\documentclass{amsart}

\usepackage[utf8]{inputenc}
\usepackage[T1]{fontenc}
\usepackage[a4paper,margin=2.5cm]{geometry}
\usepackage{amsmath,amssymb,amsfonts,amsthm,mathtools,mathrsfs}
\usepackage{xcolor}
\usepackage{hyperref}
\usepackage[nameinlink,capitalize,noabbrev]{cleveref}

\hypersetup{
  colorlinks=true,
  linkcolor=red!70!black,
  citecolor=green!45!black,
  urlcolor=red!70!black,
  linktoc=page
}
\numberwithin{equation}{section}

\newcommand{\R}{\mathbb{R}}
\newcommand{\N}{\mathbb{N}}
\newcommand{\dx}{\textup{d}x}

\newcommand{\calL}{\mathcal{L}}

\newcommand{\scal}{\textup{scal}}
\newcommand{\calU}{\mathcal{U}}
\newcommand{\calH}{\mathcal{H}}
\newcommand{\PotDann}{\omega}
\newcommand{\dist}{\textup{dist}}
\newcommand{\Sf}{\mathbb{S}}
\newcommand{\Fm}{\mathcal{F}}
\newcommand{\callF}{\mathcal{F}}

\newcommand{\Hm}{\mathcal{H}}
\newcommand{\Lm}{\mathcal{L}}
\newcommand{\eps}{\varepsilon}
\newcommand{\Id}{\mathrm{Id}}
\newcommand{\qc}{\mathrm{qc}}
\newcommand{\qcinfty}{{\mathrm{qc},\infty}}

\newcommand\weakto{\rightharpoonup}
\newcommand{\abs}[1]{\lvert #1\rvert}

\newcommand{\cbra}[1]{\left\{#1\right\}}
\newcommand{\diff}{\mathop{}\!\mathrm{d}}
\newcommand\res{\mathop{\hbox{\vrule height 7pt width .5pt depth 0pt \vrule height .5pt width 6pt depth 0pt}}\nolimits}

\newtheorem{theorem}{Theorem}[section]

\newtheorem{proposition}[theorem]{Proposition}
\newtheorem{lemma}[theorem]{Lemma}

\newtheorem{remark}[theorem]{Remark}

\title{Phase-Field Approximation of Vectorial Cohesive Models}

\author{Francesco Colasanto}
\address{Technical University of Munich (TUM), Department of Mathematics,
85748 Garching, Munich, Germany}
\email{francesco.colasanto@tum.de}

\date{\today}

\begin{document}

\begin{center}
    \begin{abstract}
        We study the $\Gamma$-convergence of a general class of anisotropic phase-field functionals modelling cohesive fracture in the multidimensional, vector-valued, and geometrically nonlinear setting. The limiting energy comprises a quasiconvexified bulk term, a Cantor part governed by its recession function, and a cohesive surface energy density defined via an asymptotic cell problem. After establishing the structural properties and equivalent formulations of the limiting surface density, we show that, whenever the recession function of the elastic energy is given by the square of an operator norm compatible with the phase-field anisotropy, the vectorial cell problem explicitly reduces to a one-dimensional variational problem. As a consequence, we extend the cohesive law reconstruction procedure of \cite{alessi2025phasefieldpart2} to the vectorial framework, explicitly designing phase-field potentials that recover prescribed classes of anisotropic, jump-dependent traction-separation laws of the form $g(\zeta,\nu)=\gamma(\nu)g_{\scal}(|\zeta|)$.
    \end{abstract}
\end{center}
\maketitle

\section{Introduction}

Free-discontinuity energies provide a natural variational framework for the analysis of fracture, damage, image segmentation, and other interfacial phenomena. In the context of fracture mechanics, one is naturally led to consider functionals in which a bulk elastic term competes with a surface contribution concentrated on the jump set of the deformation. In the brittle case, this paradigm traces back to Griffith's theory \cite{Gri21} and its variational reformulation in \cite{FM98}, which was later developed within the general framework of the variational approach to fracture \cite{AB95,BFM08}. In cohesive fracture, the surface density depends explicitly on the crack opening, thereby retaining information regarding the amplitude of the jump. This distinctive feature makes cohesive models substantially richer and more complex than their brittle counterparts, from both mechanical and analytical standpoints. Classical cohesive laws were studied in \cite{Dug60,Bar62}, and a modern mathematical discussion of their role in fracture modelling can be found, for instance, in \cite{BFM08,Mar23}.

A major breakthrough in the variational approximation of free-discontinuity energies was the phase-field regularization introduced in \cite{AT90,AT92} for the Mumford--Shah functional. In the fracture setting, this strategy was adapted to brittle energies in \cite{BFM00,Foc01}, providing a regular family of elliptic functionals whose minimizers can be effectively computed and whose singular limit rigorously reproduces the sharp-interface model. The importance of this approach is now well established: on the one hand, $\Gamma$-convergence establishes a rigorous bridge between diffuse and sharp-interface descriptions \cite{dalmaso1993introduction,Bra98}; on the other hand, the diffuse formulation is substantially more tractable in numerical simulations and has become a standard tool in computational fracture mechanics (cf. \cite{PM10b,PM10a}). This dual utility explains the sustained interest in phase-field models across both the mathematical literature and the engineering community.

The cohesive case, however, is considerably subtler. From the modelling perspective, gradient-damage and phase-field descriptions of cohesive fracture have been proposed and studied in several forms; see, among many others, \cite{LCK11,LG11,Wu17,CdB21,WNN20,LCM23,LCM25}. A key challenge is that a faithful cohesive approximation must simultaneously encode the elastic response, the damage localization mechanism, and the effective traction-separation law. Crucially, the regularization should allow for the decoupling of fracture toughness, critical stress, and internal length, a requirement that is paramount in applications but absent in the classical brittle Ambrosio-Tortorelli scaling. The scalar case was placed on a rigorous variational foundation in \cite{CFI16}, where a phase-field approximation of cohesive fracture energies was proven to converge to a free-discontinuity functional characterized by a genuinely cohesive surface density. More recently, a broader scalar framework was developed in a comprehensive three-part work; the first part \cite{ACF26} demonstrated that a single family of phase-field energies can encompass several models from the mechanics literature (including those in \cite{CFI16,Wu17,FFL21,LCM23,LCM25}) and can also reproduce brittle-type regimes through a suitable tuning of the scaling parameters. Building upon these theoretical foundations,  \cite{alessi2025phasefieldpart2} presents a systematic procedure for constructing phase-field models that reproduce prescribed cohesive laws $g_0$, that satisfy the following:
\begin{equation}\label{e:ipotesi legge coesiva}
\begin{cases}
    & \textup{$g_0\in C^1([0,\infty))$, $g^{-1}_0(\{0\})=\{0\}$, $g_0$ bounded and non-decreasing;} \\ 
    & \textup{$g_0$ is concave, $g_0'(0)=\varsigma_0\in (0,\infty)$, $g'_0$ is strictly decreasing on $[0,s_{\textup{frac}})$}\\ 
    & \textup{$R\in W^{1,p}((0,\varsigma^2_0))$ for some $p>2$, and it is convex},
\end{cases}
\end{equation}
where 
\begin{equation*}
    s_{\textup{frac}}:=\sup \{s\in [0,\infty) \, : \, g_0(s)<\lim_{t\to \infty}g_0(t)\}
\end{equation*}
and $R:[0,\varsigma_0^2]\to [0,\infty)$ is given by
\begin{equation*}
    R(t)=
    \begin{cases}
        g_0((g'_0)^{-1}(\sqrt{\varsigma_0^2-t})) & \, \textup{if $t\in [0,\varsigma_0^2)$} \\ 
        g_0(s_{\textup{frac}}) & \, \textup{if $t=\varsigma_0^2$}. \\ 
    \end{cases}
\end{equation*}

 Finally, \cite{alessi2026phasefieldpart3} explores physically relevant applications and validates the theoretical results from an engineering perspective.

In higher dimensions and in the vector-valued setting, the scenario is significantly more delicate. The transition from scalar $BV$ functions to vector-valued $GBV$ maps is accompanied by genuine geometric and analytical difficulties: the surface density generally depends on both the jump amplitude and the orientation, and the bulk part of the limit functional necessitates quasiconvexification. A rigorous vectorial result for a geometrically nonlinear cohesive phase-field model was obtained in \cite{CFI24} for the linear cohesive regime. The full $\Gamma$-limit was identified in terms of a quasiconvex bulk density, its recession function, and a surface density derived via a localized cell formula. This achievement leaves open the problem of extending such analysis to the more flexible scalar families introduced in \cite{ACF26}, and of understanding in a unified manner how the parameters governing the diffuse model influence the limiting cohesive law in the multidimensional vectorial case.

The purpose of this paper is precisely to address this issue within the linear cohesive regime. We consider the family of functionals
\begin{equation}\label{eq:intro-Feps}
\mathcal F_\varepsilon(u,v;A):=
\int_A \Big(\varphi(\varepsilon f^2(v))\,\Phi(\nabla u)
+\frac{\omega(1-v)}{4\varepsilon}+\varepsilon \gamma(\nabla v)^2\Big)\,\diff x,
\end{equation}
defined on pairs $(u,v)\in H^1(\Omega,\mathbb R^N\times [0,1])$, where $\Omega\subset\mathbb R^n$ is a bounded and Lipschitz domain, $u$ is the displacement field, and $v$ is the phase-field variable. Here, $\Phi$ is a continuous bulk density possessing quadratic growth and a quadratic recession function, the scalar factor $\varphi(\varepsilon f^2(v))$ dictates the degradation of the elastic response, while $\gamma$ is a generic norm on $\R^n$. The model is governed by three asymptotically relevant quantities: the saturation value $\varphi(\infty)$, the initial slope $\varphi'(0)$, and the parameter
\begin{equation}
\varsigma:=\lim_{t\to0^+}\Big(\frac{\omega(t)}{Q(t)}\Big)^{1/2},
\end{equation}
which quantifies the balance between the damage penalization and the elastic degeneracy. In the linear cohesive regime $\varsigma\in(0,\infty)$, these parameters determine the effective bulk density through a specific function $h_{\varsigma}$, and the effective small-jump slope through the combination $(\varphi'(0))^{1/2}\varsigma$. Simultaneously, $\varphi(\infty)$ controls the large-gradient regime of the diffuse model, thereby orchestrating the transition towards brittle or degenerate asymptotic behaviours in the extreme regimes.

Inspired by the methodologies introduced in \cite{CFI16} and \cite{CFI24}, our main theorem extends the vectorial $\Gamma$-convergence result of \cite{CFI24} to this general class of phase-field energies. Specifically, we prove that, for $\varsigma\in(0,\infty)$, the functionals $\mathcal F_\varepsilon$ converge in the strong $L^1$-topology to a free-discontinuity functional of the form
\begin{equation}
\mathcal F_{\varsigma}(u,1;A)=
\int_A h_{\varsigma}^{\qc}(\nabla u)\,\diff x
+\int_A h_{\varsigma}^{\qc,\infty}(\diff D^c u)
+\int_{J_u\cap A} g([u],\nu_u)\,\diff\mathcal H^{n-1},
\end{equation}
for $u\in (GBV(\Omega))^N$, and $\infty$ otherwise. In this expression, $h_{\varsigma}^{\qc}$ denotes the quasiconvex envelope of the effective bulk density, $h_{\varsigma}^{\qc,\infty}$ its recession function, and $g$ represents the cohesive surface density. 

A distinctive feature of this formulation, with respect to \cite{ACF26}, is the presence of a general norm $\gamma$ on $\mathbb{R}^n$ penalizing the phase-field gradient through the term $\epsilon\gamma(\nabla v)^2$. From a mechanical and geometrical standpoint, $\gamma$ encodes the directional dependence of the fracture toughness, allowing the model to capture material anisotropies. In Proposition~\ref{prop:rho-gamma-identification}, under the assumption $\Phi_{\infty}(\xi) = \|\xi\|_{\rho,\gamma}^2$ (cf. \eqref{eq:rho-gamma-tensor-norm}), we prove that the surface energy density reduces to the explicit split form
\begin{equation}
 g(\zeta,\nu)=\gamma(\nu)g_{\rm scal}(\rho(\zeta)),
\end{equation}
where $\rho$ is a norm on $\R^N$ and $g_{scal}$ is defined by the one-dimensional variational problem
\begin{equation}\label{e:definizione di g scal}
    g_{\textup{scal}}(s):=\inf_{(\alpha,\beta) \in \mathcal{U}_{s}(0,1)} 
    \int_0^1 \PotDann(1-\beta)^{\frac12}\Big(\varphi'(0)\frac{\ell(\beta)}{Q(1-\beta)}|\alpha'|^2 +|\beta'|^2\Big)^{\frac12}\,\mathrm{d}t,
\end{equation}
over the admissible functional space
\begin{align}\label{e:glispaziu}
  \mathcal{U}_{s}(T_0,T_1) := \Big\{(\alpha,\beta) \in H^1((T_0,T_1),\R^2) :\, \alpha(T_0)=0,\ \alpha(T_1)=s,\ 0\leq \beta \leq 1,\ \beta(T_0)=\beta(T_1)=1\Big\},
\end{align}
for $T_0, T_1 \in \mathbb{R}$. By doing so, we decouple the directional anisotropy $\gamma(\nu)$ from the opening resistance $g_{\scal}(\rho(\zeta))$ and exactly recover the one-dimensional cohesive energy density along any arbitrary jump direction. Crucially, this dimensional reduction bridges the gap between the vectorial theory and the one-dimensional inverse design strategy developed in~\cite{alessi2025phasefieldpart2}. It enables us to explicitly solve the reconstruction problem in the special vectorial anisotropic setting: given any target cohesive law of the form $g_0(\zeta,\nu) = \gamma_0(\nu)\hat{g}_0(|\zeta|)$ with $\hat{g}_0$ satisfying~\eqref{e:ipotesi legge coesiva},
we can systematically construct the phase-field potentials, either by prescribing the degradation function or the damage potential, so that the corresponding diffuse functionals $\Gamma$-converge to a sharp-interface model endowed with the exact surface density $g_0$ (cf.\ Theorems~\ref{thm:ricostruzione fissata degrad} and~\ref{thm:ricostruzione fissata damage pot}).

Beyond the principal $\Gamma$-convergence statement, this paper provides a comprehensive analysis of the surface term. We establish an equivalent large-cell characterization of $g$, a reduction to competitors satisfying mollified boundary data and tangential periodicity, as well as quantitative growth estimates, subadditivity, and continuity with respect to both the jump amplitude and the normal direction (cf. Section \ref{sec:surface-density}). These properties are instrumental for the proof of the lower bound and are especially critical for the gluing constructions utilized in the upper bound.

We point out that, as in \cite{CFI24}, our analysis is entirely framed within the geometrically nonlinear setting, meaning that the elastic energy penalizes the full deformation gradient. The passage to the linearized setting, which depends strictly on the symmetrized gradient, would naturally lead to the space of generalized functions of bounded deformation $(G)BD$. Adapting our strategy to the linearized framework is far from trivial. Indeed, a key ingredient of the proofs, both for the lower and the upper bound, is the use of a De Giorgi-type truncation argument in the codomain. It is a well-known intrinsic difficulty of $BD$ spaces that the symmetrized gradient of a truncated vector field cannot be bounded solely by the symmetrized gradient of the original function, thereby preventing a direct translation of the truncation techniques.

Finally, it is worth emphasizing once again that the linear cohesive regime addressed herein should be considered the natural counterpart, within this general framework, to the model analyzed in \cite{CFI24}. For what concerns the superlinear regime $\varsigma=\infty$ (cf. \eqref{eq:def-varsigma}), it introduces additional analytical issues: as shown in \cite{CFI25}, the full vectorial $\Gamma$-convergence analysis naturally gives rise to two surface densities, $g_{\inf}$ and $g_{\sup}$, which can be shown to coincide under suitable technical assumptions (such as the projection property of the recession function). While the linear and superlinear theories share a common underlying logic, this technical nuance highlights subtle differences in how sharp-interface relaxations and diffuse approximations behave across the two settings, thus motivating our choice to focus the present analysis specifically on the linear regime.

\section{Setting and statement of results}\label{sec:setting}

Throughout the work, $n\geq 2$ and $N\geq 1$ denote fixed positive integers, while $\Omega\subset \mathbb R^n$ is a bounded, open domain with Lipschitz boundary.   
The symbols $\cdot$ and $|\cdot|$ stand, respectively, for the Euclidean scalar product and the associated norm on $\mathbb{R}^d$, and $\{e_1,\dots,e_d\}$ denotes the standard basis of $\mathbb{R}^d$, for every $d\in \N$. For any two vectors $a \in \R^N$ and $b \in \R^n$, we denote by $a \otimes b \in \R^{N \times n}$ the matrix whose entries are given by 
$$(a \otimes b)_{ij} := a_i b_j$$ 
for $1 \le i \le N$ and $1 \le j \le n$.
For any real number $t$, the notation $\lfloor t \rfloor$ denotes its integer part.

\vspace{0.3em}
We denote by 
\[
\mathbb{S}^{n-1}:=\{y\in\mathbb{R}^n : |y|=1\}
\]
the $(n-1)$–dimensional unit sphere in $\mathbb{R}^n$. For every $\nu\in\mathbb{S}^{n-1}$, we denote by $\Pi_\nu\subset\mathbb{R}^n$ the hyperplane orthogonal to $\nu$, namely
\begin{equation}\label{e:definizione dell'iperpiano nu}
\Pi_\nu := \{y\in\mathbb{R}^n : y\cdot \nu=0\},
\end{equation}
and for all $\zeta\in\mathbb{R}^N$, we define the \emph{jump function} $u_{\zeta,\nu}\colon\mathbb{R}^n\to\mathbb{R}^N$ by
\begin{equation}\label{e:definizione della funzione di salto nei preliminari}
u_{\zeta,\nu}(y):=
\begin{cases}
\zeta, & \text{if } y\cdot\nu \ge 0, \\[4pt]
0, & \text{if } y\cdot\nu < 0,
\end{cases}
\end{equation}
that represents a simple discontinuity with jump $\zeta$ across the plane $\Pi_\nu$ and will be repeatedly used in the cell formula defining the surface energy density of limit functionals.

A mapping \(\gamma\colon\R^n\to[0,\infty)\) is a
\emph{norm} if
\begin{enumerate}
\item \(\gamma(p)=0\) if and only if \(p=0\);
\item \(\gamma(\lambda p)=\abs{\lambda}\gamma(p)\) for all
      \(p\in\R^n\) and \(\lambda\in\R\);
\item \(\gamma(p+q)\leq\gamma(p)+\gamma(q)\) for all
      \(p,q\in\R^n\).
\end{enumerate}
Its \emph{dual norm} is the mapping
\(\gamma^\circ\colon\R^n\to[0,\infty)\) defined by
\begin{equation}\label{eq:def-polar-norm}
 \gamma^\circ(r)
 :=
 \sup\cbra{|p\cdot r|:\gamma(p)\leq1},
 \end{equation}
 and in particular the following holds
 \begin{equation}\label{e:polar inequality}
     |p\cdot r|\leq \gamma(p)\gamma^\circ(r), \qquad \textup{for every $p,r\in \R^n$}.
 \end{equation}
 Given $\rho:\R^N\to [0,\infty)$ a norm, we consider the operatorial norm $\|\cdot\|_{\rho,\gamma}:\R^{N\times n}\to [0,\infty)$, given by 
 \begin{equation}\label{eq:rho-gamma-tensor-norm}
 \|\xi\|_{\rho,\gamma}
 :=
 \sup\left\{
 \gamma(\xi^{\mathsf T}a)\colon
 a\in\R^N,\ \rho^\circ(a)\le1
 \right\},
\end{equation}
endowing $\R^N$ of the norm $\rho^\circ$ and $\R^n$ of $\gamma$.

\vspace{0.3em}
For $x\in\mathbb{R}^n$ and $r>0$, we denote the open ball of center $x$ and radius $r$ by 
\begin{equation*}
B_{r}(x):=\{y\in\mathbb{R}^n : |y-x|<r\}, 
\end{equation*}
with the following convention $B_r:=B_r(0)$. Moreover, for every $\nu\in\mathbb{S}^{n-1}$, we denote by $Q^{\nu}_{r}(x)$ an open cube of side length $r$ centered at $x$, with one of its faces orthogonal to $\nu$, adopting the shorthand notations $Q^{\nu}_r:=Q^{\nu}_{r}(0)$ and $Q^{\nu}:=Q^{\nu}_1$.

If $O$ is an open subset of $\mathbb{R}^n$, we denote by $\mathcal{A}(O)$, $\mathcal{A}_{\infty}(O)$, and $\mathcal{B}(O)$ the families of bounded open, Lipschitz bounded open, and Borel subsets of $O$, respectively. 
For every $B\subseteq\mathbb{R}^d$, we write $\mathbf 1_B\colon\mathbb{R}^n\to\{0,1\}$ for the characteristic function of $B$. If $\mu$ is a Borel measure and $B$ a Borel set, the restriction of $\mu$ to $B$ is denoted by $\mu\res B$ and is defined by $\mu\res B(A):=\mu(A\cap B)$. 
We denote by $\mathcal{L}^n$ the $n$–dimensional Lebesgue measure in $\mathbb{R}^n$ and by $\mathcal{H}^{n-1}$ the $n-1$–dimensional Hausdorff measure on $\R^n$. 

To handle the lack of a priori $L^\infty$ bounds and to extend our analysis to the generalized space $GBV$, we will extensively rely on a family of smooth truncation maps. Specifically, let $(a_k)_{k\in\mathbb{N}}$ be a strictly increasing sequence of positive numbers with $a_k \uparrow \infty$, such that the operators $\mathcal{T}_k \in C^1(\mathbb{R}^N; \mathbb{R}^N)$ defined by
\begin{equation}\label{e:Tk}
\mathcal{T}_k(\zeta):=
\begin{cases}
\zeta, & \text{if } |\zeta| \le a_k,\\[4pt]
0, & \text{if } |\zeta| \ge a_{k+1},
\end{cases}
\end{equation}
satisfying the uniform Lipschitz bound $\textup{Lip}(\mathcal{T}_k)\le 1$.

Finally, we employ the standard notation for Lebesgue spaces $L^p(O,\mathbb{R}^k)$ ($L_{\textup{loc}}^p(O,\mathbb{R}^k)$) and Sobolev spaces $W^{1,p}(O,\mathbb{R}^k)$ ($W_{\textup{loc}}^{1,p}(O,\mathbb{R}^k)$) for $p\in[1,\infty]$, $k\in \N$, and any open set $O\subseteq \R^n$, adopting the usual convention $H^1(O,\mathbb{R}^k):=W^{1,2}(O,\mathbb{R}^k)$. Furthermore, we rely on the customary notations for the spaces of (generalized) functions of bounded variation and special bounded variation, respectively denoted by $(G)BV(O,\mathbb{R}^k)$ and $(G)SBV(O,\mathbb{R}^k)$; for a comprehensive treatment of these spaces and their fine properties, we refer to \cite{ambrosiofuscopallara}.

\subsection{The vectorial phase-field energies}
Our framework is built upon the structural potentials $l$, $Q$, $\omega$, and $\varphi$, alongside the elastic bulk density $\Phi$ and the damage norm $\gamma$. Following the scalar approach developed in \cite{ACF26}, we introduce the function
\begin{equation}\label{e:def f}
 f(t):=\Big(\frac{l(t)}{Q(1-t)}\Big)^{1/2},\qquad t\in [0,1),
\end{equation}
where:
\begin{itemize}
\item[\textup{(H1)}]
 \(l,Q\in C^0([0,1];[0,\infty))\),
 \(l(1)=1\),
 \(l^{-1}(\{0\})=Q^{-1}(\{0\})=\{0\}\);
 \(Q\) is nondecreasing in a right neighbourhood of \(0\), and \(f\) is
 nondecreasing in a right neighbourhood of \(0\) and in a left neighbourhood
 of \(1\).

\item[\textup{(H2)}]
 \(\omega\in C^0([0,1];[0,\infty))\) is nondecreasing,
 \(\omega^{-1}(\{0\})=\{0\}\), and
 \begin{equation}\label{eq:def-varsigma}
  \varsigma
  :=
  \lim_{t\downarrow0}
  \left(\frac{\omega(t)}{Q(t)}\right)^{1/2}
  \in[0,\infty].
 \end{equation}

\item[\textup{(H3)}]
 \(\varphi\) is continuous, nondecreasing, and bounded, with
 \begin{equation}\label{eq:def-varphi-infinity}
  \varphi(\infty)
  :=
  \lim_{t\to\infty}\varphi(t)\in(0,\infty).
 \end{equation}

\item[\textup{(H4)}]
 \(\varphi^{-1}(\{0\})=\{0\}\) and \(\varphi\) is right-differentiable at
 \(0\), with
 \begin{equation}\label{eq:def-a}
  \varphi'(0)\in(0,\infty).
 \end{equation}

\item[\textup{(H5)}]
 \(\Phi\in C^0(\R^{N\times n},[0,\infty))\) and there exists \(C_\Phi\geq1\)
 such that
 \begin{equation}\label{eq:growth-Phi}
  \left(\frac1{C_\Phi}\abs\xi^2-C_\Phi\right)\vee0
  \leq\Phi(\xi)
  \leq C_\Phi(1+\abs\xi^2)
  \qquad\text{for every }\xi\in\R^{N\times n}.
 \end{equation}
 Moreover, the limit
 \begin{equation}\label{eq:def-Phi-infty}
  \Phi_\infty(\xi)
  :=
  \lim_{t\to\infty}\frac{\Phi(t\xi)}{t^2}
 \end{equation}
 exists uniformly for \(\abs\xi=1\).
 The uniform convergence in \eqref{eq:def-Phi-infty} implies that
\(\Phi_\infty\) is continuous, positively two-homogeneous, and it gives
\begin{equation}\label{eq:growth-Phi-infty}
 \frac1{C_\Phi}\abs\xi^2
 \leq\Phi_\infty(\xi)
 \leq C_\Phi\abs\xi^2
 \qquad\text{for every }\xi\in\R^{N\times n}.
\end{equation}
Moreover, for every \(\delta>0\), there exists \(t_\delta>0\) such that
\begin{equation}\label{eq:uniform-recession-estimate}
 \abs{\Phi(\xi)-\Phi_\infty(\xi)}
 \leq\delta\abs\xi^2
 \qquad\text{whenever }\abs\xi\geq t_\delta.
\end{equation}

\item[\textup{(H6)}]
 \(\gamma\colon\R^n\to[0,\infty)\) is a norm.
\end{itemize}
Since all norms on a finite-dimensional vector space are equivalent, there
exist constants \(0<c_\gamma\leq C_\gamma<\infty\) such that
\begin{equation}\label{e:gamma lower euclidean suppenergy}
 c_\gamma\abs p
 \leq\gamma(p)
 \leq C_\gamma\abs p
 \qquad\text{for every }p\in\R^n.
\end{equation}

For \(\eps>0\) and \(A\subset\Omega\) open set, we define
\begin{equation}\label{eq:def-Feps-gamma}
 \Fm_\eps(u,v;A)
 :=
 \begin{cases}
 \displaystyle
 \int_A
 \left(
  \varphi(\eps f^2(v))\Phi(\nabla u)
  +\frac{\omega(1-v)}{4\eps}
  +\eps\gamma(\nabla v)^2
 \right)\,\diff x,
 &(u,v)\in H^1(\Omega,\R^N)\times H^1(\Omega,[0,1]),\\[1.2em]
 +\infty,
 &\text{otherwise in }L^1(\Omega;\R^{N+1}).
 \end{cases}
\end{equation}
We extend \(t\mapsto\varphi(\eps f^2(t))\) at \(t=1\) by the value
\(\varphi(\infty)\).

We consider the following bulk density 
\begin{equation}\label{eq:def-h-varsigma}
 h_\varsigma(\xi)
 :=
 \inf_{\tau\in[0,\infty)}
 \left\{
  \varphi\!\left(\frac1\tau\right)\Phi(\xi)
  +\frac{\varsigma^2}{4}\tau
 \right\},
\end{equation}
where \(\varphi(1/\tau):=\varphi(\infty)\) at \(\tau=0\) and we adopt the convention $0\cdot \infty=0$.
We denote by \(h_\varsigma^\qc\) its quasiconvex envelope and set
\begin{equation}\label{eq:def-h-recession}
 h_\varsigma^{\qcinfty}(\xi)
 :=
 \limsup_{t\to\infty}\frac{h_\varsigma^\qc(t\xi)}{t}.
\end{equation}

For every open set \(O\subset\R^n\), define the recession phase-field energy
\begin{equation}\label{eq:def-recession-energy}
 \Fm_\eps^{\infty}(u,v;O)
 :=
 \begin{cases}
 \displaystyle
 \int_O
 \left(
  \varphi'(0)\eps f^2(v)\Phi_\infty(\nabla u)
  +\frac{\omega(1-v)}{4\eps}
  +\eps\gamma(\nabla v)^2
 \right)\,\diff x,
 &(u,v)\in H^1(O,\R^N)\times H^1(O,[0,1]),\\[1.2em]
 +\infty,&\text{otherwise},
 \end{cases}
\end{equation}
still adopting the convention $0\cdot \infty=0$.

For \((\zeta,\nu)\in\R^N\times\Sf^{n-1}\), we define
\begin{equation}\label{eq:def-g-gamma}
 g(\zeta,\nu)
 :=
 \inf\Big\{
 \liminf_{j\to\infty}
 \Fm_{\eps_j}^{\infty}(u_j,v_j;Q^\nu):
 \eps_j\downarrow0,\\
 (u_j,v_j)\to(u_{\zeta,\nu},1)
 \text{ in }L^1(Q^\nu;\R^{N+1})
 \Big\}.
\end{equation}
The convergence of the second
component could be omitted from the definition: finite energy automatically
implies \(v_j\to1\) in \(L^1(Q^\nu)\). The last assertion follows directly from \textup{(H2)}.  Indeed, for every
\(\delta\in(0,1)\),
\[
 \Lm^n(\{v_j\leq1-\delta\})
 \frac{\min_{[\delta,1]}\omega}{4\eps_j}
 \leq
 \int_{Q^\nu}\frac{\omega(1-v_j)}{4\eps_j}\,\diff x.
\]
If the energies are bounded, the measure of the set on the left converges to
zero.  Since \(0\leq v_j\leq1\), convergence in measure to \(1\) implies
convergence in every \(L^p\)-space with $p\in (0,\infty)$.

The candidate \(\Gamma\)-limit is
\begin{equation}\label{eq:def-limit-functional}
 \Fm_\varsigma(u,v;A)
 :=
 \begin{cases}
 \displaystyle
 \int_A h_\varsigma^\qc(\nabla u)\,\diff x
 +\int_A h_\varsigma^{\qcinfty}
 \left(\frac{\diff D^c u}{\diff\abs{D^c u}}\right)
 \,\diff\abs{D^c u}
 +\int_{J_u\cap A}
 g([u],\nu_u)\,\diff\Hm^{n-1},
 &\begin{array}{l}
   u\in(GBV(\Omega))^N,\\[-0.2em]
   v=1\quad\Lm^n\text{-a.e. in }\Omega,
  \end{array}\\[1.4em]
 +\infty,&\text{otherwise}.
 \end{cases}
\end{equation}

We now state the main $\Gamma$-convergence theorem of the work, which characterizes the asymptotic behaviour of the phase-field functionals $\mathcal{F}_{\epsilon}$ in the linear cohesive regime $\varsigma \in (0,\infty)$.
\begin{theorem}\label{thm:general-anisotropic-gamma-convergence}
Assume \textup{(H1)--(H6)} and let
\(\varsigma\in(0,\infty)\).  Then
\begin{equation}\label{eq:general-gamma-convergence}
\Big( \Gamma\big(L^1\big)
 \text{-}\lim_{\eps\downarrow0}\Fm_\eps\Big)(u,v;\Omega)
 =\Fm_\varsigma(u,v;\Omega),
\end{equation}
where \(\Fm_\eps\) and
\(\Fm_\varsigma\) are defined by
\eqref{eq:def-Feps-gamma} and \eqref{eq:def-limit-functional},
respectively.
\end{theorem}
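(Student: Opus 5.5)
The proof will follow the strategy of \cite{CFI24}, adapted to the general potentials of \cite{ACF26}. We first reduce to $u\in (GBV(\Omega))^N$ with $v=1$, using compactness: finite energy forces $v_\eps\to1$ (as observed after \eqref{eq:def-g-gamma}). The bulk lower bound $\varphi(\eps f^2(v))\Phi(\nabla u)+\frac{\omega(1-v)}{4\eps}\ge h_\varsigma(\nabla u)-o(1)$ is obtained pointwise by setting $\tau=\omega(1-v)/(\eps\varsigma^2)$ and using $\omega/Q\to\varsigma^2$ near $0$. Combined with a Modica--Mortola/coarea bound on the sets $\{v<1-\delta\}$, this gives uniform bounds on truncations $\mathcal T_k(u_\eps)$ in $BV$ away from a set of small perimeter, and hence $u\in (GBV)^N$.

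\textbf{Lower bound.} We work by localization and blow-up, following the Fonseca--Müller method. For a sequence with bounded energy we take the energy measures $\mu_\eps$, extract a weak$^*$ limit $\mu$, and prove three density estimates:
\begin{itemize}
\item $\frac{d\mu}{d\Lm^n}\ge h^\qc_\varsigma(\nabla u)$ at Lebesgue points;
\item $\frac{d\mu}{d|D^cu|}\ge h_\varsigma^\qcinfty$;
\item $\frac{d\mu}{d\Hm^{n-1}\res J_u}\ge g([u],\nu_u)$.
\end{itemize}
For the bulk part, we use the pointwise inequality above together with the lower semicontinuity of $\int h_\varsigma^\qc$ on $GBV$ applied to truncations $\mathcal T_k u$; the Lipschitz bound on $\mathcal T_k$ makes the truncation cost only $o(1)$ after averaging over $k$. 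The Cantor part is treated by blow-up together with the recession behaviour \eqref{eq:uniform-recession-estimate}. For the jump part, we blow up at $x_0\in J_u$ with scale $r$ and rescale $\eps\mapsto\eps/r$. After rescaling, $\Phi(\nabla u)$ is evaluated at gradients of order $1/r$, so it is replaced by $\Phi_\infty$ thanks to \eqref{eq:uniform-recession-estimate}. Likewise, $\varphi(\eps f^2)$ is replaced by $\varphi'(0)\eps f^2$ on the region where $\eps f^2(v)$ is small, and the remaining region is handled by truncation. This yields exactly the cell energy $\Fm^\infty$ and hence the bound by $g$, using the equivalent formulations of $g$ from Section~\ref{sec:surface-density}.

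\textbf{Upper bound.} We first prove the estimate for $u\in SBV\cap L^\infty$ with polyhedral jump set and piecewise smooth $u$, by gluing optimal cell profiles. This uses the mollified-boundary, tangentially periodic competitors and the continuity of $g$ in $(\zeta,\nu)$ established in Section~\ref{sec:surface-density}. In the bulk we take $v_\eps$ with $\omega(1-v_\eps)/(4\eps)\approx\varsigma^2\tau/4$, so that $\eps f^2(v_\eps)\approx1/\tau$, with $\tau$ optimal in \eqref{eq:def-h-varsigma}. This recovers $h_\varsigma$; $h_\varsigma^\qc$ then follows by relaxation. We then extend to general $(GBV)^N$ functions by density, using the upper semicontinuity of $g$, and obtain the Cantor term through the relaxation result for $\int h_\varsigma(\nabla u)+\int g$ on $GSBV$ (à la Bouchitté--Fonseca--Mascarenhas), taking the lower semicontinuous envelope.

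\textbf{Main obstacle.} We expect the hardest step to be the jump lower bound. There, the mismatch between $\varphi(\eps f^2)$ and its linearization, together with the lack of $L^\infty$ control, must be handled simultaneously, via a De Giorgi-type truncation combined with a cut-off in $v$ that costs asymptotically nothing thanks to the subadditivity and growth bounds on $g$.
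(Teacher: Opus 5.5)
Your jump lower bound is essentially the paper's (Proposition~\ref{p:lowerboundBVsfcvettoriale}). You cut $v_\eps$ at the level $\delta_\eps$ with $\eps f^2(\delta_\eps)=\delta$, linearize $\varphi$, use $\Phi+C(\eta)\ge(1-\eta)\Phi_\infty$, blow up, and apply the definition of $g$ directly. No subadditivity of $g$ is needed there; continuity of $g$ enters only through Proposition~\ref{prop:contF0} when passing to $GBV$.

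The diffuse part, however, rests on a step that fails. The inequality $\varphi(\eps f^2(v))\Phi(\nabla u)+\frac{\omega(1-v)}{4\eps}\ge h_\varsigma(\nabla u)-o(1)$ is false pointwise. At $v=0$ the left-hand side equals $\omega(1)/(4\eps)$ whatever $\nabla u$ is, while $h_\varsigma(\nabla u)\ge|\nabla u|/K_\varsigma-K_\varsigma$. It is also false in integrated form. The competitors of Lemma~\ref{l:g limitata} have energy bounded independently of $\zeta$, while $\int h_\varsigma(\nabla u_j)\gtrsim|\zeta|$. If the inequality held, the $\Gamma$-$\liminf$ would dominate the $W^{1,1}$-relaxation of $\int h_\varsigma(\nabla u)$, whose jump density $h^{\qcinfty}_\varsigma([u]\otimes\nu_u)$ grows linearly, and the cohesive behaviour would be lost.

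With your choice $\tau=\omega(1-v)/(\eps\varsigma^2)$ one gets $\eps f^2(v)\tau=\ell(v)\omega(1-v)/(\varsigma^2Q(1-v))$, which is close to $1$ only for $v$ close to $1$. So the estimate holds only on $\{v>1-\delta\}$, and only with $\varsigma$ lowered to some $\varsigma_\eta<\varsigma$. Where $v$ is small, the elastic energy is not controlled at all. You must therefore pass to $u_\eps\chi_{\{v_\eps>s_\delta\}}$ as in Lemma~\ref{l:suppenergy}. This creates jumps whose $\calH^{n-1}$-measure is bounded (not small) and is paid only with the vanishing weight $\zeta_2(\delta)$.

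The tool you then need is not lower semicontinuity of $\int h^{\qc}_\varsigma(\nabla\cdot)$ on $GBV$, which is false without control of the jumps. It is the partial relaxation result for $\int h^{\qc}_\varsigma(\nabla w)+\zeta\,\calH^{n-1}(J_w)$ along $SBV$ sequences (Theorem~\ref{t:CFI_sci}, from \cite[Proposition~4.2]{CFI24}). That result yields the absolutely continuous and Cantor parts at once, so no separate Cantor blow-up is required.

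Your upper bound is a genuinely different route. The paper builds no recovery sequences for piecewise smooth maps with polyhedral jump sets and uses no density theorem in $SBV$. It proceeds as follows:
\begin{itemize}
\item It adds $\eta_\eps\int\Phi(\nabla u)$ and proves weak subadditivity of the $\Gamma$-$\limsup$ (Lemma~\ref{l:wsub}, using $\mathcal T_k$ to glue competitors without $L^\infty$ bounds), so that $\overline{\Gamma}$-limits are measures.
\item It applies the Bouchitt\'e--Fonseca--Mascarenhas global method to $\overline{\Fm}+\lambda|Du|$, where $\lambda|Du|$ restores the $BV$-coercivity destroyed by the boundedness of $g$.
\item It estimates densities only on affine maps, which give $h_\varsigma$ and then, via Acerbi--Fusco and Ambrosio--Dal Maso, $h^{\qc}_\varsigma$ and $h^{\qcinfty}_\varsigma$.
\item On the jump side it uses only the pure jump $u_{\zeta,\nu}$ on small cubes (Proposition~\ref{propuvjump}), and reaches $GBV$ by truncation through Proposition~\ref{prop:contF0}.
\end{itemize}
Your approach would give explicit recovery sequences for regular $u$, but it has to glue cell competitors with varying $[u]$ along facets and edges. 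Its density step is also not available off the shelf. Cortesani--Toader-type theorems are stated in $SBV^p$, i.e.\ with $\calH^{n-1}(J_u)<\infty$, and finite cohesive energy does not give this, since small jumps cost only about $|[u]|$. Finally, your appeal to a Bouchitt\'e--Fonseca--Mascarenhas relaxation of $\int h_\varsigma(\nabla u)+\int_{J_u}g$ requires exactly the paper's two ingredients: a measure property and the $\lambda|Du|$ device. Once these are applied to the $\Gamma$-$\limsup$ itself, the density step and the polyhedral gluing become superfluous, and you are back at the paper's argument.
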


Furthermore, using exactly the same argument of \cite[Theorem 6.1]{CFI24} we obtain the following compactness result 
\begin{theorem}\label{t:comp}
	 If $(u_\eps,v_\eps)\in {H^1}(\Omega,\R^{N+1})$ is such that
    \begin{equation}\label{e:prima eq compattezza vector}
        \sup_\eps \left(\callF_{\eps}(u_\eps,v_\eps,\Omega)+\|u_\eps\|_{L^1(\Omega,\R^N)}\right)<\infty,
    \end{equation}
then, under the assumptions of Theorem~\ref{thm:general-anisotropic-gamma-convergence}, there exists a subsequence $(u_{\eps_j},v_{\eps_j})$ of $(u_\eps,v_\eps)$ and a function $u\in (GBV\cap L^1(\Omega))^N$ 
	such that $u_{\eps_j}\to u$ $\calL^n$-a.e. and $v_{\eps_j}\to 1$ in $L^1(\Omega)$.
\end{theorem}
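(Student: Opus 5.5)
The plan is to follow the proof of \cite[Theorem 6.1]{CFI24} and reduce everything to the scalar $BV$ compactness theorem for slices via a Modica--Mortola type estimate combined with truncation in the codomain. The first step is to build, from the energy bound \eqref{e:prima eq compattezza vector}, a single $BV$ control on suitable nonlinear functions of $(u_\eps,v_\eps)$. Convergence $v_\eps\to1$ in $L^1(\Omega)$ is already available from (H2), exactly as in the argument after \eqref{eq:def-g-gamma}. For $u_\eps$, I will use (H3)--(H4): $\varphi(t)\ge c\min\{t,1\}$ for some $c>0$, since $\varphi'(0)>0$, $\varphi^{-1}(\{0\})=\{0\}$ and $\varphi$ is nondecreasing with positive limit. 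Together with the lower bound in \eqref{eq:growth-Phi} this gives
\[
\int_\Omega \min\{\eps f^2(v_\eps),1\}\,|\nabla u_\eps|^2\,\dx\le C.
\]
Set $\psi(t):=\min\{(\eps f^2(t))^{1/2},1\}$. By Young's inequality and $\mathrm{Lip}(\mathcal T_k)\le1$,
\[
\int_\Omega \psi(v_\eps)|\nabla(\mathcal T_k u_\eps)|\,\dx\le \frac1{\delta}\int \psi^2(v_\eps)|\nabla u_\eps|^2+\delta\,\mathcal L^n(\Omega).
\]
More to the point, on $\{v_\eps\ge 1-\eta\}$ one has $f(v_\eps)\ge c_\eta>0$ uniformly in $\eps$ by (H1). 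On that set, the weight $\eps f^2$ is at least $c_\eta^2\eps$ and does not degenerate further.

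The key estimate is a coarea/slicing bound in the spirit of Ambrosio--Tortorelli. For a fixed threshold $\eta$, the functions $w_\eps:=\mathcal T_k(u_\eps)\mathbf 1_{\{v_\eps>1-\eta\}}$ (suitably smoothed through a level set $\{v_\eps=s\}$ chosen by coarea) have $|Dw_\eps|(\Omega)$ uniformly bounded. The absolutely continuous part is controlled by the $\varsigma\in(0,\infty)$ balance: where $\eps f^2(v)\le 1$ is small, one has $\omega(1-v)/\eps\gtrsim \varsigma^2 Q(1-v)/\eps$. The Young inequality $\varphi(\eps f^2)|\nabla u|^2+\omega(1-v)/(4\eps)\ge c\,|\nabla u|\sqrt{l(v)}\,\varsigma$ then yields an $L^1$ bound on $\sqrt{l(v_\eps)}|\nabla u_\eps|$, with no loss as $\eps\to0$. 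This is exactly where linear cohesive scaling enters. The jump part created on $\partial\{v_\eps>s\}$ is bounded by $2a_{k+1}\mathcal H^{n-1}(\{v_\eps=s\})$. The Modica--Mortola inequality $\int\omega(1-v)^{1/2}\gamma(\nabla v)\le\Fm_\eps$, together with \eqref{e:gamma lower euclidean suppenergy} and coarea, lets me pick $s\in(1-2\eta,1-\eta)$ with $\mathcal H^{n-1}(\{v_\eps=s\})\le C/(\eta\min_{[\eta,2\eta]}\omega^{1/2})$. The $L^\infty$ bound on $w_\eps$ and the small measure of the removed set give $\|w_\eps-\mathcal T_ku_\eps\|_{L^1}\to0$.

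Next, $BV$ compactness gives, for each $k$, a subsequence with $\mathcal T_k u_{\eps_j}\to u^k$ in $L^1$ and a.e., and a diagonal argument handles all $k$ simultaneously. To pass from truncations to $u_\eps$ itself, I will use the $L^1$ bound in \eqref{e:prima eq compattezza vector}. It gives $\mathcal L^n(\{|u_\eps|>a_k\})\le C/a_k$ uniformly, so $u^k=u^{k'}$ on $\{|u^{k'}|<a_k\}$ up to small sets. Hence $u:=\lim_k u^k$ exists a.e., $u_{\eps_j}\to u$ a.e., and Fatou gives $u\in L^1$. Membership in $(GBV)^N$ follows because each composition $\mathcal T_k(u)$ is a limit of functions with $|D\,\cdot|(\Omega)\le C_k$ (lower semicontinuity). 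Equivalently, each component truncation is in $BV$, using a componentwise version of $\mathcal T_k$ if needed.

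The main obstacle will be the uniform-in-$\eps$ $BV$ bound. The degeneracy of the elastic coefficient near $v=0$ must be handled carefully: points where $\eps f^2(v_\eps)\gtrsim1$ are harmless, since there $|\nabla u_\eps|^2$ is directly controlled. The set where $v_\eps$ is close to $0$ is cut out via the level-set argument. I expect this to work exactly as in \cite{CFI24}, the only modifications being the anisotropic $\gamma$ (absorbed by $c_\gamma$) and the general $\varphi,f,\omega$ (absorbed by (H1)--(H4) near $t=0$ and $t=1$).
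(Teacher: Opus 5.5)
Your proposal is correct and is essentially the argument the paper relies on: the paper defers to \cite[Theorem 6.1]{CFI24}, whose core is the truncation Lemma~\ref{l:suppenergy}, and your steps reproduce it. You cut $\mathcal T_k(u_\eps)$ on a superlevel set $\{v_\eps>s\}$ selected by coarea and the Modica--Mortola bound, get uniform $BV$ bounds from the Young-type balance $\varphi(\eps f^2(v))|\nabla u|^2+\omega(1-v)/(4\eps)\gtrsim \sqrt{l(v)}\,|\nabla u|$ (up to an additive constant, and using that $\omega/Q$ is bounded below on $(0,1]$ because $\varsigma>0$), and conclude with $BV$ compactness and the $L^1$ bound.

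The one step to tighten is the last. Knowing $\mathcal T_k(u_{\eps_j})\to u^k$ a.e.\ for every $k$ does not by itself give a.e.\ convergence of $u_{\eps_j}$ along the same subsequence. However, the uniform estimate $\calL^n(\{|u_{\eps}|>a_k\})\le C/a_k$ shows that $(u_{\eps_j})$ is Cauchy in measure, so a further subsequence converges a.e.
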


\subsection{Some technical results}

We start by collecting some properties of the effective bulk density $h_{\sigma}$.
\begin{proposition}\label{p:proprieta hsigma}
Assume \textup{(H3)-(H4)}. Then $h_{\sigma_0} \in C^{0}(\mathbb R^{N\times n})$ (see \eqref{eq:def-h-varsigma}) for every $\sigma_0 \in (0,\infty)$, and it satisfies the bounds
\begin{equation}\label{e:la h al variare di sigma}
        h_{\sigma_0}(\xi)\leq h_{\sigma_1}(\xi)\leq (\sigma_1/\sigma_0)^2h_{\sigma_0}(\xi) \quad \quad \textup{for every $\xi\in \mathbb R^{N\times n}$},
    \end{equation}
for all pairs $\sigma_0,\sigma_1 \in (0,\infty)$ with $\sigma_0\leq \sigma_1$. 
\end{proposition}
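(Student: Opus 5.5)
The plan is to treat $h_{\sigma}(\xi)$ as the infimum over $\tau\in[0,\infty)$ of the affine-in-$\sigma^2$ family
\[
\psi_\sigma(\xi,\tau):=\varphi(1/\tau)\Phi(\xi)+\tfrac{\sigma^2}{4}\tau ,
\]
and to read both inequalities in \eqref{e:la h al variare di sigma}, as well as continuity, off this representation.

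\textbf{The two bounds.} For fixed $\xi$ and $\tau$, the map $\sigma\mapsto\psi_\sigma(\xi,\tau)$ is nondecreasing. Taking the infimum over $\tau$ gives $h_{\sigma_0}\le h_{\sigma_1}$ whenever $\sigma_0\le\sigma_1$.

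For the upper bound, set $\lambda:=(\sigma_1/\sigma_0)^2\ge1$. Since $\Phi\ge0$, $\varphi\ge0$ and $\lambda\ge1$,
\[
\psi_{\sigma_1}(\xi,\tau)=\varphi(1/\tau)\Phi(\xi)+\lambda\tfrac{\sigma_0^2}{4}\tau\le \lambda\Big(\varphi(1/\tau)\Phi(\xi)+\tfrac{\sigma_0^2}{4}\tau\Big)=\lambda\,\psi_{\sigma_0}(\xi,\tau).
\]
Taking the infimum over $\tau$ yields $h_{\sigma_1}\le\lambda h_{\sigma_0}$. The conventions at $\tau=0$ cause no trouble: there $\psi_\sigma(\xi,0)=\varphi(\infty)\Phi(\xi)$, which is independent of $\sigma$. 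This part is routine.

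\textbf{Finiteness and a uniform range for $\tau$.} Choosing $\tau=0$ gives
\[
0\le h_{\sigma_0}(\xi)\le\varphi(\infty)\Phi(\xi)\le\varphi(\infty)C_\Phi(1+|\xi|^2),
\]
using (H3) and (H5), so $h_{\sigma_0}$ is finite. Moreover, any $\tau>T_M:=4\varphi(\infty)C_\Phi(1+M^2)/\sigma_0^2+1$ gives $\psi_{\sigma_0}(\xi,\tau)>\varphi(\infty)\Phi(\xi)\ge h_{\sigma_0}(\xi)$ whenever $|\xi|\le M$. Hence on the ball $\{|\xi|\le M\}$ the infimum may be restricted to the compact set $\tau\in[0,T_M]$.

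\textbf{Continuity.} This is the only step needing care, because $\tau\mapsto\varphi(1/\tau)$ is continuous on $(0,\infty)$ but must be handled at $\tau=0$. By (H3), $\varphi(1/\tau)\to\varphi(\infty)$ as $\tau\downarrow0$, so the extended function $\tau\mapsto\varphi(1/\tau)$ is continuous and bounded by $\varphi(\infty)$ on $[0,T_M]$. I would then argue directly. For $|\xi|,|\xi'|\le M$,
\[
|\psi_{\sigma_0}(\xi,\tau)-\psi_{\sigma_0}(\xi',\tau)|\le\varphi(\infty)|\Phi(\xi)-\Phi(\xi')|
\]
uniformly in $\tau$. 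Since the infimum over $\tau\in[0,T_M]$ of a family that is equi-uniformly continuous in $\xi$ is again uniformly continuous, this gives
\[
|h_{\sigma_0}(\xi)-h_{\sigma_0}(\xi')|\le\varphi(\infty)\,\omega_\Phi^M(|\xi-\xi'|),
\]
where $\omega_\Phi^M$ is the modulus of continuity of $\Phi$ on the closed ball of radius $M$, which exists because $\Phi$ is continuous on a compact set. In fact this estimate does not even need the restriction to $[0,T_M]$, since the bound is uniform in $\tau\in[0,\infty)$. The compactness reduction only matters if one also wants the infimum to be attained.

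Assumption (H4) does not enter this proposition beyond guaranteeing $\varphi\ge0$ with $\varphi(0)=0$.
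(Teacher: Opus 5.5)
Your proposal is correct. For the two inequalities in \eqref{e:la h al variare di sigma} it is essentially the paper's argument, with one small improvement. The paper fixes a minimizer $\tau_{\sigma_0,\xi}$ of the $\sigma_0$-problem, asserting attainment ``by continuity of $\varphi$'', which tacitly also needs near-optimal $\tau$ to stay bounded. You compare the integrands pointwise in $\tau$ and then take the infimum, so no attainment is needed.

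For continuity your route is genuinely different.
\begin{itemize}
\item The paper argues sequentially. For $\xi_k\to\xi_0$ it picks $\frac1k$-optimal $\tau_k$ and shows they are bounded because $\Phi(\xi_k)$ is bounded. It then extracts $\tau_{k_j}\to\tau_0$ and passes to the limit to get lower semicontinuity. Upper semicontinuity comes from $h_{\sigma_0}$ being an infimum of continuous functions.
\item You instead use $0\le\varphi(1/\tau)\le\varphi(\infty)$, from monotonicity in (H3) and $\varphi(0)=0$ in (H4). This gives $|h_{\sigma_0}(\xi)-h_{\sigma_0}(\xi')|\le\varphi(\infty)|\Phi(\xi)-\Phi(\xi')|$ for all $\xi,\xi'$, uniformly in $\tau\in[0,\infty)$.
\end{itemize}
Your estimate is shorter, avoids any compactness in $\tau$, and transfers the modulus of continuity of $\Phi$ directly to $h_{\sigma_0}$. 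Both arguments tacitly use (H5), namely continuity and nonnegativity of $\Phi$, even though the statement lists only (H3)--(H4).

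The paper's compactness argument additionally gives local boundedness of (near-)optimal $\tau$, and hence attainment of the infimum. That fact is used later, when $\tau_\xi$ is chosen in Step~1 of the proof of Theorem~\ref{t:limsupndim}. Your restriction to $\tau\in[0,T_M]$ on $\{|\xi|\le M\}$ recovers exactly this, so nothing is lost.
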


\begin{proof}
We fix $\sigma_0 \in (0,\infty)$ and first verify the continuity of $h_{\sigma_0}$. Let $\xi_0 \in \mathbb R^{N \times n}$ and consider an arbitrary sequence $\xi_k \to \xi_0$ as $k \to \infty$. For each index $k$, we can select $\tau_k \in [0,\infty)$ such that
\begin{equation}\label{e:prima eq prop di h}
\varphi\left(\tfrac{1}{\tau_k}\right)\Phi(\xi_k) + \tfrac{\sigma_0^2}{4}\tau_k - \tfrac{1}{k} < h_{\sigma_0}(\xi_k).
\end{equation}
Since $\sup_{k \in \mathbb N} \Phi(\xi_k) < \infty$ and $\Phi$ is continuous, definition \eqref{eq:def-h-varsigma} ensures that the sequence $(\tau_k)$ is bounded, i.e., $\limsup_{k \to \infty} \tau_k < \infty$. Hence, we may extract a convergent subsequence $(\tau_{k_j})$ such that $\tau_{k_j} \to \tau_0$ for some $\tau_0 \in [0,\infty)$. Relying on the continuity of both $\varphi$ and $\Phi$, passing to the limit in \eqref{e:prima eq prop di h} yields
\begin{equation*}
h_{\sigma_0}(\xi_0) \leq \varphi\left(\frac{1}{\tau_0}\right)\Phi(\xi_0) + \frac{\sigma_0^2}{4}\tau_0
\leq \liminf_{j \to \infty} h_{\sigma_0}(\xi_{k_j}).
\end{equation*}
Since the point $\xi_0$ and the converging sequence $(\xi_k)$ were chosen arbitrarily, we conclude that $h_{\sigma_0}$ is lower semicontinuous. The upper semicontinuity is a direct consequence of the definition of $h_{\sigma_0}$ as an infimum of continuous functions and the continuity of $\Phi$.

Now consider $\sigma_0, \sigma_1 \in (0,\infty)$ with $\sigma_0 \leq \sigma_1$. The first inequality in \eqref{e:la h al variare di sigma} follows trivially from the definition \eqref{eq:def-h-varsigma}. To verify the second, fix any $\xi \in \mathbb R^{N \times n}$. By the continuity of $\varphi$, the infimum is attained, so there exists $\tau_{\sigma_0,\xi} \in [0,\infty)$ satisfying
\begin{equation*}
h_{\sigma_0}(\xi) = \varphi\left(\frac{1}{\tau_{\sigma_0,\xi}}\right)\Phi(\xi) + \frac{\sigma_0^2}{4}\tau_{\sigma_0,\xi}.
\end{equation*}
Exploiting the fact that $(\sigma_1/\sigma_0)^2 \geq 1$, we can easily deduce
\begin{equation*}
h_{\sigma_1}(\xi) \leq \varphi\left(\frac{1}{\tau_{\sigma_0,\xi}}\right)\Phi(\xi) + \frac{\sigma_1^2}{4}\tau_{\sigma_0,\xi}
\leq \left(\frac{\sigma_1}{\sigma_0}\right)^2 h_{\sigma_0}(\xi),
\end{equation*}
which validates the claimed inequality.
\end{proof}

The following result shows that the quadratic growth of $\Phi$ translates into linear bounds for the effective bulk density $h_{\varsigma}$, its quasiconvex envelope $h_{\varsigma}^{\qc}$, and the associated recession function $h_{\varsigma}^{\qc,\infty}$.
\begin{proposition}\label{prop:bulk-growth-section2}
Assume \textnormal{(H3)}-\textnormal{(H5)} and let $\sigma\in (0,\infty)$. Then there exists a positive constant $K_{\sigma}>0$ such that the following growth estimates hold:
\begin{equation}\label{e:bulk-growth-h}
 \Big(\frac1{K_{\sigma}}|\xi|-K_{\sigma}\Big)\vee 0
 \le h_{\sigma}^{\qc}(\xi)
 \le h_{\sigma}(\xi)
 \le K_{\sigma}(1+|\xi|)
 \qquad\text{for all }\xi\in \mathbb R^{N\times n},
\end{equation}
and correspondingly for the recession function,
\begin{equation}\label{e:bulk-growth-hrecession}
 \frac1{K_{\sigma}}|\xi|\le h_{\sigma}^{\qc,\infty}(\xi)\le K_{\sigma}|\xi|
 \qquad\text{for all }\xi\in \mathbb R^{N\times n}.
\end{equation}
\end{proposition}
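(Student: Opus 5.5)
The plan is to first prove two-sided linear bounds for $h_\sigma$ directly from its definition \eqref{eq:def-h-varsigma}, then pass them to the quasiconvex envelope, and finally to the recession function.

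\emph{Upper bound on $h_\sigma$.} For $\tau>0$ we use $\varphi(1/\tau)\le\varphi'(0)/\tau$ up to a constant. By (H3)--(H4), $\varphi$ is bounded and right-differentiable at $0$ with $\varphi(0)=0$. Hence $\varphi(t)\le c_\varphi t$ for all $t\ge0$ with some $c_\varphi<\infty$: near $0$ this is the derivative, and away from $0$ it follows from boundedness. Combining this with \eqref{eq:growth-Phi}, we get
\[
h_\sigma(\xi)\le c_\varphi C_\Phi(1+|\xi|^2)\tfrac1\tau+\tfrac{\sigma^2}{4}\tau .
\]
Choosing $\tau=(1+|\xi|^2)^{1/2}$ gives $h_\sigma(\xi)\le C(1+|\xi|^2)^{1/2}\le C(1+|\xi|)$. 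The inequality $h_\sigma^{\qc}\le h_\sigma$ holds by definition of the envelope.

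\emph{Lower bound.} We need the reverse inequality $\varphi(t)\ge c\min\{t,1\}$. Since $\varphi$ is nondecreasing, $\varphi>0$ on $(0,\infty)$ by (H4), and $\varphi'(0)>0$, we have $\varphi(t)\ge c\,t$ on $[0,1]$ and $\varphi(t)\ge\varphi(1)$ for $t\ge1$. Then for $\tau\ge1$,
\[
\varphi(1/\tau)\Phi(\xi)+\tfrac{\sigma^2}{4}\tau\ge \tfrac{c}{\tau}\Phi(\xi)+\tfrac{\sigma^2}{4}\tau\ge \sigma\sqrt{c\,\Phi(\xi)},
\]
and for $\tau\le1$ the expression is at least $\varphi(1)\Phi(\xi)$. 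Together with \eqref{eq:growth-Phi} this yields $h_\sigma(\xi)\ge \min\{\sigma\sqrt{c\Phi(\xi)},\varphi(1)\Phi(\xi)\}\ge (|\xi|/K-K)\vee0$.

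\emph{Transfer to $h_\sigma^{\qc}$.} Here I expect the only subtle point. The function $\xi\mapsto(|\xi|/K-K)\vee0$ is convex, hence quasiconvex, and lies below $h_\sigma$. Since $h_\sigma$ is continuous (Proposition~\ref{p:proprieta hsigma}) and finite, $h_\sigma^{\qc}$ is the largest quasiconvex function below $h_\sigma$, equivalently given by Dacorogna's formula. Therefore $h_\sigma^{\qc}$ dominates this convex minorant, which is the lower bound in \eqref{e:bulk-growth-h}. If one prefers not to invoke the envelope characterization, Jensen's inequality can be applied directly in the formula $h^{\qc}(\xi)=\inf_\phi\fint h(\xi+\nabla\phi)$.

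\emph{Recession function.} Dividing \eqref{e:bulk-growth-h} evaluated at $t\xi$ by $t$ and taking $\limsup_{t\to\infty}$ gives
\[
\tfrac1K|\xi|\le h_\sigma^{\qcinfty}(\xi)\le K|\xi| ,
\]
since the additive constants vanish in the limit. This is \eqref{e:bulk-growth-hrecession}, and enlarging $K$ we set $K_\sigma:=K$.
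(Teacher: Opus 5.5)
Your proof is correct. It follows the same skeleton as the paper's: two-sided bounds on $h_\sigma$, a convex minorant to pass to $h_\sigma^{\qc}$, then scaling by $t$ for the recession function. The difference is that you prove the scalar growth estimates by hand, whereas the paper imports them.

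The paper replaces $\Phi(\xi)$ by $|\xi|^2/C_\Phi$ and by $C_\Phi|\xi|^2$. This defines auxiliary scalar functions $\tilde h_{\textup{scal}}$ and $\hat h_{\textup{scal}}$ satisfying $\tilde h_{\textup{scal}}(|\xi|)\le h_\sigma(\xi)+C_\Phi\varphi(\infty)$ and $h_\sigma(\xi)\le \hat h_{\textup{scal}}(|\xi|)+C\varphi(\infty)$. It then cites \cite[Proposition 2.1, item ii]{ACF26} for the linear growth of $\tilde h^{**}_{\textup{scal}}$ and $\hat h_{\textup{scal}}$. Its lower bound for $h_\sigma^{\qc}$ comes from the radial convex minorant $(\tilde h^{**}_{\textup{scal}}(|\xi|)-C_\Phi\varphi(\infty))\vee 0$.

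You instead work directly from (H3)--(H4):
\begin{itemize}
\item you derive $c\min\{t,1\}\le\varphi(t)\le c_\varphi t$;
\item for the lower bound you use AM--GM on $\tau\ge1$ and monotonicity on $\tau\le1$;
\item for the upper bound you choose $\tau=(1+|\xi|^2)^{1/2}$ explicitly;
\item you write the convex minorant $(|\xi|/K-K)\vee0$ down explicitly.
\end{itemize}

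Your route is self-contained and makes the dependence of $K_\sigma$ on $\sigma$, $C_\Phi$, $\varphi'(0)$, $\varphi(1)$ and $\varphi(\infty)$ transparent. It also avoids a step the paper leaves implicit: that $\xi\mapsto\tilde h^{**}_{\textup{scal}}(|\xi|)$ is convex, which holds because $\tilde h^{**}_{\textup{scal}}$ is even and convex, hence nondecreasing on $[0,\infty)$. The paper's version is shorter but rests on the external scalar result.
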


\begin{proof}
Let us introduce the auxiliary scalar function defined for $t\in \mathbb R$ as
\begin{equation*}
\tilde h_{\textup{scal}}(t):=\inf_{\tau\in [0,\infty)}
\left\{\varphi\left(\frac1\tau\right)\frac{1}{C_\Phi}t^2 +\frac{\sigma^2}{4}\tau\right\},
\end{equation*}
where $C_\Phi$ is the coercivity constant appearing in \eqref{eq:growth-Phi}. From this definition, it immediately follows that
\begin{equation*}
\tilde h_{\textup{scal}}(|\xi|)\leq h_{\sigma}(\xi)+C_\Phi\,\varphi(\infty)
\qquad\text{for every }\xi\in \mathbb R^{N\times n}.
\end{equation*}
Invoking \cite[Proposition 2.1, item ii]{ACF26}, we can find a sufficiently large constant $K_{\sigma}$ such that
\begin{equation*}
    \Big(\frac1{K_{\sigma}}|\xi|-K_{\sigma}\Big)\lor 0
 \le  ({\tilde h}^{**}_{\textup{scal}}(|\xi|)-C_\Phi\varphi(\infty))\lor 0 \leq h_{\sigma}^{\qc}(\xi),
\end{equation*}
where $\tilde h^{**}_{\textup{scal}}$ denotes the convex envelope of $\tilde h_{\textup{scal}}$. In a similar way, we define another scalar function to govern the upper bound:
\begin{equation*}
\hat h_{\textup{scal}}(t):=\inf_{\tau\in[0,\infty)}
\left\{\varphi\left(\frac1\tau\right)C_\Phi t^2 +\frac{\sigma^2}{4}\tau\right\},
\qquad t\in\mathbb R.
\end{equation*}
Thanks to \eqref{eq:growth-Phi}, we trivially have $h_{\sigma}(\xi)\leq \hat h_{\textup{scal}}(|\xi|)+C\varphi(\infty)$ for every $\xi\in\mathbb R^{N\times n}$. Furthermore, applying again \cite[Proposition 2.1, item ii]{ACF26}, we infer that $\hat h_{\textup{scal}}(|\xi|)\leq K_\sigma\bigl(|\xi|+1\bigr)$ for all $\xi\in\mathbb R^{N\times n}$, slightly enlarging the constant $K_{\sigma}$ if necessary. This establishes the third inequality in \eqref{e:bulk-growth-h}. 

Finally, combining these growth conditions with the definition of the recession function $h^{\qc,\infty}_\sigma$ stated in \eqref{eq:def-h-recession}, we deduce \eqref{e:bulk-growth-hrecession}, completing the proof.
\end{proof}

For later use it will be useful the following notation

\begin{equation}\label{e:h norm generic}
 h_{\varsigma}(\xi)
 =
 h_{\varsigma}^{\textup{scal}}(\Phi(\xi)^{\frac12}),
 \qquad
h_{\varsigma}^{\textup{scal}}(t)
:=
\inf_{\tau\in(0,\infty)}
\left\{
\varphi\left(\frac1\tau\right)t^2
+
\frac{\varsigma^2}{4}\tau
\right\}.
\end{equation}
The following proposition provides a sufficient criterion ensuring that the effective bulk density $h_\varsigma$ is already convex, so that no quasiconvexification is needed (i.e., $h_\varsigma^{\qc}=h_\varsigma$), for the limit functional $\Fm_\varsigma$ in \eqref{eq:def-limit-functional}.
\begin{proposition}\label{prop:no-qc-isotropic}
Assume \textnormal{(H1)}--\textnormal{(H4)} with
$\varsigma\in(0,\infty)$, and suppose that $\Phi^{1/2}$ is convex.
Let
\begin{equation*}
r(\tau):=\varphi(1/\tau)
\quad\text{for }\tau>0,
\qquad
r(0):=\varphi(\infty),
\end{equation*}
and assume that $q:=\frac1r$ is concave, then $h_\varsigma$ is convex.
\end{proposition}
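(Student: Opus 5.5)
The plan is to reduce the convexity of $h_\varsigma$ to a convexity property of the scalar function $h^{\textup{scal}}_\varsigma$ from \eqref{e:h norm generic}, using the representation $h_\varsigma(\xi)=h^{\textup{scal}}_\varsigma(\Phi(\xi)^{1/2})$. If $h^{\textup{scal}}_\varsigma$ is convex and nondecreasing on $[0,\infty)$, then composing it with the nonnegative convex function $\Phi^{1/2}$ gives a convex function. Indeed, for $\xi_1,\xi_2$ and $\lambda\in[0,1]$, convexity of $\Phi^{1/2}$ gives $\Phi^{1/2}(\lambda\xi_1+(1-\lambda)\xi_2)\le\lambda\Phi^{1/2}(\xi_1)+(1-\lambda)\Phi^{1/2}(\xi_2)$. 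Monotonicity of $h^{\textup{scal}}_\varsigma$ transfers this inequality, and convexity of $h^{\textup{scal}}_\varsigma$ then splits the right-hand side. Monotonicity is immediate, since for each fixed $\tau$ the map $t\mapsto \varphi(1/\tau)t^2+\varsigma^2\tau/4$ is nondecreasing on $[0,\infty)$, and an infimum of nondecreasing functions is nondecreasing.

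For convexity of $h^{\textup{scal}}_\varsigma$, I will rewrite it as
\[
h^{\textup{scal}}_\varsigma(t)=\inf_{\tau\in[0,\infty)}F(t,\tau),\qquad F(t,\tau):=\frac{t^2}{q(\tau)}+\frac{\varsigma^2}{4}\tau,
\]
using $r=1/q$. Two preliminary checks are needed here:

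\begin{itemize}
\item The infimum over $(0,\infty)$ in \eqref{e:h norm generic} coincides with the one over $[0,\infty)$, by continuity of $\varphi$ and the convention $r(0)=\varphi(\infty)$.
\item $q$ is finite and strictly positive on $[0,\infty)$. This holds because $\varphi$ is bounded, $\varphi(\infty)>0$, and $\varphi(1/\tau)>0$ for $\tau<\infty$ by \textup{(H4)}.
\end{itemize}

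The standard fact I will then invoke is that the partial infimum in one variable of a jointly convex function is convex. Since the term $\varsigma^2\tau/4$ is linear, it suffices to show that $(t,\tau)\mapsto t^2/q(\tau)$ is jointly convex on $\R\times[0,\infty)$.

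This joint convexity is the main point. I plan to use the representation
\[
\frac{t^2}{s}=\sup_{a\in\R}\bigl(2at-a^2 s\bigr),\qquad s>0.
\]
Substituting $s=q(\tau)$ gives $t^2/q(\tau)=\sup_a\bigl(2at-a^2q(\tau)\bigr)$. For each fixed $a$, the function $(t,\tau)\mapsto 2at-a^2q(\tau)$ is convex, because it is linear in $t$ plus $-a^2$ times the concave function $q$. A supremum of convex functions is convex, which gives the claim.

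Alternatively, I can argue by composition: $(t,s)\mapsto t^2/s$ is jointly convex and nonincreasing in $s$, and $q$ is concave. Either route closes the argument, and once joint convexity is in place the remaining steps are routine.
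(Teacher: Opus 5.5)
Your proposal is correct and follows the paper's proof essentially step for step. It uses the monotonicity of $h^{\textup{scal}}_\varsigma$, the joint convexity of $(t,\tau)\mapsto t^2/q(\tau)$ via $\sup_{a\in\R}\bigl(2at-a^2q(\tau)\bigr)$, the convexity of the partial infimum in $\tau$, and the composition with the convex function $\Phi^{1/2}$; your preliminary checks (that $q$ is finite and strictly positive, and that the infima over $(0,\infty)$ and $[0,\infty)$ agree) simply make explicit points the paper leaves implicit.
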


\begin{proof}
By definition \eqref{e:h norm generic} we have that $h_\varsigma^{\mathrm{scal}}$ is non-decreasing. Moreover, from the identity 
\begin{equation*}
    2zt-z^2q(\tau)= \frac{t^2}{q(\tau)}- q(\tau)\left(z-\frac{t}{q(\tau)}\right)^2,
\end{equation*}
and the positivity of $q$, we deduce the following
\[
r(\tau)t^2
=
\frac{t^2}{q(\tau)}
=
\sup_{z\in\mathbb R}
\bigl\{2zt-z^2q(\tau)\bigr\}
\]
showing the jointly convexity of $(t,\tau)\mapsto r(\tau)t^2$.
Thus the map $(t,\tau)\longmapsto
r(\tau)t^2+\frac{\varsigma^2}{4}\tau$ is jointly convex as well, and its partial infimum with respect
to $\tau\in (0,\infty)$ is convex. Finally, from the assumption on $\Phi^{\frac12}$ and the monotonicity of $h_\varsigma^{\mathrm{scal}}$ we obtain the convexity of $h_\varsigma$.
\end{proof}

\begin{remark}
The hyperbolic choice (cf. \cite{Wu17}) $\varphi(t)=\frac{t}{1+t}$ perfectly matches the setting of Proposition \ref{prop:no-qc-isotropic}, it corresponds to
\[
q(\tau):=\frac1{r(\tau)}=1+\tau.
\]
 More generally, let $q:[0,\infty)\to[1,\infty)$ be a continuous, strictly increasing and concave function such that 
\[
\lim_{\tau\to\infty}\frac{q(\tau)}{\tau}=:a
\in(0,\infty).
\]
Then, the associated degradation function $\varphi(t)=\frac{1}{q(1/t)}$, satisfies the corresponding properties
\textnormal{(H3)}--\textnormal{(H4)}, with
\[
\varphi(\infty)=(q(0))^{-1},
\qquad
\varphi'(0)=a^{-1},
\]
and therefore Proposition~\ref{prop:no-qc-isotropic} applies.
\end{remark}

Next, we state a truncation result following \cite[Lemma~4.4]{CFI24}, whose proof is entirely analogous to \cite[Lemma 2.3]{ACF26}.

\begin{lemma}\label{l:suppenergy}
Assume (\textup{H1}), (\textup{H2}) with $\varsigma\in (0,\infty)$. In addition, let
$\varphi:[0,\infty)\to[0,\infty)$ be non-decreasing and
$\varphi^{-1}(\{0\})=\{0\}$.
Then there exist two functions
$\zeta_1,\zeta_2:(0,1)\to (0,\infty)$ with
\begin{equation}\label{e:limittruncat}
\lim_{\delta \to 0}\zeta_1(\delta)=1
\qquad\textup{and}\qquad
\lim_{\delta \to 0}\zeta_2(\delta)=0
\end{equation}
satisfying the following property.
For every $\eta\in(0,1\wedge \varsigma)$ there exists
$\delta_\eta\in(0,1)$ such that, for every
$\delta\in(0,\delta_\eta)$,
$(u,v)\in H^1\cap L^{\infty}
(\Omega,\R^N\times[0,1])$, and
$A\in \mathcal{A}(\Omega)$, there is
$s_\delta\in(1-\delta,1-\delta^2)$ such that
\[
u^{\delta}:=u\,\chi_{\{v>s_\delta\}}
\in SBV(A,\R^N)
\]
(the previous quantities actually depend also on $\eta$ and $A$) and
\begin{equation}\label{e:stimaprincipal}
H_{\delta}(u^{\delta},A)
\leq
\Fm_\eps(u,v,A)
+
h_{\varsigma_\eta}(0)
\calL^n(A\cap\{v\leq\delta\}),
\end{equation}
where $\varsigma_\eta:=(\varsigma-\eta)(1-\eta)$ and
$H_{\delta}:L^{1}(\Omega,\R^N)\times
\mathcal{A}(\Omega)\to[0,\infty]$
is defined by
\begin{equation}\label{e:Haccagrand sigma finito}
H_{\delta}(w,A)
=
\zeta_1(\delta)
\int_A h^{\qc}_{\varsigma_\eta}(\nabla w)\,\diff x
+
\zeta_2(\delta)
\mathcal{H}^{n-1}(A\cap J_w)
\end{equation}
if $w\in SBV(A,\R^N)$, and $\infty$ otherwise.

Moreover, if $u_{\eps}\to u$ in
$L^1(\Omega,\R^N)$ and
$v_\eps\to1$ in measure on $\Omega$, then
$u_{\eps}^{\delta}\to u$ in
$L^1(A,\R^N)$ as $\eps\to0$ for every
$A\in\mathcal{A}(\Omega)$ and $\delta\in(0,1)$.
\end{lemma}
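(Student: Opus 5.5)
The proof follows the coarea and truncation strategy of \cite[Lemma~4.4]{CFI24} and \cite[Lemma~2.3]{ACF26}. The anisotropy is handled by \eqref{e:gamma lower euclidean suppenergy}, which only alters constants in $\zeta_2$.

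\emph{Splitting the domain.} Fix $\eta$. By \eqref{eq:def-varsigma} and continuity, there is $\delta_\eta$ such that
\[
\omega(1-t)\ge(\varsigma-\eta)^2Q(1-t)
\qquad\text{for } t\in[1-\delta_\eta,1].
\]
Given $\delta<\delta_\eta$, split $A$ into three regions:
\[
A\cap\{v\le\delta\},\qquad A\cap\{\delta<v\le 1-\delta\},\qquad A\cap\{v>1-\delta\}.
\]
\begin{itemize}
\item On $\{v\le\delta\}$ we set $u^\delta=0$. This costs at most $h_{\varsigma_\eta}(0)\calL^n$, since $h^\qc_{\varsigma_\eta}(0)\le h_{\varsigma_\eta}(0)$, which is the error term in \eqref{e:stimaprincipal}.
\item On $\{v>s\}$ with $s\in(1-\delta,1-\delta^2)$ we keep $u$. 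There we must show that the integrand dominates $\zeta_1(\delta)h_{\varsigma_\eta}(\nabla u)$. With $\tau=\eps f^2(v)$ we have
\[
\frac{\omega(1-v)}{4\eps}\ge\frac{(\varsigma-\eta)^2 Q(1-v)}{4\eps}.
\]
Using $f^2(v)=l(v)/Q(1-v)$ and $l(v)\to1$ as $v\to1$ (from (H1)), we get $l(v)\ge(1-\eta)^2$ for $v$ close to $1$. Hence
\[
\frac{\omega(1-v)}{4\eps}\ge\frac{\varsigma_\eta^2}{4}\cdot\frac{1}{\eps f^2(v)}.
\]
Here we shrink $\delta_\eta$ if needed. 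So $\varphi(\eps f^2)\Phi+\omega/(4\eps)\ge h_{\varsigma_\eta}(\nabla u)$ pointwise, with the variable $1/\tau$ in the definition of $h$. Since $h^\qc\le h$, this gives the bulk part with $\zeta_1=1$. One may take $\zeta_1(\delta)\to1$ with slack to absorb the $l$-dependence on $\delta$.
\end{itemize}

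\emph{Choosing the level $s_\delta$.} The level is chosen by the coarea formula combined with the Modica--Mortola inequality on the transition region $\{1-\delta<v<1-\delta^2\}$:
\[
\frac{\omega(1-v)}{4\eps}+\eps\gamma(\nabla v)^2\ge c_\gamma\,\omega(1-v)^{1/2}|\nabla v|.
\]
Integrating, we obtain
\[
\int_{1-\delta}^{1-\delta^2}\omega(1-t)^{1/2}\,\calH^{n-1}(A\cap\partial^*\{v>t\})\,\diff t
\le\frac{1}{c_\gamma}\,\Fm_\eps(u,v;\{1-\delta<v<1-\delta^2\}\cap A).
\]
Since $\omega$ is nondecreasing, $\omega(1-t)^{1/2}\ge\omega(\delta^2)^{1/2}$ on this interval. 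Hence there is an $s_\delta$ with
\[
\calH^{n-1}(A\cap\partial^*\{v>s_\delta\})\le\frac{\Fm_\eps(\text{transition})}{c_\gamma\,\omega(\delta^2)^{1/2}(\delta-\delta^2)}.
\]
This is the wrong direction: it gives a large constant. We want the transition energy to pay for $\zeta_2\calH^{n-1}$, so we take
\[
\zeta_2(\delta):=c_\gamma\,\omega(\delta^2)^{1/2}(\delta-\delta^2)\to0.
\]
Then $\zeta_2\calH^{n-1}(J_{u^\delta})$ is bounded by the energy of the transition layer, which is disjoint from $\{v>1-\delta^2\}$.

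\emph{Avoiding double counting.} The energies of $\{v>s_\delta\}$ and of the layer overlap on $\{s_\delta<v<1-\delta^2\}$. The fix is to split the Modica--Mortola term only over $t\in(1-\delta,s_\delta)$, i.e.\ to choose $s_\delta$ in the upper half of the interval while integrating the coarea over the lower half. Equivalently, choose $s_\delta\in(1-\delta,1-\delta^2)$ and bound $\calH^{n-1}(\partial^*\{v>s_\delta\})$ through the mean over $(1-\delta,s_\delta)$, using only $\{v<s_\delta\}$. This is the main bookkeeping step; it is exactly as in the cited lemmas.

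\emph{Regularity and convergence.} We have $u^\delta\in SBV$ since $u\in H^1\cap L^\infty$ and $\{v>s_\delta\}$ has finite perimeter, with $J_{u^\delta}\subset\partial^*\{v>s_\delta\}$. Finally, $\|u_\eps-u_\eps^\delta\|_{L^1(A)}\le\int_{\{v_\eps\le 1-\delta^2\}}|u_\eps|$, which tends to $0$ by the equi-integrability of the $L^1$-convergent sequence $u_\eps$ and by $\calL^n(\{v_\eps\le1-\delta^2\})\to0$.
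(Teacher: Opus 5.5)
Your overall scheme is the right one: the choice $\tau=1/(\eps f^2(v))$ on $\{v>s_\delta\}$, Modica--Mortola plus coarea on $\{1-\delta<v<1-\delta^2\}$, a mean-value choice of $s_\delta$, and your $SBV$ and $L^1$-convergence arguments are all fine. The step you flag as the main bookkeeping step, however, fails.

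\textbf{The double counting is not removed.} An averaging argument only produces a good level \emph{inside} the range of $t$ over which you integrate in the coarea formula. A small mean of $t\mapsto\mathcal{H}^{n-1}(A\cap\partial^*\{v>t\})$ on $(1-\delta,s_\delta)$ gives no control of the perimeter at the level $s_\delta$ itself, because this function is only integrable, not monotone. If instead you take $s_\delta$ to be the good level inside the range, the potential on $\{s_\delta<v<1-\delta^2\}$ is counted twice. It is used in full in your bulk estimate, since the $\tau$-computation consumes all of $\omega(1-v)/(4\eps)$, and again in the Modica--Mortola inequality. No choice of level removes this overlap.

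What is missing is a splitting of the potential. Fix $\theta=\theta(\delta)\in(0,1)$ with $\theta(\delta)\to0$, e.g.\ $\theta=\delta$. Use
\[
\theta\,\frac{\omega(1-v)}{4\eps}+\eps\gamma(\nabla v)^2\ \ge\ \theta^{1/2}c_\gamma\,\omega(1-v)^{1/2}|\nabla v|
\]
for the coarea selection of $s_\delta$ over the whole interval $(1-\delta,1-\delta^2)$. Keep only $(1-\theta)\omega(1-v)/(4\eps)\ge(1-\theta)\frac{\varsigma_\eta^2}{4}\tau$ for the bulk on $\{v>s_\delta\}$, where your computation gives
\[
\varphi(1/\tau)\Phi(\nabla u)+(1-\theta)\frac{\varsigma_\eta^2}{4}\,\tau\ \ge\ (1-\theta)\Big(\varphi(1/\tau)\Phi(\nabla u)+\frac{\varsigma_\eta^2}{4}\,\tau\Big)\ \ge\ (1-\theta)\,h_{\varsigma_\eta}(\nabla u).
\]
This yields $\zeta_1(\delta)=1-\theta(\delta)$ and $\zeta_2(\delta)=c_\gamma\theta(\delta)^{1/2}\omega(\delta^2)^{1/2}(\delta-\delta^2)$. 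This is the natural role of the factor $\zeta_1(\delta)$ in the statement; your version ends with $\zeta_1\equiv1$, which is a symptom of the double counting.

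\textbf{The region $\{\delta<v\le s_\delta\}$ is never estimated.} Since $u^\delta=0$ on all of $\{v\le s_\delta\}$, not only on $\{v\le\delta\}$, $H_\delta(u^\delta,A)$ contains the term $\zeta_1(\delta)h^{\qc}_{\varsigma_\eta}(0)\,\mathcal{L}^n(A\cap\{\delta<v\le s_\delta\})$, which your three-region splitting never bounds. In the scalar case $\Phi(\xi)=|\xi|^2$ this term vanishes, but here it need not. For example, $\Phi(\xi)=1+|\xi|^2$ satisfies \textup{(H5)}, and then $h_{\varsigma_\eta}$, hence also $h^{\qc}_{\varsigma_\eta}$, is bounded below by a positive constant.

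After the split, this term can be paid by the remaining part of the potential on $\{v\le s_\delta\}$, namely $(1-\theta)\omega(1-v)/(4\eps)\ge(1-\theta)\omega(\delta^2)/(4\eps)$. That part is used neither by the bulk nor by the surface estimate, but it suffices only for $\eps\le\eps(\delta,\eta)$. This is enough for the application in Proposition~\ref{p:diffuseliminf sigmabarra finito vettoriale}, where $\delta$ is fixed and $\eps\to0$. Alternatively, let the error term run over a set containing $\{v\le s_\delta\}$, e.g.\ $\{v\le 1-\delta^2\}$, whose measure still vanishes when $v_\eps\to1$ in measure.
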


To reduce the analysis from $GBV$ to $BV \cap L^{\infty}$, we next prove that truncating the displacement field via the smooth operators $\mathcal{T}_k$ in~\eqref{e:Tk}
produces an arbitrarily small increase in the diffuse elastic energy.

\begin{lemma}\label{l:5.3}
Let $\eps>0$ and $S_\eps:H^1(A;\mathbb{R}^N) \times L^1(A;[0,1])\to [0,\infty]$ be the functional defined by
\begin{equation*}
    S_\eps(u,v) := \int_A \varphi(\eps f^2(v))\Phi(\nabla u)\,\mathrm{d}x.
\end{equation*}
where $A\in \mathcal{A}(\R^n)$. Then for every $M \in \mathbb{N}$ and $(u,v) \in H^1(A;\mathbb{R}^N) \times L^1(A;[0,1])$ there exists $k \in \{M + 1, \dots, 2M\}$ such that 
\begin{equation*}
    S_{\eps}(\mathcal{T}_k(u),v) \leq \left(1 + \frac{C_\Phi^2}{M}\right) S_{\eps}(u,v) + \varphi(\infty)(\Phi(0)+C_\Phi) \mathcal{L}^n(|u| > a_M).
\end{equation*}
\end{lemma}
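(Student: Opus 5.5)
The plan is the standard De Giorgi slicing argument in the codomain: split $A$ into level-set annuli of $|u|$, control the energy of $\mathcal{T}_k(u)$ on each piece, and then choose $k$ by averaging over the $M$ disjoint annuli $\{a_k<|u|\le a_{k+1}\}$, $k=M+1,\dots,2M$. Since $\mathcal{T}_k\in C^1$ with $\textup{Lip}(\mathcal{T}_k)\le1$, the chain rule gives $\mathcal{T}_k(u)\in H^1(A;\R^N)$ with $\nabla\mathcal{T}_k(u)=D\mathcal{T}_k(u)\nabla u$ a.e. and $|\nabla \mathcal{T}_k(u)|\le|\nabla u|$.

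For a fixed $k$, I will split $A$ into three sets and bound the integrand $\varphi(\eps f^2(v))\Phi(\nabla\mathcal{T}_k(u))$ on each.
\begin{enumerate}
\item On $\{|u|< a_k\}$ (an open condition), $D\mathcal{T}_k(u)=\Id$, so the integrand coincides with that of $S_\eps(u,v)$.
\item On $\{|u|>a_{k+1}\}$, $\mathcal{T}_k(u)=0$ and hence $\nabla\mathcal{T}_k(u)=0$. By (H3), $\varphi\le\varphi(\infty)$, so the integrand is at most $\varphi(\infty)\Phi(0)$. This set is contained in $\{|u|>a_M\}$.
\item On the level sets $\{|u|=a_k\}$ and $\{|u|=a_{k+1}\}$, $\nabla u=0$ a.e. (Stampacchia), so these sets contribute exactly as in the previous cases. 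In particular, on $\{|u|=a_k\}$ the contribution is $\varphi(\eps f^2(v))\Phi(0)=\varphi(\eps f^2(v))\Phi(\nabla u)$, already counted in $S_\eps(u,v)$.
\item On the annulus $E_k:=\{a_k<|u|<a_{k+1}\}$, I will use the upper bound in \eqref{eq:growth-Phi}, the Lipschitz bound, and the lower bound in \eqref{eq:growth-Phi} written as $|\xi|^2\le C_\Phi\Phi(\xi)+C_\Phi^2$:
\[
\Phi(\nabla\mathcal{T}_k(u))\le C_\Phi(1+|\nabla u|^2)\le C_\Phi^2\,\Phi(\nabla u)+C_\Phi+C_\Phi^3 .
\]
Multiplying by $\varphi(\eps f^2(v))\le\varphi(\infty)$, the integral over $E_k$ is bounded by $C_\Phi^2 S_\eps(u,v;E_k)+\varphi(\infty)(C_\Phi+C_\Phi^3)\mathcal{L}^n(E_k)$.
\end{enumerate}

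Summing the pieces gives, for every $k$,
\[
S_\eps(\mathcal{T}_k(u),v)\le S_\eps(u,v)+C_\Phi^2 S_\eps(u,v;E_k)+\varphi(\infty)\Phi(0)\mathcal{L}^n(|u|>a_M)+\varphi(\infty)(C_\Phi+C_\Phi^3)\mathcal{L}^n(E_k).
\]
The sets $E_{M+1},\dots,E_{2M}$ are pairwise disjoint and all contained in $\{|u|>a_M\}$. Averaging the right-hand side over these $M$ indices and choosing $k$ where the sum of the $E_k$-dependent terms is at most its average yields
\[
S_\eps(\mathcal{T}_k(u),v)\le\Big(1+\frac{C_\Phi^2}{M}\Big)S_\eps(u,v)+\varphi(\infty)\Big(\Phi(0)+\frac{C_\Phi+C_\Phi^3}{M}\Big)\mathcal{L}^n(|u|>a_M).
\]

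The main obstacle is matching the exact constant $\varphi(\infty)(\Phi(0)+C_\Phi)$ in the statement. The coefficient $(C_\Phi+C_\Phi^3)/M$ is at most $C_\Phi$ only when $M\ge 1+C_\Phi^2$. For the remaining small values of $M$ I would first try a sharper pointwise estimate on $E_k$, separating the regimes $|\nabla u|^2\ge C_\Phi^2$ and $|\nabla u|^2<C_\Phi^2$ before invoking the growth bounds. If that fails, I would simply record the constant $\varphi(\infty)(\Phi(0)+C_\Phi+C_\Phi^3)$, which depends only on $C_\Phi$. This is harmless for the later applications, where $M\to\infty$ and $\mathcal{L}^n(|u|>a_M)\to0$.
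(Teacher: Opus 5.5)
Your argument follows the paper's route: De Giorgi averaging over $k\in\{M+1,\dots,2M\}$, a split into $\{|u|<a_k\}$, an annulus and $\{|u|\ge a_{k+1}\}$, the bound $|\nabla\mathcal{T}_k(u)|\le|\nabla u|$, and both growth bounds in (H5). One step needs correcting, and your main obstacle turns out to be a defect of the statement rather than of your proof.

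\emph{Level sets.} Item (3) is false as stated when $N\ge2$. Stampacchia gives $\nabla|u|=0$ a.e.\ on $\{|u|=a_k\}$, not $\nabla u=0$; for example, $u=a_k(\cos x_1,\sin x_1)$ has $|u|\equiv a_k$ but $\nabla u\neq0$. The bookkeeping survives for a different reason. By continuity of $D\mathcal{T}_k$, one has $D\mathcal{T}_k=\Id$ on $\{|y|\le a_k\}$ and $D\mathcal{T}_k=0$ on $\{|y|\ge a_{k+1}\}$. Hence $\nabla\mathcal{T}_k(u)=\nabla u$ a.e.\ on $\{|u|=a_k\}$, so that set contributes $\varphi(\eps f^2(v))\Phi(\nabla u)$, which is already in $S_\eps(u,v)$ (it is not $\varphi(\eps f^2(v))\Phi(0)$ as you wrote). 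Likewise $\nabla\mathcal{T}_k(u)=0$ a.e.\ on $\{|u|=a_{k+1}\}$. Alternatively, use the half-open annuli $\{a_k\le|u|<a_{k+1}\}$ as the paper does; they are still pairwise disjoint and contained in $\{|u|>a_M\}$.

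\emph{The constant.} For small $M$ the displayed inequality is false, so no sharper pointwise estimate can rescue it.
\begin{itemize}
\item Take $N=1$ and $M=1$, so $k=2$ is forced. Since $\mathcal{T}_2$ drops from $a_2$ to $0$ on $[a_2,a_3]$ with $|\mathcal{T}_2'|\le1$, the set $E:=\{s\in(a_2,a_3):\mathcal{T}_2'(s)\le-\delta\}$, with $\delta:=\frac{a_2}{2(a_3-a_2)}$, satisfies $|E|\ge a_2/2$.
\item For $C\ge1$ set $\Phi:=(1-\psi)\bigl((|\xi|^2/C-C)\vee0\bigr)+\psi\,C(1+|\xi|^2)$. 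Here $\psi\in C_c(\R^n;[0,1])$ equals $1$ on the segment $\{-se_1:\delta C\le s\le C\}$ and vanishes at $0$ and at $Ce_1$. Then (H5) holds with $C_\Phi=C$, and $\Phi(0)=\Phi(Ce_1)=0$.
\item Let $u(x)=Cx_1$ on $A=(0,a_3/C)\times(0,1)^{n-1}$ and $v\equiv1$, using the convention $\varphi(\eps f^2(1))=\varphi(\infty)$. Then $S_\eps(u,v)=0$, and the right-hand side of the claim equals $\varphi(\infty)(a_3-a_1)$.
\item On $\{u\in E\}$ one has $\nabla\mathcal{T}_2(u)=-se_1$ with $s\in[\delta C,C]$. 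Hence $S_\eps(\mathcal{T}_2(u),v)\ge\varphi(\infty)(1+\delta^2C^2)\,a_2/2$, which exceeds the right-hand side for $C$ large.
\item Letting $u$ sweep through all the annuli, the same construction defeats any fixed $M$.
\end{itemize}

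The paper's proof reaches the stated constant only through an unjustified last step. There it averages $\int\varphi(\eps f^2(v))\Phi(\nabla u)$ alone, and then turns $C_\Phi\int_{\{a_k\le|u|<a_{k+1}\}}\varphi(\eps f^2(v))|\nabla u|^2\,\mathrm{d}x$ into $\frac{C_\Phi^2}{M}S_\eps(u,v)$. That requires $|\xi|^2\le C_\Phi\Phi(\xi)$, whereas (H5) only yields $|\xi|^2\le C_\Phi\Phi(\xi)+C_\Phi^2$. The dropped term $C_\Phi^3\varphi(\infty)\mathcal{L}^n(\{a_k\le|u|<a_{k+1}\})$ is exactly what your joint averaging of energy and annulus measure controls.

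So keep your bound with $\varphi(\infty)\bigl(\Phi(0)+\frac{C_\Phi+C_\Phi^3}{M}\bigr)$. It implies the displayed inequality for $M\ge1+C_\Phi^2$, and it is all the later applications need, since they let $M\to\infty$ with $\mathcal{L}^n(\{|u|>a_M\})\to0$.
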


\begin{proof}
Let us fix $M \in \mathbb{N}$ and $(u,v) \in H^1(A;\mathbb{R}^N) \times L^1(A;[0,1])$ with $S_\eps(u,v) < \infty$, otherwise the claim follows trivially. By averaging, there exists $k \in \{M + 1, \dots, 2M\}$ such that
\begin{equation}\label{e:5.11}
    \int_{A \cap \{a_k \leq |u| < a_{k+1}\}} \varphi(\eps f^2(v))\Phi(\nabla u)\,\mathrm{d}x \leq \frac{1}{M} S_\eps(u,v).
\end{equation}
By the properties of $GBV$ functions and the very definition of $\mathcal{T}_k$, we have that $\mathcal{T}_k(u)$ belongs to $BV \cap L^\infty(A;\mathbb{R}^N)$. Furthermore, being $\mathrm{Lip}(\mathcal{T}_k) \leq 1$, we can check for every $y,w \in \mathbb{R}^N$ with $|w| = 1$ that $|(\nabla \mathcal{T}_k(y))w| = |\partial_w \mathcal{T}_k(y)| \leq 1$ that provides $\|\nabla \mathcal{T}_k(y)\|_2 \leq 1$ for every $y \in \mathbb{R}^N$ (here $\|\cdot\|_2$ stands for the matrix norm on $\mathbb{R}^{N \times N}$ induced by $|\cdot|$ on $\mathbb{R}^N$) and consequently
\begin{equation}\label{e:5.12}
    |\nabla(\mathcal{T}_k(u))(x)| \leq |\nabla u(x)| \quad \text{for } \mathcal{L}^n\text{-a.e.\ } x \in A.
\end{equation}
In particular, in virtue of $\mathcal{T}_k(y)= y$ on $\{|y| < a_k\}$, $\mathcal{T}_k(y)= 0$ on $\{|y| \geq a_{k+1}\}$ and \eqref{eq:growth-Phi}, we obtain
\begin{align*}
    S_\eps(\mathcal{T}_k(u),v) &= \int_{A \cap \{|u| < a_k\}} \varphi(\eps f^2(v))\Phi(\nabla u)\,\mathrm{d}x 
    + \int_{A \cap \{a_k \leq |u| < a_{k+1}\}} \varphi(\eps f^2(v))\Phi(\nabla (\mathcal{T}_k(u)))\,\mathrm{d}x \\
    &\quad + \int_{A \cap \{|u| \geq a_{k+1}\}} \varphi(\eps f^2(v))\Phi(0)\,\mathrm{d}x \\
    &\leq S_\eps(u,v) + C_\Phi \int_{A \cap \{a_k \leq |u| < a_{k+1}\}} \varphi(\eps f^2(v))|\nabla(\mathcal{T}_k(u))|^2\,\mathrm{d}x + \varphi(\infty)(\Phi(0)+C_\Phi)\mathcal{L}^n(|u| \geq a_{k}) \\
    &\leq S_\eps(u,v) + C_\Phi\int_{A \cap \{a_k \leq |u| < a_{k+1}\}} \varphi(\eps f^2(v))|\nabla u|^2\,\mathrm{d}x + \varphi(\infty)(\Phi(0)+C_\Phi)\mathcal{L}^n(|u| \geq a_{k+1}) \\
    &\leq \left(1 + \frac{C_\Phi^2}{M}\right) S_\eps(u,v) + \varphi(\infty)(\Phi(0)+C_\Phi) \mathcal{L}^n(|u| > a_M),
\end{align*}
concluding the proof.
\end{proof}

\section{Properties of the surface energy density}\label{sec:surface-density}

In this section, inspired by \cite[Section 3]{CFI24}, we investigate the cohesive surface density $g$ introduced in \eqref{eq:def-g-gamma}. The first target is to reformulate the defining cell formula into alternative expressions that are more manageable to the derivation of the upper bound. Subsequently, we analyze the fundamental properties of $g$, including its growth bounds, subadditivity, continuity, its asymptotic behavior in the small-jump regime, as well as its scalar reduction.

We will frequently rely on the following comparison property.

\begin{lemma}\label{l:quasi monotonia funzione di degradamento}
Assume \textnormal{(H1)}, \textnormal{(H3)}, and \textnormal{(H4)}. Then, there exists a constant $K\ge 1$ such that, for every $\eps>0$ and every pair $t_1,t_2\in [0,1)$ satisfying $t_1\le t_2$, we have
\begin{equation}\label{eq:comparison-f-phi-surface}
 f^2(t_1)\le K f^2(t_2),
 \qquad
 \varphi\big(\eps f^2(t_1)\big)\le K\varphi\big(\eps f^2(t_2)\big).
\end{equation}
\end{lemma}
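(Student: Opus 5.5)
The plan is to prove the two inequalities in \eqref{eq:comparison-f-phi-surface} one after the other, each time using monotonicity where it is available and compactness elsewhere. The second inequality will follow from the first together with a "doubling-type'' property of $\varphi$.

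\emph{Step 1: the bound for $f^2$.} By (H1), $f$ is continuous on $[0,1)$ and $f(0)=0$, since $l(0)=0$ and $Q(1)>0$. Moreover $f(t)>0$ for $t\in(0,1)$, because $l(t)>0$ and $Q(1-t)>0$ there. Choose $0<a<b<1$ such that $f$ is nondecreasing on $[0,a]$ and on $[b,1)$. On the compact interval $[a,b]$ the continuous positive function $f$ attains a minimum $m>0$ and a maximum $M\ge m$. Set $K_1:=M^2/m^2\ge1$ and fix $t_1\le t_2$ in $[0,1)$. We split into cases according to the position of $t_2$.
\begin{itemize}
\item If $t_2\le a$, then monotonicity on $[0,a]$ gives $f(t_1)\le f(t_2)$.
\item If $t_2\in[a,b]$, then $f(t_1)\le M$. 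Indeed, either $t_1\in[a,b]$, or $t_1<a$ and then $f(t_1)\le f(a)\le M$. Since also $f(t_2)\ge m$, we get $f^2(t_1)\le K_1f^2(t_2)$.
\item If $t_2\ge b$ and $t_1\ge b$, the inequality follows from monotonicity on $[b,1)$.
\item If $t_2\ge b$ and $t_1<b$, then as before $f(t_1)\le M$, while $f(t_2)\ge f(b)\ge m$.
\end{itemize}
In all cases $f^2(t_1)\le K_1 f^2(t_2)$.

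\emph{Step 2: a doubling-type bound for $\varphi$.} The aim is to show that
\[
K_2:=\sup_{s>0}\frac{\varphi(K_1 s)}{\varphi(s)}<\infty .
\]
By (H4), $\varphi(s)>0$ for $s>0$, so the ratio is a continuous positive function on $(0,\infty)$.
\begin{itemize}
\item As $s\downarrow0$, right-differentiability with $\varphi'(0)\in(0,\infty)$ gives $\varphi(s)=\varphi'(0)s+o(s)$. Hence the ratio tends to $K_1$, and it is bounded on some interval $(0,s_0]$.
\item For $s\ge s_0$, (H3) gives $\varphi(K_1s)\le\varphi(\infty)$, while monotonicity gives $\varphi(s)\ge\varphi(s_0)>0$. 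So the ratio is at most $\varphi(\infty)/\varphi(s_0)$.
\end{itemize}
This yields $K_2<\infty$. Since $\varphi$ is nondecreasing, Step 1 then gives
\[
\varphi(\eps f^2(t_1))\le\varphi(\eps K_1 f^2(t_2))\le K_2\varphi(\eps f^2(t_2))
\]
for every $\eps>0$. If $f(t_2)=0$, then $t_2=0$, so $t_1=0$ and the inequality is trivial.

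Finally, taking $K:=\max\{K_1,K_2,1\}$ gives both inequalities in \eqref{eq:comparison-f-phi-surface}. The only delicate point is Step 2 near $s=0$: boundedness of the ratio there is exactly where the hypothesis $\varphi'(0)\in(0,\infty)$ is used. Without it, a $\varphi$ with degenerate behaviour at the origin could make the ratio blow up.
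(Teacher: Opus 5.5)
Your proof is correct and takes essentially the same route as the paper. You first use monotonicity near $0$ and near $1$ together with compactness on $[a,b]$ to get $f^2(t_1)\le K_1 f^2(t_2)$, and then show that the ratio $\varphi(K_1 s)/\varphi(s)$ is bounded on $(0,\infty)$, using that it tends to $K_1$ as $s\downarrow 0$. The only differences are cosmetic: the paper bounds the ratio for large $s$ through its limit $1$ at infinity rather than your cruder bound $\varphi(\infty)/\varphi(s_0)$, and it leaves implicit the case analysis that you write out.
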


\begin{proof}
By assumption \textnormal{(H1)}, there exist $0<r_1<r_2<1$ such that $f$ is nondecreasing on both $[0,r_1]$ and $[r_2,1)$. Since $f$ is continuous and strictly positive on $[r_1,r_2]$, one can find a constant $c_0\ge 1$ guaranteeing $f^2(t_1)\le c_0 f^2(t_2)$ whenever $t_1\le t_2$.

We then introduce $\widehat\varphi(t):=\varphi(c_0t)/\varphi(t)$ for $t>0$. Owing to \textnormal{(H3)} and \textnormal{(H4)}, $\widehat\varphi$ is continuous on $(0,\infty)$ and exhibits the limits
\[
 \lim_{t\downarrow 0}\widehat\varphi(t)=c_0,
 \qquad
 \lim_{t\to\infty}\widehat\varphi(t)=1.
\]
Consequently, $c_1:=\sup_{t>0}\widehat\varphi(t)<\infty$, which implies $\varphi(c_0 t)\le c_1\varphi(t)$ for all $t\ge 0$. The second inequality in \eqref{eq:comparison-f-phi-surface} is thus an immediate consequence. 
\end{proof}

\subsection{Alternative formulations of $g$}\label{subsec:g-equivalent}

We begin by establishing a straightforward uniform upper bound for the surface density.

\begin{lemma}\label{l:g limitata}
Under the assumptions of Theorem~\ref{thm:general-anisotropic-gamma-convergence}, there exists a constant $L>0$ such that
\begin{equation}\label{eq:g-uniform-bound}
 g(\zeta,\nu)\le L
 \qquad\text{for all }(\zeta,\nu)\in \mathbb R^N\times \mathbb S^{n-1}.
\end{equation}
\end{lemma}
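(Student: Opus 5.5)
The plan is to build, for every $(\zeta,\nu)$, one explicit competitor for \eqref{eq:def-g-gamma}. It uses a fully broken phase-field profile, so that the elastic term vanishes identically and the energy does not depend on $\zeta$ at all. Fix a sequence $\eps_j\downarrow0$. I work in coordinates $y=(y',t)$ with $t=y\cdot\nu$, so that $Q^\nu=\{|y'\text{-components}|<1/2,\ |t|<1/2\}$ after rotation. I choose the profile as a function of $t$ only:
\[
v_j(y):=\begin{cases}0,&|t|\le \eps_j^2,\\ \min\Big\{1,\dfrac{|t|-\eps_j^2}{\eps_j}\Big\},&|t|>\eps_j^2,\end{cases}
\qquad
u_j(y):=\zeta\,\lambda_j(t),
\]
where $\lambda_j$ is the piecewise affine map equal to $0$ for $t\le-\eps_j^2$, to $1$ for $t\ge\eps_j^2$, and affine in between. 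Then $(u_j,v_j)\in H^1(Q^\nu,\R^N)\times H^1(Q^\nu,[0,1])$, and $u_j\to u_{\zeta,\nu}$ in $L^1(Q^\nu)$ because they differ only on a slab of width $2\eps_j^2$ and $|u_j|\le|\zeta|$. Also $v_j\to1$ in $L^1$.

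Next I estimate the three terms of $\Fm^\infty_{\eps_j}(u_j,v_j;Q^\nu)$.

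\emph{Elastic term.} $\nabla u_j\neq0$ only where $|t|<\eps_j^2$, and there $v_j=0$. By (H1), $l(0)=0$ and $Q(1)>0$, so $f^2(0)=l(0)/Q(1)=0$. Hence $\varphi'(0)\eps_j f^2(v_j)\Phi_\infty(\nabla u_j)=0$ on that set. Elsewhere $\nabla u_j=0$ and $\Phi_\infty(0)=0$ by \eqref{eq:growth-Phi-infty}. The elastic contribution is therefore zero. This is the point where one must check that $f$ is defined at $0$; it is, since $f$ is defined on $[0,1)$.

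\emph{Damage and gradient terms.} Since $\omega\le\omega(1)$ by monotonicity (H2), and $1-v_j=0$ for $|t|\ge\eps_j+\eps_j^2$, the damage term is at most
\[
\frac{\omega(1)}{4\eps_j}\cdot 2(\eps_j+\eps_j^2)\le \omega(1),
\]
for $j$ large. The gradient satisfies $|\nabla v_j|=\eps_j^{-1}$ on a set of measure at most $2\eps_j$. By \eqref{e:gamma lower euclidean suppenergy}, $\gamma(\nabla v_j)^2\le C_\gamma^2\eps_j^{-2}$, so the gradient term is at most $\eps_j\cdot C_\gamma^2\eps_j^{-2}\cdot2\eps_j=2C_\gamma^2$.

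Taking the $\liminf$ gives $g(\zeta,\nu)\le \omega(1)+2C_\gamma^2=:L$, which is independent of $(\zeta,\nu)$. There is no real obstacle: the only delicate points are the vanishing of $f^2$ at $v=0$ and making the jump of $u_j$ happen strictly inside the fully damaged core, which the $\eps_j^2$-slab handles.
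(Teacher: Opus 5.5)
Your proposal is correct and follows essentially the same construction as the paper: a one-dimensional profile in $x\cdot\nu$ with $v_j=0$ on the slab where $u_j$ jumps. This kills the elastic term via $f(0)=0$ and gives $g(\zeta,\nu)\le\omega(1)+2C_\gamma^2$. The only cosmetic difference is that the paper lets $u_j$ jump across the whole core $\{|x\cdot\nu|<\eps_j\}$, where $v_j=0$, and ramps $v_j$ on $\{\eps_j<|x\cdot\nu|<2\eps_j\}$, so your thinner $\eps_j^2$-slab is harmless but not needed.
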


\begin{proof}
Fix $(\zeta,\nu)\in \mathbb R^N\times \mathbb S^{n-1}$ and let $\eps_j\downarrow 0$. We construct the test functions
\[
 u_j(x):=\widehat u_j(x\cdot\nu),
 \qquad
 v_j(x):=\widehat v_j(x\cdot\nu),
\]
where the one-dimensional profiles are defined as
\[
 \widehat u_j(t):=
 \begin{cases}
 0,& t\le -\eps_j,\\
 \zeta,& t\ge \eps_j,\\
 \textup{affine interpolation},& -\eps_j<t<\eps_j,
 \end{cases}
\qquad
 \widehat v_j(t):=
 \begin{cases}
 0,& |t|\le \eps_j,\\
 1,& |t|\ge 2\eps_j,\\
 \textup{affine interpolation},& \eps_j<|t|<2\eps_j.
 \end{cases}
\]
It is evident that $u_j\to u_{\zeta,\nu}$ in $L^1(Q^\nu;\mathbb R^N)$ and $0\le v_j\le 1$. Since the support of $\nabla u_j$ is strictly contained within the strip $\{|x\cdot\nu|<\eps_j\}$, where $v_j=0$, the bulk term in $\mathcal F^{\infty}_{\eps_j}(u_j,v_j;Q^\nu)$ vanishes identically. Hence, taking into account that $\omega$ is non-decreasing and the equivalence of $\gamma$ with the Euclidean norm (cf. \eqref{e:gamma lower euclidean suppenergy}), we obtain
\begin{align*}
 \mathcal F^{\infty}_{\varepsilon_j}(u_j,v_j;Q^\nu)
 &=\int_{Q^\nu\cap\{|x\cdot\nu|\le 2\varepsilon_j\}}\frac{\omega(1-v_j)}{4\varepsilon_j}\,dx
 +\int_{Q^\nu\cap\{\varepsilon_j\le |x\cdot\nu|\le 2\varepsilon_j\}}\varepsilon_j\gamma(\nabla v_j)^2\,dx\\
 &\le \omega(1)+C_\gamma^2\int_{Q^\nu\cap\{\varepsilon_j\le |x\cdot\nu|\le 2\varepsilon_j\}}\varepsilon_j|\nabla v_j|^2\,dx\leq \omega(1)+2C_\gamma^2.
\end{align*}
Passing to the lower limit in the definition of $g$ immediately yields the claim.
\end{proof}

The subsequent two propositions are dedicated to validating the $L^2$-periodic reduction of the class of competitors involved in the definition of $g$ (cf. \eqref{eq:def-g-gamma}).

To this end, we fix a standard mollifier $\phi_1\in C^\infty_c(B_1)$ such that $\int_{B_1}\phi_1\dx=1$, and denote the scaled kernel by $\phi_\eps(x):=\eps^{-n}\phi_1(x/\eps)\in C^{\infty}_c(B_\eps)$. 

\begin{proposition}\label{proplbboundary-gamma}
Let $(\zeta,\nu)\in\R^N\times\Sf^{n-1}$ and suppose that an
optimizing sequence in \eqref{eq:def-g-gamma} converges in
$L^2(Q^\nu,\R^{N+1})$. Then, under the hypotheses of
Theorem~\ref{thm:general-anisotropic-gamma-convergence}, there exist
a sequence $\eps_j\to0$ and a sequence
\[
 (\tilde u_j,\tilde v_j)\to(u_{\zeta,\nu},1)
 \qquad\text{in }L^2(Q^\nu,\R^{N+1}),
\]
with $\tilde v_j\in[0,1]$ for $\mathcal{L}^n$-a.e.\ in $Q^\nu$,
such that
\begin{equation*}
 \lim_{j\to\infty}
 \callF^{\infty}_{\eps_j}
 (\tilde u_j,\tilde v_j;Q^\nu)
 =g(\zeta,\nu),
\end{equation*}
where
\begin{equation}\label{eqpropboundrybv-gamma}
 \tilde u_j=(u_{\zeta,\nu})\ast\phi_{\eps_j},
 \qquad
 \tilde v_j=
 \mathbf 1_{\{|x\cdot\nu|\ge2\eps_j\}}\ast\phi_{\eps_j}
 \qquad\text{on }\partial Q^\nu.
\end{equation}
\end{proposition}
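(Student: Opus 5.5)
We follow the standard boundary-modification strategy of \cite[Proposition 3.2]{CFI24}, adapted to the anisotropic damage term $\eps\gamma(\nabla v)^2$ and to the degradation $f^2$. Fix an optimizing sequence $(u_j,v_j)\to(u_{\zeta,\nu},1)$ in $L^2(Q^\nu,\R^{N+1})$ with $\lim_j\callF^\infty_{\eps_j}(u_j,v_j;Q^\nu)=g(\zeta,\nu)\le L$ (Lemma~\ref{l:g limitata}). Set $w_j:=u_{\zeta,\nu}\ast\phi_{\eps_j}$ and $z_j:=\mathbf 1_{\{|x\cdot\nu|\ge2\eps_j\}}\ast\phi_{\eps_j}$. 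A direct computation shows that $(w_j,z_j)$ has energy bounded uniformly on every strip: $z_j=0$ on $\{|x\cdot\nu|\le\eps_j\}$, which contains the support of $\nabla w_j$, so the elastic term vanishes (here the convention $0\cdot\infty=0$ is used). The remaining terms satisfy $\int_{S}\big(\omega(1-z_j)/(4\eps_j)+\eps_j\gamma(\nabla z_j)^2\big)\le C\,\mathcal H^{n-1}(S\cap\Pi_\nu\text{-projection})$ for any region $S$ of the form (thin frame)$\times\R\nu$. Since $\tilde u_j,\tilde v_j$ must coincide with $(w_j,z_j)$ on $\partial Q^\nu$, we will glue $(u_j,v_j)$ in $Q^\nu_{1-\delta}$ with $(w_j,z_j)$ near $\partial Q^\nu$.

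The gluing will be done in the $v$-variable by a minimum, and in the $u$-variable by cutoffs, following the De Giorgi slicing method. Fix $\delta>0$ and $M\in\N$, split the frame $Q^\nu\setminus Q^\nu_{1-\delta}$ into $M$ concentric layers $Q^\nu_{1-\delta+i\delta/M}\setminus Q^\nu_{1-\delta+(i-1)\delta/M}$, and pick cutoffs $\psi_i$ with $|\nabla\psi_i|\le CM/\delta$. Define $\tilde u_j:=\psi_iu_j+(1-\psi_i)w_j$. For $v$, let $\tilde v_j:=\min\{v_j,\ \psi_i+(1-\psi_i)z_j,\ \hat v_{j,i}\}$-type combinations, i.e. $v_j$ inside, $z_j$ outside, and in the transition layer the minimum of $v_j$, $z_j$ and a function $\hat v_{j,i}$ that vanishes on the layer. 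Vanishing $v$ kills the elastic term where $\nabla\psi_i\otimes(u_j-w_j)$ appears. Since a zero phase field on a whole layer of width $\delta/M$ costs $\omega(1)|\text{layer}|/\eps_j\to\infty$, we instead keep $\tilde v_j=\min\{v_j,z_j\}$ on the layer. We then control the cross term
\[
\varphi'(0)\eps_jf^2(\tilde v_j)\Phi_\infty\big(\nabla\psi_i\otimes(u_j-w_j)\big)\le C\eps_j\Big(\frac M\delta\Big)^2 f^2(\tilde v_j)|u_j-w_j|^2 .
\]
We use Lemma~\ref{l:quasi monotonia funzione di degradamento}, namely $f^2(\min\{v_j,z_j\})\le Kf^2(v_j)$, together with $f^2\le C\eps_j^{-1}\cdot$bounded, taking $\sup f^2<\infty$ on $[0,1)$ away from $1$. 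The regime $v$ near $1$ needs separate care: there we exploit that $\eps_j f^2$ multiplies a quantity tending to zero in $L^2$ since $u_j-w_j\to0$ in $L^2$. If $f$ blows up at $1$, we first replace $v_j$ by $\min\{v_j,1-\eta_j\}$ with $\eta_j\downarrow0$ slowly, at negligible cost in $\omega(1-v)/\eps$ thanks to $\omega(\eta_j)/\eps_j\cdot|Q|$ being chosen small via a diagonal choice.

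The energy estimate will read
\[
\callF^\infty_{\eps_j}(\tilde u_j,\tilde v_j;Q^\nu)\le K\callF^\infty_{\eps_j}(u_j,v_j;\text{layer}_i)+\callF^\infty_{\eps_j}(u_j,v_j;Q^\nu)+\callF^\infty_{\eps_j}(w_j,z_j;Q^\nu\setminus Q^\nu_{1-\delta})+R_j ,
\]
where $R_j\to0$ by the $L^2$ convergence. The min-gluing contributes at most the sum of the damage energies of both functions, since $\gamma(\nabla\min\{a,b\})^2\le\gamma(\nabla a)^2+\gamma(\nabla b)^2$ and $\omega(1-\min)\le\omega(1-a)+\omega(1-b)$. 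Averaging over $i$ gives the first term $\le KL/M$. The third term is $\le C\delta$, because the frame meets the strip $\{|x\cdot\nu|\le2\eps_j\}$ in a set of projected $\mathcal H^{n-1}$-measure $\le C\delta$. Letting $j\to\infty$, then $M\to\infty$, $\delta\to0$, and diagonalizing yields $\limsup\le g(\zeta,\nu)$. The reverse inequality holds because $(\tilde u_j,\tilde v_j)$ is admissible in \eqref{eq:def-g-gamma}, and $L^2$ convergence is preserved. The main obstacle is the cross term in the transition layer, i.e. making $\eps_jf^2(\tilde v_j)|u_j-w_j|^2M^2/\delta^2$ vanish when $f$ is unbounded near $1$. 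I expect to handle it by the preliminary truncation $v_j\le1-\eta_j$ together with a careful diagonal choice of $\eta_j$ with respect to $\|u_j-w_j\|_{L^2}$ and $\eps_j$.
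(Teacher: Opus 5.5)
Your architecture (fixed layers of width $\delta/M$, De Giorgi averaging, then a diagonal argument in $(M,\delta)$) differs from the paper's and can be made to work. However, the step you single out as the main obstacle is never carried out, and the recipe you give for it points the wrong way. Under (H1), $f(t)\to\infty$ as $t\uparrow1$, so $\eps_jf^2(\tilde v_j)$ is unbounded. The convergence $u_j-w_j\to0$ in $L^2$ therefore controls nothing by itself, and one must force $\tilde v_j\le 1-\eta_j$ on $\operatorname{supp}\nabla\psi_i$. On a region of volume of order $\delta/M$ (or all of $Q^\nu$ if you truncate globally) this costs about $\omega(\eta_j)\,\delta/(M\eps_j)$. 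So you need $\omega(\eta_j)=o(\eps_j)$, i.e.\ $\eta_j\to0$ \emph{fast}, not slowly. But then $\eps_jf^2(1-\eta_j)=\eps_j\ell(1-\eta_j)/Q(\eta_j)\approx\varsigma^2\eps_j/\omega(\eta_j)\to\infty$.

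The two requirements are compatible here because $Q\gtrsim\omega$ near $0$ ($\varsigma<\infty$). For instance, $\omega(\eta_j)=\eps_j\max\{\|u_j-w_j\|_{L^2},1/j\}$ makes both the truncation cost and the cross term $\lesssim (M/\delta)^2\varsigma^2\eps_j\|u_j-w_j\|_{L^2}^2/\omega(\eta_j)$ vanish for fixed $M,\delta$. Global truncation does not raise the elastic term, since $f$ is nondecreasing near $1$. This balance is the heart of the proof and must be written out.

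The paper avoids it. It uses layers of width of order $\eps_j$ at depth of order $\eta_j$. It lowers $v$ to $1-\eta_j$ only on the selected layer via $\hat v_j=\min\{1,1-\eta_j+\dist(x,R_j)/\eps_j\}$, at cost $O(\max_{[0,\eta_j]}\omega+\eta_j)$, which is small for any $\eta_j\to0$. Averaging over $K_j\approx\eta_j/\eps_j$ layers then bounds the cross term by $\frac{f^2(1-\eta_j)}{\eta_j}\|u_j-U_j\|^2_{L^2}$, which vanishes if $\eta_j\to0$ slowly. No diagonalization in $(M,\delta)$ and no use of $\varsigma<\infty$ is needed there.

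Second, the phase-field competitor as written is not admissible. The function $\min\{v_j,\psi_i+(1-\psi_i)z_j,\hat v_{j,i}\}$ has trace $\min\{v_j,z_j,\ldots\}$ on $\partial Q^\nu$, not $z_j$. Moreover, on $\{0<\psi_i<1\}\cap\{\nabla w_j\neq0\}$ it is not below $z_j=0$, so the term $\eps_jf^2(\tilde v_j)\Phi_\infty((1-\psi_i)\nabla w_j)$ survives. Your verbal alternative (``$v_j$ inside, $\min\{v_j,z_j\}$ on the layer, $z_j$ outside'') is discontinuous across the layer boundaries, hence not in $H^1$.

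You need three nested layers:
\begin{itemize}
\item a ramp raising the truncated $v_j$ to $1$ outside the transition layer (the paper's $v_j^\ast$);
\item a ramp raising $z_j$ to $1$ inside it (the paper's $\hat V_j$);
\item the $u$-cutoff supported in between.
\end{itemize}
This gives $\tilde v_j\le v_j\wedge z_j\wedge(1-\eta_j)$ on $\operatorname{supp}\nabla\psi_i$. The ramp costs are not covered by your ``sum of the damage energies'' bound. For the $v_j$-ramp, Cauchy--Schwarz bounds the extra gradient energy by $\lesssim (M/\delta)\eps_j^{1/2}+(M/\delta)^2\eps_j$, which vanishes for fixed $M,\delta$. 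The $z_j$-ramp is absorbed in your $C\delta$ term. With these two repairs, your averaging and diagonal argument go through.
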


\begin{proof}
\textbf{Step 1. Construction of $\tilde u_j$ and $\tilde v_j$.}

By hypothesis, there exist a sequence $\eps_j\to0$ and functions
\[
 (u_j,v_j)\to(u_{\zeta,\nu},1)
 \qquad\text{in }L^2(Q^\nu,\R^{N+1}),
\]
with $0\le v_j\le1$ a.e.\ in $Q^\nu$, such that
\begin{equation}\label{e:successione-minimizzante-gamma-boundary}
 g(\zeta,\nu)
 =
 \lim_{j\to\infty}
 \callF^{\infty}_{\eps_j}
 (u_j,v_j;Q^\nu).
\end{equation}
For notational convenience, set
\begin{equation}\label{eqdefUjVj-gamma}
 U_j:=(u_{\zeta,\nu})\ast\phi_{\eps_j},
 \qquad
 V_j:=\mathbf 1_{\{|x\cdot\nu|\ge2\eps_j\}}\ast\phi_{\eps_j}.
\end{equation}
Then $\|u_j-U_j\|_{L^2(Q^\nu,\R^N)}\to0$.
Moreover, $U_j=u_{\zeta,\nu}$ on $\{|x\cdot\nu|\ge\eps_j\}$,
whereas $V_j=0$ on $\{|x\cdot\nu|\le\eps_j\}$ and $V_j=1$
on $\{|x\cdot\nu|\ge3\eps_j\}$.
In particular, $V_j=0$ on $\{\nabla U_j\ne0\}$.
Since $f(0)=0$ and $\Phi_\infty(0)=0$, it follows that
\begin{equation}\label{eq:canonical-energy-gamma}
 \callF^{\infty}_{\eps_j}(U_j,V_j;Q^\nu)
 =
 \callF^{\infty}_{\eps_j}(0,V_j;Q^\nu)\leq c,
\end{equation}
because $\gamma(\nabla V_j)
 \le C_\gamma|\nabla V_j|
 \le\frac{c}{\eps_j}$, by \eqref{e:gamma lower euclidean suppenergy}, and $\nabla V_j$ is supported in a strip of thickness of order $\eps_j$.
We now choose a sequence $\eta_j\downarrow0$ such that
\begin{equation}\label{eq:choice-eta-gamma}
 \frac{\eps_j}{\eta_j}\longrightarrow0,
 \qquad
 \frac{f^2(1-\eta_j)}{\eta_j}
 \|u_j-U_j\|_{L^2(Q^\nu)}^2\longrightarrow0,
\end{equation}
and we set
$K_j:=\left\lfloor\frac{\eta_j}{\eps_j}\right\rfloor$.
For $k=K_j+1,\ldots,2K_j$, define
\[
 R_k^j
 :=
 Q^\nu_{1-k\eps_j}
 \setminus
 Q^\nu_{1-(k+1)\eps_j},
 \qquad
 S_k^j
 :=
 Q^\nu_{1-k\eps_j}
 \setminus
 Q^\nu_{1-(k+3)\eps_j}.
\]
For $j$ sufficiently large, these layers are contained in $Q^\nu$.
The layers $R_k^j$ are pairwise disjoint, while the layers $S_k^j$
have overlap multiplicity at most three a.e.\ in $Q^\nu$.
By \eqref{e:successione-minimizzante-gamma-boundary},
\eqref{eq:canonical-energy-gamma}, and an averaging argument,
there exists $k_j\in\{K_j+1,\ldots,2K_j\}$,
such that, setting $R_j:=R_{k_j}^j$ and $S_j:=S_{k_j}^j$,
one has
\begin{align}
 &\callF^{\infty}_{\eps_j}(u_j,v_j;S_j)
 +\callF^{\infty}_{\eps_j}(U_j,V_j;S_j)
 +\frac{f^2(1-\eta_j)}{\eps_j}
   \|u_j-U_j\|_{L^2(R_j)}^2
 \le
 \frac{c}{K_j}
 +\frac{f^2(1-\eta_j)}{K_j\eps_j}
  \|u_j-U_j\|_{L^2(Q^\nu)}^2,
 \label{eqchoice1-gamma}
\end{align}
where $K_j\to\infty$ and $K_j\eps_j/\eta_j\to1$, so the
right-hand side converges to zero by \eqref{eq:choice-eta-gamma}.
Let $\theta_j\in C_c^1(Q^\nu_{1-k_j\eps_j},[0,1])$ be such that
\[
 \theta_j=1
 \quad\text{on }Q^\nu_{1-(k_j+1)\eps_j},
 \qquad
 |\nabla\theta_j|\le\frac{3}{\eps_j},
\]
and define $\tilde u_j
 :=
 \theta_j u_j+(1-\theta_j)U_j.$ 
 
 We next construct $\tilde v_j$. First, define
\begin{equation}\label{eqdefhatv-gamma}
 \hat v_j(x)
 :=
 \min\left\{
 1,\,
 1-\eta_j+\frac{\dist(x,R_j)}{\eps_j}
 \right\}.
\end{equation}
Thus,
\[
 \hat v_j=1-\eta_j\quad\text{on }R_j,
 \qquad
 \hat v_j=1
 \quad\text{if }\dist(x,R_j)\ge\eta_j\eps_j.
\]
Next, set
\begin{equation}\label{eqdefhatV-gamma}
 \hat V_j(x)
 :=
 \min\left\{
 1,\,
 V_j(x)
 +\frac{
   \dist\bigl(
    x,Q^\nu\setminus Q^\nu_{1-(k_j+1)\eps_j}
   \bigr)
  }{\eps_j}
 \right\}.
\end{equation}
Then $\hat V_j=V_j$ outside
$Q^\nu_{1-(k_j+1)\eps_j}$, while $\hat V_j=1$ sufficiently far
inside this cube, and $\hat V_j=1$ on
$\{|x\cdot\nu|\ge3\eps_j\}$.
Finally, define
\begin{equation*}
 v_j^\ast(x)
 :=
 \min\left\{
 1,\,
 v_j(x)
 +\frac{2}{k_j\eps_j}
  \dist\bigl(x,Q^\nu_{1-k_j\eps_j}\bigr)
 \right\}
\end{equation*}
and set
\begin{equation}\label{eq:def-tilde-v-gamma}
 \tilde v_j:=v_j^\ast\wedge\hat V_j\wedge\hat v_j.
\end{equation}
By construction $0\le\tilde v_j\le1$.
Moreover, on $\partial Q^\nu$, both $v_j^\ast$ and $\hat v_j$
are equal to $1$ for $j$ sufficiently large, consequently
$\tilde v_j=\hat V_j=V_j$ on $\partial Q^\nu$.
Together with the definition of $\tilde u_j$, this proves
the boundary condition \eqref{eqpropboundrybv-gamma}.

\medskip
\noindent
\textbf{Step 2. Estimate of the bulk term.}

On $R_j$ we have
\[
 \nabla\tilde u_j
 =
 \theta_j\nabla u_j
 +(1-\theta_j)\nabla U_j
 +(u_j-U_j)\otimes\nabla\theta_j,
\]
therefore,
\[
 |\nabla\tilde u_j|
 \le
 |\nabla u_j|+|\nabla U_j|
 +\frac{3}{\eps_j}|u_j-U_j|.
\]
By the quadratic growth of $\Phi_\infty$
(cf.\ \eqref{eq:growth-Phi-infty}),
\begin{equation}\label{eq:bulk-growth-tilde-u-gamma}
 \Phi_\infty(\nabla\tilde u_j)
 \le
 c\Phi_\infty(\nabla u_j)
 +c\Phi_\infty(\nabla U_j)
 +\frac{c}{\eps_j^2}|u_j-U_j|^2
 \qquad\text{on }R_j.
\end{equation}
Moreover on $R_j$, $\tilde v_j\le v_j$, $\tilde v_j\le V_j$
and $\tilde v_j\le1-\eta_j$, thus
Lemma~\ref{l:quasi monotonia funzione di degradamento} gives
\[
 f^2(\tilde v_j)\le c f^2(v_j),
 \qquad
 f^2(\tilde v_j)\le c f^2(1-\eta_j).
\]
Furthermore,
\[
 \tilde v_j\le V_j=0
 \qquad\text{on }\{\nabla U_j\ne0\}\cap R_j,
\]
and hence
\[
 f^2(\tilde v_j)\Phi_\infty(\nabla U_j)=0
 \qquad\text{a.e.\ on }R_j.
\]
Multiplying \eqref{eq:bulk-growth-tilde-u-gamma} by
$\eps_jf^2(\tilde v_j)$ yields
\begin{align}
 \eps_j f^2(\tilde v_j)
 \Phi_\infty(\nabla\tilde u_j)
 \le
 c\eps_j f^2(v_j)\Phi_\infty(\nabla u_j)
 +c\frac{f^2(1-\eta_j)}{\eps_j}|u_j-U_j|^2
 \qquad\text{on }R_j.
 \label{eq:bulk-transition-gamma}
\end{align}
Integrating and using \eqref{eqchoice1-gamma}, we obtain
\begin{align*}
 \int_{R_j}
 \varphi'(0)\eps_j f^2(\tilde v_j)
 \Phi_\infty(\nabla\tilde u_j)\,\diff x
 \le
 \frac{c}{K_j}
 +c\frac{f^2(1-\eta_j)}{K_j\eps_j}
 \|u_j-U_j\|_{L^2(Q^\nu)}^2,
\end{align*}
and hence by \eqref{eq:choice-eta-gamma},
\begin{equation}\label{eq:bulk-transition-vanishes-gamma}
 \limsup_{j\to\infty}
 \int_{R_j}
 \varphi'(0)\eps_j f^2(\tilde v_j)
 \Phi_\infty(\nabla\tilde u_j)\,\dx
 =0.
\end{equation}
On $S_j\setminus R_j$, we have $\tilde u_j=u_j$ and
$\tilde v_j\le v_j$. Hence, by
Lemma~\ref{l:quasi monotonia funzione di degradamento} and
\eqref{eqchoice1-gamma},
\[
 \int_{S_j\setminus R_j}
 \varphi'(0)\eps_j f^2(\tilde v_j)
 \Phi_\infty(\nabla\tilde u_j)\,\dx
 \le
 c\callF^{\infty}_{\eps_j}(u_j,v_j;S_j)
 \longrightarrow0.
\]
Moreover, on $Q^\nu_{1-(k_j+3)\eps_j}$ we have
$\hat V_j=\hat v_j=1$ for $j$ sufficiently large, and therefore
\[
 \tilde u_j=u_j,
 \qquad
 \tilde v_j=v_j.
\]
On the exterior region $\tilde u_j=U_j$, $\tilde v_j\le V_j$
and the supports of $\nabla U_j$ and $V_j$ are disjoint, thus
\[
 \int_{Q^\nu\setminus Q^\nu_{1-k_j\eps_j}}
 \varphi'(0)\eps_jf^2(\tilde v_j)
 \Phi_\infty(\nabla\tilde u_j)\,\dx=0.
\]
Combining these estimates, we obtain
\begin{equation}\label{eqfinelastcu-gamma}
 \int_{Q^\nu}
 \varphi'(0)\eps_j f^2(\tilde v_j)
 \Phi_\infty(\nabla\tilde u_j)\,\dx
 \le
 \int_{Q^\nu}
 \varphi'(0)\eps_j f^2(v_j)
 \Phi_\infty(\nabla u_j)\,\dx+o(1).
\end{equation}

\medskip
\noindent
\textbf{Step 3. Estimate of the anisotropic phase-field energy.}

From the monotonicity of $\PotDann$ we obtain
\begin{align}
 \callF^{\infty}_{\eps_j}
 (0,\tilde v_j;Q^\nu)
 \le
 \callF^{\infty}_{\eps_j}
 (0,v_j^\ast;Q^\nu)
 +
 \callF^{\infty}_{\eps_j}
 (0,\hat V_j;Q^\nu)
 +\callF^{\infty}_{\eps_j}
 (0,\hat v_j;Q^\nu).
 \label{eqcalFst0-gamma}
\end{align}
We start with $\hat v_j$. By \eqref{eqdefhatv-gamma}, we have $0\le1-\hat v_j\le\eta_j$ and $|\nabla\hat v_j|\le\frac1{\eps_j}$, with
\[
 \calL^n(\{\hat v_j\ne1\})\le c\eps_j,
 \qquad
 \calL^n(\{\nabla\hat v_j\ne0\})
 \le c\eta_j\eps_j,
 \qquad
 \gamma(\nabla\hat v_j)
 \le\frac{C_\gamma}{\eps_j}.
\]
Setting $\tau_j:=\max_{t\in[0,\eta_j]}\PotDann(t)$, we have
\begin{align}
 \callF^{\infty}_{\eps_j}
 (0,\hat v_j;Q^\nu)
 =\int_{Q^\nu}
 \left(
  \frac{\PotDann(1-\hat v_j)}{4\eps_j}
  +\eps_j\gamma(\nabla\hat v_j)^2
 \right)\,\dx
 \le c\tau_j+c\eta_j.
 \label{eq:hat-v-energy-gamma}
\end{align}
Likewise, the definition of $\hat V_j$ gives
\[
 |\nabla\hat V_j|
 \le|\nabla V_j|+\frac1{\eps_j},
 \qquad
 \gamma(\nabla\hat V_j)\le\frac{c}{\eps_j},
 \qquad
 \calL^n(\{\hat V_j\ne1\})\le c\eta_j\eps_j,
\]
and consequently,
\begin{equation}\label{eq:hat-V-energy-gamma}
 \callF^{\infty}_{\eps_j}
 (0,\hat V_j;Q^\nu)
 \le c\eta_j.
\end{equation}
It remains to estimate $v_j^\ast$.
Set $E_j:=Q^\nu\setminus Q^\nu_{1-k_j\eps_j}$ and observe that $v_j^\ast=v_j$ on $Q^\nu_{1-k_j\eps_j}$ and $v_j^\ast\ge v_j$ on $Q^\nu$. Thus, by the monotonicity of $\PotDann$,
$\PotDann(1-v_j^\ast)\le\PotDann(1-v_j)$, and being the
distance function 1-Lipschitz,
\[
 \gamma(\nabla v_j^\ast)^2
 \le
 \gamma(\nabla v_j)^2
 +\frac{c}{k_j\eps_j}\gamma(\nabla v_j)
 +\frac{c}{k_j^2\eps_j^2}
 \qquad\text{on }E_j.
\]
Therefore,
\begin{align}
 \callF^{\infty}_{\eps_j}
 (0,v_j^\ast;Q^\nu)
 \le
 \callF^{\infty}_{\eps_j}
 (0,v_j;Q^\nu)
 +\frac{c\calL^n(E_j)}{k_j^2\eps_j}
 +
 \frac{c}{k_j}
 \int_{E_j}\gamma(\nabla v_j)\,\dx,
 \label{eq:vstar-preliminary-gamma}
\end{align}
where, by the Cauchy-Schwarz inequality,
\begin{align*}
 \frac1{k_j}\int_{E_j}\gamma(\nabla v_j)\,\dx
 \le
 \frac{\calL^n(E_j)^{\frac12}}{k_j\eps_j^{\frac12}}
 \left(
  \eps_j
  \int_{E_j}\gamma(\nabla v_j)^2\,\dx
 \right)^{\frac12}
 \le
 \frac{\calL^n(E_j)^{\frac12}}{k_j\eps_j^{\frac12}}
 \callF^{\infty}_{\eps_j}
 (0,v_j;Q^\nu)^{\frac12}.
\end{align*}
Since $E_j$ is a boundary layer of thickness of order
$k_j\eps_j$, we have $\calL^n(E_j)\le ck_j\eps_j$.
The energies
$\callF^{\infty}_{\eps_j}(u_j,v_j;Q^\nu)$
are uniformly bounded by
\eqref{e:successione-minimizzante-gamma-boundary}, hence
\eqref{eq:vstar-preliminary-gamma} yields
\begin{equation}\label{eq:vstar-final-gamma}
 \callF^{\infty}_{\eps_j}
 (0,v_j^\ast;Q^\nu)
 \le
 \callF^{\infty}_{\eps_j}
 (0,v_j;Q^\nu)
 +\frac{c}{k_j}
 +\frac{c}{k_j^{1/2}},
\end{equation}
and, since $k_j\ge K_j+1\to\infty$,
\begin{equation}\label{eq:vstar-limsup-gamma}
 \limsup_{j\to\infty}
 \callF^{\infty}_{\eps_j}
 (0,v_j^\ast;Q^\nu)
 \le
 \limsup_{j\to\infty}
 \callF^{\infty}_{\eps_j}
 (0,v_j;Q^\nu).
\end{equation}
Combining \eqref{eqcalFst0-gamma},
\eqref{eq:hat-v-energy-gamma},
\eqref{eq:hat-V-energy-gamma}, and
\eqref{eq:vstar-final-gamma}, and using
$\eta_j,\tau_j\to0$, we conclude that
\begin{equation}\label{eq:phase-limsup-gamma}
 \callF^{\infty}_{\eps_j}
 (0,\tilde v_j;Q^\nu)
 \le
 \callF^{\infty}_{\eps_j}
 (0,v_j;Q^\nu)+o(1).
\end{equation}
Finally, \eqref{eqfinelastcu-gamma} and
\eqref{eq:phase-limsup-gamma} imply
\[
 \limsup_{j\to\infty}
 \callF^{\infty}_{\eps_j}
 (\tilde u_j,\tilde v_j;Q^\nu)
 \le g(\zeta,\nu).
\]
By construction, $\tilde u_j\to u_{\zeta,\nu}$ in
$L^2(Q^\nu,\R^N)$.
Since $\tilde v_j=v_j$ on
$Q^\nu_{1-(k_j+3)\eps_j}$, $0\le\tilde v_j\le1$, and
$(k_j+3)\eps_j\to0$, we also have $\tilde v_j\to1$ in
$L^2(Q^\nu)$.
The reverse inequality follows from
\eqref{eq:def-g-gamma}, completing the proof.
\end{proof}
We now present the periodic reduction on the class of test sequences defining $g(\cdot,\nu)$ in \eqref{eq:def-g-gamma}. Using a De Giorgi’s averaging argument, the family $\mathcal{T}_k$, defined in \eqref{e:Tk}, replaces sequences merely converging in $L^1$ with sequences sharing the identical $L^1$ limit but enjoying uniform bounds in $L^\infty$. This modification entails an arbitrarily small increase in energy.

\begin{proposition}\label{periodicity}
Let $(\zeta,\nu)\in\R^N\times\Sf^{n-1}$ and assume that the
hypotheses of Theorem~\ref{thm:general-anisotropic-gamma-convergence} hold. Then, for every sequence
$\tilde\eps_j\downarrow0$, there exists a sequence
$(\tilde u_j,\tilde v_j)$ converging to $(u_{\zeta,\nu},1)$ in
$L^2(Q^\nu;\R^{N+1})$. Furthermore,
$\tilde v_j(x)\in[0,1]$ for $\calL^n$-a.e.\ $x\in Q^\nu$, and
\begin{equation}\label{e:g periodic sequence}
 \lim_{j\to\infty}
 \callF^{\infty}_{\tilde\eps_j}
 (\tilde u_j,\tilde v_j;Q^\nu)
 =
 g(\zeta,\nu),
\end{equation}
with the boundary conditions
\begin{equation*}
 \tilde u_j=(u_{\zeta,\nu})\ast\phi_{\tilde\eps_j},
 \qquad
 \tilde v_j=
 \mathbf 1_{\{|x\cdot\nu|\ge2\tilde\eps_j\}}
 \ast\phi_{\tilde\eps_j}
 \qquad\text{on }\partial Q^\nu.
\end{equation*}
\end{proposition}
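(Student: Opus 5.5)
The plan is to reduce first to $L^\infty$-bounded optimizing sequences, so that Proposition~\ref{proplbboundary-gamma} applies along \emph{some} sequence $\eps_j\downarrow0$. Then I would transfer the result to the prescribed sequence $\tilde\eps_j$, using the scaling invariance of $\callF^\infty_\eps$ together with the tangential invariance of the boundary data.

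\textbf{Step 1 (truncation and $L^2$ convergence).} Take an optimizing sequence $(u_j,v_j)$ for \eqref{eq:def-g-gamma}, with $\eps_j\downarrow0$ and $u_j\to u_{\zeta,\nu}$ in $L^1$. Fix $M$ with $a_M>|\zeta|$. I would repeat the averaging of Lemma~\ref{l:5.3} for the recession integrand $\varphi'(0)\eps f^2(v)\Phi_\infty(\nabla u)$. Since $\Phi_\infty(0)=0$ and $\Phi_\infty$ is $2$-homogeneous with quadratic bounds \eqref{eq:growth-Phi-infty}, this gives $k\in\{M+1,\dots,2M\}$ with
\[
\callF^\infty_{\eps_j}(\mathcal T_k(u_j),v_j;Q^\nu)\le \Big(1+\frac{c}{M}\Big)\callF^\infty_{\eps_j}(u_j,v_j;Q^\nu).
\]
There is no additive measure term here, because $\Phi_\infty(0)=0$. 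Since $\mathcal T_k(\zeta)=\zeta$ and $\mathcal T_k(0)=0$, we have $\mathcal T_k(u_j)\to u_{\zeta,\nu}$ in $L^1$. The truncated maps are also uniformly bounded, so the convergence holds in $L^2$. Letting $M\to\infty$ along a diagonal subsequence produces an optimizing sequence converging in $L^2$, and Proposition~\ref{proplbboundary-gamma} then gives $(u_j,v_j)$ with the mollified boundary data at scale $\eps_j$ and energy tending to $g(\zeta,\nu)$.

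\textbf{Step 2 (change of scale).} I would use the identity
\[
\callF^\infty_{\lambda\eps}\big(u(\cdot/\lambda),v(\cdot/\lambda);\lambda Q^\nu\big)=\lambda^{n-1}\callF^\infty_\eps(u,v;Q^\nu),
\]
which follows from a direct change of variables. The boundary data are also compatible with this rescaling, since $\big((u_{\zeta,\nu})\ast\phi_\eps\big)(\cdot/\lambda)=(u_{\zeta,\nu})\ast\phi_{\lambda\eps}$, and similarly for $V$. Given $\tilde\eps_i$, I would choose indices $j(i)\to\infty$ with $\eps_{j(i)}\ge\tilde\eps_i$ and $\lambda_i:=\tilde\eps_i/\eps_{j(i)}\to0$; for instance, take $j(i)$ minimal with $\eps_{j(i)}\le\sqrt{\tilde\eps_i}$. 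Set $m_i:=\lfloor 1/\lambda_i\rfloor$. I would then tile the central portion of $\Pi_\nu\cap Q^\nu$ by $m_i^{n-1}$ translated copies of the rescaled cell $\lambda_iQ^\nu$. On each copy the competitor is the rescaled pair $(u_{j(i)},v_{j(i)})$; everywhere else in $Q^\nu$ it is the canonical pair $(U,V)$ at scale $\tilde\eps_i$.

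\textbf{Step 3 (gluing and energy count).} On the lateral faces of the small cubes the traces of the rescaled competitors coincide with the canonical profile, which depends only on $x\cdot\nu$. Hence the glued pair is in $H^1$, and it has the required boundary data on $\partial Q^\nu$. The total energy is at most
\[
m_i^{n-1}\lambda_i^{n-1}\callF^\infty_{\eps_{j(i)}}(u_{j(i)},v_{j(i)};Q^\nu)+C\big(1-(m_i\lambda_i)^{n-1}\big).
\]
The second term comes from \eqref{eq:canonical-energy-gamma}: the canonical profile has energy per unit area of $\Pi_\nu$ bounded by a constant. Since $m_i\lambda_i\to1$, the limsup of the total is at most $g(\zeta,\nu)$.

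For convergence, each small cube carries a function close to the jump in $L^2$, uniformly bounded by the truncation, and the parts outside the small cubes have shrinking measure or agree with the canonical profile. So the glued sequence converges to $(u_{\zeta,\nu},1)$ in $L^2$. The reverse inequality is immediate from the definition of $g$.

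\textbf{Main obstacle.} I expect the main difficulty to be Step 2–3: choosing $j(i)$ so that $\lambda_i\to0$ while the pieces still fit inside $Q^\nu$, and checking that the portion of $Q^\nu$ along the $\nu$-direction outside the stacked cubes is correctly filled by the canonical pair with negligible energy.
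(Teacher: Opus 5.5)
Your proposal is correct and follows essentially the paper's proof. Both arguments use De Giorgi averaging over the truncations $\mathcal T_k$ (no additive term, since $\Phi_\infty(0)=0$) to get an $L^2$-convergent optimizing sequence, then Proposition~\ref{proplbboundary-gamma} to fix the mollified boundary data, then rescaling by $\lambda_i=\tilde\eps_i/\eps_{j(i)}\to0$ and a tangential tiling with $\lfloor 1/\lambda_i\rfloor^{n-1}$ cells glued to the canonical profile.

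Two minor corrections, neither of which changes the structure:

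\begin{enumerate}
\item Your sample choice of $j(i)$ (the minimal $j$ with $\eps_j\le\sqrt{\tilde\eps_i}$) need not give $\lambda_i\to0$ when $\eps_j$ decays very fast. For example, $\eps_j=2^{-2^j}$ and $\tilde\eps_i=\eps_{i+1}/2$ give $\lambda_i=1/2$. Take instead $j(i)$ maximal with $\eps_{j(i)}\ge\sqrt{\tilde\eps_i}$, so that $\lambda_i\le\sqrt{\tilde\eps_i}$. Alternatively, diagonalize using $\tilde\eps_i/\eps_j\to0$ for each fixed $j$, as the paper does.
\item You do not need a uniform $L^\infty$ bound, and the diagonal in $M$ does not provide one in general. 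The $L^2$ convergence on the cells follows directly from scaling, since their total error is at most $\lambda_i\|u_{j(i)}-u_{\zeta,\nu}\|_{L^2(Q^\nu)}^2\to0$.
\end{enumerate}
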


\begin{proof}
\textbf{Step 1. Reduction to an optimal sequence converging in
$L^2$.} Consider a vanishing sequence $\eps_j\to0$ and a pair $(u_j,v_j)$
such that $(u_j,v_j)\to (u_{\zeta,\nu},1)
 \text{ in }L^1(Q^\nu;\R^{N+1})$ and
\begin{equation}\label{eq:optimal-L1-periodicity-gamma}
 g(\zeta,\nu)
 = \lim_{j\to\infty}
 \callF^{\infty}_{\eps_j}(u_j,v_j;Q^\nu).
\end{equation}
Since $0\le v_j\le1$, we have that $v_j\to 1$ strongly in $L^2(Q^{\nu})$. We assert that, for every $j,M\in\N$, there exists $ k_{M,j}\in\{M+1,\ldots,2M\}$ such that
\begin{equation}\label{e:truncation g}
 \callF^{\infty}_{\eps_j}
 \bigl(\mathcal T_{k_{M,j}}(u_j),v_j;Q^\nu\bigr)
 \le
 \left(1+\frac{C^2}{M}\right)
 \callF^{\infty}_{\eps_j}
 (u_j,v_j;Q^\nu),
\end{equation}
where the functions $\mathcal{T}_k$ are defined in \eqref{e:Tk}, and we assume that
\[
 a_M>1+|\zeta|
 =1+\|u_{\zeta,\nu}\|_{L^\infty(Q^\nu)}.
\]
For fixed
$M$, the functions
$\mathcal T_{k_{M,j}}(u_j)$ are uniformly bounded and converge
to $u_{\zeta,\nu}$ in $L^2(Q^\nu,\R^N)$. Applying \eqref{e:truncation g}, we obtain
\[
 \limsup_{j\to\infty}
 \callF^{\infty}_{\eps_j}
 \bigl(\mathcal T_{k_{M,j}}(u_j),v_j;Q^\nu\bigr)
 \le
 \left(1+\frac{C^2}{M}\right)g(\zeta,\nu).
\]
Letting $M\to\infty$ and using a diagonal argument, we deduce
\begin{align}
 g(\zeta,\nu)
 =\inf\Bigl\{
 \liminf_{j\to\infty}
 \callF^{\infty}_{\eps_j}(u_j,v_j;Q^\nu):
 \eps_j\to0,\,
 \|u_j-u_{\zeta,\nu}\|_{L^2(Q^\nu)}\to0,\quad
 \|v_j-1\|_{L^2(Q^\nu)}\to0
 \Bigr\}.
 \label{eq:g-characterization-L2-gamma}
\end{align}
We now justify \eqref{e:truncation g}. For every
$k\in\{M+1,\ldots,2M\}$, since $\Phi_{\infty}(0)=0$,
\begin{align}
 &\callF^{\infty}_{\eps_j}
 \bigl(\mathcal T_k(u_j),v_j;Q^\nu\bigr)
 =
 \callF^{\infty}_{\eps_j}
 \bigl(u_j,v_j;\{|u_j|\le a_k\}\bigr)
 \notag\\
 &+
 \callF^{\infty}_{\eps_j}
 \bigl(\mathcal T_k(u_j),v_j;
       \{a_k<|u_j|<a_{k+1}\}\bigr)+
 \callF^{\infty}_{\eps_j}
 \bigl(0,v_j;\{|u_j|\ge a_{k+1}\}\bigr).
 \label{e:Feps-Tk-gamma}
\end{align}
On the intermediate region, the quadratic growth of
$\Phi_\infty$ (cf. \eqref{eq:growth-Phi-infty}), together with
$\operatorname{Lip}(\mathcal T_k)\le1$, give
\begin{align}
 \callF^{\infty}_{\eps_j}
 \bigl(\mathcal T_k(u_j),v_j;
       \{a_k<|u_j|<a_{k+1}\}\bigr)
&\le
 C^2
 \int_{\{a_k<|u_j|<a_{k+1}\}}
 \varphi'(0)\eps_j f^2(v_j)
 \Phi_\infty(\nabla u_j)\,\dx
 \notag\\
 &+
 \callF^{\infty}_{\eps_j}
 \bigl(0,v_j;\{a_k<|u_j|<a_{k+1}\}\bigr).
 \label{e:Feps-Tk-2-gamma}
\end{align}
Combining \eqref{e:Feps-Tk-gamma} and
\eqref{e:Feps-Tk-2-gamma}, we find
\begin{align*}
 \callF^{\infty}_{\eps_j}
 \bigl(\mathcal T_k(u_j),v_j;Q^\nu\bigr)
\le\callF^{\infty}_{\eps_j}
 (u_j,v_j;Q^\nu)
 +C^2\int_{\{a_k<|u_j|<a_{k+1}\}}
 \varphi'(0)\eps_j f^2(v_j)
 \Phi_\infty(\nabla u_j)\,\dx.
\end{align*}
Therefore choosing $k_{j,M}\in \{M+1,\dots,2M\}$ such that 
\begin{equation*}
    \int_{\{a_k<|u_j|<a_{k+1}\}}
 \varphi'(0)\eps_j f^2(v_j)
 \Phi_\infty(\nabla u_j)\,\dx\leq \frac1M\callF^{\infty}_{\eps_j}
 (u_j,v_j;Q^\nu), 
\end{equation*}
we obtain \eqref{e:truncation g}.

\textbf{Step 2. Conclusion.} By Step 1, we can assume that a minimizing sequence for
$g(\zeta,\nu)$ converges strongly in
$L^2(Q^\nu;\R^{N+1})$. Let $(\eps_k,u_k,v_k)$ be the sequence provided by
Proposition~\ref{proplbboundary-gamma}.
Since, for every fixed $k$, $\frac{\tilde\eps_j}{\eps_k}\rightarrow0$ as $j\to \infty$, we can choose a monotonically increasing sequence $k(j)\to\infty$
such that
\begin{equation}\label{eq:def-lambda-periodicity-gamma}
 \lambda_j:=
 \frac{\tilde\eps_j}{\eps_{k(j)}}\longrightarrow0.
\end{equation}
For $r>0$, set $ \widetilde Q^\nu_r:=\Pi_\nu\cap Q^\nu_r,
$ where $\Pi_\nu$ is the hyperplane orthogonal to $\nu$. Choose points
\[
 x_1,\ldots,x_{I_j}\in\widetilde Q^\nu,
 \qquad
 I_j:=\left\lfloor\frac1{\lambda_j}\right\rfloor^{n-1},
\]
such that the tangential cubes
\[
 x_i+\widetilde Q^\nu_{\lambda_j},
 \qquad i=1,\ldots,I_j,
\]
are pairwise disjoint and contained in $\widetilde Q^\nu$. They may
be chosen so that $I_j\lambda_j^{n-1}\rightarrow1$ and
\begin{equation}\label{eq:tangential-remainder-measure}
 \calH^{n-1}\left(
 \widetilde Q^\nu
 \setminus
 \bigcup_{i=1}^{I_j}
 (x_i+\widetilde Q^\nu_{\lambda_j})
 \right)\longrightarrow0,
\end{equation}
as $j\to \infty$. Set
\[
 \widetilde U_j:=
 u_{\zeta,\nu}\ast\phi_{\tilde\eps_j},
 \qquad
 \widetilde V_j:=
 \chi_{\{|x\cdot\nu|\ge2\tilde\eps_j\}}
 \ast\phi_{\tilde\eps_j}.
\]
We define
\[
 \tilde u_j(x):=
 \begin{cases}
  u_{k(j)}
  \left(\dfrac{x-x_i}{\lambda_j}\right),
  &
  \text{if }x-x_i\in Q^\nu_{\lambda_j}
  \text{ for some }i,
  \\[6pt]
  \widetilde U_j(x),
  &
  \text{otherwise in }Q^\nu,
 \end{cases}
\]
and
\[
 \tilde v_j(x):=
 \begin{cases}
  v_{k(j)}
  \left(\dfrac{x-x_i}{\lambda_j}\right),
  &
  \text{if }x-x_i\in Q^\nu_{\lambda_j}
  \text{ for some }i,
  \\[6pt]
  \widetilde V_j(x),
  &
  \text{otherwise in }Q^\nu.
 \end{cases}
\]
Since $x_i\cdot\nu=0$ and
$\tilde\eps_j=\lambda_j\eps_{k(j)}$, the scaling properties of the
mollifier give
\begin{equation}\label{eq:canonical-scaling-periodicity}
 \widetilde U_j(x)
 =
 U_{k(j)}
 \left(\frac{x-x_i}{\lambda_j}\right),
 \qquad
 \widetilde V_j(x)
 =
 V_{k(j)}
 \left(\frac{x-x_i}{\lambda_j}\right).
\end{equation}
Consequently, the traces of the two definitions coincide on the
boundaries of the cubes $x_i+Q^\nu_{\lambda_j}$. By the boundary
conditions furnished by
Proposition~\ref{proplbboundary-gamma}, we therefore have $ (\tilde u_j,\tilde v_j)
 \in W^{1,2}(Q^\nu;\R^{N+1})$, with
\[
 \tilde u_j=\widetilde U_j,
 \qquad
 \tilde v_j=\widetilde V_j
 \qquad\text{on }\partial Q^\nu.
\]
We now estimate the energy. On each cube
$x_i+Q^\nu_{\lambda_j}$, using the change of variables $x=x_i+\lambda_j y$, we have
\[ \nabla\tilde u_j(x)
 =\frac1{\lambda_j}\nabla u_{k(j)}(y),
 \qquad
 \nabla\tilde v_j(x)
 =\frac1{\lambda_j}\nabla v_{k(j)}(y).
\]
Since $\Phi_\infty$ and $\gamma^2$ are positively two-homogeneous, using  $\tilde\eps_j=\lambda_j\eps_{k(j)}$, we obtain the 
scaling identity
\begin{align}
 \callF^{\infty}_{\tilde\eps_j}
 \bigl(\tilde u_j,\tilde v_j;
       x_i+Q^\nu_{\lambda_j}\bigr)
 =
 \lambda_j^{n-1}
 \callF^{\infty}_{\eps_{k(j)}}
 (u_{k(j)},v_{k(j)};Q^\nu).
 \label{eq:cell-scaling-gamma}
\end{align}
On the complementary region the pair coincides with
$(\widetilde U_j,\widetilde V_j)$. By construction,
\[
 \widetilde U_j=u_{\zeta,\nu}
 \quad\text{on }\{|x\cdot\nu|\ge\tilde\eps_j\},
\]
whereas
\[
 \widetilde V_j=0
 \quad\text{on }\{|x\cdot\nu|\le\tilde\eps_j\},
 \qquad
 \widetilde V_j=1
 \quad\text{on }\{|x\cdot\nu|\ge3\tilde\eps_j\},
\]
therefore
\[
 f^2(\widetilde V_j)
 \Phi_\infty(\nabla\widetilde U_j)=0
 \qquad\text{a.e. on  $Q^\nu$.}
\]
It follows that
\begin{align}
 \callF^{\infty}_{\tilde\eps_j}
 (\tilde u_j,\tilde v_j;Q^\nu)
 \le
 I_j\lambda_j^{n-1}
 \callF^{\infty}_{\eps_{k(j)}}
 (u_{k(j)},v_{k(j)};Q^\nu)
 +
 c\calH^{n-1}\left(
 \widetilde Q^\nu
 \setminus
 \bigcup_{i=1}^{I_j}
 (x_i+\widetilde Q^\nu_{\lambda_j})
 \right),
 \label{eq:periodicity-upper-gamma}
\end{align}
where $c>0$ depends only on $\PotDann$, the mollifier kernel $\phi$, and the
equivalence constants of $\gamma$ (cf. \eqref{e:gamma lower euclidean suppenergy}). By optimality of the sequence $(\eps_k,u_k,v_k)$, \eqref{eq:tangential-remainder-measure} and $I_j \lambda_j^{n-1}\to 1$ as $j\to \infty$, we obtain that
\[
 \limsup_{j\to\infty}
 \callF^{\infty}_{\tilde\eps_j}
 (\tilde u_j,\tilde v_j;Q^\nu)
 \le g(\zeta,\nu),
\]
concluding the proof, by definition of $g(\cdot,\cdot)$ in \eqref{eq:def-g-gamma}.
\end{proof}

We proceed to formulate an equivalent large-cell expression for the surface energy $g$, analogous in structure to the one-dimensional formulations (cf. \cite[Proposition 2.5 item (v)]{ACF26}).

\begin{proposition}\label{p:chargT}
Under the assumptions of Theorem~\ref{thm:general-anisotropic-gamma-convergence},
for every $(\zeta,\nu)\in\R^N\times\Sf^{n-1}$ one has
\begin{equation}\label{e:characg}
 g(\zeta,\nu)
 =
 \lim_{T\to\infty}
 \inf_{(u,v)\in\calU_{\zeta,\nu}^T}
 \frac{1}{T^{n-1}}
 \callF^{\infty}_{1}(u,v;Q^\nu_T),
\end{equation}
with
\begin{multline}\label{e:spazio funzionale per la g vettoriale alternativa}
 \calU^T_{\zeta,\nu}
 :=
 \Big\{
 (u,v)\in
 H^1(Q^\nu_T;\R^N)\times H^1(Q^\nu_T;[0,1])\colon
 \\
 v=\mathbf 1_{\{|x\cdot\nu|\ge2\}}\ast\phi_1
 \ \text{ and }\
 u=u_{\zeta,\nu}\ast\phi_1
 \quad\text{on }\partial Q^\nu_T
 \Big\},
\end{multline}
where the boundary equalities are understood in the sense of traces.
\end{proposition}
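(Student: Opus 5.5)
We set $G(T):=\inf_{(u,v)\in\calU^T_{\zeta,\nu}}T^{1-n}\callF^{\infty}_1(u,v;Q^\nu_T)$. The whole proof rests on a scaling identity. For $(u,v)\in\calU^T_{\zeta,\nu}$ and $\eps:=1/T$, we rescale $u_T(x):=u(Tx)$ and $v_T(x):=v(Tx)$ on $Q^\nu$. Since $\Phi_\infty$ and $\gamma^2$ are positively two-homogeneous, the change of variables $y=Tx$ (exactly as in \eqref{eq:cell-scaling-gamma}) gives
\begin{equation*}
 \callF^{\infty}_{1/T}(u_T,v_T;Q^\nu)=T^{1-n}\callF^{\infty}_1(u,v;Q^\nu_T).
\end{equation*}
Moreover, the boundary data $u_{\zeta,\nu}\ast\phi_1$ and $\mathbf 1_{\{|x\cdot\nu|\ge2\}}\ast\phi_1$ on $\partial Q^\nu_T$ become exactly $u_{\zeta,\nu}\ast\phi_{1/T}$ and $\mathbf 1_{\{|x\cdot\nu|\ge2/T\}}\ast\phi_{1/T}$ on $\partial Q^\nu$, because $u_{\zeta,\nu}$ is $0$-homogeneous. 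Hence $G(T)$ coincides with the infimum of $\callF^\infty_{\eps}$ on $Q^\nu$ over pairs carrying the mollified boundary conditions of Proposition~\ref{periodicity} at scale $\eps=1/T$. We then prove $\limsup_{T\to\infty}G(T)\le g(\zeta,\nu)\le\liminf_{T\to\infty}G(T)$, which yields both the existence of the limit and the identity \eqref{e:characg}.

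\textbf{Upper bound.} Let $T_j\to\infty$ be any sequence with $\lim_j G(T_j)=\limsup_T G(T)$, and put $\tilde\eps_j:=1/T_j$. Proposition~\ref{periodicity} supplies pairs $(\tilde u_j,\tilde v_j)$ with exactly the boundary conditions at scale $\tilde\eps_j$ and with $\callF^\infty_{\tilde\eps_j}(\tilde u_j,\tilde v_j;Q^\nu)\to g(\zeta,\nu)$. Blowing these pairs back up to $Q^\nu_{T_j}$ produces competitors in $\calU^{T_j}_{\zeta,\nu}$. By the scaling identity, $G(T_j)\le\callF^\infty_{\tilde\eps_j}(\tilde u_j,\tilde v_j;Q^\nu)$, and therefore $\limsup_T G(T)\le g(\zeta,\nu)$. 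This direction needs nothing beyond Proposition~\ref{periodicity}.

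\textbf{Lower bound.} Take $T_j\to\infty$ realizing $\liminf_T G(T)$; we may assume this value is finite, since otherwise there is nothing to prove. Choose almost-minimizers $(u^j,v^j)\in\calU^{T_j}_{\zeta,\nu}$ with $T_j^{1-n}\callF^\infty_1(u^j,v^j;Q^\nu_{T_j})\le G(T_j)+1/j$, and rescale them to $(u_j,v_j)$ on $Q^\nu$ with $\eps_j=1/T_j$. By the scaling identity their energies converge to $\liminf_T G(T)$. It then suffices to show $(u_j,v_j)\to(u_{\zeta,\nu},1)$ in $L^1(Q^\nu)$, since the definition \eqref{eq:def-g-gamma} would give $g(\zeta,\nu)\le\liminf_T G(T)$.

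The convergence $v_j\to1$ follows from the energy bound, as remarked after \eqref{eq:def-g-gamma}. The convergence of $u_j$ is the main obstacle: the energy only controls $\eps_jf^2(v_j)\Phi_\infty(\nabla u_j)$, which degenerates where $v_j$ is small, so there is no direct compactness. Our first attempt is to modify the competitor. We truncate $u_j$ with $\mathcal T_k$, $a_k>|\zeta|$; this preserves the boundary data and costs a factor $(1+C^2/M)$ by the averaging argument of Proposition~\ref{periodicity}, Step 1. Next, on the strips $\{x\cdot\nu>\eps_j^{1/2}\}$ and $\{x\cdot\nu<-\eps_j^{1/2}\}$ we would like to replace $u_j$ by $\zeta$ and $0$ respectively, but doing so costs energy that must be shown negligible. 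This is not clear, since the minimizer might cheaply place cracks elsewhere.

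Instead we argue by compactness. The truncated $u_j$ are bounded in $L^\infty$, with energies bounded. Extending them outside $Q^\nu$ by $(U_j,V_j)$, on a larger cube, makes the boundary data part of the domain. Applying Lemma~\ref{l:suppenergy} to $\Fm_\eps$, with $\varphi(t)$ replaced by $\varphi'(0)t$, then gives a bound on $H_\delta(u_j^\delta)$. Combined with the $L^\infty$ bound and the linear growth of $h^{\qc}$ (Proposition~\ref{prop:bulk-growth-section2}), this yields precompactness in $L^1$, so $u_j\to u$ along a subsequence, with $u$ in $BV$.

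The limit $u$ need not equal $u_{\zeta,\nu}$. We fix this by a rescaling-and-covering argument in the spirit of the proof of Proposition~\ref{periodicity}. We use the subadditivity in $T$ of $T^{n-1}G(T)$ up to boundary-layer errors: gluing $\lfloor T/S\rfloor^{n-1}$ translated copies of an $S$-cell tangentially, and padding the rest with the canonical profile $(U,V)$, gives
\begin{equation*}
 G(T)\le G(S)+o_T(1)\quad\text{for fixed }S.
\end{equation*}
We then pass to the limit with $T=T_j$ along a diagonal sequence in which the $S$-cells are themselves rescaled from the $\liminf$ sequence. By construction the glued competitors coincide with $u_{\zeta,\nu}$ away from a thin neighbourhood of $\Pi_\nu$ up to $L^1$-small errors, so they converge to $u_{\zeta,\nu}$ in $L^1$. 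This secures the convergence required by \eqref{eq:def-g-gamma} and closes the lower bound.
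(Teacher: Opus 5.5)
Your upper bound is the same as the paper's: Proposition~\ref{periodicity} at scale $\tilde\eps_j=1/T_j$, followed by the blow-up to $Q^\nu_{T_j}$. Your lower bound is correct. However, only its final covering step carries weight, and that step is the paper's argument in a roundabout form.

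The paper never takes almost-minimizers along a sequence $T_j\to\infty$. Instead it does the following:
\begin{itemize}
\item It fixes a single $T>6$ with $g_T\le\liminf_S g_S+\eta$ and one competitor $(u_T,v_T)\in\calU^T_{\zeta,\nu}$.
\item It chooses $\eps_j\downarrow0$ with $1/(\eps_jT)\in\N$.
\item It tiles the slab $\{|x\cdot\nu|<\eps_jT/2\}\cap Q^\nu$ exactly by $(\eps_jT)^{1-n}$ tangential translates of the rescaled cell, and uses the canonical profile elsewhere.
\end{itemize}
Then $\callF^\infty_{\eps_j}(u_j,v_j;Q^\nu)=T^{1-n}\callF^\infty_1(u_T,v_T;Q^\nu_T)$ for every $j$. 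The $L^1$ distance to $u_{\zeta,\nu}$ is $\eps_jT^{1-n}\|u_T-u_{\zeta,\nu}\|_{L^1(Q^\nu_T)}$, which tends to $0$ simply because the cell is fixed. Since \eqref{eq:def-g-gamma} admits any sequence $\eps_j\downarrow0$, this gives $g\le g_T$ for every $T>6$ directly. Combined with the upper bound, the limit exists and equals $g$.

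Measured against this, several parts of your proposal are unnecessary:
\begin{itemize}
\item The compactness step via Lemma~\ref{l:suppenergy} and the truncation by $\mathcal T_k$ are never used. As you noticed yourself, they cannot identify the limit.
\item The padding with the canonical profile disappears if the big cube is an integer multiple of the cell.
\item The inequality $G(T)\le G(S)+o_T(1)$ is not what yields $g\le\liminf_T G(T)$. That comes from the replicated sequence being admissible in \eqref{eq:def-g-gamma}, so the diagonal over the $\liminf$ sequence is also unnecessary.
\end{itemize}
Your phrase ``up to $L^1$-small errors'' is best made precise by the explicit computation above: fixed cell, vanishing fine scale, no truncation needed. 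What your subadditivity buys is existence of $\lim_T G(T)$ independently of $g$. The paper's construction gives this for free as well, since it actually shows $\limsup_T g_T\le g\le\inf_{T>6}g_T$.
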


\begin{proof}
For every $(\zeta,\nu)\in\R^N\times\Sf^{n-1}$ and $T>0$, define
\begin{equation}\label{e:def di gT}
 g_T(\zeta,\nu)
 :=
 \inf_{(u,v)\in\calU_{\zeta,\nu}^T}
 \frac{1}{T^{n-1}}
 \callF^{\infty}_{1}(u,v;Q^\nu_T).
\end{equation}
We first prove that
\begin{equation}\label{e:limsup leq g}
 \limsup_{T\to\infty}g_T(\zeta,\nu)
 \le g(\zeta,\nu)
 \qquad
 \text{for every }(\zeta,\nu)\in\R^N\times\Sf^{n-1}.
\end{equation}
Fix $(\zeta,\nu)\in\R^N\times\Sf^{n-1}$ and let
$T_j\uparrow\infty$ be a sequence realizing the upper limit on the
left-hand side. By Proposition~\ref{periodicity}, applied to the
prescribed sequence $1/T_j\downarrow0$, there exist $(u_j,v_j)\in
 H^1(Q^\nu;\R^N)\times H^1(Q^\nu;[0,1])
$ such that
\[
 (u_j,v_j)\to(u_{\zeta,\nu},1)
 \qquad\text{in }L^2(Q^\nu;\R^{N+1}),
\]
and
\begin{equation}\label{e:bound}
 u_j=u_{\zeta,\nu}\ast\phi_{\frac1{T_j}},
 \qquad
 v_j=
 \chi_{\{|x\cdot\nu|\ge\frac2{T_j}\}}
 \ast\phi_{\frac1{T_j}}
 \qquad\text{on }\partial Q^\nu,
\end{equation}
with
\begin{equation}\label{e:estf}
 \lim_{j\to\infty}
 \callF^{\infty}_{\frac1{T_j}}
 (u_j,v_j;Q^\nu)
 =
 g(\zeta,\nu).
\end{equation}
Define
\[
 \tilde u_j(y):=u_j\left(\frac{y}{T_j}\right),
 \qquad
 \tilde v_j(y):=v_j\left(\frac{y}{T_j}\right),
 \qquad y\in Q^\nu_{T_j}.
\]
Since $\Phi_\infty$ and $\gamma^2$ are
positively two-homogeneous, a change of variables gives
\begin{equation}\label{eq:large-cell-scaling-upper-gamma}
 \frac{1}{T_j^{n-1}}
 \callF^{\infty}_{1}
 (\tilde u_j,\tilde v_j;Q^\nu_{T_j})
 =
 \callF^{\infty}_{\frac1{T_j}}
 (u_j,v_j;Q^\nu).
\end{equation}
The scaling properties of $u_{\zeta,\nu}$ and of the mollifier imply,
by \eqref{e:bound}, that $(\tilde u_j,\tilde v_j)\in\calU^{T_j}_{\zeta,\nu}$, obtaining \eqref{e:limsup leq g}.

We now prove the converse inequality
\begin{equation}\label{e:liminf geq g}
 \liminf_{T\to\infty}g_T(\zeta,\nu)
 \ge g(\zeta,\nu).
\end{equation}
Fix $\eta>0$ and choose $T$, $(u_T,v_T)\in \calU^T_{\zeta,\nu}$ such that
\begin{equation}\label{e:estf2}
 \frac{1}{T^{n-1}}
 \callF^{\infty}_{1}
 (u_T,v_T;Q^\nu_T)
 \le
 \liminf_{S\to\infty}g_S(\zeta,\nu)+\eta.
\end{equation}
Let $\eps_j\downarrow0$ be such that
\[
 m_j:=\frac{1}{\eps_jT}\in\N
 \qquad\text{for every }j.
\]
Let $I_{\eps_j}^\nu\subset\Pi_\nu$ be a finite set such that the
oriented cubes
\[
 Q^\nu_{\eps_jT}(\eps_jd)
 :=
 \eps_jd+Q^\nu_{\eps_jT},
 \qquad d\in I_{\eps_j}^\nu,
\]
are pairwise disjoint, are contained in $Q^\nu$, and satisfy
\begin{equation}\label{eq:tangential-tiling-large-cell}
 \calH^{n-1}\left(
 (Q^\nu\cap\Pi_\nu)
 \setminus
 \bigcup_{d\in I_{\eps_j}^\nu}
 \bigl(Q^\nu_{\eps_jT}(\eps_jd)\cap\Pi_\nu\bigr)
 \right)=0.
\end{equation}
Define the functions
\[
 \widetilde U_j(y)
 :=u_{\zeta,\nu}\ast\phi_{\eps_j}(y), \qquad \widetilde V_j(y)
 := \mathbf 1_{\{|x\cdot\nu|\ge2\eps_j\}}
 \ast\phi_{\eps_j}(y),
\]
and set
\[
 (u_j(y),v_j(y))
 :=
 \begin{cases}
 \displaystyle
 \left(
 u_T\left(\frac{y}{\eps_j}-d\right),
 v_T\left(\frac{y}{\eps_j}-d\right)
 \right),
 &
 \begin{aligned}
 &\text{if }y\in
 Q^\nu_{\eps_jT}(\eps_jd)\\[-2pt]
 &\text{for some }d\in I_{\eps_j}^\nu,
 \end{aligned}
 \\[10pt]
 \bigl(\widetilde U_j(y),\widetilde V_j(y)\bigr),
 &
 \text{otherwise in }Q^\nu.
 \end{cases}
\]

Since $d\in\Pi_\nu$, translations by $d$ leave the canonical boundary
data invariant, consequently we have $(u_j,v_j)\in
 H^1(Q^\nu;\R^N)\times H^1(Q^\nu;[0,1]).
$ We next verify convergence: since $d\cdot\nu=0$ and $\eps_j>0$,
\[
 u_{\zeta,\nu}\left(\frac{y}{\eps_j}-d\right)
 =
 u_{\zeta,\nu}(y).
\]
A change of variables in each cube gives
\begin{align*}
 \int_{\bigcup_{d\in I_{\eps_j}^\nu}
 Q^\nu_{\eps_jT}(\eps_jd)}
 |u_j-u_{\zeta,\nu}|\,\dx=
 \eps_j^n(\#I_{\eps_j}^\nu)
 \int_{Q^\nu_T}|u_T-u_{\zeta,\nu}|\,\dx=
 \eps_jT^{1-n}
 \int_{Q^\nu_T}|u_T-u_{\zeta,\nu}|\,\dx
 \longrightarrow0,
\end{align*}
and similarly,
\[
 \int_{\bigcup_{d\in I_{\eps_j}^\nu}
 Q^\nu_{\eps_jT}(\eps_jd)}
 |v_j-1|\,\dx
 =
 \eps_jT^{1-n}
 \int_{Q^\nu_T}|v_T-1|\,\dx
 \longrightarrow0.
\]
Outside the union of the cubes, the pair is equal to
$(\widetilde U_j,\widetilde V_j)$, which converges to
$(u_{\zeta,\nu},1)$ in $L^1(Q^\nu,\R^{N+1})$. 

It remains to compute the energy. By the two-homogeneity of $\Phi_\infty$ and $\gamma^2$, the change of variable $x=\frac{y}{\eps_j}-d$ yields
\begin{equation}\label{eq:small-cell-scaling-gamma}
 \callF^{\infty}_{\eps_j}
 \bigl(u_j,v_j;
 Q^\nu_{\eps_jT}(\eps_jd)\bigr)
 =
 \eps_j^{n-1}
 \callF^{\infty}_{1}
 (u_T,v_T;Q^\nu_T).
\end{equation}

Moreover,
\[
 \widetilde U_j=u_{\zeta,\nu}
 \quad\text{on }\{|x\cdot\nu|\ge\eps_j\},
\]
whereas
\[
 \widetilde V_j=0
 \quad\text{on }\{|x\cdot\nu|\le\eps_j\},
 \qquad
 \widetilde V_j=1
 \quad\text{on }\{|x\cdot\nu|\ge3\eps_j\}.
\]
Taking $T>0$ large enough, we may assume the support of the energy of
$(\widetilde U_j,\widetilde V_j)$ is contained in the central slab
covered by the cubes
$Q^\nu_{\eps_jT}(\eps_jd)$. Therefore, using
$\Phi_\infty(0)=0$ and $f(0)=0$, the energy vanishes outside their
union, and from \eqref{eq:small-cell-scaling-gamma} we obtain
\begin{align*}
 \callF^{\infty}_{\eps_j}
 (u_j,v_j;Q^\nu)=
 \sum_{d\in I_{\eps_j}^\nu}
 \callF^{\infty}_{\eps_j}
 \bigl(u_j,v_j;
 Q^\nu_{\eps_jT}(\eps_jd)\bigr)=
 \eps_j^{n-1}\#I_{\eps_j}^\nu
 \callF^{\infty}_{1}
 (u_T,v_T;Q^\nu_T)=
 \frac{1}{T^{n-1}}
 \callF^{\infty}_{1}
 (u_T,v_T;Q^\nu_T).
\end{align*}
In particular, by definition of
$g(\cdot,\cdot)$, we conclude that
\begin{align*}
 g(\zeta,\nu)
\le
 \liminf_{j\to\infty}
 \callF^{\infty}_{\eps_j}
 (u_j,v_j;Q^\nu)=
 \frac{1}{T^{n-1}}
 \callF^{\infty}_{1}
 (u_T,v_T;Q^\nu_T)\le
 \liminf_{S\to\infty}g_S(\zeta,\nu)+\eta.
\end{align*}
Letting $\eta\downarrow0$ proves \eqref{e:liminf geq g}, concluding the proof.
\end{proof}

Building on this representation of $g$, we derive a modified version of Proposition~\ref{periodicity}, which includes an additional regularization term $\eta_\eps \int \Phi(\nabla u)\,dx$. This term, which ensures the coercivity of the functionals $\Fm_\eps$, is crucial for the analysis in Section~\ref{s:upperbound}.

\begin{proposition}\label{propuvjump}
For any sequences $\eps_j\downarrow0$ and $\eta_j\downarrow0$
satisfying $\eta_j/\eps_j\to0$, and for every
$(\zeta,\nu)\in \R^N\times \Sf^{n-1}$, under the assumptions of
Theorem~\ref{thm:general-anisotropic-gamma-convergence}, there exist functions
\begin{equation*}
(u_j,v_j)\to (u_{\zeta,\nu},1)
\qquad\text{in }L^2(Q^\nu;\R^{N+1}),
\end{equation*}
such that
\begin{equation*}
\lim_{j\to\infty}
\callF^{\infty}_{\eps_j}(u_j,v_j;Q^\nu)=
g(\zeta,\nu) \qquad \textup{and} \qquad \lim_{j\to\infty}
\eta_j\int_{Q^\nu}|\nabla u_j|^2\dx=0,
\end{equation*}
with
\begin{equation*}
u_j=u_{\zeta,\nu}\ast\phi_{\eps_j},
\qquad
v_j=
\mathbf 1_{{|x\cdot\nu|\ge2\eps_j}}\ast\phi_{\eps_j}
\qquad
\text{on }\partial Q^\nu.
\end{equation*}
\end{proposition}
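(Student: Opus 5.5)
The plan is to combine the large-cell characterization of Proposition~\ref{p:chargT} with the tiling construction used in the second half of its proof, choosing the cell size so that the Dirichlet energy of the rescaled competitors is killed by $\eta_j$. Fix $(\zeta,\nu)$. By Proposition~\ref{p:chargT}, for every $T>0$ we can pick $(u_T,v_T)\in\calU^T_{\zeta,\nu}$ with
\[
\frac{1}{T^{n-1}}\callF^{\infty}_1(u_T,v_T;Q^\nu_T)\le g_T(\zeta,\nu)+\frac1T,
\qquad g_T(\zeta,\nu)\to g(\zeta,\nu).
\]
Each $u_T$ lies in $H^1(Q^\nu_T;\R^N)$, so $D_T:=\int_{Q^\nu_T}|\nabla u_T|^2\,\dx<\infty$. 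This quantity may blow up as $T\to\infty$, since the elastic term only controls $f^2(v_T)\Phi_\infty(\nabla u_T)$ and $f$ vanishes at $0$. However, it is a fixed finite number for each $T$.

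\textbf{Choice of $T_j$.} Since $\eta_j/\eps_j\to0$, we choose $T_j\uparrow\infty$ slowly enough that
\[
\frac{\eta_j}{\eps_j}\,T_j^{1-n}D_{T_j}\to0 .
\]
We also want $1/(\eps_jT_j)$ to be an integer, or at least the tangential tiling of $Q^\nu\cap\Pi_\nu$ by cubes of side $\eps_jT_j$ to leave a remainder of vanishing $\calH^{n-1}$ measure. This can be arranged by slightly adjusting $T_j$ (e.g.\ $T_j:=1/(\eps_j m_j)$ with $m_j\in\N$ chosen suitably), keeping in mind that $T\mapsto g_T$ converges along every sequence.

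\textbf{Construction and energy.} As in the proof of \eqref{e:liminf geq g}, we set
\[
(u_j,v_j)(y):=(u_{T_j},v_{T_j})\Big(\frac{y}{\eps_j}-d\Big)\quad\text{on } Q^\nu_{\eps_jT_j}(\eps_jd),\ d\in I^\nu_{\eps_j},
\]
and use the mollified profiles $(u_{\zeta,\nu}\ast\phi_{\eps_j},\mathbf 1_{\{|x\cdot\nu|\ge2\eps_j\}}\ast\phi_{\eps_j})$ elsewhere. The matching boundary traces give $H^1$ regularity and the prescribed data on $\partial Q^\nu$. The scaling identity \eqref{eq:small-cell-scaling-gamma} together with $\eps_j^{n-1}\#I^\nu_{\eps_j}\to T_j^{1-n}$ gives
\[
\callF^\infty_{\eps_j}(u_j,v_j;Q^\nu)=T_j^{1-n}\callF^\infty_1(u_{T_j},v_{T_j};Q^\nu_{T_j})+o(1)\to g(\zeta,\nu).
\]
The $o(1)$ accounts for the uncovered tangential remainder, whose energy is at most $c\,\calH^{n-1}$ of that remainder by the computation in Proposition~\ref{periodicity}. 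Convergence in $L^2$ follows as in the $L^1$ computation there, because $u_{T_j}$ can be assumed uniformly bounded by $a_{2M}$ after a truncation via $\mathcal T_k$ as in Step~1 of Proposition~\ref{periodicity}. Truncation preserves the boundary data when $a_M>1+|\zeta|$, and costs a factor $(1+C^2/M)$, which we absorb with $M=M_j\to\infty$. Both integrals are then bounded by $\eps_jT_j\|\cdot\|_\infty^2$ times the measure of the central slab, which tends to zero provided $\eps_jT_j\to0$; we impose this in the choice of $T_j$.

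\textbf{Dirichlet term.} In each small cube $\nabla u_j(y)=\eps_j^{-1}\nabla u_{T_j}(y/\eps_j-d)$. Hence
\[
\int_{\text{cube}}|\nabla u_j|^2=\eps_j^{n-2}D_{T_j},
\]
and summing over the roughly $(\eps_jT_j)^{1-n}$ cubes gives about $\eps_j^{-1}T_j^{1-n}D_{T_j}$. The outer mollified region contributes $c/\eps_j$ on a thin strip. Altogether,
\[
\eta_j\int_{Q^\nu}|\nabla u_j|^2\le \frac{\eta_j}{\eps_j}\bigl(T_j^{1-n}D_{T_j}+c\bigr)\to0
\]
by the choice of $T_j$ and $\eta_j/\eps_j\to0$.

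\textbf{Main obstacle.} The delicate point is the simultaneous choice of $T_j$ (and the truncation level $M_j$). It must satisfy the integrality or tiling constraint, the condition $\eps_jT_j\to0$, and the slow-growth condition controlling $D_{T_j}$. All three are compatible because each imposes only an upper bound on the growth of $T_j$ relative to $\eps_j$: first pick a diagonal sequence of $T$'s from the countable family $T\in\N$, then round to the tiling lattice. The only issue is checking that the rounding does not destroy optimality, which the limit in Proposition~\ref{p:chargT} guarantees.
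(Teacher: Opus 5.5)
Your proposal is correct and is essentially the paper's proof. You reuse the tiling construction from the second half of Proposition~\ref{p:chargT} and let $T=T_j\to\infty$ slowly enough that $\frac{\eta_j}{\eps_j}\bigl(T_j^{1-n}\|\nabla u_{T_j}\|_{L^2(Q^\nu_{T_j})}^2+c\bigr)\to0$, which is exactly the paper's requirement $\eta_jc_{T_j}/\eps_j\to0$. The extra details you add (truncation by $\mathcal T_k$ for the $L^2$ convergence, $\eps_jT_j\to0$, and the tangential-remainder estimate) are sound, and they make rounding $T_j$ to the lattice $1/(\eps_j\N)$ unnecessary. That matters, because rounding would replace the competitor $u_{T_j}$, and your slow-growth control of $D_{T_j}$ would then no longer be automatic.
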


\begin{proof}
We use the construction developed in the second part of Proposition~\ref{p:chargT}. For every sufficiently large $T>0$, and $j\in \N$ we construct a sequence $(u_{j,T},v_{j,T})\in H^1(Q^\nu,\R^N\times [0,1])$, with
\begin{equation}\label{e:boundary coercive prop} u_{j,T}=u_{\zeta,\nu}\ast\phi_{\eps_j}, \qquad v_{j,T} = \mathbf 1_{\{|x\cdot\nu|\ge2\eps_j\}}\ast\phi_{\eps_j} \qquad \text{on }\partial Q^\nu,
\end{equation} 
such that, for every fixed $T$, 
\begin{equation}\label{e:L2 fixed T coercive} (u_{j,T},v_{j,T}) \longrightarrow (u_{\zeta,\nu},1) \qquad \text{in }L^2(Q^\nu;\R^{N+1}) \end{equation} 
as $j\to\infty$, and 
\begin{equation}\label{e:energy fixed T coercive} \limsup_{j\to\infty} \callF^\infty_{\eps_j} (u_{j,T},v_{j,T};Q^\nu) \leq g_T(\zeta,\nu), 
\end{equation} 
where $g_T$ is defined in \eqref{e:def di gT}. 

It remains to show that the parameter $T$ can be chosen depending on $j$ in such a way that, in addition to \eqref{e:L2 fixed T coercive}--\eqref{e:energy fixed T coercive}, the additional coercive term tends to zero. By the very construction of $u_j$, we have
\begin{align*}
\|\nabla u_{j,T_j}\|_{L^2(Q^\nu)}^2
&\leq
\sum_{d\in I_{\eps_j}}
\|\nabla u_{j,T_j}\|_{L^2(Q^\nu_{\eps_jT_j}(\eps_jd))}^2
+
\frac{c}{\eps_j^2}
\calL^n
\bigl(
Q^\nu\cap\{|x\cdot\nu|\leq\eps_j\}
\bigr)
\\
&=
\eps_j^{n-1}(\# I_{\eps_j})
\|\nabla u_{T_j}\|_{L^2(Q_{T_j}^\nu)}^2
+
\frac{c}{\eps_j}=
\frac{\|\nabla u_{T_j}\|_{L^2(Q_{T_j}^\nu)}^2}{T_j^{n-1}}
+
\frac{c}{\eps_j}
\leq
\frac{c_{T_j}}{\eps_j}.
\end{align*}
To finish the proof, it is enough to choose $T_j$, such that $\frac{\eta_j c_{T_j}}{\eps_j}\to0$.
\end{proof}

We conclude the analysis of alternative formulations by proving that, when the recession function $\Phi_{\infty}$ is given by the squared operator norm $\|\cdot\|_{\rho,\gamma}^2$ (cf. \eqref{eq:rho-gamma-tensor-norm}), the multidimensional cell problem for $g$ explicitly factorizes into the directional norm $\gamma(\nu)$ and the one-dimensional cohesive density $g_{\scal}(\rho(\zeta))$.

\begin{proposition}\label{prop:rho-gamma-identification}
Let $\rho$ be norm on $\R^N$ and assume that
\begin{equation}\label{eq:Phi-rho-gamma}
 \Phi_\infty(\xi)=\|\xi\|_{\rho,\gamma}^2
 \qquad
 \text{for every }\xi\in\R^{N\times n},
\end{equation}
as in \eqref{eq:rho-gamma-tensor-norm}. Then, under the assumptions of
Theorem~\ref{thm:general-anisotropic-gamma-convergence}, one has
\begin{equation}\label{eq:g-rho-gamma-identification}
 g(\zeta,\nu)
 =
 \gamma(\nu)\,
 g_\scal\bigl(\rho(\zeta)\bigr)
\end{equation}
for every $(\zeta,\nu)\in\R^N\times\Sf^{n-1}$, $g_\scal$ is defined in \eqref{e:definizione di g scal}.
\end{proposition}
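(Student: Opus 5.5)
We prove the two inequalities separately, working with the large-cell characterization of Proposition~\ref{p:chargT} for the lower bound and with the definition \eqref{eq:def-g-gamma} for the upper bound.

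\emph{Upper bound} $g(\zeta,\nu)\le\gamma(\nu)g_\scal(\rho(\zeta))$. Fix an almost-optimal pair $(\alpha,\beta)\in\calU_{\rho(\zeta)}(0,T)$ for the one-dimensional problem on a long interval. By the reparametrization invariance of the integrand (it is one-homogeneous in $(\alpha',\beta')$), we may rescale time freely. Choose a vector $e\in\R^N$ with $e=\zeta/\rho(\zeta)$ and a covector $p\in\R^n$ realizing $\gamma^\circ$, i.e.\ $p\cdot\nu$ related to $\gamma$ via duality. We then define one-dimensional profiles $u_j(x)=e\,\hat\alpha_j(x\cdot\nu)$ and $v_j(x)=\hat\beta_j(x\cdot\nu)$. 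For such maps $\nabla u_j=\hat\alpha_j' e\otimes\nu$, and by \eqref{eq:rho-gamma-tensor-norm}
\[
\|e\otimes\nu\|_{\rho,\gamma}=\sup_{\rho^\circ(a)\le1}|a\cdot e|\,\gamma(\nu)=\rho(e)\gamma(\nu)=\gamma(\nu),
\]
while $\gamma(\nabla v_j)=\gamma(\nu)|\hat\beta_j'|$. Hence the cell energy equals
\[
\int\Big(\varphi'(0)\eps\frac{\ell(\beta)}{Q(1-\beta)}\gamma(\nu)^2|\hat\alpha'|^2+\frac{\omega(1-\beta)}{4\eps}+\eps\gamma(\nu)^2|\hat\beta'|^2\Big)dt.
\]
Substituting $t=\gamma(\nu)s$ and optimizing the equipartition via the reparametrization $\eps\,\gamma(\nu)^2|\cdot'|^2$ versus $\omega/(4\eps)$, the inequality $a+b\ge2\sqrt{ab}$ becomes an equality after arclength reparametrization. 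This gives the energy $\gamma(\nu)\int\omega(1-\beta)^{1/2}\big(\varphi'(0)\frac{\ell(\beta)}{Q(1-\beta)}|\alpha'|^2+|\beta'|^2\big)^{1/2}$. Note the factor $1/4$ versus $1/2$ in the normalization, which must match \eqref{e:definizione di g scal}. The profiles can be made compactly supported near $\{x\cdot\nu=0\}$ and converge to $u_{\zeta,\nu}$, so the definition \eqref{eq:def-g-gamma} yields the bound after letting the approximation parameters vanish.

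\emph{Lower bound} $g(\zeta,\nu)\ge\gamma(\nu)g_\scal(\rho(\zeta))$. This is the main obstacle, since competitors are genuinely $n$-dimensional and vector-valued. We reduce to scalar, one-dimensional functions in two steps.

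First we scalarize in the target. Pick $a\in\R^N$ with $\rho^\circ(a)=1$ and $a\cdot\zeta=\rho(\zeta)$. For any $u$, set $w:=a\cdot u$. Then $\|\nabla u\|_{\rho,\gamma}\ge\gamma(\nabla u^{\mathsf T}a)=\gamma(\nabla w)$, so the energy dominates the scalar anisotropic energy of $(w,v)$, with boundary jump $\rho(\zeta)$.

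Second we slice in the domain. Using \eqref{e:polar inequality} with a vector $p$ such that $\gamma^\circ(p)=1$ and $p\cdot\nu=\gamma(\nu)$ (a dual vector of $\nu$), we get $\gamma(\nabla w)\ge|\partial_p w|$ and likewise for $v$. Thus the energy is bounded below by the integral of $\varphi'(0)\eps f^2(v)|\partial_pw|^2+\omega(1-v)/(4\eps)+\eps|\partial_pv|^2$. We apply Fubini along lines parallel to $p$, which cross $Q^\nu_T$ transversally. On each line, $a^2+b^2\ge 2ab$ gives the lower bound $\int\omega(1-v)^{1/2}(\varphi'(0)f^2(v)|\partial_pw|^2+|\partial_pv|^2)^{1/2}\,dt$. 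This is parametrization-invariant, and is at least $g_\scal(\rho(\zeta))$ whenever the line passes from the region $x\cdot\nu<-3$ to $x\cdot\nu>3$, where the boundary data give $w=0,v=1$ and $w=\rho(\zeta),v=1$ respectively.

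The delicate point is the boundary: oblique lines exit through the lateral faces of $Q^\nu_T$. We handle this with Proposition~\ref{p:chargT}, taking $T$ large. Lines hitting the central slab $\{|x\cdot\nu|<3\}$ near the lateral boundary form a set whose measure fraction is $O(1/T)$, since $p$ is fixed. To make the endpoint data valid we extend $(w,v)$ by the boundary data outside the cube along $p$, which costs nothing because $\nabla u_{\zeta,\nu}*\phi_1$ lives where $v=0$. Alternatively, we use lateral periodicity obtained from the tiling in Proposition~\ref{p:chargT}.

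The Fubini measure of lines crossing the slab, counted through the cross-section orthogonal to $p$, is $T^{n-1}\,|p\cdot\nu|/|p|$ per unit slope normalization. Combined with the change of variables $dt$ along $p$, this produces exactly the factor $\gamma(\nu)$. Dividing by $T^{n-1}$ and letting $T\to\infty$ concludes. We expect the bookkeeping of this transversal Fubini factor, together with the lateral boundary error, to be the technically hardest step.
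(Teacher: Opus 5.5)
Your proposal is correct and follows the paper's proof essentially step by step. For the upper bound, both use the one-dimensional profiles $u_j=\frac{\zeta}{\rho(\zeta)}\,\alpha\bigl(\frac{x\cdot\nu}{\eps_j\gamma(\nu)}\bigr)$, $v_j=\beta\bigl(\frac{x\cdot\nu}{\eps_j\gamma(\nu)}\bigr)$ together with the rank-one identity $\|z\otimes p\|_{\rho,\gamma}=\rho(z)\gamma(p)$. For the lower bound, both use the large-cell formula of Proposition~\ref{p:chargT}, scalarize via $w=a\cdot u$ with $\rho^\circ(a)=1$ and $a\cdot\zeta=\rho(\zeta)$, and slice along a $\gamma^\circ$-unit vector $q$ with $q\cdot\nu=\gamma(\nu)$, discarding an $O(1/T)$ lateral layer of lines. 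Your cross-section bookkeeping is equivalent to the paper's Jacobian $\gamma(\nu)$ for $(y,t)\mapsto y+tq$, $y\in\Pi_\nu$.

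The one step to tighten is the equipartition claim in the upper bound. Exact arclength reparametrization need not exist, since near $\beta=1$ it can require an unbounded interval. Instead, as the paper does, keep the $\omega/4$ form: the scaling gives exactly $\gamma(\nu)$ times the one-dimensional energy on $(-L/2,L/2)$, and you then conclude with $g_\scal(s)=\lim_{L\to\infty}m_L(s)$ from \cite[Proposition 2.5]{ACF26}.
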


\begin{proof}
We divide the proof into three steps.

\medskip
\noindent
\textbf{Step 1. Preliminaries.}

We first record the two properties of
$\|\cdot\|_{\rho,\gamma}$ that will be used in the proof. Let $\xi\in\R^{N\times n}$ and let $a\in\R^N$ satisfy
$\rho^\circ(a)\le1$, where $\rho^\circ$ is the dual norm of $\rho$ (cf. \eqref{eq:def-polar-norm}). By definition \eqref{eq:rho-gamma-tensor-norm},
\begin{equation}\label{eq:tensor-projection-inequality}
 \gamma(\xi^{\mathsf T}a)
 \le
 \|\xi\|_{\rho,\gamma},
\end{equation}
and consequently, by \eqref{eq:Phi-rho-gamma},
\begin{equation}\label{eq:Phi-projection-inequality}
 \Phi_\infty(\xi)
 \ge
 \gamma(\xi^{\mathsf T}a)^2.
\end{equation}
We next show that, for every $z\in\R^N$ and $p\in\R^n$,
\begin{equation}\label{eq:tensor-rank-one-identity}
 \|z\otimes p\|_{\rho,\gamma}
 =
 \rho(z)\gamma(p).
\end{equation}
Indeed, for every $a\in\R^N$, $ (z\otimes p)^{\mathsf T}a=(z\cdot a)p.$ Therefore, using the positive homogeneity of $\gamma$,
\begin{align*}
 \|z\otimes p\|_{\rho,\gamma}
 =
 \sup_{\rho^\circ(a)\le1}
 \gamma\bigl((z\cdot a)p\bigr)=
 \gamma(p)
 \sup_{\rho^\circ(a)\le1}|z\cdot a|
 =
 \rho(z)\gamma(p).
\end{align*}
Thus, by \eqref{eq:Phi-rho-gamma},
\begin{equation}\label{eq:Phi-rank-one-identity}
 \Phi_\infty(z\otimes p)
 =
 \rho(z)^2\gamma(p)^2.
\end{equation}
For $L>0$ and $s\ge0$, set
\begin{align}
 m_L(s)
 :=
 \inf_{(\alpha,\beta)\in
 \mathcal{U}_s(-\frac L2,\frac L2)}
 \int_{-\frac L2}^{\frac L2}
 \bigg(
 &\varphi'(0)f^2(\beta)|\alpha'|^2
 +
 \frac{\PotDann(1-\beta)}{4}
 +
 |\beta'|^2
 \bigg)\,\textup{d}t
 \label{eq:def-mL-rho-gamma}
\end{align}
where $\mathcal{U}_s$ is defined in \eqref{e:glispaziu}. In particular, thanks to \cite[Proposition 2.5 item (v)]{ACF26}, we have
\begin{equation}\label{eq:gscalar-large-interval}
 g_\scal(s):=\lim_{L\to\infty}m_L(s).
\end{equation}

\medskip
\noindent
\textbf{Step 2. The lower bound.}

We prove that
\begin{equation}\label{eq:g-rho-gamma-lower}
 g(\zeta,\nu)
 \ge
 \gamma(\nu)g_\scal\bigl(\rho(\zeta)\bigr).
\end{equation}
If $\zeta=0$, the right-hand side is zero, since
$g_\scal(0)=0$, and the inequality follows from the non-negativity
of $g$. We may therefore assume that $\zeta\ne0$ and set $ s:=\rho(\zeta)>0$, $ c_\nu:=\gamma(\nu)>0$. By \eqref{eq:def-polar-norm}, there exists
$a_\zeta\in\R^N$ such that
\begin{equation}\label{eq:rho-calibrating-vector}
 \rho^\circ(a_\zeta)=1,
 \qquad
 a_\zeta\cdot\zeta=\rho(\zeta)=s,
\end{equation}
and $q_\nu\in\R^n$ such that
\begin{equation}\label{eq:gamma-calibrating-vector}
 \gamma^\circ(q_\nu)=1,
 \qquad
 q_\nu\cdot\nu=\gamma(\nu)=c_\nu.
\end{equation}
Decompose $q_\nu$ as
\begin{equation}\label{eq:q-normal-tangential-decomposition}
 q_\nu=q_\nu^\tau+c_\nu\nu,
 \qquad
 q_\nu^\tau\in\Pi_\nu.
\end{equation}
Fix $T>0$ and let $ (u,v)\in\calU_{\zeta,\nu}^T
$. Defining
the scalar function $ \alpha:=a_\zeta\cdot u$, then $ \nabla\alpha=(\nabla u)^{\mathsf T}a_\zeta$.
Since $\rho^\circ(a_\zeta)=1$, inequality
\eqref{eq:Phi-projection-inequality} gives
\begin{equation}\label{eq:bulk-projection-estimate}
 \Phi_\infty(\nabla u)
 \ge
 \gamma(\nabla\alpha)^2
 \qquad\text{a.e.\ in }Q^\nu_T.
\end{equation}
Moreover, by linearity of convolution and
$a_\zeta\cdot\zeta=s$, the boundary condition for $u$ implies
\[
 \alpha=u_{s,\nu}\ast\phi_1
 \qquad\text{on }\partial Q^\nu_T,
\]
where
\[
 u_{s,\nu}(x)
 :=
 \begin{cases}
  s,&x\cdot\nu>0,\\
  0,&x\cdot\nu\le0.
 \end{cases}
\]
Therefore, using \eqref{eq:bulk-projection-estimate},
\begin{align}
 \callF^\infty_1(u,v;Q^\nu_T)
 \ge
 \int_{Q^\nu_T}
 \left(
 \varphi'(0)f(v)^2\gamma(\nabla\alpha)^2
 +
 \frac{\PotDann(1-v)}{4}
 +
 \gamma(\nabla v)^2
 \right)\,\dx.
 \label{eq:vector-to-scalar-energy-lower}
\end{align}
We now reduce the right-hand side of
\eqref{eq:vector-to-scalar-energy-lower} to one dimension. As in Proposition~\ref{p:chargT}, set $ \widetilde Q^\nu_T:=Q^\nu_T\cap\Pi_\nu.$ Choose $R_\nu
 :=
 1+\frac{3|q_\nu^\tau|}{c_\nu}
$ and define the interior part of the tangential section by
\begin{equation}\label{eq:interior-tangential-section-direct}
 \widetilde Q^{\nu,\mathrm{int}}_T
 :=
 \left\{
 y\in\widetilde Q^\nu_T\colon
 \dist\bigl(y,\partial\widetilde Q^\nu_T\bigr)>R_\nu
 \right\}.
\end{equation}
Since $R_\nu$ is independent of $T$, we have
\begin{equation}\label{eq:interior-section-asymptotic-measure}
 \frac{
 \calH^{n-1}(\widetilde Q^{\nu,\mathrm{int}}_T)
 }{T^{n-1}}
 \longrightarrow1
 \qquad\text{as }T\to\infty.
\end{equation}
For every $y\in\widetilde Q^{\nu,\mathrm{int}}_T$, let
\begin{equation}\label{eq:def-line-interval-Iy}
 I_y
 :=
 \left\{
 t\in\R\colon y+tq_\nu\in Q^\nu_T
 \right\}.
\end{equation}
Since $Q^\nu_T$ is open and convex, $I_y$ is an open interval.
We write $I_y=(a_y,b_y)$. For $T>6$, the choice of
$\widetilde Q^{\nu,\mathrm{int}}_T$ implies that
\begin{equation}\label{eq:line-contains-fixed-interval}
 \textstyle \left[-\frac{3}{c_\nu},\frac{3}{c_\nu}\right]
 \subset I_y.
\end{equation}
Indeed, if $|t|\le3/c_\nu$, then $ (y+tq_\nu)\cdot\nu= t(q_\nu\cdot\nu)
 =tc_\nu,$ and hence $|(y+tq_\nu)\cdot\nu|\le3.$ Moreover, by definition of $R_\nu$,
the tangential displacement satisfies $|tq_\nu^\tau| \le\frac{3|q_\nu^\tau|}{c_\nu}< R_\nu,$ proving \eqref{eq:line-contains-fixed-interval}. Define now
\begin{equation}\label{eq:def-direct-slicing-region}
 E_T
 :=
 \left\{
 y+tq_\nu\colon
 y\in\widetilde Q^{\nu,\mathrm{int}}_T,\ 
 t\in I_y
 \right\}.
\end{equation}
By construction $ E_T\subset Q^\nu_T$ and the map
\[
 (y,t)\longmapsto y+tq_\nu,
 \qquad
 y\in\Pi_\nu,\quad t\in\R,
\]
is injective. Indeed, if $y_1+t_1q_\nu=y_2+t_2q_\nu$, then, taking the scalar product with $\nu$ and using
$y_1,y_2\in\Pi_\nu$, we obtain $(t_1-t_2)q_\nu\cdot\nu
 =(t_1-t_2)c_\nu=0.$ Since $c_\nu>0$, it follows that $t_1=t_2$, and hence $y_1=y_2$. 
Consequently, for every non-negative measurable function
$h\colon E_T\to[0,+\infty]$, one has
\begin{equation}\label{eq:direct-line-change-variables}
 \int_{E_T}h(x)\,\dx
 =
 c_\nu
 \int_{\widetilde Q^{\nu,\mathrm{int}}_T}
 \int_{I_y}
 h(y+tq_\nu)\,\textup{d}t\,\textup{d}\calH^{n-1}(y).
\end{equation}
For $\calH^{n-1}$-a.e.\
$y\in\widetilde Q^{\nu,\mathrm{int}}_T$, define
\begin{equation}\label{eq:def-direct-slices-alpha-v}
 \alpha_y(t):=\alpha(y+tq_\nu),
 \qquad
 v_y(t):=v(y+tq_\nu),
 \qquad t\in I_y.
\end{equation}
By the Sobolev slicing theorem, $\alpha_y,v_y\in H^1(I_y)$ and the chain rule yields
\begin{equation}\label{eq:direct-slice-derivatives}
 \alpha_y'(t)
 =
 \nabla\alpha(y+tq_\nu)\cdot q_\nu,
 \qquad
 v_y'(t)
 =
 \nabla v(y+tq_\nu)\cdot q_\nu
\end{equation}
for a.e.\ $t\in I_y$. By the dual inequality for $\gamma$ and $\gamma^\circ$ in \eqref{e:polar inequality},
\[
 |p\cdot q_\nu|
 \le
 \gamma(p)\gamma^\circ(q_\nu)
 =
 \gamma(p)
 \qquad
 \text{for every }p\in\R^n.
\]
It follows from \eqref{eq:direct-slice-derivatives} that
\begin{equation}\label{eq:gamma-controls-direct-alpha-slice}
 |\alpha_y'(t)|
 \le
 \gamma\bigl(\nabla\alpha(y+tq_\nu)\bigr) \qquad \textup{and} \qquad |v_y'(t)|
 \le
 \gamma\bigl(\nabla v(y+tq_\nu)\bigr)
\end{equation}
for a.e.\ $t\in I_y$. We next verify the endpoint conditions of the slices. Since
\[
 y+a_yq_\nu,\ y+b_yq_\nu\in\partial Q^\nu_T
\]
and,
\[
 (y+a_yq_\nu)\cdot\nu=c_\nu a_y\le-3, \qquad (y+b_yq_\nu)\cdot\nu=c_\nu b_y\ge3
\]
from \eqref{eq:line-contains-fixed-interval},
the canonical boundary conditions defining
$\calU^T_{\zeta,\nu}$ imply
\[
 \alpha_y(a_y)=0,
 \qquad
 v_y(a_y)=1, \qquad \alpha_y(b_y)=s,
 \qquad
 v_y(b_y)=1.
\]
Extend $\alpha_y$ and $v_y$ to the whole real line by setting
\[
 \overline\alpha_y(t)
 :=
 \begin{cases}
  0,&t\le a_y,\\
  \alpha_y(t),&a_y<t<b_y,\\
  s,&t\ge b_y,
 \end{cases} \qquad \textup{and} \qquad \overline v_y(t)
 :=
 \begin{cases}
  1,&t\le a_y,\\
  v_y(t),&a_y<t<b_y,\\
  1,&t\ge b_y.
 \end{cases}
\]
Therefore, by definition of
$g_\scal(s)$,
\begin{align}
 &\int_{I_y}
 \left(
 \varphi'(0)f^2(v_y)|\alpha_y'|^2
 +
 \frac{\PotDann(1-v_y)}{4}
 +
 |v_y'|^2
 \right)\,\textup{d}t
 \ge
 g_\scal(s).
 \label{eq:one-dimensional-slice-lower-direct}
\end{align}
We now use the non-negativity of the energy density, the inclusion
$E_T\subset Q^\nu_T$, and
\eqref{eq:vector-to-scalar-energy-lower}. We obtain
\begin{align}
 \callF^\infty_1(u,v;Q^\nu_T)
 &\ge
 \int_{E_T}
 \left(
 \varphi'(0)f^2(v)\gamma(\nabla\alpha)^2
 +
 \frac{\PotDann(1-v)}{4}
 +
 \gamma(\nabla v)^2
 \right)\,\dx.
 \label{eq:restriction-to-ET}
\end{align}
Applying \eqref{eq:direct-line-change-variables} and
\eqref{eq:gamma-controls-direct-alpha-slice}, we find
\begin{align}
 \callF^\infty_1(u,v;Q^\nu_T)
 \ge
 c_\nu
 \int_{\widetilde Q^{\nu,\mathrm{int}}_T}
 \int_{I_y}
 \left(
 \varphi'(0)f^2(v_y)|\alpha_y'|^2
 \right.
  +
 \frac{\PotDann(1-v_y)}{4}
 +
 |v_y'|^2)
 \,\textup{d}t\,\textup{d}\calH^{n-1}(y).
 \label{eq:direct-slicing-energy-lower}
\end{align}
Using \eqref{eq:one-dimensional-slice-lower-direct}, we conclude that
\begin{equation}\label{eq:direct-lower-before-normalization}
 \callF^\infty_1(u,v;Q^\nu_T)
 \ge
 c_\nu
 \calH^{n-1}
 \bigl(\widetilde Q^{\nu,\mathrm{int}}_T\bigr)
 g_\scal(s).
\end{equation}

After division by $T^{n-1}$, this yields
\begin{align}
 \frac{1}{T^{n-1}}
 \callF^\infty_1(u,v;Q^\nu_T)
 \ge
 c_\nu
 \frac{
 \calH^{n-1}(\widetilde Q^{\nu,\mathrm{int}}_T)
 }{T^{n-1}}
 g_\scal(s).
 \label{eq:direct-lower-normalized}
\end{align}
Taking the infimum over
$(u,v)\in\calU^T_{\zeta,\nu}$ and using
Proposition~\ref{p:chargT}, from \eqref{eq:interior-section-asymptotic-measure} we obtain
\begin{align*}
 g(\zeta,\nu)
 =
 \lim_{T\to\infty}
 \inf_{(u,v)\in\calU^T_{\zeta,\nu}}
 \frac{1}{T^{n-1}}
 \callF^\infty_1(u,v;Q^\nu_T)
 \ge
 c_\nu
 \lim_{T\to\infty}
 \frac{
 \calH^{n-1}(\widetilde Q^{\nu,\mathrm{int}}_T)
 }{T^{n-1}}
 g_\scal(s)= c_\nu g_\scal(s).
\end{align*}
Recalling that $ c_\nu=\gamma(\nu)$ and $s=\rho(\zeta)$ we finally obtain
\[
 g(\zeta,\nu)
 \ge
 \gamma(\nu)g_\scal\bigl(\rho(\zeta)\bigr).
\]
\medskip
\noindent
\textbf{Step 3. The upper bound.}

We now prove that
\begin{equation}\label{eq:g-rho-gamma-upper}
 g(\zeta,\nu)
 \le
 \gamma(\nu)g_\scal\bigl(\rho(\zeta)\bigr).
\end{equation}
Assume that $\zeta\ne0$, otherwise the claim is trivial, and set again $s:=\rho(\zeta)$, $c_\nu:=\gamma(\nu)$. Fix $L>0$ and $(\alpha,\beta)
 \in
 \mathcal{U}_s(-L/2,L/2).$ Extending the functions by constants outside
$(-L/2,L/2)$ and truncating, we may assume that
\[
 \alpha(t)=0,\quad \beta(t)=1
 \qquad\text{for }t\le-L/2, \qquad \textup{and} \qquad  \alpha(t)=s,\quad \beta(t)=1
 \qquad\text{for }t\ge L/2,
\]
with $0\le\alpha\le s$, $0\le\beta\le1$. Let $\eps_j\downarrow0$ be any sequence such that $\eps_jc_\nu L<1$ for every $j$. Define, for $x\in Q^\nu$,
\begin{equation}\label{eq:direct-recovery-u-rho-gamma}
 u_j(x)
 :=
 \frac{\zeta}{s}
 \alpha\left(
 \frac{x\cdot\nu}{\eps_jc_\nu}
 \right) \qquad \textup{and} \qquad v_j(x)
 :=
 \beta\left(
 \frac{x\cdot\nu}{\eps_jc_\nu}
 \right).
\end{equation}
Since $\alpha$ and $\beta$ are constant outside
$(-L/2,L/2)$, the pair $(u_j,v_j)$ differs from
$(u_{\zeta,\nu},1)$ only in 
\[
 \left\{
 x\in Q^\nu\colon
 |x\cdot\nu|
 <
 \frac{\eps_jc_\nu L}{2}
 \right\}.
\]
The measure of this region is $\eps_jc_\nu L$. Since $u_j$ and $v_j$
are uniformly bounded, it follows that
\begin{equation}\label{eq:direct-recovery-convergence-rho-gamma}
 (u_j,v_j)\to(u_{\zeta,\nu},1)
 \qquad\text{in }L^2(Q^\nu,\R^{N+1}).
\end{equation}
We now compute the energy: for a.e.\ $x\in Q^\nu$,
\[
 \nabla u_j(x)
 =
 \frac{\zeta}{s}\otimes
 \frac{\nu}{\eps_jc_\nu}
 \alpha'\left(
 \frac{x\cdot\nu}{\eps_jc_\nu}
 \right),
\]
and by the rank-one identity \eqref{eq:Phi-rank-one-identity},
\begin{align}
 \Phi_\infty(\nabla u_j(x))
 =
 \rho\left(\frac{\zeta}{s}\right)^2
 \gamma\left(\frac{\nu}{\eps_jc_\nu}\right)^2
 \left|
 \alpha'\left(
 \frac{x\cdot\nu}{\eps_jc_\nu}
 \right)
 \right|^2
=
 \frac{1}{\eps_j^2}
 \left|
 \alpha'\left(
 \frac{x\cdot\nu}{\eps_jc_\nu}
 \right)
 \right|^2,
 \label{eq:direct-recovery-bulk-computation}
\end{align}
because $\rho\left(\frac{\zeta}{s}\right)=1$ and $\gamma\left(\frac{\nu}{c_\nu}\right)=1$. Similarly,
\begin{equation}\label{eq:direct-recovery-phase-gradient}
 \gamma(\nabla v_j(x))^2
 =
 \frac{1}{\eps_j^2}
 \left|
 \beta'\left(
 \frac{x\cdot\nu}{\eps_jc_\nu}
 \right)
 \right|^2.
\end{equation}
Since the tangential section
$Q^\nu\cap\Pi_\nu$ has $\calH^{n-1}$-measure one, Fubini's theorem
and the change of variables $ t=\frac{x\cdot\nu}{\eps_jc_\nu}$ gives
\begin{align}
 \callF^\infty_{\eps_j}(u_j,v_j;Q^\nu)
 &=
 c_\nu
 \int_{-\frac L2}^{\frac L2}
 \bigg(
 \varphi'(0)f^2(\beta)|\alpha'|^2
 +
 \frac{\PotDann(1-\beta)}{4}
 +
 |\beta'|^2
 \bigg)\,\textup{d}t.
 \label{eq:direct-recovery-energy-identity}
\end{align}
By \eqref{eq:direct-recovery-convergence-rho-gamma} and the
definition of $g$ in \eqref{eq:def-g-gamma}, we infer that
\begin{align}
 g(\zeta,\nu)
 \le
 \liminf_{j\to\infty}
 \callF^\infty_{\eps_j}(u_j,v_j;Q^\nu)
 =
 c_\nu
 \int_{-\frac L2}^{\frac L2}
 \bigg(
 \varphi'(0)f^2(\beta)|\alpha'|^2
 +
 \frac{\PotDann(1-\beta)}{4}
 +
 |\beta'|^2
 \bigg)\,\textup{d}t.
 \label{eq:upper-before-infimum}
\end{align}
Taking the infimum over
$(\alpha,\beta)\in\mathcal{U}_s(-L/2,L/2)$ gives
\[
 g(\zeta,\nu)\le c_\nu m_L(s).
\]
Letting $L\to\infty$ and using
\eqref{eq:gscalar-large-interval}, we conclude the proof.
\end{proof}

The next two Theorems are direct consequence of Theorem \ref{thm:general-anisotropic-gamma-convergence}, Proposition \ref{prop:rho-gamma-identification} and \cite[Theorems 3.1 and 3.2]{alessi2025phasefieldpart2}. In the following we reconstruct a special class of anisotropic cohesive law, by prescribing the degradation function. 

\begin{theorem}\label{thm:ricostruzione fissata degrad}
 Let $g_0:\R^N\times \Sf^{n-1}\to [0,\infty)$ be such that 
 \begin{equation}
     g_0(\zeta,\nu)=\gamma_0(\nu)\widehat g_0(|\zeta|) \qquad \textup{for every $(\zeta,\nu)\in \R^N\times \Sf^{n-1}$},
 \end{equation}
 where $\gamma_0$ is a norm on $\R^n$ and $\widehat g_0$ is a function satisfying \eqref{e:ipotesi legge coesiva} with $(\widehat g_0)'(0)=\varsigma_0$, and let $\theta_0:[0,1]\to [0,\infty)$ be such that $\theta_0^{\frac12}\in C^1([0,1])$, $\theta_0(0)=0$, $\theta_0(1)=\varsigma_0^2$, with $\theta_0'(t)>0$ for every $t\in (0,1)$, and concave in a left neighbourhood of $t=1$. Then there exists $\omega_0\in C^0([0,1])$, such that $\ell(t):=\varsigma_0^{-2}\theta_0(t)$, $\varsigma_0^2 Q(t)=\omega(t)=\omega_0(t)$ on $[0,1]$ satisfy (\textup{H1})-(\textup{H2})and for any $\varphi$ satisfying (\textup{H3})-(\textup{H4}) with $\varphi'(0)=1$ the corresponding functionals $\Fm_\eps$ in \eqref{eq:def-Feps-gamma}, with $\gamma:=\gamma_0$ and $\Phi(\xi):=\|\xi\|^2_{2,\gamma}$ as in \eqref{eq:def-compatible-matrix-norm-gamma}, $\Gamma$-converge to $\Fm_{\varsigma_0}$ in \eqref{eq:def-limit-functional} with $g=g_0$.
\end{theorem}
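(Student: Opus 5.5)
The plan is to combine three ingredients: the one-dimensional reconstruction result of \cite[Theorem 3.1]{alessi2025phasefieldpart2}, the factorization of Proposition~\ref{prop:rho-gamma-identification} with $\rho=|\cdot|$, and the general $\Gamma$-convergence Theorem~\ref{thm:general-anisotropic-gamma-convergence}.

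\textbf{Step 1: scalar reconstruction.} Apply \cite[Theorem 3.1]{alessi2025phasefieldpart2} to the scalar law $\widehat g_0$, which satisfies \eqref{e:ipotesi legge coesiva} with slope $\varsigma_0$, together with the given $\theta_0$. This produces $\omega_0\in C^0([0,1])$ such that the choices $\ell=\varsigma_0^{-2}\theta_0$ and $Q=\varsigma_0^{-2}\omega_0$, $\omega=\omega_0$, satisfy (H1)--(H2). Moreover, for $\varphi'(0)=1$, the one-dimensional density \eqref{e:definizione di g scal} coincides with $\widehat g_0$. Since $\omega/Q\equiv\varsigma_0^2$, we get $\varsigma=\varsigma_0\in(0,\infty)$, so we are in the linear cohesive regime. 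I expect no loss here, because the scalar density in \cite{alessi2025phasefieldpart2} is built from the same potentials $\ell,Q,\omega$ and the same factor $\varphi'(0)$. The only thing to check is that the two normalizations agree: the $\tfrac14$ in front of $\omega$ and the constant $\varphi'(0)=1$ must match the conventions of \cite{ACF26,alessi2025phasefieldpart2}. This is what gives \eqref{eq:gscalar-large-interval}.

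\textbf{Step 2: the bulk density.} Take $\Phi(\xi)=\|\xi\|_{2,\gamma_0}^2$, the operator norm \eqref{eq:rho-gamma-tensor-norm} with $\rho=|\cdot|$, which is self-dual, so $\rho^\circ=|\cdot|$. I would verify (H5):
\begin{itemize}
\item $\Phi$ is continuous and positively $2$-homogeneous, so $\Phi_\infty=\Phi$ and the limit defining $\Phi_\infty$ is exact, hence uniform on the unit sphere.
\item Quadratic bounds follow from norm equivalence on $\R^{N\times n}$. Concretely, $\|\xi\|_{2,\gamma}\le C_\gamma\sup_{|a|\le1}|\xi^{\mathsf T}a|\le C_\gamma|\xi|$, and similarly $\|\xi\|_{2,\gamma}\ge c_\gamma\|\xi\|_{op}\ge c_\gamma n^{-1/2}|\xi|$ in the Hilbert--Schmidt norm.
\end{itemize}
Hypothesis (H6) holds because $\gamma_0$ is a norm, and (H3)--(H4) are assumed for $\varphi$.

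\textbf{Step 3: conclusion.} Theorem~\ref{thm:general-anisotropic-gamma-convergence} gives $\Gamma$-convergence of $\Fm_\eps$ to $\Fm_{\varsigma_0}$ with surface density $g$ as in \eqref{eq:def-g-gamma}. Proposition~\ref{prop:rho-gamma-identification}, applied with $\rho=|\cdot|$ and $\gamma=\gamma_0$, yields
\[
g(\zeta,\nu)=\gamma_0(\nu)\,g_\scal(|\zeta|)=\gamma_0(\nu)\,\widehat g_0(|\zeta|)=g_0(\zeta,\nu).
\]

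\textbf{Main obstacle.} I expect the hardest step to be Step 1: matching exactly the hypotheses and normalizations of \cite[Theorem 3.1]{alessi2025phasefieldpart2}. That theorem may produce $\omega_0$ only up to a constant, or require $\varphi'(0)$ to enter the definition of $\ell$. In that case the plan is to rescale $\ell$ or $\omega$ by constants, using that $g_\scal$ depends on $\varphi'(0)\ell/Q$ and on $\omega$ as displayed in \eqref{e:definizione di g scal}. A secondary check is that (H1) also requires monotonicity of $f=(\ell/Q)^{1/2}$ near $0$ and near $1$. Near $1$ this should follow from $\theta_0$ being increasing and concave there, together with the behaviour of $\omega_0$ established in the cited reconstruction.
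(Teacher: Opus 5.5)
Your proposal is correct and follows the paper's route: the paper obtains this theorem directly from Theorem~\ref{thm:general-anisotropic-gamma-convergence}, Proposition~\ref{prop:rho-gamma-identification} with $\rho=|\cdot|$ (so that $\|\cdot\|_{\rho,\gamma_0}=\|\cdot\|_{2,\gamma_0}$), and the scalar reconstruction of \cite[Theorem 3.1]{alessi2025phasefieldpart2}, with $\varsigma=\varsigma_0$ because $\omega=\varsigma_0^2Q$. The paper leaves implicit your check of (H5) for $\Phi=\|\cdot\|_{2,\gamma_0}^2$, namely exact $2$-homogeneity (so $\Phi_\infty=\Phi$) together with the quadratic bounds from norm equivalence, and you supply it correctly.
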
 

Here we reconstruct the cohesive law, by prescribing the damage potential. To this end, we introduce the function $\Psi : [0,1] \to \mathbb{R}$ defined by
\begin{equation}\label{e:definizione di Psi la primitiva}
    \Psi(x) := \int_0^x \PotDann^{1/2}(1 - t) \, \mathrm{d}t.
\end{equation}
\begin{theorem}\label{thm:ricostruzione fissata damage pot}
 Let $g_0:\R^N\times \Sf^{n-1}\to [0,\infty)$ be such that 
 \begin{equation}
     g_0(\zeta,\nu)=\gamma_0(\nu)\widehat g_0(|\zeta|) \qquad \textup{for every $(\zeta,\nu)\in \R^N\times \Sf^{n-1}$},
 \end{equation}
 where $\gamma_0$ is a norm on $\R^n$ and $\widehat g_0$ is a function satisfying \eqref{e:ipotesi legge coesiva} with $(\widehat g_0)'(0)=\varsigma_0$, and let $\omega_0\in C^0([0,1],[0,\infty))$ with $\omega_0^{-1}(\{0\})=\{0\}$, $\omega_0$ increasing and such that $2\Psi_0(1)=\widehat g_0(\infty)$, where $\Psi_0$ is defined as in \eqref{e:definizione di Psi la primitiva} integrating $\omega_0$. Then, there exists $\ell_0\in C^0([0,1])\cap C^1((0,1))$ with $\ell'_0>0$ on $(0,1)$, such that $\ell=\ell_0$, $\varsigma_0^2Q=\omega=\omega_0$ satisfy (\textup{H1})-(\textup{H2}), and for any $\varphi$ satisfying (\textup{H3})-(\textup{H4}) with $\varphi'(0)=1$ the corresponding functionals $\Fm_\eps$ in \eqref{eq:def-Feps-gamma},  with $\gamma:=\gamma_0$ and $\Phi(\xi):=\|\xi\|^2_{2,\gamma}$ as in \eqref{eq:def-compatible-matrix-norm-gamma}, $\Gamma$-converge to $\Fm_{\varsigma_0}$ in \eqref{eq:def-limit-functional} with $g=g_0$.
\end{theorem}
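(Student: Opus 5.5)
The plan is to combine three ingredients. The first is the $\Gamma$-convergence result of Theorem~\ref{thm:general-anisotropic-gamma-convergence}. The second is the factorization of Proposition~\ref{prop:rho-gamma-identification}. The third is the one-dimensional reconstruction result \cite[Theorem 3.2]{alessi2025phasefieldpart2} (the damage-potential version).

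\textbf{Step 1: the scalar potentials.} Given $\widehat g_0$ satisfying \eqref{e:ipotesi legge coesiva} and $\omega_0$ with $2\Psi_0(1)=\widehat g_0(\infty)$, we invoke \cite[Theorem 3.2]{alessi2025phasefieldpart2}. It produces $\ell_0\in C^0([0,1])\cap C^1((0,1))$ with $\ell_0'>0$ such that, taking $\ell=\ell_0$, $Q=\varsigma_0^{-2}\omega_0$ and $\omega=\omega_0$, the scalar density \eqref{e:definizione di g scal} with $\varphi'(0)=1$ coincides with $\widehat g_0$. We then check (H1)--(H2).
\begin{itemize}
\item $l(1)=1$ and $l^{-1}(\{0\})=\{0\}$ follow from the normalization in that theorem; $Q^{-1}(\{0\})=\{0\}$ and monotonicity of $Q$ are inherited from $\omega_0$.
\item Monotonicity of $f=(\ell_0/Q(1-\cdot))^{1/2}$ near $0$ and near $1$ follows from $\ell_0'>0$ and the monotonicity of $\omega_0$. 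Since $Q(1-t)$ decreases in $t$, $f$ is in fact increasing everywhere.
\item $\omega/Q\equiv\varsigma_0^2$, so $\varsigma=\varsigma_0\in(0,\infty)$, which is the linear cohesive regime.
\end{itemize}

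\textbf{Step 2: the bulk energy.} Take $\Phi(\xi)=\|\xi\|_{2,\gamma_0}^2$, that is, the operator norm \eqref{eq:rho-gamma-tensor-norm} with $\rho=|\cdot|$ (self-dual) and $\gamma=\gamma_0$. This $\Phi$ is continuous and two-homogeneous, so $\Phi_\infty=\Phi$ and the limit \eqref{eq:def-Phi-infty} is trivially uniform. Quadratic bounds \eqref{eq:growth-Phi} follow from equivalence of norms on $\R^{N\times n}$. For the lower bound, choose $a=\xi e/|\xi e|$ with $e$ a maximal singular direction; then $\|\xi\|_{2,\gamma_0}\ge c_\gamma|\xi^{\mathsf T}a|\ge c_\gamma|\xi|/\sqrt{n}$. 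Hence (H5)--(H6) hold, and any $\varphi$ with (H3)--(H4) and $\varphi'(0)=1$ is admissible.

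\textbf{Step 3: identification of the limit.} Theorem~\ref{thm:general-anisotropic-gamma-convergence} gives $\Fm_\eps\to\Fm_{\varsigma_0}$ with surface density $g$ from \eqref{eq:def-g-gamma}. Since $\Phi_\infty=\|\cdot\|_{|\cdot|,\gamma_0}^2$, Proposition~\ref{prop:rho-gamma-identification} yields
\[
g(\zeta,\nu)=\gamma_0(\nu)\,g_\scal(|\zeta|)=\gamma_0(\nu)\,\widehat g_0(|\zeta|)=g_0(\zeta,\nu).
\]
This is exactly the claim.

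\textbf{Main obstacle.} Step 3 is formal; the delicate point is Step 1. One must verify that the conventions of \cite{alessi2025phasefieldpart2} match \eqref{e:definizione di g scal}. In particular, the weight $\varphi'(0)\ell(\beta)/Q(1-\beta)$ and the choice $Q=\varsigma_0^{-2}\omega$ must correspond to the normalization used there; this is why $\varphi'(0)=1$ is imposed. One must also confirm that the regularity of the produced $\ell_0$ is enough for (H1). If the normalizations differ by a constant factor, I would rescale $\ell_0$ (or equivalently $Q$) so that $\ell_0(1)=1$. I would then recheck that the compatibility condition $2\Psi_0(1)=\widehat g_0(\infty)$ is exactly the requirement that the limiting fracture toughness $g_\scal(\infty)=2\int_0^1\omega_0^{1/2}(1-t)\,dt$ matches $\widehat g_0(\infty)$.
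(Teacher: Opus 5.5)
Your proposal is correct and follows essentially the same route as the paper. The paper states this theorem as a direct consequence of Theorem~\ref{thm:general-anisotropic-gamma-convergence}, Proposition~\ref{prop:rho-gamma-identification} (with $\rho=|\cdot|$, so that $\Phi=\Phi_\infty=\|\cdot\|_{2,\gamma_0}^2$) and \cite[Theorem 3.2]{alessi2025phasefieldpart2}; your Steps 1--3 just write out the hypothesis checks it leaves implicit. One caveat on your fallback: rescaling $\ell_0$ by a constant $c$ is not a harmless normalization, because it turns $g_\scal(s)$ into $g_\scal(\sqrt{c}\,s)$ and so changes the cohesive law. It is also unnecessary, since the statement (via the cited result) already provides an $\ell_0$ satisfying (H1), including $\ell_0(1)=1$.
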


\subsubsection{Structural properties of $g$}
We now characterize the structural behaviour of $g$, demonstrating its linear response for small crack openings.

\begin{proposition}\label{p:crescita della gvettoriale}
Assume that the hypotheses of
Theorem~\ref{thm:general-anisotropic-gamma-convergence} hold.
Then there exists a constant $K\ge1$ such that
\begin{equation}\label{e:crescita della g vettoriale}
 \frac{1}{K}\bigl(1\wedge|\zeta|\bigr)
 \le
 g(\zeta,\nu)
 \le
 K\bigl(1\wedge|\zeta|\bigr)
\end{equation}
for every $(\zeta,\nu)\in\R^N\times\Sf^{n-1}$.
\end{proposition}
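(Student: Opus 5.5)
The plan is to compare with the special case already treated in Proposition~\ref{prop:rho-gamma-identification}, and then use the known growth of the one-dimensional density $g_\scal$ from the scalar theory of \cite{ACF26}.

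\textbf{Step 1: comparison of recession functions.} Take $\rho=|\cdot|$, the Euclidean norm on $\R^N$, which is self-dual. Then $\|\xi\|_{|\cdot|,\gamma}$ is a norm on $\R^{N\times n}$, hence equivalent to the Euclidean norm $|\xi|$. Combined with \eqref{eq:growth-Phi-infty}, this yields constants $0<c_1\le c_2$ with
\[
 c_1\|\xi\|_{|\cdot|,\gamma}^2\le\Phi_\infty(\xi)\le c_2\|\xi\|_{|\cdot|,\gamma}^2 .
\]
The recession energy $\Fm^\infty_\eps$ in \eqref{eq:def-recession-energy} is monotone with respect to $\Phi_\infty$. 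Therefore the cell formula \eqref{eq:def-g-gamma} gives $g^{(c_1)}\le g\le g^{(c_2)}$. Here $g^{(c)}$ denotes the density obtained with $\Phi_\infty$ replaced by $c\|\cdot\|^2_{|\cdot|,\gamma}$; equivalently, $\varphi'(0)$ is replaced by $c\varphi'(0)$ and $\Phi_\infty$ by $\|\cdot\|^2_{|\cdot|,\gamma}$. The definition of $g$ and the proof of Proposition~\ref{prop:rho-gamma-identification} only involve $\Phi_\infty$, $\varphi'(0)$, $f$, $\omega$ and $\gamma$, and the structural assumptions are stable under this rescaling. So the proposition applies and gives
\[
 g^{(c)}(\zeta,\nu)=\gamma(\nu)\,g^{(c)}_\scal(|\zeta|),
\]
where $g^{(c)}_\scal$ is \eqref{e:definizione di g scal} with $\varphi'(0)$ replaced by $c\varphi'(0)$.

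\textbf{Step 2: scalar growth.} By \eqref{e:gamma lower euclidean suppenergy}, $c_\gamma\le\gamma(\nu)\le C_\gamma$ on $\Sf^{n-1}$. It therefore suffices to show that each $g^{(c)}_\scal$ satisfies
\[
 \tfrac1{K'}(1\wedge s)\le g^{(c)}_\scal(s)\le K'(1\wedge s).
\]
For this I would invoke the properties of the one-dimensional density in \cite[Proposition 2.5]{ACF26}: boundedness, monotonicity, and a finite positive slope at $0$ equal to $(c\varphi'(0))^{1/2}\varsigma$, together with $g_\scal(s)>0$ for $s>0$. These properties use $\varsigma\in(0,\infty)$. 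Concavity or subadditivity then turns finite slope plus boundedness into the two-sided bound $(1\wedge s)$.

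\textbf{Step 3: direct check if the citation is not exactly in this form.} If the citation does not directly give the bound, I would verify it from \eqref{e:definizione di g scal}.

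For the upper bound, boundedness is Lemma~\ref{l:g limitata}. For small $s$, choose $\beta$ equal to $1-\eta$ on a plateau of length $T$, with $\eta:=s^{1/2}$ and steep transitions costing $O(\eta^2)=O(s)$. Let $\alpha$ be linear on the plateau. The plateau cost is
\[
 \frac{\omega(\eta)T}{4}+\varphi'(0)\frac{\ell(1-\eta)}{Q(\eta)}\frac{s^2}{T}.
\]
Optimizing in $T$ gives a bound $\lesssim(\omega(\eta)/Q(\eta))^{1/2}s\lesssim s$.

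For the lower bound, use the Modica--Mortola inequality $\omega(1-\beta)^{1/2}(\cdots)^{1/2}\ge\omega(1-\beta)^{1/2}|\beta'|$. If $\min\beta\le 1-\delta_0$, this gives a lower bound $\ge 2\Psi(\delta_0)>0$. Otherwise $\beta\ge1-\delta_0$ throughout, and then $\omega(1-\beta)\ell(\beta)/Q(1-\beta)\ge c>0$ near the endpoint, since the ratio tends to $\varsigma^2$. The integrand is then $\ge c|\alpha'|$, so the energy is $\ge cs$.

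\textbf{Main obstacle.} The main point needing care is the lower bound in the regime where the phase field stays close to $1$. There one needs a uniform positive lower bound for $\omega(t)/Q(t)$ on $(0,\delta_0]$. This follows from $\varsigma>0$, continuity, and positivity of $\omega$ and $Q$ on $(0,1]$.
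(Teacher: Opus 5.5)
Your Steps~1--2 are the paper's proof: it sandwiches $\Phi_\infty$ between two multiples of $\|\cdot\|_{2,\gamma}^2$, applies Proposition~\ref{prop:rho-gamma-identification}, and concludes with \cite[Proposition 2.5]{ACF26}; the paper absorbs the constants into rescaled Euclidean norms $\rho_a,\rho_b$ rather than into $\varphi'(0)$, which is equivalent because your $g^{(c)}_{\scal}(s)$ equals $g_{\scal}(\sqrt{c}\,s)$. Your fallback Step~3 has a flaw in the small-$s$ upper bound: moving $\beta$ from $1$ to $1-\eta$ costs at least $\int_0^\eta\omega(t)^{1/2}\,\mathrm{d}t$ however steep the transition, which is not $O(\eta^2)$ in general (for $\omega(t)=t$ and $\eta=s^{1/2}$ it is of order $s^{3/4}\gg s$), so take instead $\eta=\eta(s)\to0$ with $\eta\,\omega(\eta)^{1/2}\le s$, and then the plateau estimate gives the linear bound as intended.
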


\begin{proof}
We first construct two comparison integrands that are compatible
with the norm $\gamma$ and therefore satisfy the assumptions of
Proposition~\ref{prop:rho-gamma-identification}. For $\xi\in\R^{N\times n}$, consider
\begin{equation}\label{eq:def-compatible-matrix-norm-gamma}
 \|\xi\|_{2,\gamma}
 :=
 \sup\left\{
 \gamma(\xi^{\mathsf T}a)\colon
 a\in\R^N,\ |a|\le1
 \right\},
\end{equation}
that is a norm on
$\R^{N\times n}$. In particular there exist constants
$m_\gamma,M_\gamma>0$ such that
\begin{equation}\label{eq:matrix-norm-equivalence-gamma}
 m_\gamma|\xi|^2
 \le
 \|\xi\|_{2,\gamma}^2
 \le
 M_\gamma|\xi|^2
 \qquad
 \text{for every }\xi\in\R^{N\times n}.
\end{equation}
Defining
\begin{equation}\label{eq:def-comparison-Phi-a-b-gamma}
 \Phi_\infty^a(\xi)
 :=
 \frac{1}{C_{\Phi}M_\gamma}\|\xi\|^2_{2,\gamma},
 \qquad
 \Phi_\infty^b(\xi)
 :=
 \frac{C_\Phi}{m_\gamma}\|\xi\|^2_{2,\gamma},
\end{equation}
where $C_\Phi$ is the constant in \eqref{eq:growth-Phi-infty}. We have that
\begin{equation}\label{eq:comparison-Phi-a-Phi-Phi-b}
 \Phi_\infty^a(\xi)
 \le
 \Phi_\infty(\xi)
 \le
 \Phi_\infty^b(\xi)
 \qquad
 \text{for every }\xi\in\R^{N\times n}.
\end{equation}
Let $g^a$ and $g^b$ denote the surface energy densities obtained by
replacing $\Phi_\infty$ in the definition of
$\callF^\infty_\eps$ with $\Phi_\infty^a$ and
$\Phi_\infty^b$, respectively. Since the bulk coefficient $\varphi'(0)\eps f(v)^2$ is non-negative, \eqref{eq:comparison-Phi-a-Phi-Phi-b} implies that
\begin{equation}\label{eq:comparison-functionals-a-original-b}
 \callF^{\infty,a}_\eps(u,v;A)
 \le
 \callF^\infty_\eps(u,v;A)
 \le
 \callF^{\infty,b}_\eps(u,v;A)
\end{equation}
for every admissible pair $(u,v)$, every open set $A$, and every
$\eps>0$. Therefore, taking the corresponding infima and lower limits in the
definitions of the surface densities, we obtain
\begin{equation}\label{eq:comparison-surface-densities-a-g-b}
 g^a(\zeta,\nu)
 \le
 g(\zeta,\nu)
 \le
 g^b(\zeta,\nu)
\end{equation}
for every $(\zeta,\nu)\in\R^N\times\Sf^{n-1}$. Observing that 
\begin{equation*}
    \Phi^a_\infty(\xi)=\|\xi\|^2_{\rho_a,\gamma} \qquad \textup{and} \qquad  \Phi^b_\infty(\xi)=\|\xi\|^2_{\rho_b,\gamma}, 
\end{equation*}
with $\rho_a(\zeta) = \frac{|\zeta|}{\sqrt{C_\Phi M_\gamma}}$ and $\rho_b(\zeta) = \sqrt{\frac{C_\Phi}{m_\gamma}}|\zeta|$,
applying Proposition \ref{prop:rho-gamma-identification}, we obtain
\begin{equation}
    \gamma(\nu)g_\scal\left(\frac{|\zeta|}{\sqrt{C_\Phi M_\gamma}}\right)\leq g(\zeta,\nu)\leq\gamma(\nu)g_\scal\left(\sqrt{\frac{C_\Phi}{m_\gamma}}|\zeta|\right).
\end{equation}
From \cite[Proposition 2.5]{ACF26}, we conclude the proof.
\end{proof}

We proceed to establish the subadditivity and continuity properties of the surface energy density $g$.

\begin{lemma}\label{lemmapropg2}
Under the assumptions of Theorem~\ref{thm:general-anisotropic-gamma-convergence}, the following hold:
\begin{enumerate}
    \item\label{lemmapropgsubadd}
    For every $\nu\in \Sf^{n-1}$ and $\zeta_1,\zeta_2\in\R^N$ one has
    \[
    g(\zeta_1+\zeta_2,\nu)\le g(\zeta_1,\nu)+g(\zeta_2,\nu);
    \]
    \item\label{lemmapropgcont}
    The function $g$ is continuous on $\R^N\times \Sf^{n-1}$.
\end{enumerate}
\end{lemma}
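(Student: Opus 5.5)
For subadditivity I plan to work with the large-cell characterization of Proposition~\ref{p:chargT} and glue two optimal profiles stacked in the normal direction. Fix $\nu$, $\zeta_1,\zeta_2$ and $\eta>0$. For $T$ large, choose $(u_i,v_i)\in\calU^T_{\zeta_i,\nu}$ with $T^{1-n}\callF^\infty_1(u_i,v_i;Q^\nu_T)\le g(\zeta_i,\nu)+\eta$.

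In a cube $Q^\nu_{2T}$, place a copy of $(u_1,v_1)$ centered at $-\tfrac T2\nu$ and a copy of $(\zeta_1+u_2,v_2)$ centered at $+\tfrac T2\nu$; by translation invariance $\zeta_1+u_2$ is admissible. Outside the two small cubes, take $u\equiv 0$ below, $u\equiv\zeta_1$ in the middle and $u\equiv\zeta_1+\zeta_2$ above, with $v\equiv 1$; the mollified boundary data make these traces match. Complete the remaining tangential region with $2^{n-1}-1$ further translated copies of each building block, so that the two interfaces fill the tangential section of $Q^\nu_{2T}$.

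The result is a pair which, after rescaling $x\mapsto x/(2T)$ and setting $\eps=1/(2T)$, converges in $L^1(Q^\nu)$ to $u_{\zeta_1+\zeta_2,\nu}$ as $T\to\infty$, since both transitions collapse onto $\Pi_\nu$. Its energy is bounded by $g(\zeta_1,\nu)+g(\zeta_2,\nu)+2\eta$. The tiling scaling used in the second half of Proposition~\ref{p:chargT}, together with the definition \eqref{eq:def-g-gamma}, then gives subadditivity directly without having to impose the $\calU^T$ boundary datum on the glued competitor.

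For continuity in $\zeta$ at fixed $\nu$, combine subadditivity with Proposition~\ref{p:crescita della gvettoriale}:
\[
|g(\zeta_1,\nu)-g(\zeta_2,\nu)|\le \max\bigl\{g(\zeta_1-\zeta_2,\nu),\,g(\zeta_2-\zeta_1,\nu)\bigr\}\le K|\zeta_1-\zeta_2|.
\]
This makes $g$ Lipschitz in $\zeta$ uniformly in $\nu$. Joint continuity therefore reduces to continuity in $\nu$, uniformly for $\zeta$ in compact sets.

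For continuity in $\nu$, use a rotation argument on the cell problem of Proposition~\ref{p:chargT}. Given $\nu_k\to\nu$, pick rotations $R_k\to\Id$ with $R_k\nu=\nu_k$. Take an almost optimal $(u,v)\in\calU^T_{\zeta,\nu}$ and consider $(u\circ R_k^{\mathsf T},v\circ R_k^{\mathsf T})$ on $Q^{\nu_k}_T=R_kQ^\nu_T$. Because $\Phi_\infty$ and $\gamma$ are continuous and the energy density is continuous in the gradient, the rotated gradients $\nabla u\,R_k^{\mathsf T}$ and $R_k\nabla v$ change the energy by at most a factor $(1+o(1))$, using the bounds \eqref{eq:growth-Phi-infty} and \eqref{e:gamma lower euclidean suppenergy}. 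The boundary datum is rotated exactly onto the canonical datum for $\nu_k$. This yields $g_T(\zeta,\nu_k)\le(1+o(1))g_T(\zeta,\nu)$, and symmetrically in the other direction.

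The main obstacle is the interchange of $\lim_T$ with $k\to\infty$, since Proposition~\ref{p:chargT} gives only pointwise convergence in $T$. To handle it I would work directly with $g$ from \eqref{eq:def-g-gamma}: rotate an optimal sequence $(u_j,v_j)$ on $Q^\nu$ to get competitors on $R_kQ^\nu\ne Q^{\nu_k}$. Then either use the boundary-condition version of Proposition~\ref{periodicity} to extend the competitors by the canonical profile on the symmetric difference, which costs $o(1)$ as $k\to\infty$, or shrink the cube slightly and use the tangential periodic tiling to restore the unit cube. This gives $\limsup_k g(\zeta,\nu_k)\le g(\zeta,\nu)$, and the reverse inequality follows by swapping the roles of $\nu$ and $\nu_k$ (with error controlled uniformly via the constant $L$ of Lemma~\ref{l:g limitata}).
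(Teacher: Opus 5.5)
Your overall architecture matches the paper's. You stack two profiles in the normal direction and exploit tangential periodicity for subadditivity, you get Lipschitz dependence on $\zeta$ from subadditivity plus Proposition~\ref{p:crescita della gvettoriale}, and you use a rotation with multiplicative modulus $1+\vartheta(|R-\Id|)$ for $\nu$. The $\zeta$-Lipschitz step is fine.

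The convergence claim in your subadditivity step, however, is false as written. In the glued block on $Q^\nu_{2T}$ the two transitions sit at the centres of the copies, $x\cdot\nu=\pm T/2$. After $x\mapsto x/(2T)$ they sit at $x\cdot\nu=\pm\frac14$, so as $T\to\infty$ the rescaled pair converges to the two-jump function with values $0$, $\zeta_1$, $\zeta_1+\zeta_2$, not to $u_{\zeta_1+\zeta_2,\nu}$. Nothing collapses onto $\Pi_\nu$.

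The mechanism that does work is the other one you mention. Keep $T$ fixed and tile the block tangentially at scale $\eps_j\to0$ as in the second half of Proposition~\ref{p:chargT}, with $1/(2\eps_jT)\in\N$. This is legitimate for two reasons. The lateral traces of the block depend only on $x\cdot\nu$, so tangential translates glue in $H^1$. Its bottom and top traces are the constants $(0,1)$ and $(\zeta_1+\zeta_2,1)$, so you can extend by constants outside the slab $\{|x\cdot\nu|<\eps_jT\}$. The energy is then exactly $(2T)^{1-n}\callF^\infty_1(\cdot;Q^\nu_{2T})\le g(\zeta_1,\nu)+g(\zeta_2,\nu)+2\eta$, and the $L^1$-deviation is $O(\eps_j)$. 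Write it that way. Corrected, your route is a fixed-$T$ variant of the paper's, which instead compresses the tangentially periodic profiles of Proposition~\ref{periodicity} by $M_j\to\infty$.

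For $\nu$, the obstacle you single out is not one. The factor $1+\vartheta(|R_k-\Id|)$ does not depend on $T$, so $g_T(\zeta,\nu_k)\le(1+\vartheta(|R_k-\Id|))\,g_T(\zeta,\nu)$ for all $T$. Letting $T\to\infty$ at fixed $k$ (both limits exist by Proposition~\ref{p:chargT}) gives the estimate for $g$ directly, and symmetrically.

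The real issue is elsewhere. The rotated competitor lives on $R_kQ^\nu_T$, which has a face orthogonal to $\nu_k$ but need not be the cube $Q^{\nu_k}_T$ fixed in the definition of $g(\cdot,\nu_k)$. Mapping the mollified datum exactly onto the canonical one also needs $\phi_1$ radial. Your first fallback, extending by the canonical profile on the symmetric difference at cost $o(1)$, only works if the cubes $Q^{\nu_k}$ happen to be chosen close to $R_kQ^\nu$. Otherwise the cost is of order $\calH^{n-1}$ of the tangential symmetric difference, which need not vanish.

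Your second fallback is the robust one and is exactly the paper's argument. Since \eqref{eq:def-g-gamma} imposes no boundary conditions, rotate the tangentially periodic, $M_j$-compressed profile and cover $\Pi_\nu\cap M_jRQ^{\tilde\nu}$ by at most $M_j^{n-1}+cM_j^{n-2}$ periodic cells. This makes the tangential orientation of the cube irrelevant.
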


\begin{proof}
\noindent\textit{(i) Subadditivity.}
Fix $\zeta_1,\zeta_2\in\R^N$ and $\nu\in \Sf^{n-1}$.
Let $(u_j^i,v_j^i)$ be the sequences given in
Proposition~\ref{periodicity} corresponding to $\eps_j:=1/j$ and to the
pairs $(\zeta_i,\nu)$, for $i=1,2$.
We extend these functions periodically along the directions tangent to
$\Pi_{\nu}\cap Q^\nu$, and constantly along $\nu$. In particular, for
each $i\in\{1,2\}$ and all $j$, we have $u_j^i=\zeta_i$ and $v_j^i=1$
in $\{x\cdot\nu\ge\frac12\}$, and $u_j^i=0$, $v_j^i=1$ in
$\{x\cdot\nu\le-\frac12\}$.

We now use a rescaling analogous to that used in
Proposition~\ref{periodicity}.
Let $M_j\in\N$ with $M_j\to\infty$, and define $(u_j,v_j)$ by
\[
u_j(x):=
\begin{cases}
u_j^1(M_jx+\frac12\nu), & \text{if } x\cdot\nu<0,\\[3pt]
\zeta_1+u_j^2(M_jx-\frac12\nu), & \text{if } x\cdot\nu\ge0,
\end{cases}
\qquad
v_j(x):=
\begin{cases}
v_j^1(M_jx+\frac12\nu), & \text{if } x\cdot\nu<0,\\[3pt]
v_j^2(M_jx-\frac12\nu), & \text{if } x\cdot\nu\ge0.
\end{cases}
\]
Because of periodicity in the tangential directions, $(u_j,v_j)$
belongs to $H^1(Q^\nu;\R^{N+1})$.
Moreover, $u_j=0$ and $v_j=1$ on
$\{x\cdot\nu\le-\frac1{M_j}\}$, while
$u_j=\zeta_1+\zeta_2$ and $v_j=1$ on
$\{x\cdot\nu\ge\frac1{M_j}\}$, and the pair is
$\frac1{M_j}$--periodic along the directions of
$\Pi_{\nu}\cap Q^\nu$.

By a change of variables,
\begin{equation*}
\begin{split}
\|u_j-u_{\zeta_1+\zeta_2,\nu}\|_{L^1(Q^\nu)}
&=
\|u_j-u_{\zeta_1+\zeta_2,\nu}\|_
{L^1(Q^\nu\cap\{|x\cdot\nu|\leq\frac1{M_j}\})}=
\frac1{M_j}\|u_j^1\|_{L^1(Q^\nu)}
+\frac1{M_j}\|u_j^2-\zeta_2\|_{L^1(Q^\nu)}
\\
&\le
\frac1{M_j}\Big(
\|u_j^1-u_{\zeta_1,\nu}\|_{L^1(Q^\nu)}
+
\|u_j^2-u_{\zeta_2,\nu}\|_{L^1(Q^\nu)}
\Big)
+\frac{|\zeta_1|+|\zeta_2|}{2M_j},
\end{split}
\end{equation*}
hence $u_j\to u_{\zeta_1+\zeta_2,\nu}$ in
$L^1(Q^\nu;\R^N)$.

Furthermore, using also the positive one-homogeneity of the norm
$\gamma$, an immediate computation yields
\[
\callF^{\infty}_{\eps_j/M_j}(u_j,v_j;Q^\nu)
=
\callF^{\infty}_{\eps_j}(u_j^1,v_j^1;Q^\nu)
+
\callF^{\infty}_{\eps_j}(u_j^2,v_j^2;Q^\nu).
\]
Taking the limit as $j\to\infty$ gives the desired inequality in
\ref{lemmapropgsubadd}.

\medskip
\noindent\textit{(ii) Continuity.}
From \ref{lemmapropgsubadd} and
Proposition~\ref{p:crescita della gvettoriale}, we deduce that
\[
g(\zeta,\nu)
\le
g(\zeta',\nu)+K|\zeta-\zeta'|,
\]
where $K$ (cf.~\eqref{e:crescita della g vettoriale}) does not depend
on $\nu$.
Interchanging $\zeta$ and $\zeta'$ therefore gives
\[
|g(\zeta,\nu)-g(\zeta',\nu)|
\le K|\zeta-\zeta'|.
\]
Thus it remains only to show the continuity with respect to $\nu$.

Since $\Phi_{\infty}$ is continuous and strictly positive on the
compact unit sphere $\Sf^{N\times n-1}$, there exists a nondecreasing
modulus of continuity $\vartheta_\Phi$, with
$\vartheta_\Phi(0)=0$, such that
\[
\Phi_{\infty}(\xi)
\le
\bigl(1+\vartheta_\Phi(|\xi-\xi'|)\bigr)
\Phi_{\infty}(\xi')
\qquad
\text{whenever }|\xi|=|\xi'|=1.
\]
By the $2$-homogeneity of $\Phi_\infty$, this implies, after possibly
enlarging $\vartheta_\Phi$, that
\begin{equation}\label{eqpsiinfunifcont}
\Phi_\infty(\eta R)
\le
\bigl(1+\vartheta_\Phi(|R-\Id|)\bigr)
\Phi_\infty(\eta)
\qquad
\text{for every }\eta\in\R^{N\times n},\ R\in O(n).
\end{equation}
Thanks to \eqref{e:gamma lower euclidean suppenergy}, for every $\xi\in\R^n$ and $R\in O(n)$,
\[
\begin{split}
\gamma(\xi R)
\le
\gamma(\xi)+\gamma\bigl(\xi(R-\Id)\bigr)
\le
\gamma(\xi)+C_\gamma|\xi|\,|R-\Id|
\le
\left(1+\frac{C_\gamma}{c_\gamma}|R-\Id|\right)\gamma(\xi).
\end{split}
\]
Consequently,
\begin{equation}\label{eq:gammarotation}
\gamma^2(\xi R)
\le
\bigl(1+\vartheta_\gamma(|R-\Id|)\bigr)\gamma^2(\xi),
\end{equation}
where, 
\[
\vartheta_\gamma(t)
:=
2\frac{C_\gamma}{c_\gamma}t
+
\left(\frac{C_\gamma}{c_\gamma}\right)^2t^2.
\]
In particular, $\vartheta_\gamma$ is nondecreasing and
$\vartheta_\gamma(t)\to0$ as $t\to0$. Setting $\vartheta(t):=
\max\{\vartheta_\Phi(t),\vartheta_\gamma(t)\}
$, then both \eqref{eqpsiinfunifcont} and
\eqref{eq:gammarotation} hold with $\vartheta$ in place of
$\vartheta_\Phi$ and $\vartheta_\gamma$, respectively.

Fix $\nu\in\Sf^{n-1}$ and a sequence $\eps_j\to0$, and let
$(u_j,v_j)$ be the sequence given by Proposition~\ref{periodicity},
extended periodically along $\Pi_\nu$ and constantly in the
$\nu$--direction as in part~\ref{lemmapropgsubadd}.
Let $\tilde\nu\in\Sf^{n-1}$, $\tilde\nu\neq\nu$, and choose
$R\in O(n)$ such that
\[
\nu=R\tilde\nu,
\qquad
|R-\Id|\le c|\nu-\tilde\nu|
\]
(for instance, $R$ acts as the identity on the subspace orthogonal to
$\operatorname{span}\{\nu,\tilde\nu\}$ and rotates $\tilde\nu$ to
$\nu$ in that plane). Let $M_j\to\infty$ and define
\[
\tilde u_j(x):=u_j(M_jRx),
\qquad
\tilde v_j(x):=v_j(M_jRx).
\]
Since
\[
(Rx)\cdot\nu
=
x\cdot R^T\nu
=
x\cdot\tilde\nu,
\]
and $u_j\to u_{\zeta,\nu}$ locally, it follows that
\[
\tilde u_j\to u_{\zeta,\tilde\nu}
\qquad\text{in }L^1(Q^{\tilde\nu};\R^N).
\]
Moreover,
\[
\nabla\tilde u_j(x)
=
M_j\nabla u_j(M_jRx)R,
\qquad
\nabla\tilde v_j(x)
=
M_j\nabla v_j(M_jRx)R.
\]
Using \eqref{eqpsiinfunifcont}, we obtain
\[
\Phi_{\infty}(\nabla\tilde u_j)(x)
=
M_j^2
\Phi_{\infty}\bigl(\nabla u_j(M_jRx)R\bigr)
\le
M_j^2
\bigl(1+\vartheta(|R-\Id|)\bigr)
\Phi_{\infty}(\nabla u_j)(M_jRx).
\]
Likewise, by \eqref{eq:gammarotation},
\[
\gamma^2(\nabla\tilde v_j(x))
=
M_j^2
\gamma^2\bigl(\nabla v_j(M_jRx)R\bigr)
\le
M_j^2
\bigl(1+\vartheta(|R-\Id|)\bigr)
\gamma^2(\nabla v_j(M_jRx)).
\]
Therefore, substituting these estimates into the definition of
$\callF^\infty_{\eps_j/M_j}
(\tilde u_j,\tilde v_j;Q^{\tilde\nu})$
and performing the change of variables $y=M_jRx$, we obtain
\[
\callF_{\frac{\eps_j}{M_j}}^{\infty}
(\tilde u_j,\tilde v_j;Q^{\tilde\nu})
\le
\bigl(1+\vartheta(|R-\Id|)\bigr)
M_j^{1-n}
\callF_{\eps_j}^{\infty}
(u_j,v_j;M_jRQ^{\tilde\nu}).
\]

Since $(u_j,v_j)$ are periodic in the directions orthogonal to $\nu$,
the set
$\Pi_{\nu}\cap M_jRQ^{\tilde\nu}$ can be covered by at most $M_j^{n-1}+cM_j^{n-2}$ disjoint copies of $\Pi_{\nu}\cap Q^\nu$. Hence,
\begin{align*}
 g(\zeta,\tilde\nu)
&\le
\limsup_{j\to\infty}
\callF_{\frac{\eps_j}{M_j}}^{\infty}
(\tilde u_j,\tilde v_j;Q^{\tilde\nu})
\le
\bigl(1+\vartheta(|R-\Id|)\bigr)
\limsup_{j\to\infty}
\left(1+\frac{c}{M_j}\right)
\callF_{\eps_j}^{\infty}(u_j,v_j;Q^\nu)
\\
&=
\bigl(1+\vartheta(|R-\Id|)\bigr)g(\zeta,\nu)\le
\bigl(1+\vartheta(c|\nu-\tilde\nu|)\bigr)g(\zeta,\nu).
\end{align*}
Interchanging the roles of $\nu$ and $\tilde\nu$, we also obtain
\[
g(\zeta,\nu)
\le
\bigl(1+\vartheta(c|\nu-\tilde\nu|)\bigr)
g(\zeta,\tilde\nu).
\]
Since $\vartheta(t)\to0$ as $t\to0$, this proves the continuity of
$g(\zeta,\cdot)$ on $\Sf^{n-1}$ and hence, together with the
Lipschitz continuity with respect to $\zeta$, the continuity of $g$
on $\R^N\times\Sf^{n-1}$.
\end{proof}

We conclude this section by noting that, under the lower semicontinuity assumption on the effective limit functional $\mathcal{F}_{\varsigma}$, the surface energy density $g$ shares the same asymptotic behaviour near the origin as the recession function $h_{\varsigma}^{\qc,\infty}$. We omit the proof, as it follows verbatim the arguments of~\cite[Proposition~3.9]{CFI24}.

\begin{proposition}\label{p:behaviour g at 0}
Assume that the limit functional $\Fm_\varsigma$ (see \eqref{eq:def-limit-functional}) is
lower semicontinuous in $L^1(\Omega;\R^N)$.
Then, under the assumptions of Theorem~\ref{thm:general-anisotropic-gamma-convergence},
for every $\nu\in \Sf^{n-1}$ it holds that
\[
\lim_{\zeta\to 0}
\frac{g(\zeta,\nu)}
{h^\qcinfty_{\varsigma}(\zeta\otimes\nu)}
=1.
\]
\end{proposition}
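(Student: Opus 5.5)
The plan is to follow the strategy of \cite[Proposition~3.9]{CFI24} and prove two matching one-sided estimates as $\zeta\to0$, with $\nu$ fixed. Write $\zeta=t z$ with $t=|\zeta|\to0$ and $|z|=1$. Both $g(\cdot,\nu)$ and $h^\qcinfty_\varsigma(\cdot\otimes\nu)$ are comparable to $|\zeta|$ near $0$, by Proposition~\ref{p:crescita della gvettoriale} and \eqref{e:bulk-growth-hrecession}. So it suffices to show
\[
\limsup_{t\to0}\frac{g(tz,\nu)}{t\,h^\qcinfty_\varsigma(z\otimes\nu)}\le1,
\qquad
\liminf_{t\to0}\frac{g(tz,\nu)}{t\,h^\qcinfty_\varsigma(z\otimes\nu)}\ge1,
\]
uniformly in $z\in\Sf^{N-1}$. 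Uniformity should follow from continuity (Lemma~\ref{lemmapropg2}) and from the Lipschitz continuity of $h^\qcinfty_\varsigma$, which is convex in rank-one directions and has linear growth.

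\textbf{Lower bound on $g$ (uses lower semicontinuity).} Take $u=u_{\zeta,\nu}$ restricted to a cube $Q^\nu$. Its smooth approximations $u_k$ are affine with gradient $\zeta\otimes\nu/\delta_k$ in a layer of width $\delta_k$, and constant outside. These converge in $L^1$, so lower semicontinuity of $\Fm_\varsigma$ gives
\[
g(\zeta,\nu)=\Fm_\varsigma(u_{\zeta,\nu};Q^\nu)\le\liminf_k\Fm_\varsigma(u_k;Q^\nu)=\liminf_k\delta_k h^\qc_\varsigma(\zeta\otimes\nu/\delta_k)=h^\qcinfty_\varsigma(\zeta\otimes\nu),
\]
where the last equality uses that the limsup defining the recession function is a limit along rank-one directions. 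This already yields $g\le h^\qcinfty_\varsigma(\zeta\otimes\nu)$ for every $\zeta$.

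\textbf{Reverse inequality (the main obstacle).} Here lower semicontinuity is applied the other way. Consider the Cantor-type or diffuse function $w(x)=\zeta\,\psi(x\cdot\nu)$ on $Q^\nu$, where $\psi$ is a staircase with $m$ jumps of size $1/m$ which converges uniformly to a linear ramp $x\cdot\nu$. Then $w_m$ has $m$ jumps of size $\zeta/m$, $w_m\to w$ in $L^1$, and the limit has gradient $\zeta\otimes\nu$. Lower semicontinuity gives
\[
h^\qc_\varsigma(\zeta\otimes\nu)\le\liminf_m m\,g(\zeta/m,\nu).
\]
Replacing $\zeta$ by $\lambda\zeta$ and letting $\lambda\to\infty$, one obtains $h^\qcinfty_\varsigma(\zeta\otimes\nu)\le\lambda^{-1}\liminf_m m\,g(\lambda\zeta/m,\nu)$. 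To turn this into a statement about $g(s z,\nu)/s$ along every $s\to0$, rather than only along the subsequence $s=\lambda/m$, I would use subadditivity (Lemma~\ref{lemmapropg2}(i)) together with the Lipschitz bound in $\zeta$. Together these show that $s\mapsto g(sz,\nu)/s$ has a limit as $s\to0$, namely $\sup_{s}g(sz,\nu)/s$ up to Fekete-type arguments. Hence the liminf is a genuine limit, which matches the upper bound.

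The delicate points are twofold. First, one must justify that the recession limsup is actually a limit on rank-one matrices. Second, the absorption of the bulk term $h^\qc_\varsigma(\lambda\zeta\otimes\nu)/\lambda$ into $h^\qcinfty_\varsigma$ must be uniform in $z$; I would handle this using rank-one convexity of $h^\qc_\varsigma$, for which $t\mapsto h^\qc_\varsigma(t\xi)/t$ is monotone up to the constant $h^\qc_\varsigma(0)$.
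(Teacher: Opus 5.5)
Your proposal is correct and follows the standard lower-semicontinuity argument that the paper invokes through \cite[Proposition~3.9]{CFI24}: ramps approximating $u_{\zeta,\nu}$ give $g(\zeta,\nu)\le h^{\qcinfty}_\varsigma(\zeta\otimes\nu)$ for all $\zeta$, and staircases approximating $x\mapsto\zeta\,(x\cdot\nu)$, together with the subadditivity and the uniform Lipschitz bound of Lemma~\ref{lemmapropg2}, give the matching lower limit uniformly in $\zeta/|\zeta|$. One bookkeeping fix is needed: (H5) allows $\Phi(0)>0$, so $h^{\qc}_\varsigma(0)$ may be positive, piecewise constant competitors then carry the bulk energy $h^{\qc}_\varsigma(0)\,\calL^n(Q^\nu)$, and the staircase inequality must read $h^{\qc}_\varsigma(\zeta\otimes\nu)-h^{\qc}_\varsigma(0)\le\liminf_m m\,g(\zeta/m,\nu)$ (the version you wrote fails for small $\zeta$); this constant cancels in the ramp comparison and disappears after dividing by $\lambda\to\infty$.
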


\section{Lower bound}\label{sec:lower-bound}

In this section, we establish the $\Gamma$-$\liminf$ inequality for the sequence of functionals $\{\mathcal{F}_\varepsilon\}_{\varepsilon>0}$. The proof is conceptually divided into two parts, corresponding to the diffuse and surface contributions of the limit functional. The surface part relies on a localized blow-up argument around the jump points, whereas the diffuse part is handled by means of standard truncation techniques and the partial relaxation result established in \cite{CFI24}. Finally, we characterize the effective domain of the $\Gamma$-limit, demonstrating that the limit energy is infinite for functions belonging to $(L^1\setminus GBV(\Omega))^N$.

We begin by addressing the surface contribution within the $BV$ framework.

\begin{proposition}\label{p:lowerboundBVsfcvettoriale}
Let $u\in BV(\Omega,\R^N)$ and assume that a sequence
$(u_{\eps},v_{\eps})$ converges to $(u,1)$ strongly in
$L^1(\Omega,\R^{N+1})$.
Then, under the hypotheses of Theorem~\ref{thm:general-anisotropic-gamma-convergence}, we have the
lower bound
\begin{equation}\label{lbjumpvectoriale}
\int_{J_u\cap A}g([u],\nu_u)\,\diff \calH^{n-1}
\leq
\liminf_{\eps\to0}\Fm_\eps(u_\eps,v_\eps,A),
\end{equation}
for every $A\in\mathcal A(\Omega)$, where the surface energy density $g$ is
defined in \eqref{eq:def-g-gamma}.
\end{proposition}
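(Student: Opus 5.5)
We follow the blow-up method of Fonseca--Müller, as in \cite{CFI16,CFI24}. Assume without loss of generality that $\liminf_{\eps}\Fm_\eps(u_\eps,v_\eps,A)<\infty$, and pass to a subsequence realizing it as a limit. Consider the Radon measures
\[
\mu_\eps:=\Big(\varphi(\eps f^2(v_\eps))\Phi(\nabla u_\eps)+\frac{\omega(1-v_\eps)}{4\eps}+\eps\gamma(\nabla v_\eps)^2\Big)\calL^n\res A .
\]
After a further subsequence, $\mu_\eps\weakto^*\mu$ in $A$. It then suffices to prove the density estimate
\[
\frac{\diff\mu}{\diff\calH^{n-1}\res J_u}(x_0)\ge g([u](x_0),\nu_u(x_0))
\qquad\text{for }\calH^{n-1}\text{-a.e. }x_0\in J_u\cap A .
\]
Fix such an $x_0$. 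We choose it to be a jump point where the Radon--Nikodym derivative exists and is finite, and where the rescalings $u_{x_0,r}(y):=u(x_0+ry)$ converge in $L^1(Q^\nu)$ to $u_{\zeta,\nu}$ (suitably translated by $u^-(x_0)$), with $\zeta=[u](x_0)$ and $\nu=\nu_u(x_0)$. Such points form a set of full $\calH^{n-1}$-measure. Choose radii $r_k\downarrow0$ with $\mu(\partial Q^\nu_{r_k}(x_0))=0$. Then
\[
\frac{\diff\mu}{\diff\calH^{n-1}\res J_u}(x_0)=\lim_k\lim_{\eps}\frac{\mu_\eps(Q^\nu_{r_k}(x_0))}{r_k^{n-1}} .
\]

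The next step is rescaling. Set $w_{\eps,k}(y):=u_\eps(x_0+r_ky)-u^-(x_0)$ and $z_{\eps,k}(y):=v_\eps(x_0+r_ky)$. A change of variables gives
\[
\frac{\mu_\eps(Q^\nu_{r_k}(x_0))}{r_k^{n-1}}=\int_{Q^\nu}\Big(r_k\varphi(\eps f^2(z))\Phi\big(\tfrac{1}{r_k}\nabla w\big)+\frac{\omega(1-z)}{4(\eps/r_k)}+\frac{\eps}{r_k}\gamma(\nabla z)^2\Big)\diff y .
\]
A diagonal choice $\eps=\eps_k$ with $\eps_k/r_k\to0$, $w_k\to u_{\zeta,\nu}$ and $z_k\to1$ in $L^1(Q^\nu)$ then reduces the problem to showing
\[
\liminf_k G_k\ge g(\zeta,\nu),
\]
where $G_k$ denotes the above integral with $\delta_k:=\eps_k/r_k$ playing the role of the phase-field parameter.

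To compare $G_k$ with $\Fm^\infty_{\delta_k}(w_k,z_k;Q^\nu)$, two estimates are needed.

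\emph{Degradation.} By (H4), for every $\eta>0$ we have $\varphi(t)\ge(1-\eta)\varphi'(0)t$ when $t$ is small, and $\varphi(t)\ge(1-\eta)\varphi'(0)\,t\wedge c_\eta$ in general. The set where $\eps f^2(z)$ is not small is a set where $z$ is close to $1$. There, in the cohesive regime, the elastic term is cheap relative to its contribution, and we handle it as follows. On $\{\eps_k f^2(z_k)\ge\lambda\}$ the integrand satisfies $r_k\varphi(\cdot)\Phi(\nabla w/r_k)\ge c_\lambda r_k\Phi(\nabla w/r_k)$. Replace $w_k$ by a truncation $w_k\chi_{\{z_k>s\}}$, as in Lemma~\ref{l:suppenergy}, or use $\mathcal T_k$ from Lemma~\ref{l:5.3}. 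The resulting functional differs from the original only by $O(r_k)$ terms, because the bulk part $r_k\int h^\qc(\nabla w/r_k)$ gives the absolutely continuous part, which vanishes on jump points after division by $r_k^{n-1}$ only as a bounded quantity times $r_k$.

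\emph{Recession.} Here we use \eqref{eq:uniform-recession-estimate}. On the region where $|\nabla w/r_k|\ge t_\delta$, we have $r_k\Phi(\nabla w/r_k)\ge(1-\delta C)\frac{1}{r_k}\Phi_\infty(\nabla w)$. On the complement, the elastic contribution is bounded by $C r_k\,t_\delta^2\,\varphi(\infty)$, which is $o(1)$. Multiplying by $r_k\eps_kf^2$ and rewriting gives $\delta_k f^2(z_k)\Phi_\infty(\nabla w_k)$. Combining the two estimates yields
\[
G_k\ge(1-\eta)\Fm^\infty_{\delta_k}(w_k,z_k;Q^\nu)-o(1) .
\]
By definition \eqref{eq:def-g-gamma}, the right-hand side satisfies $\liminf\ge(1-\eta)g(\zeta,\nu)$. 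Letting $\eta\to0$ concludes the proof.

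The main obstacle is the degradation step: $\varphi(\eps f^2(v))$ is only asymptotically linear near $0$. Wherever $\eps f^2(v)$ is not small, we cannot bound it below by $\varphi'(0)\eps f^2(v)$. The first approach we would try is to split $Q^\nu$ into $\{z_k\le1-\theta\}$, where $f^2$ is bounded so that $\eps_kf^2\to0$ uniformly and linearization holds, and $\{z_k>1-\theta\}$. On the second set, we would modify $z_k$ by capping it at $1-\theta$ via a coarea or averaging choice of level, as in Lemma~\ref{l:suppenergy}, and argue that this costs at most $O(\omega(\theta))$ plus a gradient term. Since $f^2(1-\theta)$ is finite and $\Phi_\infty$-energy there is bounded by $C\varphi(\infty)r_k\int\Phi(\nabla w/r_k)$, which is controlled, we can then let $\theta\to0$ after $k\to\infty$.
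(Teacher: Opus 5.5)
There is a genuine gap, and it sits exactly in the step you flag as the main obstacle. The energy measures, density estimate, good radii and blow-up with $\eps_k/r_k\to0$ match the paper. But the claimed comparison $G_k\ge(1-\eta)\Fm^\infty_{\delta_k}(w_k,z_k;Q^\nu)-o(1)$ is false for the unmodified phase field. Since $Q(0)=0$, $f^2(t)\to+\infty$ as $t\uparrow1$, while $\varphi$ saturates at $\varphi(\infty)$. So where $z_k$ is close to $1$ (almost all of $Q^\nu$), $\varphi'(0)\delta_kf^2(z_k)\Phi_\infty(\nabla w_k)$ can exceed the true bulk density by any amount. For the same reason your recession remainder is $o(1)$ only if $\eps_kf^2(z_k)$ is bounded: after passing to $\Fm^\infty$ it is $C(\eta)\varphi'(0)\eps_kf^2(z_k)r_k$, not $C r_kt_\delta^2\varphi(\infty)$.

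None of your remedies closes this:
\begin{itemize}
\item $w_k\chi_{\{z_k>s\}}$ is not in $H^1$, so it is not an admissible competitor in \eqref{eq:def-g-gamma}. It also modifies $w_k$ where $z_k$ is small, whereas the difficulty is where $z_k$ is close to $1$.
\item $\mathcal T_k$ truncates the values of $u$ and does not touch $\varphi(\eps f^2(v))$.
\item Capping $z_k$ at a fixed level $1-\theta$ does not cost $O(\omega(\theta))$. In the rescaled cube the potential density is $\omega(1-z)/(4\delta_k)$, and since $z_k\to1$ in measure the cap raises it to about $\omega(\theta)/(4\delta_k)\to\infty$ on a set of almost full measure. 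So the order ``$k\to\infty$, then $\theta\to0$'' cannot work.
\end{itemize}

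The missing idea is an $\eps$-dependent cap tuned to the cohesive scaling. Fix $\delta\in(0,1)$ and set $\delta_\eps:=\max\{t\in[0,1):\eps f^2(t)\le\delta\}$. Then $\delta_\eps\to1$, and by the monotonicity of $f$ near $1$ in (H1), $\eps f^2(v_\eps\wedge\delta_\eps)\le\delta$ everywhere for small $\eps$. Replacing $v_\eps$ by $\hat v_\eps:=v_\eps\wedge\delta_\eps$ has the following effects:
\begin{itemize}
\item The gradient term does not increase.
\item The bulk term does not increase, since on $\{v_\eps>\delta_\eps\}$ one has $\varphi(\delta)\le\varphi(\eps f^2(v_\eps))$.
\item The potential term rises by at most $\frac{\omega(1-\delta_\eps)}{4\eps}\calL^n(Q^\nu_\rho(x_0))$, where, by $\eps f^2(\delta_\eps)=\delta$ and (H2),
\[
\frac{\omega(1-\delta_\eps)}{4\eps}=\frac{\ell(\delta_\eps)}{4\delta}\,\frac{\omega(1-\delta_\eps)}{Q(1-\delta_\eps)}\longrightarrow\frac{\varsigma^2}{4\delta}.
\]
\end{itemize}
This is exactly where $\varsigma<\infty$ enters: the extra cost is $O(\rho^n/\delta)=o(\rho^{n-1})$.

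After the cap, $\varphi(\eps f^2(\hat v_\eps))\ge(1-\tilde\vartheta(\delta))\varphi'(0)\eps f^2(\hat v_\eps)$ holds everywhere. The recession error $C(\eta)\varphi'(0)\eps f^2(\hat v_\eps)\le C(\eta)\varphi'(0)\delta$ integrates to $O(\rho^n)$, which vanishes once $\eta=\eta_\rho$ is chosen with $\rho C(\eta_\rho)\to0$. Since $\hat v_\eps\to1$ still holds, the blown-up pairs are admissible for $g$. Taking limits in the order $\eps\to0$, then $\rho\to0$, then $\delta\to0$ gives the density bound.
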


\begin{proof}
Fix an arbitrary $A\in\mathcal A(\Omega)$.
We may initially assume that the inferior limit on the right-hand side
of \eqref{lbjumpvectoriale} is finite, since otherwise the claim holds
trivially.
Consequently, up to extracting a subsequence, we can suppose that
\[
(u_\eps,v_\eps)\in H^1(\Omega;\R^N\times[0,1])
\]
and that the aforementioned $\liminf$ is actually a limit.

For every $\eps>0$, we introduce the localized energy Radon measure
$\mu_\eps$ on $A$ defined by
\begin{equation*}
\mu_\eps
:=
\left(
\varphi(\eps f^2(v_\eps))\Phi(\nabla u_\eps)
+
\frac{\PotDann(1-v_\eps)}{4\eps}
+
\eps\gamma^2(\nabla v_\eps)
\right)
\calL^n\res A.
\end{equation*}
By weak$^*$ compactness, extracting a further subsequence if necessary,
we obtain
\[
\mu_\eps\weakto\mu
\]
for some limiting Radon measure $\mu$ on $A$. This weak convergence
guarantees that
\begin{equation*}
\liminf_{\eps\to0}
\mathcal F_\eps(u_\eps,v_\eps,A)
\geq \mu(A).
\end{equation*}
To prove \eqref{lbjumpvectoriale}, it is therefore sufficient to
establish the pointwise bound
\begin{equation}\label{e:claim lower sup}
\frac{d\mu}{d\calH^{n-1}\res J_u}(x_0)
\geq
g([u](x_0),\nu_u(x_0))
\end{equation}
for $\calH^{n-1}$-a.e.\ $x_0\in J_u\cap A$.

Let us fix a point $x_0\in J_u\cap A$ that simultaneously satisfies
the following regularity conditions:
\begin{equation*}
\frac{d\mu}
{d(\calH^{n-1}\res(J_u\cap A))}(x_0)
=
\lim_{\rho\to0}
\frac{\mu(Q^\nu_\rho(x_0))}
{\calH^{n-1}(J_u\cap Q^\nu_\rho(x_0))}
<\infty,
\end{equation*}
\begin{equation*}
\lim_{\rho\to0}
\frac{\calH^{n-1}
(J_u\cap Q^\nu_\rho(x_0))}
{\rho^{n-1}}
=1,
\end{equation*}
and
\begin{equation*}
\lim_{\rho\to0}
\frac1{\rho^n}
\int_{Q^\nu_\rho(x_0)}
|u(y)-u_{x_0,\zeta,\nu}(y)|\,dy
=0,
\end{equation*}
where we have set
\[
\nu:=\nu_u(x_0),
\qquad
\zeta:=[u](x_0).
\]
By standard theory on $BV$ functions, these conditions identify a set
of full $\calH^{n-1}$-measure within $J_u\cap A$.

From the weak$^*$ convergence of $\mu_\eps$, we can express the density
at $x_0$ as
\begin{equation}\label{e:doppio limite radon-nyk parte sup}
\frac{d\mu}{d\calH^{n-1}\res J_u}(x_0)
=
\lim_{\rho\to0}
\frac{\mu(Q^\nu_\rho(x_0))}{\rho^{n-1}}
=
\lim_{\substack{\rho\to0\\ \rho\in I(x_0)}}
\lim_{\eps\to0}
\frac{\mu_\eps(Q^\nu_\rho(x_0))}
{\rho^{n-1}},
\end{equation}
where $I(x_0)$ is the set of ``good'' radii
\begin{equation*}
I(x_0)
:=
\left\{
\rho\in
\left(
0,\frac{2}{\sqrt n}\dist(x_0,\partial A)
\right):
\mu(\partial Q^\nu_\rho(x_0))=0
\right\}.
\end{equation*}

Next, as in the proof of \cite[Proposition~3.3]{ACF26}, we fix a
threshold parameter $\delta\in(0,1)$ and, for every $\eps>0$, define
the truncation level
\[
\delta_\eps
:=
\sup\left\{
t\in[0,1):
\eps f^2(t)\leq\delta
\right\}.
\]
Since $f(0)=0$ and $f(t)\to\infty$ as $t\to1^-$,
$\delta_\eps\in(0,1)$ is actually a maximum by the continuity of $f$
on $[0,1)$. Moreover, for all $t\in(\delta_\eps,1)$,
\[
\delta<\eps f^2(t),
\qquad
\eps f^2(\delta_\eps)=\delta.
\]
In particular, the latter identity implies
\[
\delta_\eps\to1
\qquad\text{as }\eps\to0.
\]
Moreover, by (H1), there exists $t_0\in(0,1)$ such that $f$ is
increasing on $[t_0,1)$. Defining $M:=\max_{[0,t_0]}f$, we have for all $\eps$ sufficiently small, $\frac{\delta}{\eps}>M^2
\text{ and }
\delta_\eps\in(t_0,1).$ In particular, when $\eps$ is small enough,
$t\in[0,\delta_\eps]$ if and only if
\begin{equation}\label{e:bounddeltaciro bis}
\eps f^2(t)\leq\delta.
\end{equation}

We then introduce the truncated phase-field $\hat v_\eps:=v_\eps\land\delta_\eps$ and decompose the energy as follows:
\begin{align*}
\mathcal F_\eps
(u_\eps,\hat v_\eps,Q^\nu_\rho(x_0))
&=
\mathcal F_\eps
\bigl(
u_\eps,v_\eps,
Q^\nu_\rho(x_0)\setminus\{v_\eps>\delta_\eps\}
\bigr)+
\int_{Q^\nu_\rho(x_0)\cap\{v_\eps>\delta_\eps\}}
\varphi(\delta)\Phi(\nabla u_\eps)\,dx
\\
&\quad+
\frac{\PotDann(1-\delta_\eps)}{4\eps}
\calL^n
\bigl(
Q^\nu_\rho(x_0)\cap\{v_\eps>\delta_\eps\}
\bigr).
\end{align*}
Given that $\varphi$ is non-decreasing and using
\eqref{e:bounddeltaciro bis}, it follows that $\varphi(\delta)
\leq
\varphi(\eps f^2(v_\eps))\text{ on }\{v_\eps>\delta_\eps\}.$ Consequently,
\begin{align*}
\mathcal F_\eps
(u_\eps,\hat v_\eps,Q^\nu_\rho(x_0))
\leq
\mathcal F_\eps
(u_\eps,v_\eps,Q^\nu_\rho(x_0))
+
\frac{\PotDann(1-\delta_\eps)}{4\eps}
\calL^n
\bigl(
Q^\nu_\rho(x_0)\cap\{v_\eps>\delta_\eps\}
\bigr).
\end{align*}
Appealing to the definition of $\delta_\eps$, condition
\eqref{eq:def-varsigma}, and the fact that
$\delta_\eps\to1$ as $\eps\to0$, we conclude that, for $\eps$
sufficiently small (depending on $\delta$),
\begin{equation*}
\mathcal F_\eps
(u_\eps,\hat v_\eps,Q^\nu_\rho(x_0))
\leq
\mathcal F_\eps
(u_\eps,v_\eps,Q^\nu_\rho(x_0))
+
\frac{(\varsigma^2+\delta)\rho^n}{4\delta}.
\end{equation*}
Substituting this into
\eqref{e:doppio limite radon-nyk parte sup} and then letting
$\rho\to0$ provides the key lower estimate
\begin{equation}\label{e:penultima stima con der radon-nyk lower sup}
\frac{d\mu}{d\calH^{n-1}\res J_u}(x_0)
\geq
\limsup_{\substack{\rho\to0\\\rho\in I(x_0)}}
\limsup_{\eps\to0}
\frac{
\mathcal F_\eps
(u_\eps,\hat v_\eps,Q^\nu_\rho(x_0))
}{\rho^{n-1}}.
\end{equation}

For an arbitrary $\eta\in(0,1)$, conditions
\eqref{eq:uniform-recession-estimate} and
\eqref{eq:growth-Phi-infty} guarantee that $\Phi(\xi)\geq(1-\eta)\Phi_\infty(\xi)$
for sufficiently large $|\xi|$. Thus, by the continuity of the
recession function $\Phi_\infty$, there exists a constant
$C(\eta)>0$ ensuring the global bound
\begin{equation*}
\Phi(\xi)+C(\eta)
\geq
(1-\eta)\Phi_\infty(\xi)
\qquad
\text{for every }\xi\in\R^{N\times n}.
\end{equation*}
For each ``good'' radius $\rho\in I(x_0)$, we select
$\eta_\rho\in(0,1)$ such that
\begin{equation}\label{e:cicciomaccio}
\eta_\rho\to0,
\qquad
\rho C(\eta_\rho)\to0
\qquad
\text{as }\rho\to0.
\end{equation}
Using \eqref{e:bounddeltaciro bis} and hypothesis (H4), we can write
\begin{equation*}
\varphi(\eps f^2(\hat v_\eps))
\geq
(1-\tilde\vartheta(\delta))
\varphi'(0^+)\eps f^2(\hat v_\eps),
\end{equation*}
where $\tilde\vartheta(\delta)\to0^+$ as $\delta\to0^+$. Consequently,
\begin{align*}
\varphi(\eps f^2(\hat v_\eps))
\Phi(\nabla u_\eps)
\geq
(1-\tilde\vartheta(\delta))
\varphi'(0^+)\eps f^2(\hat v_\eps)
\bigl(
(1-\eta_\rho)\Phi_\infty(\nabla u_\eps)
-
C(\eta_\rho)
\bigr).
\end{align*}
Integrating this bound over the cube and recalling
\eqref{e:bounddeltaciro bis}, we deduce
\begin{align*}
\frac{
\mathcal F_\eps
(u_\eps,\hat v_\eps,Q^\nu_\rho(x_0))
}{\rho^{n-1}}
&\geq
\frac{
(1-\tilde\vartheta(\delta))
(1-\eta_\rho)
\mathcal F^\infty_\eps
(u_\eps,\hat v_\eps,Q^\nu_\rho(x_0))
}{\rho^{n-1}}-
\frac{
C(\eta_\rho)
\calL^n(Q^\nu_\rho(x_0))
(1-\tilde\vartheta(\delta))
\varphi'(0^+)\delta
}{\rho^{n-1}},
\end{align*}
where the asymptotic functional
$\mathcal F^\infty_\eps$ is defined in
\eqref{eq:def-recession-energy}.

Indeed, the potential and damage terms are nonnegative and therefore
may be multiplied by the factor
\[
(1-\tilde\vartheta(\delta))(1-\eta_\rho)\leq1.
\]

Therefore, taking the superior limits, we arrive at
\begin{equation}\label{e:ultima stima der radon nyk lowe sup}
\frac{d\mu}{d\calH^{n-1}\res J_u}(x_0)
\geq
(1-\tilde\vartheta(\delta))
\limsup_{\substack{\rho\to0\\\rho\in I(x_0)}}
\limsup_{\eps\to0}
\frac{
\mathcal F^\infty_\eps
(u_\eps,\hat v_\eps,Q^\nu_\rho(x_0))
}{\rho^{n-1}}.
\end{equation}

Finally, we perform the standard blow-up procedure. For $\rho>0$ set
\[
u^\rho_\eps(y)
:=
u_\eps(\rho y+x_0),
\qquad
\hat v^\rho_\eps(y)
:=
\hat v_\eps(\rho y+x_0).
\]
By the $2$-homogeneity of $\Phi_\infty$ and the $1$-homogeneity of the
norm $\gamma$, we have
\begin{equation}\label{e:blowup Finfty gamma}
\frac{
\mathcal F^\infty_\eps
(u_\eps,\hat v_\eps,Q^\nu_\rho(x_0))
}{\rho^{n-1}}
=
\mathcal F^\infty_{\eps/\rho}
(u^\rho_\eps,\hat v^\rho_\eps,Q^\nu).
\end{equation}
Because $u_\eps\to u$ strongly in $L^1(\Omega;\R^N)$ and
$v_\eps\to1$ strongly in $L^1(\Omega)$, while
$\delta_\eps\to1$, a standard diagonal argument allows us to choose
$\rho_j\to0$, with $\rho_j\in I(x_0)$, and
$\varepsilon_j\to0$ such that
\[
\tilde\varepsilon_j
:=
\frac{\varepsilon_j}{\rho_j}
\longrightarrow0, \quad \textup{and} \quad (u^{\rho_j}_{\varepsilon_j},\hat v^{\rho_j}_{\varepsilon_j})
\longrightarrow
(u_{\zeta,\nu},1)
\qquad\text{in }L^1(Q^\nu;\R^{N+1}),
\]
and 
\begin{align}
\limsup_{\substack{\rho\to0\\\rho\in I(x_0)}}
\limsup_{\eps\to0}
\frac{
\mathcal F^\infty_\eps
(u_\eps,\hat v_\eps,Q^\nu_\rho(x_0))
}{\rho^{n-1}}=
\lim_{j\to\infty}
\mathcal F^\infty_{\tilde\varepsilon_j}
\bigl(
u^{\rho_j}_{\varepsilon_j},
\hat v^{\rho_j}_{\varepsilon_j},
Q^\nu
\bigr).
\label{e:ultima stima lower sup}
\end{align}
By definition of $g$ in \eqref{eq:def-g-gamma}, we
therefore obtain
\[
\lim_{j\to\infty}
\mathcal F^\infty_{\tilde\varepsilon_j}
\bigl(
u^{\rho_j}_{\varepsilon_j},
\hat v^{\rho_j}_{\varepsilon_j},
Q^\nu
\bigr)
\geq
g(\zeta,\nu).
\]
Coupling this with
\eqref{e:ultima stima der radon nyk lowe sup} and \eqref{e:cicciomaccio} yield
\[
\frac{d\mu}{d\calH^{n-1}\res J_u}(x_0)
\geq
(1-\tilde\vartheta(\delta))
g(\zeta,\nu),
\]
concluding the proof, taking $\delta\to0$.
\end{proof}

Here, we recall the partial relaxation result from 
\cite[Proposition~4.2]{CFI24}, adapted to the present framework.
\begin{theorem}\label{t:CFI_sci}
Let $\zeta\in(0,\infty)$, and $\vartheta:\R^{N\times n}\to[0,\infty)$ be Borel, quasi-convex map such that for some constant $c>0$
\begin{equation*}
0\le \vartheta(\xi)\le c(1+|\xi|) \quad\textup{for all } \xi\in\R^{N\times n}\,.
\end{equation*}
Define $F:L^1(\Omega,\R^N)\times\mathcal A(\Omega)\to[0,\infty]$ by
\begin{equation*}
F(w,A):=\int_A \vartheta(\nabla w) \dx + \zeta\,\mathcal{H}^{n-1}(A\cap J_{w})
\end{equation*}
if $w\in SBV(\Omega,\R^N)$, and $\infty$ otherwise.  Then, for any $w_j,w\in BV\cap L^\infty(\Omega,\R^N)$, with $w_j\to w$ in $L^1(A,\R^N)$, we have
\begin{equation*}
\liminf_{j\to \infty} F(w_j,A)\geq\int_A\vartheta(\nabla w)\dx+ \int_A \vartheta^{\infty}(\diff D^c w)\,,
\end{equation*}    
where $\vartheta^{\infty}$ is the recession function of $\vartheta$.
\end{theorem}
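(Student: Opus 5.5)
The statement is \cite[Proposition~4.2]{CFI24}, which the paper only transposes to the present notation. I would therefore reproduce its proof rather than invoke it as a black box. The goal is to reduce the jump-penalized functional $F$ to a lower semicontinuity result for the integral $\int_A\vartheta(\nabla w)\,\dx+\int_A\vartheta^\infty(\diff D^c w)$ on $BV$. The linear growth of $\vartheta$ suggests the classical $BV$ relaxation theory of Ambrosio--Dal Maso and Fonseca--Müller for quasiconvex integrands of linear growth. Here the jump part of the sequence is controlled only through the counting term $\zeta\,\mathcal H^{n-1}(J_{w_j})$, and not through $|D^jw_j|$.

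First, I would assume the $\liminf$ is finite and realized as a limit. Then $\sup_j \mathcal H^{n-1}(J_{w_j}\cap A)<\infty$ and $\sup_j\int_A\vartheta(\nabla w_j)\,\dx<\infty$. Since $\vartheta$ lacks coercivity, I would add a vanishing perturbation $\vartheta_\lambda:=\vartheta+\lambda|\cdot|$. This is still quasiconvex, since the sum of a quasiconvex and a convex function is quasiconvex, and it has linear growth from both sides. The limit $\lambda\to0$ is recovered at the end, using $\vartheta_\lambda\ge\vartheta$ in the absolutely continuous and Cantor parts of the limit.

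The subtlety in this step is the bound on $|D^jw_j|$. A uniform $L^\infty$ bound on $w_j$ gives $|D^jw_j|(A)\le 2\sup_j\|w_j\|_\infty\,\mathcal H^{n-1}(J_{w_j}\cap A)$. However, $w_j\in BV\cap L^\infty$ does not by itself give a uniform bound, so I would first truncate. Using the maps $\mathcal T_k$ of \eqref{e:Tk} with $a_k>\|w\|_\infty$, Lipschitz constant at most $1$, and a De Giorgi averaging over $k$ as in Lemma~\ref{l:5.3}, the bulk energy increases by an arbitrarily small fraction plus a term proportional to $\mathcal L^n(\{|w_j|>a_M\})\to0$. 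Meanwhile $J_{\mathcal T_k(w_j)}\subset J_{w_j}$, and $\mathcal T_k(w_j)\to w$ in $L^1$. With this, $(w_j)$ is bounded in $BV$ and $w_j\weakto w$ weakly$^*$.

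The core step is a blow-up on the measure $\mu$, the weak$^*$ limit of $\vartheta(\nabla w_j)\mathcal L^n$, together with the blow-up of the jump measures $\zeta\mathcal H^{n-1}\res J_{w_j}$. I would prove
\[
\frac{d\mu}{d\mathcal L^n}\ge\vartheta(\nabla w)\ \text{a.e.},\qquad \frac{d\mu}{d|D^cw|}\ge\vartheta^\infty\Big(\frac{dD^cw}{d|D^cw|}\Big)\ |D^cw|\text{-a.e.}
\]
At Lebesgue points, the rescaled sequences converge to the affine map $\nabla w(x_0)x$. The jump sets of the rescaled maps have $\mathcal H^{n-1}$-measure $O(r^{-(n-1)}\cdot r^{n-1})\cdot$(density), which, after normalizing by $r^n$, vanishes compared with the volume. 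Hence an SBV approximation argument applies: Alberti's theorem is combined with the Fonseca--Müller / Ambrosio--Dal Maso technique of modifying the rescaled sequence near the boundary of the cube so that it attains the affine boundary datum. Quasiconvexity then gives the bound at Lebesgue points. At Cantor points the same argument applies using Alberti's rank-one theorem, the blow-up being one-dimensional and rank-one. One uses that $\vartheta^\infty$ of a quasiconvex function of linear growth is quasiconvex and, along rank-one directions, convex. The small jump sets again contribute negligibly because $|D^cw|(Q_r)/r^{n-1}\to\infty$ at $|D^cw|$-a.e. point.

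I expect the Cantor part to be the main obstacle. In the rescaled problem, the rescaled jump set has $\mathcal H^{n-1}$-measure of order $r^{n-1}/|D^cw|(Q_r)\cdot$(bounded), which does tend to zero. The difficulty is replacing the jump portion by a Sobolev one at controlled cost, and this is exactly where the $L^\infty$ truncation is indispensable: the cost of a jump is comparable to $\|w_j\|_\infty\mathcal H^{n-1}(J)$. I would follow Fonseca--Müller's argument, passing through a mollification of the rescaled limit, and conclude by invoking the classical lower semicontinuity on $W^{1,1}$ for the Cantor blow-up.
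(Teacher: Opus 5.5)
The paper does not prove this statement; it only quotes \cite[Proposition~4.2]{CFI24}. Your outline has a genuine gap at the perturbation step, and everything after it depends on that step. Since $F_\lambda(w_j,A)=F(w_j,A)+\lambda\int_A|\nabla w_j|\,\dx$, the inequality $\vartheta_\lambda\ge\vartheta$ only gives $\liminf_jF_\lambda(w_j,A)\ge\liminf_jF(w_j,A)$, which is the wrong direction. A lower bound proved for $F_\lambda$ transfers to $F$ only if $\sup_j\int_A|\nabla w_j|\,\dx<\infty$, and nothing in the hypotheses gives this when $\vartheta$ is not coercive. Your De Giorgi averaging over $\mathcal T_k$ needs the same bound: the cost of the layer $\{a_k<|w_j|<a_{k+1}\}$ is $c\int(1+|\nabla w_j|)\,\dx$ over that layer. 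In Lemma~\ref{l:5.3} this cost is absorbed only because $\Phi$ is coercive. Your claim that $(w_j)$ is bounded in $BV$ also rests on this bound, so the reduction is circular.

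The gap cannot be closed, because the statement as transcribed, with only $0\le\vartheta$, is false.

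\emph{Counterexample.} Take $n=N=2$ and $\vartheta(\xi)=|\operatorname{tr}\xi|$, which is convex, nonnegative and $\le\sqrt2|\xi|$. Let $E(x)=\frac1{2\pi}\log|x|$, let $\phi\in C^\infty_c(\Omega;[0,1])$ with $\phi=1$ on $A$, and set $w=\nabla(E*\phi)$; the right-hand side is then $\Lm^2(A)$. Tile the plane by squares $Q_i$ of side $1/m$ with centres $x_i$, and let $c_i=\int_{Q_i}\phi$. Fix $a>0$ and $\beta\in C^\infty_c((-\frac12,\frac12);[0,1])$ with $\int\beta=b$. On the horizontal segment of length $\ell_i=c_i/(ab)$ centred at $x_i$, put the density $a\beta\big((x-x_i)\cdot e_1/\ell_i\big)$. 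The resulting measures $\mu_m$ converge weakly$^*$ to $\phi\Lm^2$. The single-layer fields $w_m=\nabla(E*\mu_m)$ have the following properties:
\begin{itemize}
\item they belong to $SBV\cap L^\infty$ and are smooth up to the segments from both sides; one can check they are even bounded uniformly in $m$;
\item they jump only on the segments;
\item they converge to $w$ in $L^1$, since $\nabla E\in L^1_{\rm loc}$;
\item $\operatorname{tr}\nabla w_m=\Delta(E*\mu_m)=0$ a.e.
\end{itemize}
Hence $F(w_m,A)\le\zeta\|\phi\|_{L^1}/(ab)<\Lm^2(A)$ for $a$ large. The diffuse divergence of the limit is created by jumps of small area, while $\nabla w_m$ stays in $\{\vartheta=0\}$ and its $L^1$ norm blows up.

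\emph{The fix.} The missing hypothesis is $\vartheta(\xi)\ge\frac1c|\xi|-c$, presumably what the cited result assumes. It holds in the only application here, $\vartheta=h^{\qc}_{\varsigma_\eta}$, by Proposition~\ref{prop:bulk-growth-section2}. With it you can drop $\lambda$, and your truncation directly yields uniform $L^\infty$ and $BV$ bounds.

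Even then, your blow-up claims need more work:
\begin{itemize}
\item At Lebesgue points the rescaled jumps have amplitude of order $M/r$ on area of order $r$, so their total variation in $Q$ is $O(1)$, not negligible. You must truncate again in the blown-up variables, which again uses coercivity.
\item At Cantor points the rescaled jump variation is controlled by $\frac{d\nu}{d|Dw|}(x_0)$, where $\nu$ is the weak$^*$ limit of $\mathcal H^{n-1}\res J_{w_j}$. This need not vanish, so the jumps are not negligible by scaling alone and this case needs a genuine argument.
\end{itemize}
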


Next, we establish the lower bound for the diffuse part on $BV\cap L^\infty$. 

\begin{proposition}\label{p:diffuseliminf sigmabarra finito vettoriale}
Let $u\in BV\cap L^\infty(\Omega,\R^N)$ and assume that the sequence $(u_{\eps},v_{\eps})\in L^1(\Omega,\R^{N+1})$ converges strongly to $(u,1)$ in $L^1(\Omega,\R^{N+1})$. Then, provided that the assumptions of Theorem~\ref{thm:general-anisotropic-gamma-convergence} hold, we have the following lower bound for the diffuse energy:
\begin{equation}\label{e:liminf diffuse sigmabarra finito vettoriale}
    \int_A h^{\qc}_{\varsigma}(\nabla u)\dx + \int_A h^{\qc,\infty}_{\varsigma}(\diff D^cu) \leq \liminf_{\eps \to 0} \Fm_\eps(u_{\eps},v_{\eps},A)\,,
\end{equation}
for every $A\in\mathcal A(\Omega)$.
\end{proposition}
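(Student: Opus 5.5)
We combine the truncation Lemma~\ref{l:suppenergy} with the partial relaxation Theorem~\ref{t:CFI_sci}, following the scheme of \cite[Proposition 3.3]{ACF26} and \cite[Proposition 4.3]{CFI24}. Fix $A\in\mathcal A(\Omega)$. Without loss of generality the $\liminf$ on the right-hand side of \eqref{e:liminf diffuse sigmabarra finito vettoriale} is a finite limit, so $(u_\eps,v_\eps)\in H^1(\Omega,\R^N\times[0,1])$ with equibounded energy; in particular $v_\eps\to1$ in measure.

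\textbf{Step 1: reduction to bounded $u_\eps$.} Lemma~\ref{l:suppenergy} requires $u\in L^\infty$. Take $M$ with $a_M>\|u\|_{L^\infty}+1$. By Lemma~\ref{l:5.3} there is $k_\eps\in\{M+1,\dots,2M\}$ such that the elastic part increases at most by the factor $(1+C_\Phi^2/M)$, plus $c\,\mathcal L^n(\{|u_\eps|>a_M\})$. The other two terms do not change, and $\mathcal L^n(\{|u_\eps|>a_M\})\to0$ because $u_\eps\to u$ in $L^1$. Passing to a subsequence, $k_\eps\equiv k$ is constant. Then $\mathcal T_k(u_\eps)\to\mathcal T_k(u)=u$ in $L^1$, and the sequence is uniformly bounded. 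We may therefore replace $u_\eps$ by $\mathcal T_k(u_\eps)$, at the cost of the factor $(1+C_\Phi^2/M)$, and let $M\to\infty$ at the end.

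\textbf{Step 2: truncation of the phase field.} Fix $\eta\in(0,1\wedge\varsigma)$ and $\delta\in(0,\delta_\eta)$. Lemma~\ref{l:suppenergy} gives $u_\eps^\delta=u_\eps\chi_{\{v_\eps>s_\delta\}}\in SBV\cap L^\infty(A,\R^N)$ with $u_\eps^\delta\to u$ in $L^1(A)$ and
\[
\zeta_1(\delta)\int_A h^{\qc}_{\varsigma_\eta}(\nabla u^\delta_\eps)\,\dx+\zeta_2(\delta)\mathcal H^{n-1}(A\cap J_{u^\delta_\eps})\le \Fm_\eps(u_\eps,v_\eps,A)+h_{\varsigma_\eta}(0)\,\mathcal L^n(A\cap\{v_\eps\le\delta\}).
\]
The last term vanishes as $\eps\to0$ because $v_\eps\to1$ in measure.

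\textbf{Step 3: relaxation and limits.} Apply Theorem~\ref{t:CFI_sci} with $\vartheta:=\zeta_1(\delta)h^{\qc}_{\varsigma_\eta}$, which is quasiconvex with linear growth by Proposition~\ref{prop:bulk-growth-section2}, and with jump constant $\zeta_2(\delta)>0$. Since $w_j=u_\eps^\delta$ and $w=u$ lie in $BV\cap L^\infty$, dropping the jump term gives
\[
\zeta_1(\delta)\Big(\int_A h^{\qc}_{\varsigma_\eta}(\nabla u)\,\dx+\int_A h^{\qc,\infty}_{\varsigma_\eta}(\diff D^cu)\Big)\le(1+C_\Phi^2/M)\liminf_{\eps}\Fm_\eps(u_\eps,v_\eps,A).
\]
Let $\delta\to0$ (so $\zeta_1\to1$) and $M\to\infty$. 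It remains to let $\eta\to0$, i.e.\ $\varsigma_\eta\uparrow\varsigma$.

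This last step is the delicate one. By Proposition~\ref{p:proprieta hsigma} we have $h_{\varsigma_\eta}\ge(\varsigma_\eta/\varsigma)^2h_\varsigma$. Since the quasiconvex envelope commutes with multiplication by a positive constant, $h^{\qc}_{\varsigma_\eta}\ge(\varsigma_\eta/\varsigma)^2h^{\qc}_\varsigma$, and the same inequality holds for the recession functions. The limit $\eta\to0$ therefore recovers $h^{\qc}_\varsigma$ and $h^{\qc,\infty}_\varsigma$ with constant tending to $1$. The main technical point I expect is checking that the hypotheses of Lemma~\ref{l:suppenergy} and Theorem~\ref{t:CFI_sci} match exactly: the recession function in Theorem~\ref{t:CFI_sci} must coincide with the $\limsup$-definition \eqref{eq:def-h-recession}, and the truncated $u_\eps$ must be in $L^\infty$, which Step 1 provides.
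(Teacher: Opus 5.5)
Your proof is correct and follows the paper's argument: reduce to bounded $u_\eps$ via Lemma~\ref{l:5.3}, truncate the phase field with Lemma~\ref{l:suppenergy}, apply the partial relaxation Theorem~\ref{t:CFI_sci}, and then let $\delta\to0$ and $\eta\to0$. The only difference is in the last step, where you use the multiplicative bound $h^{\qc}_{\varsigma_\eta}\ge(\varsigma_\eta/\varsigma)^2h^{\qc}_{\varsigma}$ (from the second inequality in \eqref{e:la h al variare di sigma} and the positive homogeneity of the quasiconvex envelope) instead of the paper's appeal to monotone convergence; this is equally valid and somewhat cleaner.
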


\begin{proof}
Fix an arbitrary open set $A\in \mathcal{A}(\Omega)$. We can assume without loss of generality that the inferior limit on the right-hand side of \eqref{e:liminf diffuse sigmabarra finito vettoriale} is finite, as the inequality holds trivially otherwise. Moreover, since $u\in L^{\infty}(\Omega,\R^N)$, thanks to Lemma \ref{l:5.3} we may assume that $(u_\eps,v_\eps)\in H^1\cap L^\infty(\Omega,\R^{N+1})$ for every $\eps>0$.

Let $\eta\in(0,1\wedge \varsigma)$ and invoking the notation and results established in Lemma~\ref{l:suppenergy}, we can find $\delta_{\eta}\in(0,1)$ such that, for every $\delta\in(0,\delta_{\eta})$, the truncated function $u^\delta_\eps$ belongs to the space $SBV(A,\R^N)$.
This allows us to bound the effective energy from below as follows:
\begin{align*}
 H_\delta(\tilde u_{\eps}^\delta,A)&=\zeta_1(\delta)\int_A h^{\qc}_{\varsigma_\eta}(\nabla \tilde u^{\delta}_\eps) \dx 
+ \zeta_2(\delta)\mathcal{H}^{n-1}(A\cap J_{\tilde u^\delta_\eps}) 
 \leq\Fm_\eps(u_\eps,v_\eps,A)
+ h_{\varsigma_\eta}(0)\calL^{n}(A\cap \{v_{\eps}\leq 1-\delta\})\,,
\end{align*}
where the shifted parameter $\varsigma_{\eta}:=(\varsigma-\eta)(1-\eta)$ as in Lemma \ref{l:suppenergy}. 

Taking the inferior limit as $\eps\to0$ and applying the lower semicontinuity result of Theorem~\ref{t:CFI_sci}, we deduce:
\begin{equation*}
    \zeta_1(\delta)\int_A h_{\varsigma_\eta}^{\qc}(\nabla u)\dx 
    + \zeta_1(\delta)\int_A h^{\qc,\infty}_{\varsigma_\eta}(\diff D^c u)
    \leq \liminf_{\eps \to 0} \Fm_\eps(u_{\eps},v_{\eps},A)\,,
\end{equation*}
because $v_\eps\to 1$ in $L^1(\Omega)$. Finally, by passing to the limit first as $\delta\to0$ and subsequently as $\eta\to0$, we recover the desired lower bound \eqref{e:liminf diffuse sigmabarra finito vettoriale}. This sequential limit is fully justified by the asymptotic behavior in \eqref{e:limittruncat}, the monotonicity property \eqref{e:la h al variare di sigma}, and a direct application of Beppo Levi's Monotone Convergence Theorem.
\end{proof}

Combining Propositions~\ref{p:lowerboundBVsfcvettoriale} and~\ref{p:diffuseliminf sigmabarra finito vettoriale} via a standard localization argument, and appealing to the De Giorgi-Letta measure criterion together with \cite[Proposition~1.16]{Bra98} to exploit the mutual singularity of the Lebesgue, jump, and Cantor parts of $Du$, we directly obtain the full $\Gamma$-$\liminf$ inequality for functions in $BV\cap L^\infty$ (cf. \cite[Theorem 4.6]{CFI24}).

\begin{theorem}\label{thm:lbcompBV}
Let $u\in BV\cap L^\infty(\Omega,\mathbb R^N)$. Under the assumptions of Theorem~\ref{thm:general-anisotropic-gamma-convergence}, we have
\begin{equation}\label{e:lbBV}
\mathcal{F}_{\varsigma}(u,1;\Omega) \le \Gamma(L^1)\hbox{-}\liminf_{\eps\to0} \Fm_\eps(u, 1;\Omega).
\end{equation}
\end{theorem}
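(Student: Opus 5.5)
The plan is to follow the standard localization scheme of \cite[Theorem 4.6]{CFI24}. Fix $u\in BV\cap L^\infty(\Omega,\R^N)$ and a sequence $(u_\eps,v_\eps)\to(u,1)$ in $L^1$ with $\liminf_\eps\Fm_\eps(u_\eps,v_\eps;\Omega)<\infty$; up to a subsequence this lower limit is a limit. If the limit of the $v_\eps$ were not $1$, the damage term would force infinite energy (as remarked after \eqref{eq:def-g-gamma}), so only pairs converging to $(u,1)$ matter. As in the proof of Proposition~\ref{p:lowerboundBVsfcvettoriale}, I would consider the energy densities $\mu_\eps:=\big(\varphi(\eps f^2(v_\eps))\Phi(\nabla u_\eps)+\frac{\omega(1-v_\eps)}{4\eps}+\eps\gamma(\nabla v_\eps)^2\big)\calL^n\res\Omega$. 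These have uniformly bounded mass, so a subsequence converges weakly$^*$ to a finite Radon measure $\mu$ on $\Omega$. For every open $A\subset\Omega$, lower semicontinuity on open sets gives $\mu(A)\le\liminf_\eps\mu_\eps(A)$, and this is at most $\liminf_\eps\Fm_\eps(u_\eps,v_\eps;A)$.

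Next I would apply the two local lower bounds on an arbitrary $A\in\mathcal A(\Omega)$. Since $u\in BV$, Proposition~\ref{p:lowerboundBVsfcvettoriale} yields $\int_{J_u\cap A}g([u],\nu_u)\,\diff\calH^{n-1}\le\liminf_\eps\Fm_\eps(u_\eps,v_\eps;A)$. Since $u\in BV\cap L^\infty$, Proposition~\ref{p:diffuseliminf sigmabarra finito vettoriale} gives the analogous bound for the bulk plus Cantor part. The delicate point is to make the right-hand sides refer to the same limit measure $\mu$ and not to different subsequences. I would handle it by applying both propositions along the fixed subsequence already chosen, and then using $\mu(\overline{A})\ge\limsup_\eps\mu_\eps(\overline A)$ for compactly contained sets. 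This gives $\mu(A)\ge\lambda_1(A)$ and $\mu(A)\ge\lambda_2(A)$ for all open $A$, where $\lambda_1:=h^{\qc}_\varsigma(\nabla u)\calL^n+h^{\qc,\infty}_\varsigma(\frac{\diff D^cu}{\diff|D^cu|})|D^cu|$ and $\lambda_2:=g([u],\nu_u)\calH^{n-1}\res J_u$. By outer regularity the inequalities $\mu\ge\lambda_i$ then extend to Borel sets.

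The conclusion uses mutual singularity. The measures $\lambda_1$ and $\lambda_2$ are concentrated on disjoint Borel sets: $J_u$ is $\sigma$-finite for $\calH^{n-1}$, so it is $\calL^n$-null and $|D^cu|$-null. Hence $\mu\ge\lambda_1\vee\lambda_2=\lambda_1+\lambda_2$, which is exactly the measure-theoretic step of \cite[Proposition~1.16]{Bra98}, i.e. the supremum of a family of measures over disjoint open sets. Evaluating on $\Omega$ gives $\Fm_\varsigma(u,1;\Omega)=(\lambda_1+\lambda_2)(\Omega)\le\mu(\Omega)\le\liminf_\eps\Fm_\eps(u_\eps,v_\eps;\Omega)$. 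Since the sequence was arbitrary, \eqref{e:lbBV} follows.

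The main obstacle is the measure-theoretic combination. The two propositions are stated with $\liminf$ over possibly different subsequences on each set $A$, so care is needed to ensure that the set function $A\mapsto\liminf_\eps\Fm_\eps(u_\eps,v_\eps;A)$ dominates each $\lambda_i$ through the single measure $\mu$. Fixing the weak$^*$ subsequence first and applying both propositions along it, which is allowed since they hold for every sequence, resolves this. The remaining details are routine.
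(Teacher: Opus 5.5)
Your proposal is correct and takes the same route as the paper: it combines Propositions~\ref{p:lowerboundBVsfcvettoriale} and~\ref{p:diffuseliminf sigmabarra finito vettoriale} on arbitrary open sets, then uses the mutual singularity of the diffuse and jump parts (the step of \cite[Proposition~1.16]{Bra98}), as in \cite[Theorem~4.6]{CFI24}. The only variation is that you work with the weak$^*$ limit $\mu$ of the energy densities instead of invoking the De Giorgi--Letta criterion; this is an equivalent standard device, and your handling of it is sound (upper semicontinuity on $\overline{A'}$ for $A'\subset\subset A$, inner regularity of $\lambda_i$, outer regularity of $\mu$, and $\calL^n(J_u)=|D^cu|(J_u)=0$).
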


In the subsequent proposition, we characterize the domain of the $\Gamma$-$\limsup$. Specifically, we prove that it is contained in the space $GBV_{\star}(\Omega,\mathbb R^N)$, introduced in \cite{dalmaso_toader_2022, donati_2025}. This function space consists of all maps $u\in (GBV(\Omega))^N$ such that, for every component $i\in\{1,\dots,N\}$ and every truncation level $M>0$, one has
\begin{equation*}
\int_\Omega |\nabla \tau_M(u_i)| \dx 
+ |D^c(\tau_M(u_i))|(\Omega)
+ \int_{J_{\tau_M(u_i)}} (|[\tau_M(u_i)]|\wedge1)\, \diff \calH^{n-1}\leq L,
\end{equation*}
for some uniform constant $L>0$. Here, $\tau_M(s):=(s\wedge M)\lor(-M)$ denotes the standard truncation operator at level $M$.

\begin{proposition}\label{prop:domain direct}
Let $(u,v)\in L^1(\Omega;\mathbb R^{N+1})$ be such that $\displaystyle\big(\Gamma\textup{-}\liminf_{\eps\to 0} \mathcal{F}_\eps\big)(u,v;\Omega)<\infty$. Under the assumptions of Theorem~\ref{thm:general-anisotropic-gamma-convergence}, it necessarily follows that $v=1$ $\calL^n$-a.e. in $\Omega$, and $u\in GBV_{\star}(\Omega,\mathbb R^N)$ with $\mathcal{F}_{\varsigma}(u,1;\Omega)<\infty$.
\end{proposition}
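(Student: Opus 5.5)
The argument has three parts: first $v=1$, then $GBV$ regularity of $u$ via truncation, and finally the uniform bounds defining $GBV_\star$ together with finiteness of $\Fm_\varsigma$. Fix a sequence $\eps_j\downarrow0$ and $(u_j,v_j)\to(u,v)$ in $L^1$ with $\sup_j\Fm_{\eps_j}(u_j,v_j;\Omega)\le C<\infty$.

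\emph{Phase field.} As in the remark after \eqref{eq:def-g-gamma}, for every $\delta\in(0,1)$ one has $\Lm^n(\{v_j\le1-\delta\})\le 4\eps_j C/\min_{[\delta,1]}\omega\to0$. Hence $v_j\to1$ in measure, and so $v=1$ a.e.

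\emph{Truncation of $u$.} To reach $GBV$, I work with truncations. For each $k$, Lemma~\ref{l:5.3} (with $M$ fixed) gives an index $k_j\in\{M+1,\dots,2M\}$, which I may take constant $k$ after passing to a subsequence. Then $w_j:=\mathcal T_k(u_j)\in H^1\cap L^\infty$ satisfies $\Fm_{\eps_j}(w_j,v_j;\Omega)\le (1+C_\Phi^2/M)C+c\,\Lm^n(\Omega)$, and $w_j\to \mathcal T_k(u)$ in $L^1$, since $\mathcal T_k$ is $1$-Lipschitz. I then apply Lemma~\ref{l:suppenergy} to $(w_j,v_j)$ with fixed $\eta,\delta$. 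This produces $w_j^\delta=w_j\chi_{\{v_j>s_\delta\}}\in SBV$ with
\[
\zeta_1(\delta)\int_\Omega h^\qc_{\varsigma_\eta}(\nabla w_j^\delta)\,\dx+\zeta_2(\delta)\Hm^{n-1}(J_{w_j^\delta})\le C'
\]
and $w^\delta_j\to\mathcal T_k(u)$ in $L^1$. The bound $\|w_j^\delta\|_\infty\le a_{2M+1}$ holds uniformly in $j$. Combined with the linear lower bound \eqref{e:bulk-growth-h}, this yields $|Dw_j^\delta|(\Omega)\le C''(1+a_{2M+1})$. By lower semicontinuity of total variation, $\mathcal T_k(u)\in BV\cap L^\infty$. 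Since $\mathcal T_k(u)=u$ on $\{|u|\le a_k\}$, composing with any $\psi\in C^1_c$ Lipschitz shows that $\psi(u_i)\in BV_{\rm loc}$, so $u\in (GBV(\Omega))^N$.

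\emph{Uniform bounds and finiteness.} This is the main obstacle: the $GBV_\star$ bound must not depend on $M$, whereas the estimate above grows like $a_{2M+1}$ through the jump part. I would not use the jump count here. Instead I apply the lower bound, Theorem~\ref{thm:lbcompBV}, to $\mathcal T_k(u)\in BV\cap L^\infty$ with the recovery sequence $(w_j,v_j)$. This gives
\[
\Fm_\varsigma(\mathcal T_k(u),1;\Omega)\le \liminf_j\Fm_{\eps_j}(w_j,v_j;\Omega)\le (1+C_\Phi^2/M)C+c,
\]
which is uniform in $k$. The coercivity \eqref{e:bulk-growth-h}, \eqref{e:bulk-growth-hrecession} and Proposition~\ref{p:crescita della gvettoriale} then bound
\[
\int_\Omega|\nabla \mathcal T_k(u)|\,\dx+|D^c\mathcal T_k(u)|(\Omega)+\int_{J_{\mathcal T_k(u)}}(1\wedge|[\mathcal T_k(u)]|)\,\diff\Hm^{n-1}
\]
uniformly in $k$. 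To pass to the scalar truncations $\tau_M(u_i)$, I compose $\tau_M\circ\pi_i$ with $\mathcal T_k(u)$ for $a_k>\sqrt N M$. This map is $1$-Lipschitz, so by the $BV$ chain rule its three quantities are dominated by those of $\mathcal T_k(u)$; on the jump part this uses $1\wedge|\cdot|$ monotone under Lipschitz compression. This gives $u\in GBV_\star$. Finally, letting $k\to\infty$, $\nabla\mathcal T_k(u)\to\nabla u$ a.e. and $J_{\mathcal T_k(u)}$ increases to $J_u$ up to null sets, with $[\mathcal T_k(u)]\to[u]$. The Cantor parts agree on $\{|\tilde u|<a_k\}$ as well. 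By Fatou, the continuity of $g$ (Lemma~\ref{lemmapropg2}) and of $h^\qc_\varsigma$, and the positive homogeneity of $h^{\qcinfty}_\varsigma$, I obtain $\Fm_\varsigma(u,1;\Omega)\le\liminf_k\Fm_\varsigma(\mathcal T_k(u),1;\Omega)<\infty$.
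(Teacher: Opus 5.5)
Your plan follows the paper's: penalization forces $v=1$; truncation with Lemma~\ref{l:suppenergy} and the jump-count coercivity gives $BV$ truncations; Theorem~\ref{thm:lbcompBV} with the growth bounds then gives the estimate uniform in the truncation level. You correctly see that the jump-count bound alone is not uniform. The gap is the passage from your radial truncations $\mathcal T_k(u)$ to the scalar truncations $\tau_M(u_i)$. You use it twice, and it fails both times.

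In the $GBV$ step, $\psi(u_i)$ and $\psi(\pi_i\mathcal T_k(u))$ need not agree on $\{|u|>a_k\}$, where $\mathcal T_k(u)\neq u$ (for instance $\mathcal T_k(u)=0$ on $\{|u|\ge a_{k+1}\}$). Moreover, knowing $\mathcal T_k(u)\in BV$ for every $k$, with bounds $C''(1+a_{2M+1})$ that blow up, does not imply $u\in(GBV(\Omega))^N$. For example, let $u=(b_j,0,\dots,0)$ on the slabs $\{x_1\in(2^{-2j-1},2^{-2j})\}$ and $u=0$ elsewhere, with $b_j\uparrow\infty$. Each $\mathcal T_k(u)$ has only finitely many jump hyperplanes, yet $\tau_M(u_1)$ has infinitely many jumps of height $M$ accumulating at $\{x_1=0\}$ and is not in $BV_{\mathrm{loc}}$. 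Also, $GBV$ is tested with $\psi$ whose derivative has compact support, such as $\tau_M$, not with $\psi\in C^1_c$.

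In the $GBV_\star$ step, the identity you rely on is false. For $a_k>\sqrt{N}\,M$ one has $\mathcal T_k\circ\tilde\tau_M=\tilde\tau_M$, where $\tilde\tau_M(w):=(\tau_M(w_1),\dots,\tau_M(w_N))$. But you compose in the opposite order, and $\tau_M\circ\pi_i\circ\mathcal T_k\neq\tau_M\circ\pi_i$: if $|\zeta|\ge a_{k+1}$ and $\zeta_i>M$, then $\tau_M(\zeta_i)=M$ while $\tau_M(\pi_i\mathcal T_k(\zeta))=0$. So your bounds concern $\tau_M(\pi_i\mathcal T_k(u))$, not $\tau_M(u_i)$.

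The repair is short, and there are two ways to make it.

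\emph{Reorder your argument.} Your $k$-uniform bound on $\Fm_\varsigma(\mathcal T_k(u),1;\Omega)$, together with the chain rule (jump contributions are at most $|[\mathcal T_k(u)]|\wedge 2M$), gives $|D(\tau_M\circ\pi_i\circ\mathcal T_k(u))|(\Omega)\le c\max\{1,2M\}$ with $c$ independent of $k$. These functions converge to $\tau_M(u_i)$ in $L^1$ as $k\to\infty$, since they coincide on $\{|u|\le a_k\}$ and are bounded by $M$. Hence $\tau_M(u_i)\in BV(\Omega)$ and $u\in(GBV(\Omega))^N$, and your Fatou argument then gives $\Fm_\varsigma(u,1;\Omega)<\infty$. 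The $GBV_\star$ bound follows from \eqref{e:bulk-growth-h}, \eqref{e:bulk-growth-hrecession} and \eqref{e:crescita della g vettoriale}, because the three $GBV_\star$ quantities of $\tau_M(u_i)$ are dominated by $\int_\Omega|\nabla u|\,\dx$, $|D^cu|(\Omega)$ and $\int_{J_u}(|[u]|\wedge1)\,\diff\Hm^{n-1}$.

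\emph{Follow the paper and truncate componentwise from the start.} Since $|\nabla\tilde\tau_M(w)|\le|\nabla w|$, the quadratic growth of $\Phi$ gives $\Fm_{\eps_j}(\tilde\tau_M(u_j),v_j;\Omega)\le c\,(C+1)$ uniformly in $M$. Lemma~\ref{l:5.3} and the subsequence in $k$ then become unnecessary. Your Lemma~\ref{l:suppenergy} argument, with $\|\tilde\tau_M(u_j)\|_\infty\le\sqrt{N}\,M$, yields $\tilde\tau_M(u)\in BV\cap L^\infty$. Theorem~\ref{thm:lbcompBV} applied to $\tilde\tau_M(u)$ then bounds $\Fm_\varsigma(\tilde\tau_M(u),1;\Omega)$ uniformly in $M$, and its $i$-th component is exactly $\tau_M(u_i)$.
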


\begin{proof}
Consider a sequence $(u_\eps,v_\eps)\to(u,v)$ in $L^1(\Omega,\mathbb R^{N+1})$ verifying 
\begin{equation}\label{e:prima equazione nella prop sul dominio del gamma limite}
\sup_\eps\callF_\eps(u_\eps,v_\eps;\Omega)<\infty.
\end{equation}
Due to the strict penalization induced by the term $\frac{\omega(1-v_\eps)}{\eps}$, it follows that $v=1$ $\calL^n$-a.e. in $\Omega$.  
Fix $\delta\in(0,1)$. Using standard truncation properties for phase-field approximations and exploiting the coercive growth conditions on $h^{\qc}_{\varsigma}$ (cf. \eqref{e:bulk-growth-h}), we obtain the uniform bound
\begin{align*}
\int_\Omega|\nabla {u}_\eps^\delta|\dx+\calH^{n-1}(J_{{u}_\eps^\delta})
\leq c\big(\callF_\eps(u_\eps,v_\eps;\Omega)+1\big),
\end{align*}
where $u_\eps^\delta$ represents a properly thresholded version of $u_\eps$, and $c>0$ is a constant depending on $\delta$ and the measure of $\Omega$. In particular, each component $({u}_\eps^\delta)_i$ belongs to $GSBV(\Omega)$ and satisfies
\begin{align*}
\int_\Omega|\nabla ({u}_\eps^\delta)_i|\dx+\calH^{n-1}(J_{({u}_\eps^\delta)_i})
\leq c\big(\callF_\eps(u_\eps,v_\eps;\Omega)+1\big).
\end{align*}
For any $M>0$, the truncated function $\tau_M(({u}_\eps^\delta)_i)$ is in $BV_{\textup{loc}}(\Omega)$ for every $\eps>0$. The previous estimate enforces that 
\[
|D(\tau_M(({u}_\eps^\delta)_i))|(\Omega)\leq C_M,
\]
for some constant $C_M>0$ dependent on $M$ but strictly independent of $\eps$. Hence, up to a subsequence, $\tau_M(({u}_\eps^\delta)_i)$ converges weakly in $BV(\Omega)$. Since $v_\eps \to 1$, we also have ${u}_\eps^\delta \to u$ strongly in $L^1(\Omega,\mathbb R^N)$, implying that $\tau_M(u_i) \in BV(\Omega)$ for every $M>0$. Consequently, $u_i\in GBV(\Omega)$ for each $i$, ensuring that $u\in (GBV(\Omega))^N$.

Furthermore, letting $\tilde \tau_M(w):=(\tau_M(w_1),\dots,\tau_M(w_N))$ for every $w\in \mathbb R^N$, there exists a constant $L\in(0,\infty)$ such that 
\begin{equation*}
\sup_{M>0} \sup_{\eps>0} \callF_{\eps}(\tilde \tau_M(u_\eps),v_\eps;\Omega)\leq L.
\end{equation*}
Because $\tilde \tau_M(u_{\eps})\to \tilde \tau_M(u)$ in $L^1(\Omega,\mathbb R^N)$, we can leverage Theorem~\ref{thm:lbcompBV}, along with the growth properties \eqref{e:bulk-growth-h} and \eqref{e:crescita della g vettoriale}, to deduce that for all $i$ and $M>0$:
\begin{equation*}
\int_\Omega |\nabla \tau_M(u_i)| \dx 
+ |D^c(\tau_M(u_i))|(\Omega)
+ \int_{J_{\tau_M(u_i)}} (|[\tau_M(u_i)]|\wedge1) \diff \calH^{n-1}\leq L.
\end{equation*}
This explicitly classifies $u\in GBV_{\star}(\Omega,\mathbb R^N)$. By exploiting the fundamental structural properties of $GBV$ functions (see, for instance, \cite{Bra98}) and applying the Monotone Convergence Theorem, we infer that
\begin{equation}\label{e:funz finito su GBV alla N}
\int_\Omega |\nabla u_i| \dx 
+ |D^cu_i|(\Omega)
+  \int_{J_{u_i}} (|[u_i]|\wedge1) \diff \calH^{n-1}<\infty.
\end{equation}
Coupling this integrability with the growth constraints of $h_{\varsigma}^{\qc}$, $h_{\varsigma}^{\qc,\infty}$, and $g$ (cf. \eqref{e:bulk-growth-h} and \eqref{e:crescita della g vettoriale}), we finally conclude that $\mathcal{F}_{\varsigma}(u,1;\Omega)<\infty$.
\end{proof}

We are now in the position to extend the validity of the lower bound (Theorem~\ref{thm:lbcompBV}) to the whole space $(GBV(\Omega))^N$. We first verify that the limit functional $\mathcal{F}_{\varsigma}$ is continuous under compositions with truncations.

\begin{proposition}\label{prop:contF0}
 Under the assumptions of Theorem~\ref{thm:general-anisotropic-gamma-convergence}, for all $u\in (GBV(\Omega))^N$ with $\mathcal{F}_{\varsigma}(u,1;\Omega)<\infty$ (i.e., $u\in GBV_{\star}(\Omega)^N$), we have  
\[
\lim_{k\to\infty}\mathcal{F}_{\varsigma}(\mathcal{T}_k(u),1;\Omega)=\mathcal{F}_{\varsigma}(u,1;\Omega)\,,
\]
\end{proposition}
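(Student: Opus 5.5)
We treat the three terms of $\mathcal{F}_\varsigma(\mathcal{T}_k(u),1;\Omega)$ separately and pass to the limit in each. The first ingredient is the chain rule in $(GBV)^N$. Since $\mathcal{T}_k\in C^1$ with $\textup{Lip}(\mathcal{T}_k)\le 1$, and $\mathcal{T}_k(u)\in BV\cap L^\infty$, we have
\[
\nabla \mathcal{T}_k(u)=\nabla\mathcal{T}_k(u)\nabla u,\qquad D^c\mathcal{T}_k(u)=\nabla\mathcal{T}_k(\tilde u)\,D^cu,
\]
and $J_{\mathcal{T}_k(u)}\subset J_u$ with traces $\mathcal{T}_k(u^\pm)$. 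The identities hold in the generalized sense of \cite{ambrosiofuscopallara} for $GBV$, where $D^cu$ is the measure obtained as the monotone limit of $D^c$ of truncations. On $\{|u|<a_k\}$ (respectively $\{|\tilde u|<a_k\}$) we have $\nabla\mathcal{T}_k(u)=\nabla u$. On $\{|u|\ge a_{k+1}\}$ we have $\nabla\mathcal{T}_k(u)=0$, and in between $|\nabla\mathcal{T}_k(u)|\le|\nabla u|$.

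\textbf{Bulk and Cantor terms.} By \eqref{e:bulk-growth-h} we have $h^{\qc}_\varsigma(\xi)\le K_\varsigma(1+|\xi|)$, so the integrand $h^{\qc}_\varsigma(\nabla\mathcal{T}_k(u))$ is dominated by $K_\varsigma(1+|\nabla u|)\in L^1$. Here $|\nabla u|\in L^1$ because \eqref{e:funz finito su GBV alla N} holds, finiteness of $\mathcal{F}_\varsigma$ giving this via the lower bound in \eqref{e:bulk-growth-h}. Moreover $\mathcal{L}^n(\{|u|\ge a_k\})\to0$, hence dominated convergence gives convergence of the first term. For the Cantor term, $h^{\qc,\infty}_\varsigma$ is positively one-homogeneous and bounded by $K_\varsigma|\cdot|$ by \eqref{e:bulk-growth-hrecession}. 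We write it as $\int h^{\qc,\infty}_\varsigma\bigl(\nabla\mathcal{T}_k(\tilde u)\tfrac{dD^cu}{d|D^cu|}\bigr)\,d|D^cu|$. The integrand converges $|D^cu|$-a.e. because $|D^cu|$ does not charge the set where $\tilde u$ is undefined or infinite. It is dominated by $K_\varsigma$, which is integrable since $|D^cu|(\Omega)<\infty$. Dominated convergence again applies.

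\textbf{Jump term.} For $\mathcal{H}^{n-1}$-a.e.\ $x\in J_u$, as $k\to\infty$ we have $\mathcal{T}_k(u^\pm(x))\to u^\pm(x)$, so $[\mathcal{T}_k(u)](x)\to[u](x)$. By continuity of $g$ (Lemma~\ref{lemmapropg2}) we get $g([\mathcal{T}_k(u)],\nu_u)\to g([u],\nu_u)$; where the truncated jump vanishes, $g(0,\cdot)=0$ is consistent. Domination is the main obstacle, since $g\le L$ uniformly by Lemma~\ref{l:g limitata}, but $\mathcal{H}^{n-1}(J_u)$ may be infinite. We use instead the growth bound \eqref{e:crescita della g vettoriale}, $g(\zeta,\nu)\le K(1\wedge|\zeta|)$, together with $|[\mathcal{T}_k(u)]|\le|[u]|$, which follows from $\textup{Lip}\,\mathcal{T}_k\le1$. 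This yields the domination $g([\mathcal{T}_k(u)],\nu_u)\le K(1\wedge|[u]|)$. The right-hand side is integrable on $J_u$ by the lower bound $g\ge K^{-1}(1\wedge|\zeta|)$ and $\mathcal{F}_\varsigma(u,1;\Omega)<\infty$. Points of $J_{\mathcal{T}_k(u)}$ outside $J_u$ form an $\mathcal{H}^{n-1}$-null set. Dominated convergence concludes, and summing the three limits proves the claim.
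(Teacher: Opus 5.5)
Your proposal is correct and follows essentially the same route as the paper. Both split $\mathcal{F}_\varsigma$ into bulk, Cantor and jump parts and combine the chain rule for $\mathcal{T}_k$ with $\textup{Lip}(\mathcal{T}_k)\le1$, the linear growth bounds \eqref{e:bulk-growth-h}--\eqref{e:bulk-growth-hrecession}, and, on $J_u$, continuity of $g$, $|[\mathcal{T}_k(u)]|\le|[u]|$ and dominated convergence. The only differences are cosmetic: the paper bounds the bulk and Cantor errors explicitly by $K_\varsigma\int_{\Omega\setminus\Omega_k}(1+|\nabla u|)\,\dx$ and $K_\varsigma|D^cu|(\Omega\setminus\Omega_k)$, with $\Omega_k:=\{|u|\le a_k\}$, instead of invoking dominated convergence, and you state the dominating function $K(1\wedge|[u]|)$ on $J_u$ explicitly, which the paper leaves implicit.
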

where $\mathcal{T}_k$ is defined in \eqref{e:Tk}.
\begin{proof}
We assess the convergence of the volume, Cantor, and surface components separately.  

For the bulk part, \eqref{e:bulk-growth-h} ensures that $|\nabla u|\in L^1(\Omega)$. Since $\nabla(\mathcal{T}_k(u))=\nabla u$ for $\calL^n$-a.e. $x\in \Omega_k:=\{|u|\le a_k\}$, applying the linear growth bound yields
\[
\Big|\int_\Omega h_{\varsigma}^\qc\big(\nabla(\mathcal{T}_k(u))\big)\dx-
\int_\Omega h_{\varsigma}^\qc(\nabla u)\dx\Big|
\leq K_{\varsigma}\int_{\Omega\setminus \Omega_k}(1+|\nabla u|)\dx.
\]
As $a_k\to\infty$, the domain $\Omega\setminus \Omega_k$ vanishes in measure, producing
\[
\lim_{k\to\infty} \int_\Omega h_{\varsigma}^\qc\big(\nabla(\mathcal{T}_k(u))\big)\dx
=\int_\Omega h_{\varsigma}^\qc(\nabla u)\dx.
\]

Concerning the surface contribution, we note that $J_{\mathcal{T}_k(u)}\subseteq J_u$ for every $k$, and $\nu_{\mathcal{T}_k(u)}=\nu_u$ for $\mathcal H^{n-1}$-a.e. $x\in J_{\mathcal{T}_k(u)}$. Because $\mathcal{F}_{\varsigma}(u,1;\Omega)<\infty$, condition \eqref{e:funz finito su GBV alla N} is satisfied. Consequently, the left and right traces verify $(\mathcal{T}_k(u))^\pm\to u^\pm$ and $\chi_{J_{\mathcal{T}_k(u)}}\to \chi_{J_u}$ as $k\to\infty$, with the uniform domination $|[\mathcal{T}_k(u)]|\le |[u]|$ on $J_u$. Thus,
\begin{align*}
\lim_{k\to\infty} \int_{J_{\mathcal{T}_k(u)}}g([\mathcal{T}_k(u)],\nu_{\mathcal{T}_k(u)})\diff \calH^{n-1}
&=\lim_{k\to\infty} \int_{J_u}g([\mathcal{T}_k(u)],\nu_u)\chi_{J_{\mathcal{T}_k(u)}}\diff \calH^{n-1}=\int_{J_u}g([u],\nu_u)\diff \calH^{n-1},
\end{align*}
relying on the continuity of $g$ (Lemma~\ref{lemmapropg2}) and the Dominated Convergence Theorem.

For the Cantor part, the chain rule in $GBV$ guarantees
\[
D^c(\mathcal{T}_k(u))\res \Omega_k=D^c u\res \Omega_k,
\qquad
|D^c(\mathcal{T}_k(u))|\ll |D^c u|,
\qquad
\frac{\diff|D^c(\mathcal{T}_k(u))|}{\diff|D^c u|}\le 1.
\]
Then, exploiting the recession bound \eqref{e:bulk-growth-hrecession}, we obtain
\begin{equation*}
\Big|\int_{\Omega}h_{\varsigma}^{\qc,\infty}(\diff D^c(\mathcal{T}_k(u)))-
\int_{\Omega}h_{\varsigma}^{\qc,\infty}(\diff D^cu)\Big|
\leq K_{\varsigma}\int_{\Omega\setminus \Omega_k}\diff |D^cu|
= K_{\varsigma}|D^cu|(\Omega\setminus \Omega_k).
\end{equation*}
Taking the limit as $k\to\infty$ nullifies the right-hand side, yielding
\[
\lim_{k\to\infty}\int_{\Omega}h^{\qc,\infty}_{\varsigma}(\diff D^c(\mathcal{T}_k(u)))=
\int_{\Omega}h^{\qc,\infty}_{\varsigma}(\diff D^cu),
\]
which completes the proof.
\end{proof}

We conclude this section by establishing the ultimate lower bound theorem for generalized functions of bounded variation.

\begin{theorem}\label{thm:lbcomp}
Let $u\in (GBV(\Omega))^N$. Under the assumptions of Theorem~\ref{thm:general-anisotropic-gamma-convergence}, it holds that
\begin{equation}\label{e:lbcomp}
\mathcal{F}_{\varsigma}(u,1;\Omega) \le \Gamma(L^1)\hbox{-}\liminf_{\eps\to0} \Fm_\eps(u, 1;\Omega).
\end{equation}
\end{theorem}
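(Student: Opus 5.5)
The plan is to reduce to the $BV\cap L^\infty$ case (Theorem~\ref{thm:lbcompBV}) by truncating with the maps $\mathcal T_k$. We use Lemma~\ref{l:5.3} to control the energy cost of the truncation. Proposition~\ref{prop:contF0} then lets us pass to the limit in $k$ on the limit side.

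Fix $u\in(GBV(\Omega))^N$ and a sequence $(u_\eps,v_\eps)\to(u,1)$ in $L^1$. We may assume $\liminf_\eps\Fm_\eps(u_\eps,v_\eps;\Omega)<\infty$. Passing to a subsequence, we may assume the liminf is a limit, with bounded energies. Otherwise there is nothing to prove. Proposition~\ref{prop:domain direct} moreover gives $\Fm_\varsigma(u,1;\Omega)<\infty$.

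Fix $M\in\N$. For each $\eps$, Lemma~\ref{l:5.3} yields $k_\eps\in\{M+1,\dots,2M\}$ with
\[
S_\eps(\mathcal T_{k_\eps}(u_\eps),v_\eps)\le\Big(1+\frac{C_\Phi^2}{M}\Big)S_\eps(u_\eps,v_\eps)+\varphi(\infty)(\Phi(0)+C_\Phi)\calL^n(\{|u_\eps|>a_M\}).
\]
The phase-field terms $\frac{\omega(1-v_\eps)}{4\eps}+\eps\gamma(\nabla v_\eps)^2$ are unaffected by the truncation. Hence the same bound holds for the full energy $\Fm_\eps$, with the factor $(1+C_\Phi^2/M)$ in front.

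Since $k_\eps$ ranges over a finite set, we pass to a further subsequence on which $k_\eps\equiv k_M$ is constant. Then $\mathcal T_{k_M}(u_\eps)\to\mathcal T_{k_M}(u)$ in $L^1$, because $\mathcal T_{k_M}$ is $1$-Lipschitz. Moreover $\mathcal T_{k_M}(u)\in BV\cap L^\infty$. Also $\limsup_\eps\calL^n(\{|u_\eps|>a_M\})\le\calL^n(\{|u|\ge a_M\})$, by convergence in measure.

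Applying Theorem~\ref{thm:lbcompBV} to $\mathcal T_{k_M}(u)$ along this sequence gives
\[
\Fm_\varsigma(\mathcal T_{k_M}(u),1;\Omega)\le\Big(1+\frac{C_\Phi^2}{M}\Big)\lim_\eps\Fm_\eps(u_\eps,v_\eps;\Omega)+c\,\calL^n(\{|u|\ge a_M\}).
\]

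Finally, let $M\to\infty$. Then $k_M\ge M+1\to\infty$, so Proposition~\ref{prop:contF0} applied along the subsequence $k_M$ gives $\Fm_\varsigma(\mathcal T_{k_M}(u),1;\Omega)\to\Fm_\varsigma(u,1;\Omega)$. Meanwhile $\calL^n(\{|u|\ge a_M\})\to0$, since $u$ is finite a.e. This yields \eqref{e:lbcomp}.

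The main subtle point is the subsequence dependence. The index $k_\eps$ depends on $\eps$, so we must freeze it along a subsequence before invoking the BV lower bound. We also need that Theorem~\ref{thm:lbcompBV} is really a statement about arbitrary sequences converging in $L^1$, which it is as a $\Gamma$-liminf inequality. Care is needed too in that the limit $\lim_\eps\Fm_\eps$ is taken along the original optimizing subsequence, so that further extraction does not change it.
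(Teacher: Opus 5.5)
Your proposal is correct and follows essentially the same route as the paper: you truncate with $\mathcal T_k$, pick a good level $k\in\{M+1,\dots,2M\}$ by De Giorgi averaging, and freeze it along a subsequence by pigeonhole. You then apply Theorem~\ref{thm:lbcompBV} and conclude with Proposition~\ref{prop:contF0}, with finiteness of $\Fm_\varsigma(u,1;\Omega)$ supplied by Proposition~\ref{prop:domain direct}; the only cosmetic difference is that you get the averaging estimate by citing Lemma~\ref{l:5.3} and noting that the phase-field terms are untouched by truncation, whereas the paper redoes the slicing directly on the full energy $\Fm_\eps$.
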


\begin{proof}
Let $(u_\eps,v_\eps)\in H^1(\Omega, \mathbb R^{N}\times [0,1])$ be a sequence converging to $(u,1)$ in $L^1(\Omega,\mathbb R^{N+1})$. We may seamlessly assume that $\liminf_{\eps\to0}\callF_{\eps}(u_\eps,v_\eps;\Omega)<\infty$, that this limit is attained along a subsequence, and that $(u_\eps,v_\eps)\to(u,1)$ pointwise $\calL^n$-a.e. in $\Omega$. By Proposition~\ref{prop:domain direct}, this guarantees that $u\in(GBV(\Omega))^N$ and $\mathcal{F}_{\varsigma}(u,1;\Omega)<\infty$. 

Recalling the truncation operator $\mathcal{T}_k$, we recognize that $\mathcal{T}_k(u_\eps)\to \mathcal{T}_k(u)$ strongly in $L^1(\Omega,\mathbb R^N)$ for any fixed $k$, and that $\mathcal{T}_k(u)\in BV(\Omega,\mathbb R^N)$. Thus, invoking Theorem~\ref{thm:lbcompBV}, we possess the bound
\begin{equation}\label{estTkA}
\mathcal{F}_{\varsigma}(\mathcal{T}_{k_{M}}(u),1;\Omega)\leq \liminf_{\eps\to0}\callF_{\eps}(\mathcal{T}_{k_{M}}(u_\eps),v_\eps;\Omega),
\end{equation}
for every integer choice $k_M$. 

We claim that for each $M\in\mathbb N$, there exists an optimal index $k_{M}\in\{M+1,\dots,2M\}$ (independent of $\eps$) such that, upon passing to a suitable subsequence,
\begin{equation}\label{estTk}
\callF_{\eps}(\mathcal{T}_{k_{M}}(u_\eps),v_\eps;\Omega)\leq 
\Big(1+\frac{c}{M}\Big)\callF_{\eps}(u_\eps,v_\eps;\Omega)
+c\,\calL^n(\{|u_\eps|>a_M\}),
\end{equation}
for some uniform constant $c>0$. Accepting this claim momentarily, substituting \eqref{estTk} into \eqref{estTkA} and noting that the measure of the super-level sets decays to zero, we infer that
\[
\limsup_{M\to\infty}\mathcal{F}_{\varsigma}(\mathcal{T}_{k_M}(u),1;\Omega)
\leq\liminf_{\eps\to0}\callF_{\eps}(u_\eps,v_\eps;\Omega).
\]
Applying the truncation continuity of $\mathcal{F}_{\varsigma}$ proven in Proposition~\ref{prop:contF0}, we effortlessly recover the general lower bound \eqref{e:lbcomp}. 

It remains to justify \eqref{estTk}. We execute De Giorgi's averaging-slicing method across the codomain levels. For each $k\in\mathbb N$, the energy partitions additively:
\begin{multline}\label{e:Feps* Tk2}
\callF_{\eps}(\mathcal{T}_k(u_\eps),v_\eps;\Omega)
=\callF_{\eps}(u_\eps,v_\eps;\{|u_\eps|\leq a_k\})\\
+\callF_{\eps}(\mathcal{T}_k(u_\eps),v_\eps;\{a_k<|u_\eps|< a_{k+1}\})
+\callF_{\eps}(0,v_\eps;\{|u_\eps|\geq a_{k+1}\}).
\end{multline}
Due to the structural growth limits \eqref{eq:growth-Phi} and the non-expansiveness of $\mathcal{T}_k$, the transition layer energy is bounded by
\begin{align}\label{e:Feps* Tk 22}
\callF_{\eps}(\mathcal{T}_k(u_\eps),v_\eps;\{a_k<|u_\eps|<a_{k+1}\})
&\leq c\int_{\{a_k<|u_\eps|< a_{k+1}\}}\varphi(\eps f^2(v_\eps))\Phi(\nabla u_\eps)\dx\notag\\
&\quad +c\calL^n(\{a_k<|u_\eps|<a_{k+1}\})
+\callF_{\eps}(0,v_\eps;\{a_k<|u_\eps|< a_{k+1}\}).
\end{align}
Summing \eqref{e:Feps* Tk2} and \eqref{e:Feps* Tk 22}, and subsequently averaging over $k \in \{M+1, \dots, 2M\}$, ensures the existence of a highly favorable index $k_{M,\eps}$ fulfilling
\begin{align*}
\callF_{\eps}(\mathcal{T}_{k_{M,\eps}}(u_\eps),v_\eps;\Omega)
&\leq \frac{1}{M}\sum_{k=M+1}^{2M}\callF_{\eps}(\mathcal{T}_k(u_\eps),v_\eps;\Omega)\leq \Big(1+\frac{c}{M}\Big)\callF_\eps(u_\eps,v_\eps;\Omega)
+c\,\calL^n(\{|u_\eps|>a_M\}).\end{align*}
As $\eps\to0$, the pigeonhole principle allows us to extract a subsequence for which $k_{M,\eps} \equiv k_M$ locks into a constant value independent of $\eps$, verifying \eqref{estTk} and concluding the proof.
\end{proof}

\section{Upper bound}\label{s:upperbound}

In this section, we establish the $\Gamma$-$\limsup$ inequality stated in Theorem~\ref{thm:general-anisotropic-gamma-convergence}. Following the methodology developed in \cite{CFI24}, we introduce a coercive perturbation of the functional $\mathcal{F}_{\eps}$. We denote this augmented functional by $\widehat{\mathcal{F}}_{\eps}:L^1(\Omega,\mathbb R^{N+1})\times \mathcal{A}(\Omega)\to[0,\infty]$, defined as
\begin{equation}\label{eq:defcalfepseta}
\widehat{\mathcal{F}}_{\eps}(u,v;A):=
\begin{cases}
\Fm_\eps(u,v;A)+\eta_\eps \displaystyle\int_A \Phi(\nabla u)\dx, & \text{if } u\in H^1(\Omega,\mathbb R^N),\\[0.5em]
\infty, & \text{otherwise},
\end{cases}
\end{equation}
where $\eta_{\eps}>0$ is chosen such that $\eta_{\eps}/\eps\to 0$ as $\eps\to 0$.  

The supplementary regularization term $\eta_\eps\int \Phi(\nabla u)\dx$ ensures the strict coercivity of $\widehat{\mathcal{F}}_{\eps}$, thereby facilitating the derivation of the $\Gamma$-limit. Specifically, we will prove the upper bound directly for this modified functional by relying on the integral representation formula from \cite{bouchitte1998global}.  

As a preliminary step, we demonstrate that the set function $\overline{\Gamma}\text{-}\displaystyle\lim_{\eps\to0}\widehat{\mathcal{F}}_{\eps}(u,1;\cdot)$ identifies a Borel measure by proving the weak subadditivity of $\Gamma\text{-}\displaystyle\limsup_{\eps\to 0}\widehat{\mathcal{F}}_\eps$.

\begin{lemma}\label{l:wsub}
Let $u\in L^1(\Omega,\R^N)$, let $A',A,B\in\mathcal{A}(\Omega)$
with $A'\subset\subset A$, then under the assumptions of
Theorem~\ref{thm:general-anisotropic-gamma-convergence},
\begin{equation}\label{e:subadd}
\widehat{\mathcal{F}}''(u,1;A'\cup B)
\leq \widehat{\mathcal{F}}''(u,1;A)
+\widehat{\mathcal{F}}''(u,1;B),
\end{equation}
where $\widehat{\mathcal{F}}''(u,1;\cdot)
:=\Gamma$-$\displaystyle\limsup_{\eps\to0}
\widehat{\mathcal{F}}_{\eps}(u,1;\cdot)$.
\end{lemma}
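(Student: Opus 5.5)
We follow the classical fundamental-estimate strategy (cf.\ \cite[Lemma~5.1]{CFI24} and the scalar analogue in \cite{ACF26}). We assume the right-hand side of \eqref{e:subadd} is finite. We pick recovery sequences $(u_\eps,v_\eps)\to(u,1)$ on $A$ and $(w_\eps,z_\eps)\to(u,1)$ on $B$ with
\[
\limsup_{\eps\to0}\widehat{\mathcal F}_\eps(u_\eps,v_\eps;A)=\widehat{\mathcal F}''(u,1;A),\qquad \limsup_{\eps\to0}\widehat{\mathcal F}_\eps(w_\eps,z_\eps;B)=\widehat{\mathcal F}''(u,1;B).
\]
A preliminary reduction is to replace $u_\eps,w_\eps$ by $\mathcal T_{k}(u_\eps),\mathcal T_k(w_\eps)$ with $k\in\{M+1,\dots,2M\}$ chosen by averaging as in Lemma~\ref{l:5.3}. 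The extra term $\eta_\eps\int\Phi(\nabla u)$ is handled identically since $\Phi(\nabla\mathcal T_k u)\le C_\Phi(1+|\nabla u|^2)$. This costs a factor $(1+c/M)$ plus $c\,\calL^n(\{|u_\eps|>a_M\})$. Because the $\Gamma$-$\limsup$ is lower semicontinuous and $\mathcal T_k(u)\to u$ in $L^1$, one can first prove the inequality for the truncated functions and then let $M\to\infty$; alternatively, this $L^\infty$ bound is simply retained along the construction. We then fix $\delta:=\dist(A',\partial A)$ and $h\in\N$, and we consider the nested sets $A_i:=\{x\in A:\dist(x,A')<i\delta/h\}$ for $i=1,\dots,h$, together with cut-offs $\theta_i\in C^1_c(A_{i+1})$, $\theta_i=1$ on $A_i$, $|\nabla\theta_i|\le 2h/\delta$.

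\textbf{Gluing.} We set $\tilde u^i_\eps:=\theta_iu_\eps+(1-\theta_i)w_\eps$. For the phase field we take the minimum: $\tilde v^i_\eps:=\min\{v_\eps,z_\eps\}$ on the transition layer $S_i:=A_{i+1}\setminus\overline{A_i}$, equal to $v_\eps$ in $A_i$ and to $z_\eps$ outside $A_{i+1}$. To keep this $H^1$, one uses $\theta_iv_\eps+(1-\theta_i)z_\eps$ composed with a minimum on the layer, or more simply $\tilde v^i_\eps:=\min\{\theta_i' v_\eps+(1-\theta'_i)\cdot 1,\ \theta_i'' z_\eps+\dots\}$; concretely, $\tilde v_\eps^i=\min\{\hat v_\eps,\hat z_\eps\}$ with $\hat v_\eps=v_\eps$ on $A_{i+1}$, $\hat v_\eps=1$ far from it, and similarly for $\hat z_\eps$. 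Since $\omega$ is nondecreasing and $\gamma(\nabla\min\{a,b\})^2\le\gamma(\nabla a)^2+\gamma(\nabla b)^2$, the phase-field part on $S_i$ is bounded by the sum of the two phase-field energies there. For the elastic term we use Lemma~\ref{l:quasi monotonia funzione di degradamento}: $\varphi(\eps f^2(\tilde v))\le K\varphi(\eps f^2(v_\eps))$ and $\le K\varphi(\eps f^2(z_\eps))$. Together with the quadratic growth \eqref{eq:growth-Phi}, this gives
\[
\varphi(\eps f^2(\tilde v))\Phi(\nabla\tilde u)\le c\bigl(\varphi(\eps f^2(v_\eps))\Phi(\nabla u_\eps)+\varphi(\eps f^2(z_\eps))\Phi(\nabla w_\eps)\bigr)+c\,\varphi(\infty)\Big(1+\frac{h^2}{\delta^2}|u_\eps-w_\eps|^2\Big).
\]
The $\eta_\eps$ term is treated analogously, with the same bound times $\eta_\eps$.

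\textbf{Averaging and conclusion.} Summing over $i$ and using the bounded overlap of the $S_i$, we choose $i_\eps$ with
\[
\widehat{\mathcal F}_\eps(\tilde u^{i_\eps}_\eps,\tilde v^{i_\eps}_\eps;A'\cup B)\le \widehat{\mathcal F}_\eps(u_\eps,v_\eps;A)+\widehat{\mathcal F}_\eps(w_\eps,z_\eps;B)+\frac ch\bigl(\text{energies}+\calL^n(A\cap B)\bigr)+c\frac{h^2}{\delta^2}\|u_\eps-w_\eps\|_{L^2(A\cap B)}^2 .
\]
By the $L^\infty$ bound, $u_\eps-w_\eps\to0$ in $L^2(A\cap B)$. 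Taking $\limsup_\eps$ and then $h\to\infty$ (and $M\to\infty$) yields \eqref{e:subadd}.

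\textbf{Main obstacle.} The key difficulty is the mismatch term $\int_{S_i}\varphi(\eps f^2(\tilde v))\,|\nabla\theta_i|^2|u_\eps-w_\eps|^2$. This term requires $L^2$ convergence, whereas recovery sequences only converge in $L^1$. This is exactly why the codomain truncation (legitimate thanks to Lemma~\ref{l:5.3} and the De Giorgi averaging) must be performed first. An additional check is that the layer contributes no spurious term of order $\calL^n(S_i)/\eps$ from $\omega$. This holds because $\tilde v$ is a minimum of the two fields rather than an interpolation toward $0$, so only $\omega(1-v_\eps)$ and $\omega(1-z_\eps)$ appear.
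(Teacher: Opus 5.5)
Your proposal is correct and follows essentially the same route as the paper: nested layers between $A'$ and $A$, cut-off interpolation of the displacements, the minimum of the two phase fields on the gluing layer (the paper interpolates convexly between $v^A_\eps$ and $v^A_\eps\wedge v^B_\eps$, and between $v^A_\eps\wedge v^B_\eps$ and $v^B_\eps$, on the adjacent layers rather than extending by $1$, which is an equivalent device), Lemma~\ref{l:quasi monotonia funzione di degradamento} for the degradation, averaging over the layer index, and a preliminary truncation with a \emph{common} index $k_{\eps,M}$ for both recovery sequences so that the mismatch term vanishes in $L^2$. The one point to tighten is the passage $M\to\infty$: your first option (prove the inequality for $\mathcal T_k(u)$ and conclude by lower semicontinuity) is not justified as stated, because the averaged index depends on $\eps$ and so gives no bound on $\widehat{\mathcal F}''(\mathcal T_k(u),1;A)$ for any fixed $k$; you should take your second option, keep the truncation inside the construction, note that for fixed $M$ the glued displacements stay at $L^1$-distance at most $2\int_{\{|u|>a_{M+1}\}}|u|\,\dx$ from $u$ as $\eps\to0$, and conclude by a diagonal choice $M=M(\eps)\to\infty$ (and likewise for the number of layers), which is what the paper does.
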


\begin{proof}
It is not restrictive to assume that the right-hand side of
\eqref{e:subadd} is finite.
Let $(u_\eps^J,v_\eps^J)\in
H^1(\Omega,\R^N)\times H^1(\Omega,[0,1])$, for $J\in\{A,B\}$,
be such that
\begin{equation}\label{e:rec1}
(u_\eps^J,v_\eps^J)\to(u,1)
\quad\text{in }L^1(\Omega,\R^{N+1}),
\end{equation}
and
\begin{equation}\label{e:rec2}
\limsup_{\eps\to0}
\widehat{\mathcal{F}}_\eps(u_\eps^J,v_\eps^J;J)
=\widehat{\mathcal{F}}''(u,1;J).
\end{equation}

\medskip
\noindent
{\bf Step 1. A gluing estimate.}
Let
\[
\delta:=\dist(A',\partial A)>0
\]
and fix $M\in\N$ with $M\ge4$. For $i=1,\ldots,M$ we set
\[
A_i
:=
\left\{
x\in A:
\dist(x,A')<\frac{i\delta}{M}
\right\},
\qquad
A_0:=A',
\]
then
\[
A'=A_0\subset\subset A_1\subset\subset\cdots
\subset\subset A_M\subseteq A.
\]
For every $i=1,\ldots,M$, let $\chi_i\in C_c^1(A_i;[0,1])$
be such that
\begin{equation}\label{e:cutoff Ai weak sub}
\chi_i=1
\quad\text{on }A_{i-1},
\qquad
|\nabla\chi_i|
\leq\frac{2M}{\delta}.
\end{equation}
Setting $w_\eps:=v_\eps^A\wedge v_\eps^B$, we have
\begin{equation}\label{e:grad min weak sub}
\nabla w_\eps
=
\nabla v_\eps^A\,
\mathbf 1_{\{v_\eps^A\leq v_\eps^B\}}
+
\nabla v_\eps^B\,
\mathbf 1_{\{v_\eps^B<v_\eps^A\}}
\qquad\text{a.e. in }\Omega.
\end{equation}
Fix now $i\in\{2,\ldots,M-1\}$ and define $u_\eps^i$ on $\Omega$ by
\begin{equation}\label{e:uki}
u_\eps^i
:=
\chi_i u_\eps^A+(1-\chi_i)u_\eps^B
\in H^1(\Omega,\R^N),
\end{equation}
and $v_\eps^i\in H^1(\Omega,[0,1])$ by
\begin{equation}\label{e:vki}
v_\eps^i
:=
\begin{cases}
\chi_{i-1}v_\eps^A+(1-\chi_{i-1})w_\eps,
&\text{in }A_{i-1},
\\
\chi_{i+1}w_\eps+(1-\chi_{i+1})v_\eps^B,
&\text{in }\Omega\setminus\operatorname{supp}\chi_{i-1}.
\end{cases}
\end{equation}
For simplicity, let us introduce the auxiliary functional
$\mathcal{G}_\eps:L^1(\Omega)\times\mathcal{A}(\Omega)
\to[0,\infty]$ defined by
\begin{equation*}
\mathcal{G}_\eps(v;A):=
\begin{cases}
\displaystyle\int_A
\frac{\PotDann(1-v)}{4\eps}+\eps\gamma^2(\nabla v)\,\diff x,
&\text{if }v\in H^1(\Omega,[0,1]),
\\
\infty,&\text{otherwise}.
\end{cases}
\end{equation*}
From \eqref{e:uki} and \eqref{e:vki}, we have that
\begin{align}\label{e:stima iniziale limsup}
\widehat{\mathcal{F}}_{\eps}
(u^i_{\eps},v^i_{\eps};A'\cup B)
&\leq
\widehat{\mathcal{F}}_{\eps}
(u^A_{\eps},v^A_{\eps};A_{i-2})
+\widehat{\mathcal{F}}_{\eps}
(u^B_{\eps},v^B_{\eps};B\setminus A_{i+1})+
\widehat{\mathcal{F}}_{\eps}
(u^i_{\eps},v^i_{\eps};B\cap(A_{i+1}\setminus A_{i-2})).
\end{align}

We now estimate just the last term on the right-hand side of
\eqref{e:stima iniziale limsup} by splitting it into three contributions.
Starting with
$\widehat{\mathcal{F}}_{\eps}
(u_\eps^i,v_\eps^i;B\cap(A_{i-1}\setminus A_{i-2}))$,
and using \eqref{eq:growth-Phi},
\eqref{e:gamma lower euclidean suppenergy} together with
Lemma~\ref{l:quasi monotonia funzione di degradamento}, we obtain
\begin{align}\label{e:i-1i-2}
&\widehat{\mathcal{F}}_{\eps}
(u_\eps^i,v_\eps^i;B\cap(A_{i-1}\setminus A_{i-2}))
\notag\\
&\le
c\int_{B\cap(A_{i-1}\setminus A_{i-2})}
\varphi(\eps f^2(v_\eps^A))|\nabla u_\eps^A|^2\,\dx
+\mathcal{G}_\eps(v_\eps^i;B\cap(A_{i-1}\setminus A_{i-2}))
\notag\\
&\quad+
c(\varphi(\infty)+\eta_\eps)
\mathcal{L}^n(B\cap(A_{i-1}\setminus A_{i-2}))
+\frac{cM^2\eps}{\delta^2}
\int_{B\cap(A_{i-1}\setminus A_{i-2})}
|v_\eps^A-v_\eps^B|^2\,\dx
\notag\\
&\quad+
\eta_\eps\int_{B\cap(A_{i-1}\setminus A_{i-2})}
\Phi(\nabla u^A_\eps)\,\dx.
\end{align}
Similarly, for the intermediate region we get
\begin{align}\label{e:ii-1}
&\widehat{\mathcal{F}}_{\eps}
(u_\eps^i,v_\eps^i;B\cap(A_i\setminus A_{i-1}))
\notag\\
&\le
c\int_{B\cap(A_i\setminus A_{i-1})}
\varphi(\eps f^2(v_\eps^A\wedge v_\eps^B))
\Big(
|\nabla u_\eps^A|^2+|\nabla u_\eps^B|^2
+\frac{M^2}{\delta^2}|u_\eps^A-u_\eps^B|^2
\Big)\,\dx
\notag\\
&\quad+
\mathcal{G}_{\eps}
(v_\eps^A\wedge v_\eps^B;B\cap(A_i\setminus A_{i-1}))
+c(\varphi(\infty)+\eta_\eps)
\calL^n(B\cap(A_i\setminus A_{i-1}))
\notag\\
&\quad+
c\eta_\eps\int_{B\cap(A_i\setminus A_{i-1})}
\big(\Phi(\nabla u^A_\eps)+\Phi(\nabla u^B_\eps)\big)\,\dx
+\frac{c\eta_\eps M^2}{\delta^2}
\int_{B\cap(A_i\setminus A_{i-1})}
|u_\eps^A-u_\eps^B|^2\,\dx,
\end{align}
and for the third region:
\begin{align}\label{e:i+1i}
&\widehat{\mathcal{F}}_{\eps}
(u_\eps^i,v_\eps^i;B\cap(A_{i+1}\setminus A_i))
\notag\\
&\le
c\int_{B\cap(A_{i+1}\setminus A_i)}
\varphi(\eps f^2(v_\eps^B))|\nabla u_\eps^B|^2\,\dx
+\mathcal{G}_{\eps}(v_\eps^i;B\cap(A_{i+1}\setminus A_i))
\notag\\
&\quad+
c(\varphi(\infty)+\eta_\eps)
\mathcal{L}^n(B\cap(A_{i+1}\setminus A_i))
+\frac{cM^2\eps}{\delta^2}
\int_{B\cap(A_{i+1}\setminus A_i)}
|v_\eps^A-v_\eps^B|^2\,\dx
\notag\\
&\quad+
\eta_\eps\int_{B\cap(A_{i+1}\setminus A_i)}
\Phi(\nabla u^B_\eps)\,\dx.
\end{align}
Moreover, setting $D_i:=B\cap(A_{i+1}\setminus A_{i-2})$,
the monotonicity of $\PotDann$ and the triangle inequality for
$\gamma$ give
\[
\mathcal{G}_\eps(v_\eps^i;D_i)
\le
c\big(\mathcal{G}_\eps(v_\eps^A;D_i)
+\mathcal{G}_\eps(v_\eps^B;D_i)\big)
+\frac{cM^2\eps}{\delta^2}
\int_{D_i}|v_\eps^A-v_\eps^B|^2\,\dx.
\]
Since $\eta_\eps\to0$ as $\eps\to0$, summing up
\eqref{e:stima iniziale limsup}--\eqref{e:i+1i}, using the preceding
estimate, and averaging over $i=2,\dots,M-1$, we find an index
$i_\eps\in\{2,\ldots,M-1\}$ such that
\begin{align*}
&\widehat{\mathcal{F}}_{\eps}
(u_\eps^{i_\eps},v_\eps^{i_\eps};A'\cup B)
\\
&\le
\left(1+\frac{c}{M-2}\right)
\left(
\widehat{\mathcal{F}}_{\eps}(u_\eps^A,v_\eps^A;A)
+\widehat{\mathcal{F}}_{\eps}(u_\eps^B,v_\eps^B;B)
\right)
+\frac{c}{M-2}\calL^n(B\cap(A\setminus A'))
\\
&\quad+
\frac{cM^2}{\delta^2(M-2)}
\int_{B\cap(A\setminus A')}
\big(|u_\eps^A-u_\eps^B|^2
+\eps|v_\eps^A-v_\eps^B|^2\big)\,\dx.
\end{align*}
We apply this estimate below to suitably truncated recovery sequences.

\medskip
\noindent
{\bf Step 2. Truncation and diagonal argument.}

Using the same truncation argument as in Theorem~\ref{thm:lbcomp},
for every $M\ge4$ and $\eps>0$ sufficiently small there exists
$k_{\eps,M}\in\{M+1,\dots,2M\}$ such that
\begin{align}\label{e:truncation Psik}
&\widehat{\callF}_\eps
(\mathcal{T}_{k_{\eps,M}}(u_\eps^A),v_\eps^A;A)
+\widehat{\callF}_\eps
(\mathcal{T}_{k_{\eps,M}}(u_\eps^B),v_\eps^B;B)
\notag\\
&\le
\Big(1+\frac{c}{M}\Big)
\Big(
\widehat{\callF}_\eps(u_\eps^A,v_\eps^A;A)
+\widehat{\callF}_\eps(u_\eps^B,v_\eps^B;B)
\Big)
\notag\\
&\quad+
c\calL^n\big(
\{|u_\eps^A|\ge a_{M+1}\}
\cup\{|u_\eps^B|\ge a_{M+1}\}
\big),
\end{align}
for some constant $c$ independent of $\eps$ and $M$.
Set
\[
U_\eps^{J,M}:=\mathcal{T}_{k_{\eps,M}}(u_\eps^J),
\qquad J\in\{A,B\}.
\]
Since the same truncation is used for both families,
$\operatorname{Lip}(\mathcal{T}_k)\le1$, and
$\|\mathcal{T}_k\|_{L^\infty(\R^N)}\le a_{k+1}$, we have
\[
\|U_\eps^{A,M}-U_\eps^{B,M}\|_{L^2(\Omega)}^2
\le
2a_{2M+1}\|u_\eps^A-u_\eps^B\|_{L^1(\Omega)}
\longrightarrow0
\qquad\text{for every fixed }M.
\]
Apply the construction and the averaged estimate of Step~1, with
$M$ layers, to $(U_\eps^{A,M},v_\eps^A)$ and
$(U_\eps^{B,M},v_\eps^B)$, and denote the resulting pair by
$(\bar u_\eps^M,\bar v_\eps^M)$.
By \eqref{e:rec1},
\[
\limsup_{\eps\to0}
\calL^n\big(
\{|u_\eps^A|\ge a_{M+1}\}
\cup\{|u_\eps^B|\ge a_{M+1}\}
\big)
\le \calL^n(\{|u|\ge a_{M+1}\}).
\]
Thus, using \eqref{e:rec2} and \eqref{e:truncation Psik}, and
absorbing constants, we obtain
\begin{align}\label{e:truncation Psik 2}
\limsup_{\eps\to0}
\widehat{\callF}_\eps(\bar u_\eps^M,\bar v_\eps^M;A'\cup B)
&\le
\Big(1+\frac{c}{M}\Big)
\Big(
\widehat{\callF}''(u,1;A)
+\widehat{\callF}''(u,1;B)
\Big)
\notag\\
&\quad+
\frac{c}{M}\calL^n(B\cap(A\setminus A'))
+c\calL^n(\{|u|\ge a_{M+1}\}).
\end{align}
Moreover, by the Lipschitz property of the truncations and the
definition of the glued displacement,
\[
\|\bar u_\eps^M-u\|_{L^1(\Omega)}
\le
\|u_\eps^A-u\|_{L^1(\Omega)}
+\|u_\eps^B-u\|_{L^1(\Omega)}
+2\int_{\{|u|>a_{M+1}\}}|u|\,\dx.
\]
The phase-field construction also gives
\[
\|\bar v_\eps^M-1\|_{L^1(\Omega)}
\le
\|v_\eps^A-1\|_{L^1(\Omega)}
+\|v_\eps^B-1\|_{L^1(\Omega)}.
\]
Since $u\in L^1(\Omega,\R^N)$ and $a_{M+1}\to\infty$,
the tail terms vanish as $M\to\infty$.
A diagonal choice $M=M(\eps)\to\infty$ sufficiently slowly yields $(\bar u_\eps^{M(\eps)},\bar v_\eps^{M(\eps)})
\to(u,1)$ in $L^1(\Omega,\R^{N+1})$ and
\[
\limsup_{\eps\to0}
\widehat{\callF}_\eps
(\bar u_\eps^{M(\eps)},\bar v_\eps^{M(\eps)};A'\cup B)
\le
\widehat{\callF}''(u,1;A)
+\widehat{\callF}''(u,1;B).
\]
This proves \eqref{e:subadd} by the definition of
$\widehat{\callF}''$.
\end{proof}
We are now ready to prove the upper bound inequality.
\begin{theorem}\label{t:limsupndim}
Under the assumptions of Theorem~\ref{thm:general-anisotropic-gamma-convergence}, we have that for every $(u,v)\in L^1(\Omega,\R^{N+1})$ it holds
	\begin{equation}\label{e:gammalisup estimate}
		{\Gamma\textup{-}\limsup_{{\eps\to0}}\widehat{\Fm}_{\eps}(u,v;\Omega)}\leq \Fm_\varsigma(u,v;\Omega),
	\end{equation}
    where $\Fm_{\varsigma}$ is defined in \eqref{eq:def-limit-functional}.
\end{theorem}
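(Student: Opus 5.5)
The plan is to follow the scheme of \cite{CFI24}. First, the upper bound for $\widehat{\Fm}_\eps$ is proved on $BV\cap L^\infty$ through an integral representation. Then the result is extended to $(GBV(\Omega))^N$ by truncation. Finally, since $\Fm_\eps\le\widehat{\Fm}_\eps$, the same bound holds for the original functionals. We may assume $v=1$ a.e. and $u\in(GBV(\Omega))^N$ with $\Fm_\varsigma(u,1;\Omega)<\infty$, since otherwise there is nothing to prove.

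\textbf{Step 1 (localization and measure property).} Fix $u\in BV\cap L^\infty(\Omega,\R^N)$ and a sequence $\eps_j\to0$. By a compactness argument for $\Gamma$-convergence on the class $\mathcal A(\Omega)$, we can pass to a subsequence along which the inner regular envelope of $\widehat{\Fm}''(u,1;\cdot)$ is the $\Gamma$-limit $\widehat{\Fm}(u,1;\cdot)$. This uses the coercivity provided by the term $\eta_\eps\int\Phi(\nabla u)$. The envelope is superadditive and inner regular, and Lemma~\ref{l:wsub} gives weak subadditivity, so the De Giorgi--Letta criterion shows that $\widehat{\Fm}(u,1;\cdot)$ is the restriction of a Borel measure. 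Testing with piecewise affine interpolations of a mollification of $u$, with $v$ chosen optimally in the bulk and cut off near jumps, gives the bound $\widehat{\Fm}(u,1;A)\le c(\Lm^n(A)+|Du|(A))$. We also check translation invariance and $L^1$-lower semicontinuity. With these properties the global method of \cite{bouchitte1998global} yields a representation $\int_A f_0(x,\nabla u)\,\dx+\int_A f_0^\infty(x,\diff D^cu)+\int_{J_u\cap A}g_0(x,[u],\nu_u)\,\diff\Hm^{n-1}$. In this representation the densities are given by blow-up cell formulas at affine maps and at jump functions $u_{\zeta,\nu}$.

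\textbf{Step 2 (identifying the densities).} For the bulk density, it suffices to show $f_0(\xi)\le h_\varsigma(\xi)$. To see this, take $u=\xi x$ and $v_\eps$ constant equal to the value $t_\eps$ that solves $\eps f^2(t_\eps)=1/\tau$, where $\tau$ is optimal in \eqref{eq:def-h-varsigma}. The limit $\varsigma^2=\lim\omega/Q$ turns $\frac{\omega(1-t_\eps)}{4\eps}$ into $\frac{\varsigma^2}{4}\tau$; in the case $\tau=0$ we take $v=1$. Quasiconvexity of $f_0$, which follows from lower semicontinuity of the represented functional, then upgrades the bound to $f_0\le h^{\qc}_\varsigma$, and hence $f_0^\infty\le h^{\qc,\infty}_\varsigma$. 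For the jump density $g_0\le g$, we use Proposition~\ref{propuvjump}. It provides sequences with mollified boundary data, energy $\callF^\infty_{\eps_j}\to g(\zeta,\nu)$, and $\eta_j\int|\nabla u_j|^2\to0$. On the support of $\nabla u_j$ we have $v_j\le\delta_\eps$, so $\eps f^2(v_j)$ is small there and $\varphi(\eps f^2)\le(1+o(1))\varphi'(0)\eps f^2$. The replacement of $\Phi$ by $\Phi_\infty$ costs an error controlled by \eqref{eq:uniform-recession-estimate} plus a term $C(\delta)\eps\int f^2(v)$. Both the $o(1)$ and this error vanish once we first truncate $v$ at the level $\delta_\eps$, as in Proposition~\ref{p:lowerboundBVsfcvettoriale}, and then let $\delta\to0$. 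Scaling to small cubes centered at jump points then gives $g_0(x,\zeta,\nu)\le g(\zeta,\nu)$.

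\textbf{Step 3 (extension to $GBV$).} For $u\in GBV_\star$, the truncations $\mathcal T_k(u)$ lie in $BV\cap L^\infty$ and satisfy $\mathcal T_k(u)\to u$ in $L^1$. By lower semicontinuity of the $\Gamma$-limsup and Proposition~\ref{prop:contF0}, $\widehat{\Fm}''(u,1)\le\liminf_k\Fm_\varsigma(\mathcal T_k u,1)=\Fm_\varsigma(u,1)$.

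\textbf{Main obstacle.} The hardest step is the jump identification in Step 2. There we must pass from the recession cell energy $\callF^\infty$ back to the full energy with $\varphi(\eps f^2(v))\Phi$, in a way that remains compatible with the boundary data required by the blow-up formula of \cite{bouchitte1998global}. It is also delicate to verify the hypotheses of \cite{bouchitte1998global}, since we only have linear growth in $|Du|$ together with the coercive perturbation. If it is difficult to match the cell formula of \cite{bouchitte1998global} directly, I would use instead the large-cell characterization of Proposition~\ref{p:chargT}, periodically tiled, as in Proposition~\ref{periodicity}.
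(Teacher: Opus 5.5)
Your architecture matches the paper's. You take a subsequential limit and apply the global method of \cite{bouchitte1998global}, bound the bulk density with the affine test and constant $v$, bound the jump density via Proposition~\ref{propuvjump} plus the truncation $v\wedge\delta_\eps$ and rescaling, and extend to $GBV$ through $\mathcal T_k$ and Proposition~\ref{prop:contF0}. Deducing $f_0\le h_\varsigma^{\qc}$ from $f_0\le h_\varsigma$ and quasiconvexity of the representation density is a legitimate substitute for the paper's Acerbi--Fusco/Ambrosio--Dal Maso relaxation.

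However, the point you leave open as ``delicate'' is a genuine gap. The representation theorem of \cite{bouchitte1998global} requires $\frac1C|Du|(A)\le\mathcal F(u;A)\le C(\calL^n(A)+|Du|(A))$, and the lower bound fails here because the surface energy is cohesive. The construction in the proof of Lemma~\ref{l:g limitata} gives $\callF''(u_{\zeta,\nu},1;Q^\nu)\le C$ uniformly in $\zeta$, using $\varphi(0)=0$ and $\eta_\eps/\eps\to0$, while $|Du_{\zeta,\nu}|(Q^\nu)=|\zeta|$. The term $\eta_\eps\int\Phi(\nabla u)\,\dx$ cannot supply this coercivity, because recovery sequences are built precisely so that it vanishes; that is the content of Proposition~\ref{propuvjump}. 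The missing idea is to represent the perturbed functional $\overline{\callF}_\lambda:=\overline{\callF}+\lambda|Du|$, $\lambda\in(0,1]$, instead. You then bound its densities by $h_\varsigma^{\qc}(\xi)+\lambda|\xi|$, $h_\varsigma^{\qcinfty}(\xi)+\lambda|\xi|$ and $g(\zeta,\nu)+\lambda|\zeta|$, and let $\lambda\downarrow0$ at the end. Your worry about boundary data, on the other hand, is unfounded. Since $u_{\zeta,\nu}$ is itself admissible in the jump cell formula, $g_\lambda(\zeta,\nu)\le\limsup_{r\downarrow0}r^{1-n}\callF''(u_{\zeta,\nu},1;Q^\nu_r)+\lambda|\zeta|$, so any rescaled sequence from Proposition~\ref{propuvjump} works without matching traces.

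A second step is also missing. Compactness yields only the $\overline\Gamma$-limit, i.e.\ the inner regular envelope $(\callF'')_-(u,1;\cdot)$, and this is what the representation controls. Your Step~3, however, uses $\callF''(w,1;\Omega)\le\Fm_\varsigma(w,1;\Omega)$ for $w=\mathcal T_k(u)$. You therefore need inner regularity of $\callF''(u,1;\cdot)$ for $u\in BV$. The paper obtains it from Lemma~\ref{l:wsub}, namely $\callF''(u,1;A)\le\callF''(u,1;A'')+\callF''(u,1;C)$ for $A'\subset\subset A''\subset\subset A$ and a thin open $C\supset A\setminus A'$. It combines this with the diffuse estimate $\callF''(u,1;C)\le\int_C h_\varsigma^{\qc}(\nabla u)\,\dx+\int_C h_\varsigma^{\qcinfty}(\diff D^su)$, which is small for such $C$. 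Your linear bound $c(\calL^n+|Du|)$ would serve, but it must be proved for $\callF''$ itself (as in the paper, via the $W^{1,1}$ estimate and relaxation), not merely for the limit.

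Finally, watch the order of limits in the jump estimate. The truncation $v\wedge\delta_\eps$ adds damage energy of order $\frac{K\varsigma^2}{4\delta}r^n$ on $Q^\nu_r$, which blows up as $\delta\downarrow0$. It disappears only after dividing by $r^{n-1}$ and letting $r\downarrow0$ with $\delta$ fixed, and only then may you send $\delta\downarrow0$.
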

\begin{proof}
	Given a subsequence $(\widehat{\callF}_{\eps_k})$ of $(\widehat{\callF}_\eps)$, we can extract (without relabeling) a further subsequence that $\overline{\Gamma}$–converges to some functional $\overline{\callF}$. In particular,
\begin{equation}\label{e:additivity}
\overline{\callF}=(\callF')_-=(\callF'')_-,
\end{equation}
where $\callF'$ and $\callF''$ denote the $\Gamma(L^1)$–lower and upper limits of $\widehat{\callF}_{\eps_k}$, and the subscript $_-$ stands for the inner regular envelope of the corresponding functional (see \cite[Definition~16.2 and Theorem~16.9]{dalmaso1993introduction}).

We observe that $\overline{\callF}(u,v;\cdot)$ is the restriction to open sets of a Borel measure by \cite[Theorem~14.23]{dalmaso1993introduction}. Indeed, by construction it is increasing and inner regular; additivity follows from \eqref{e:additivity} once one verifies that $(\callF')_-$ is superadditive and $(\callF'')_-$ is subadditive. The former is a direct consequence of the additivity of $\widehat{\callF}_\eps(u,v;\cdot)$ together with \cite[Proposition~16.12]{dalmaso1993introduction}. The latter is obtained from Lemma~\ref{l:wsub}, along the same lines as \cite[Proposition~18.4]{dalmaso1993introduction}, replacing \cite[(18.6)]{dalmaso1993introduction} with Lemma~\ref{l:wsub}.

We split the proof of \eqref{e:gammalisup estimate} into several steps. Note first that it suffices to treat the case $v=1$ $\calL^n$–a.e. in $\Omega$.

{\bf Step 1. Diffuse estimate for $u\in BV(\Omega,\R^N)$.}
We first derive a global upper bound for $\callF''$ which is sharp on the diffuse part when $u\in BV(\Omega,\R^N)$. Define $H:L^1(\Omega,\R^N)\times\mathcal A(\Omega)\to[0,\infty]$ by
\begin{equation}\label{e:H2}
 H(u;A):=\int_A h_{\varsigma}(\nabla u)\,\dx
\end{equation}
for $u\in W^{1,1}(\Omega;\R^N)$, and $H(u;A)=\infty$ otherwise, where $h_{\varsigma}$ is given by \eqref{eq:def-h-varsigma}. We claim that
\begin{equation}\label{linearest}
\callF''(u,1;A)\leq H(u;A)
\end{equation}
for $u\in W^{1,1}(\Omega;\R^N)$ and $A\in\mathcal{A}(\Omega)$. Assuming \eqref{linearest}, set $H^{*}:L^1(\Omega;\R^N)\times\mathcal A(\Omega)\to[0,\infty]$ as
\begin{equation}\label{e:Hqc}
 H^{*}(u;A):=\int_A h_{\varsigma}^{\qc}(\nabla u)\,\dx
\end{equation}
for $u\in W^{1,1}(\Omega;\R^N)$, and $H^{*}(u;A)=\infty$ otherwise. By the $L^1$–lower semicontinuity of $\callF''$ and the relaxation theorem in \cite[Statement~III.7]{AcerbiFusco1984semicontinuity} (whose assumptions follow from Proposition~\ref{prop:bulk-growth-section2}), we deduce $\callF''(u,1;A)\leq H^{*}(u;A).$ Then \cite[Theorem~4.1]{AmbrosioDalmaso1992} yields, for every $u\in BV(\Omega;\R^N)$,
\begin{equation}\label{e:lsc envelope H}
\mathcal H(u;A):=\mathrm{sc}^-(L^1)\hbox{-}H^{*}(u;A)
=\int_A h_{\varsigma}^{\qc}(\nabla u)\,\dx+\int_A h_{\varsigma}^\qcinfty\big(\diff D^su\big)\,,
\end{equation}
and hence
\begin{equation}\label{e:stima limsup diffusa}
\callF''(u,1;A)\leq \mathcal H(u;A)
\end{equation}
for all $u\in BV(\Omega,\R^N)$ and $A\in\mathcal A(\Omega)$.

To verify \eqref{linearest}, consider first an affine map $u(x)=\xi x+b$ with $\xi\in\R^{N\times n}$ and $b\in\R^N$. By \eqref{eq:def-h-varsigma} there exists $\tau_{\xi}\in[0,\infty)$ such that
\begin{equation}\label{e:optimal tauxi vettoriale}
    h_{\varsigma}(\xi)=\varphi\Big(\frac{1}{\tau_{\xi}}\Big)\Phi(\xi)+\frac{\varsigma^2}{4}\tau_\xi,
\end{equation}
with the convention that $\varphi(\frac1\tau)$ is extended continuously at $\tau=0$ by $\varphi(\infty)\in(0,\infty)$. Define $(u_k,v_k):=(u,\lambda_k)$ with
\begin{equation}\label{e:def di tau kappa}
\eps_kf^2(\lambda_k)=\eps_k\frac{\ell(\lambda_k)}{Q(1-\lambda_k)}=\frac{1}{\tau_k},
\end{equation}
where $\tau_k\to \tau_\xi$ as $k\to \infty$. Then {$\lambda_k\to 1$ as $k\to\infty$, thus} $(u_k,v_k)\to(u,1)$ in $L^1(\Omega;\R^{N+1})$, and
\begin{align}\label{e:gamma limsup vettoriale affine functions}
  \widehat{\Fm}_{\eps_k}(u_k,v_k,A)
 &= \Big[\varphi\Big(\frac{1}{\tau_k}\Big)\Phi(\xi)+\frac{\PotDann(1-\lambda_k)}{4\eps_k}+\eta_{\eps_k} \Phi(\xi)\Big]\calL^n(A) \nonumber\\
 & = \Big[\varphi\Big(\frac{1}{\tau_k}\Big)\Phi(\xi)+\frac{\PotDann(1-\lambda_k)}{4Q(1-\lambda_k)}{\ell(\lambda_k)}\tau_k+\eta_{\eps_k}\Phi(\xi)\Big]\calL^n(A).
\end{align}
Using \eqref{eq:def-varsigma}, $\tau_k\to \tau_\xi$ and $\eta_\eps=o(\eps)$, we get
\[
\limsup_{k\to\infty}\widehat{\callF}_{\eps_k}(u_{k},v_{k};A)
{=} h_{\varsigma}(\xi)\calL^n(A)=H(u,A),
\]
which proves \eqref{linearest} for affine $u$ by \eqref{e:optimal tauxi vettoriale}–\eqref{e:gamma limsup vettoriale affine functions}.

Next, let $u\in C^0(\Omega,\R^N)$ be piecewise affine, say $u(x)=\sum_{i=1}^M(\xi_i x+b_i)\mathbf 1_{\Omega_i}(x)$ with pairwise disjoint $\Omega_i\in\mathcal{A}(\Omega)$ of Lipschitz boundary and $\calL^n(\Omega\setminus\cup_i \Omega_i)=0$. Set $u_k:=u$ and $v_k:=\sum_{i=1}^M \lambda_k^i\chi_i$, where for each $i$ and $k$, $\eps_k f^2(\lambda_{k}^i)= \frac{1}{\tau^i_{k}}$ as in \eqref{e:def di tau kappa}, and $\{\chi_i\}$ is a partition of unity subordinate to the cover $\{\Omega_i^\delta\}$: $\sum_{i=1}^M \chi_i(x)=1$ on $\Omega$,
\begin{equation*}
\chi_i\in C_c^\infty(\Omega^\delta_i), \qquad 0\leq \chi_i\leq 1, \qquad \chi_i\equiv 1 \, \textup{on $\Omega_i^{-\delta}$} \qquad \textup{and} \qquad |\nabla \chi_i| \leq \frac{2}{\delta} \quad \textup{$i=1,\dots,M,$}
\end{equation*}
adopting the following conventions
\begin{equation}
    B^{\delta}:=\{x\in \R^n \, :\, \operatorname{dist}(x,B)<\delta\} \qquad \textup{and} \qquad B^{-\delta}:=\{x\in B \, :\, \operatorname{dist}(x,\partial B)>\delta\},
\end{equation}
for every $\delta>0$ and $B\subseteq \R^n$. Since $\sum_{i=1}^M\chi_i=1$ on $\Omega$, we have $1-v^k=\sum_{i=1}^M (1-\lambda^i_k)\chi_i\le 1-\lambda^{i_k}_k$, for some $i_k$. Arguing as in \eqref{e:gamma limsup vettoriale affine functions}, using the monotonicity of $\PotDann$ and \eqref{e:gamma lower euclidean suppenergy},
\begin{align}\label{e:stima lim sup caso affine generale}
& \limsup_{k\to\infty}\widehat{\Fm}_{\eps_k}(u_{k},v_{k};A)
\le \sum_{i=1}^M \calL^n(\Omega_i^{-\delta}\cap A)h_{\varsigma}(\xi_i)\nonumber\\
&\quad+\limsup_{k\to \infty} \Big[\Big(C_u+\ell(\lambda^{i_k}_k)\tau^{i_k}_{k}\frac{\PotDann(1-\lambda^{i_k}_k)}{Q(1-\lambda^{i_k}_k)}+\frac{4\eps_kMC^2_\gamma}{\delta^2}\Big)\sum_{i=1}^M \calL^n((\Omega^\delta_i\setminus \Omega^{-\delta}_i)\cap A)\Big],
\end{align}
where $C_u:= (\varphi(\infty)+1)\max_{i=1,\dots,M}\Phi(\xi_i)$. Since $\lambda^{i_k}_k\to1$, $(u_k,v_k)\to(u,1)$ in $L^1(\Omega,\R^{N+1})$, and letting $\delta\downarrow0$ in \eqref{e:stima lim sup caso affine generale} gives
\[
 \callF''(u,1;A)\le \sum_{i=1}^M \calL^n(\Omega_i\cap A)h_{\varsigma}(\xi_i)=H(u;A).
\]
Finally, for general $u\in W^{1,1}(\Omega,\R^N)$, extend $u$ (still denoted by $u$) to $W^{1,1}_0(\Omega',\R^N)$ for some bounded $\Omega'\supset\supset \Omega$ (using that $\Omega$ is Lipschitz), approximate by piecewise affine $u_k\to u$ in $W^{1,1}(\Omega',\R^N)$ (see \cite[Chapter X, Prop.~2.1]{ekeland1999convex}), and conclude by the continuity of $H$ in $W^{1,1}$ and the $L^1$–lower semicontinuity of $\callF''$.

\medskip
{\bf Step 2. Inner regularity of $\callF''(u,1;\cdot)$ and existence of the $\Gamma(L^1)$–limit for $u\in BV(\Omega,\R^N)$.}
We claim that for $u\in BV(\Omega,\R^N)$,
\begin{equation}\label{inner}
\callF''(u,1;\cdot)=(\callF'')_-(u,1;\cdot).
\end{equation}
Given $A$ open and $\delta>0$, choose $A'\subset\subset A''\subset\subset A$ and an open set $C$ with $A\setminus A'\subset C$ so that $\mathcal H(u;C)\le\delta$, where $\mathcal H$ is defined in \eqref{e:lsc envelope H}. By Lemma~\ref{l:wsub} and \eqref{e:stima limsup diffusa},
\[
\callF''(u,1;A)\le \callF''(u,1;A'\cup C)\le \callF''(u,1;A'')+\mathcal H(u;C)\le \callF''(u,1;A'')+\delta,
\]
which gives \eqref{inner}. Hence, from \eqref{e:additivity},
\begin{equation}\label{e:Gammabarlimite uguale al gammalimite}
\overline{\callF}(u,1;\cdot)\le \callF'(u,1;\cdot)\le \callF''(u,1;\cdot)=\overline{\callF}(u,1;\cdot),
\end{equation}
so the $\Gamma$–limit of $\widehat{\callF}_{\eps_k}(u,1;\cdot)$ exists and coincides with $\overline{\callF}(u,1;\cdot)$ for every $u\in BV(\Omega,\R^N)$.

\medskip
{\bf Step 3. Integral representation of the $\Gamma(L^1)$–limit on $BV(\Omega,\R^N)\times\{1\}$.}
To represent $\overline{\callF}$ in integral form and to estimate its densities, we consider the coercive perturbation
\[
\overline{\callF}_\lambda(u,1;A):=\overline{\callF}(u,1;A)+\lambda{|Du|(A)},
\]
for $u\in BV(\Omega,\R^N)$, $A\in \mathcal{A}(\Omega)$ and $\lambda\in(0,1]$. By \eqref{e:bulk-growth-h}, \eqref{e:lbcomp}, \eqref{e:stima limsup diffusa}, and \eqref{e:Gammabarlimite uguale al gammalimite},
\[
{\lambda|Du|(\Omega)-c\calL^n(\Omega)\le
\overline{\callF}_\lambda(u,1;\Omega)\le
c(|Du|(\Omega)+\calL^n(\Omega))\,,}
\]
for some $c>0$ and all $u\in BV(\Omega,\R^N)$. Moreover, translation invariance holds:
\[
\widehat{\callF}_{\eps_k}(u(\cdot-z),v(\cdot-z);z+A)=\widehat{\callF}_{\eps_k}(u,v;A),\qquad
\widehat{\callF}_{\eps_k}(u+b,v;A)=\widehat{\callF}_{\eps_k}(u,v;A),
\]
whence the analogous properties for $\overline{\callF}$.
By \cite[Theorem~3.12]{bouchitte1998global}, for $u\in BV(\Omega,\R^N)$ and $A\in\mathcal{A}(\Omega)$,
\[
\overline{\callF}_{\lambda}(u,1;A)=
\int_A h_\lambda(\nabla u)\dx
+\int_A h_\lambda^\infty({\diff D^cu})
+\int_{J_u\cap A}g_\lambda([u],\nu_u)\diff\calH^{n-1},
\]
with
\begin{align}\label{e:flambdadff}
h_\lambda(\xi):=\limsup_{r\downarrow0}\frac{1}{r^n}
\inf\Big\{\overline{\callF}_\lambda(w,1;Q_r):
\,w\in BV(Q_r,\R^N),\;
w(x)=\xi x\text{ on }\partial Q_r\Big\},
\end{align}
for $\xi\in\R^{N\times n}$,
\begin{equation}\label{e:flambda}
g_\lambda(\zeta,\nu):=\limsup_{r\downarrow0}\frac{1}{r^{n-1}}
\inf\Big\{\overline{\callF}_\lambda(w,1;Q_r^\nu):
\,w\in BV(Q_r^\nu,\R^N),\;
w=u_{\zeta,\nu}\text{ on }\partial Q_r^\nu\Big\},
\end{equation}
for $\zeta\in\R^N$, $\nu\in\Sf^{n-1}$, and $h^\infty_\lambda(\xi):=\limsup_{t\to\infty}\frac{h_\lambda(t\xi)}{t}$. Using \eqref{e:stima limsup diffusa},
\begin{equation}\label{lambdalin}
h_\lambda(\xi)\leq
\frac{1}{r^n}\overline{\callF}_\lambda(\xi x,1;Q_r)
\leq h_\varsigma^{\qc}(\xi)+\lambda|\xi|,
\end{equation}
hence
\begin{equation}\label{recessione}
h_\lambda^\infty(\xi)\leq
h_\varsigma^{\qcinfty}(\xi)+\lambda|\xi|.
\end{equation}
We now prove
\begin{equation}\label{e:upbvsalto}
g_\lambda(\zeta,\nu)\leq g(\zeta,\nu)+\lambda|\zeta|,
\end{equation}
for all $\zeta\in\R^N$, $\nu\in\Sf^{n-1}$. From \eqref{e:flambda},
\begin{align}\label{sfc}
g_\lambda(\zeta,\nu)
\leq\limsup_{r\downarrow0}\frac{1}{r^{n-1}}
\overline{\callF}_\lambda(u_{\zeta,\nu},1;Q_r^\nu)
=\limsup_{r\downarrow0}\frac{1}{r^{n-1}}
\overline{\callF}(u_{\zeta,\nu},1;Q_r^\nu)
+\lambda|\zeta|.
\end{align}
By the definition of $\overline\callF$, for any
$(\tilde u_k,\tilde v_k)\to(u_{\zeta,\nu},1)$
in $L^1(Q_r^\nu;\R^{N+1})$,
\begin{equation}\label{recupp}
\overline{\callF}(u_{\zeta,\nu},1;Q_r^\nu)
\leq\limsup_{k\to\infty}
\widehat{\callF}_{\eps_k}(\tilde u_k,\tilde v_k;Q_r^\nu).
\end{equation}
Thus, it suffices to construct a suitable recovery sequence
$(\tilde u_k,\tilde v_k)$ depending on $r$, $\zeta$, and $\nu$.
By applying Proposition~\ref{propuvjump} with
$\eps_k^*:=\eps_k/r$ and $\eta_k^*:=\eta_{\eps_k}$,
we obtain sequences
$(u_k^*,v_k^*)\to(u_{\zeta,\nu},1)$
in $L^2(Q^\nu,\R^{N+1})$ such that
\begin{equation}\label{estg}
\lim_{k\to\infty}
\callF^\infty_{\eps_k^*}(u_k^*,v_k^*;Q^\nu)
=g(\zeta,\nu)
\quad\text{and}\quad
\lim_{k\to\infty}
\eta_k^*\|\nabla u_k^*\|_{L^2(Q^\nu)}^2=0.
\end{equation}
Define $\tilde u_k(y):=u_k^*\!\left(\frac{y}{r}\right)$ on $Q_r^\nu$, so that $\tilde u_k\to u_{\zeta,\nu}$ in $L^2(Q_r^\nu,\R^N)$. Fix $\tau>0$ and $\delta\in(0,1)$.
By \eqref{eq:uniform-recession-estimate}, we have
$\Phi(\xi)\leq(1+\tau)\Phi_\infty(\xi)$ for $|\xi|$ large,
and hence, for all $\xi$,
\begin{equation}\label{e:stima con recessione quadratica in upperbound vettoriale}
\Phi(\xi)\leq(1+\tau)\Phi_\infty(\xi)+C(\tau).
\end{equation}
Let $K\geq1$ be the constant in
Lemma~\ref{l:quasi monotonia funzione di degradamento}.
For every $\eps>0$ sufficiently small (depending on $\delta$), define
\[
\textstyle \delta_\eps:=
\max\Big\{t\in[0,1):
\eps f^2(t)\leq\frac{\delta}{K}\Big\}.
\]
Since $f^2$ is continuous on $[0,1)$ and diverges as $t\uparrow1$,
this maximum is well defined, belongs to $(0,1)$, and satisfies $\eps f^2(\delta_\eps)=\frac{\delta}{K}.$ Moreover, $\delta_\eps\to1$ as $\eps\to0$, since every fixed $t<1$
belongs to the above sublevel set for $\eps$ sufficiently small.
By Lemma~\ref{l:quasi monotonia funzione di degradamento},
\[
\eps f^2(t)\leq K\eps f^2(\delta_\eps)=\delta
\qquad\text{for every }t\in[0,\delta_\eps],
\]
whereas the maximality of $\delta_\eps$ gives
$f^2(t)>f^2(\delta_\eps)$ for every $t\in(\delta_\eps,1)$.
Consequently,
\begin{equation}\label{e:stima di soglia per f in upperbound vettoriale}
\eps f^2(t\wedge\delta_\eps)\leq\delta,
\qquad
f^2(t\wedge\delta_\eps)\leq f^2(t)
\qquad\text{for every }t\in[0,1).
\end{equation}

Setting $v^*_{k,\delta}:=v_k^*\wedge\delta_{\eps_k}$,
the bulk term does not increase by
\eqref{e:stima di soglia per f in upperbound vettoriale}.
The gradient term does not increase either, since
\[
\nabla v^*_{k,\delta}
=\mathbf1_{\{v_k^*<\delta_{\eps_k}\}}\nabla v_k^*
\qquad\text{a.e. in }Q^\nu.
\]
Thus, by the monotonicity of $\PotDann$ and $\calL^n(Q^\nu)=1$,
\begin{equation}\label{e:prima approssimazione per troncamento in upperbound vettoriale}
\callF_{\eps_k^*}^\infty(u_k^*,v_{k,\delta}^*;Q^\nu)
\leq
\callF_{\eps_k^*}^\infty(u_k^*,v_k^*;Q^\nu)
+r\frac{\PotDann(1-\delta_{\eps_k})}{4\eps_k}.
\end{equation}
Moreover, using
$\eps_k f^2(\delta_{\eps_k})=\delta/K$
and \eqref{eq:def-varsigma},
\begin{equation}\label{e:potential cost threshold upperbound vettoriale}
\frac{\PotDann(1-\delta_{\eps_k})}{4\eps_k}
=\frac{K}{4\delta}\ell(\delta_{\eps_k})
\frac{\PotDann(1-\delta_{\eps_k})}{Q(1-\delta_{\eps_k})}
\longrightarrow\frac{K\varsigma^2}{4\delta}
\qquad\text{as }k\to\infty.
\end{equation}
Now letting
$\tilde v_{k,\delta}(y):=v^*_{k,\delta}\!\left(\frac{y}{r}\right)$,
we have $\nabla\tilde v_{k,\delta}(y)
=\frac{1}{r}(\nabla v^*_{k,\delta})\!\left(\frac{y}{r}\right)$, thus from the $2$-homogeneity of $\gamma^2$, we get
\begin{equation*}
\textstyle \int_{Q_r^\nu}
\eps_k\gamma(\nabla\tilde v_{k,\delta})^2\diff y
=
r^{n-1}\int_{Q^\nu}
\eps_k^*\gamma(\nabla v^*_{k,\delta})^2\diff x.
\end{equation*}
Using analogously the $2$-homogeneity of $\Phi_\infty$,
changing variable we obtain
\begin{equation}\label{eqdeltaresc}
\callF_{\eps_k}^\infty
(\tilde u_k,\tilde v_{k,\delta};Q_r^\nu)
=
r^{n-1}\callF_{\eps_k^*}^\infty
(u_k^*,v_{k,\delta}^*;Q^\nu),
\quad
\|\nabla\tilde u_k\|_{L^2(Q_r^\nu)}^2
=
r^{n-2}\|\nabla u_k^*\|_{L^2(Q^\nu)}^2.
\end{equation}

Since $\varphi$ is differentiable at $0$ (cf. (H4)),
there exists a modulus of continuity $\vartheta$ such that
\begin{equation}\label{e:modulo continuità derivata di varphi in upperbound vettoriale}
\varphi(\eps_k f^2(\tilde v_{k,\delta}))
\leq
(1+\vartheta(\delta))\varphi'(0^+)
\eps_k f^2(\tilde v_{k,\delta}),
\end{equation}
for every $k$ sufficiently large.
Combining \eqref{eqdeltaresc},
\eqref{e:modulo continuità derivata di varphi in upperbound vettoriale},
\eqref{e:prima approssimazione per troncamento in upperbound vettoriale},
and \eqref{e:stima con recessione quadratica in upperbound vettoriale},
and using $\eps_k f^2(\tilde v_{k,\delta})\leq\delta<1$, we deduce
\begin{align*}
\callF_{\eps_k}(\tilde u_k,\tilde v_{k,\delta};Q_r^\nu)
&\leq
(1+\vartheta(\delta))
\int_{Q_r^\nu}\!\Big(
\varphi'(0^+)\eps_k f^2(\tilde v_{k,\delta})
\Phi(\nabla\tilde u_k)
+\frac{\PotDann(1-\tilde v_{k,\delta})}{4\eps_k}
+\eps_k\gamma(\nabla\tilde v_{k,\delta})^2
\Big)\,\dx
\\
&\leq
(1+\vartheta(\delta))\Big(
r^{n-1}(1+\tau)
\callF_{\eps_k^*}^\infty(u_k^*,v_{k,\delta}^*;Q^\nu)
+r^n C(\tau)
\Big)
\\
&\leq
(1+\vartheta(\delta))\Big(
r^{n-1}(1+\tau)
\callF_{\eps_k^*}^\infty(u_k^*,v_k^*;Q^\nu)
+r^n(1+\tau)\frac{\PotDann(1-\delta_{\eps_k})}{4\eps_k}
+r^n C(\tau)
\Big),
\end{align*}
where $C(\tau)$ may be enlarged and is independent of $k$, $r$,
and $\delta$. Moreover, from \eqref{eq:growth-Phi} and \eqref{eqdeltaresc},
\[
\textstyle \eta_{\eps_k}\!\int_{Q_r^\nu}
\Phi(\nabla\tilde u_k)\,\dx
\leq
C_\Phi\eta_{\eps_k}r^{n-2}
\|\nabla u_k^*\|_{L^2(Q^\nu)}^2
+C_\Phi\eta_{\eps_k}r^n.
\]

For every fixed $r>0$ and $\delta\in(0,1)$,
since $\delta_{\eps_k}\to1$, we have $(\tilde u_k,\tilde v_{k,\delta})
\to(u_{\zeta,\nu},1)$ in $L^2(Q_r^\nu,\R^{N+1})$. Summing up and passing to the limit as $k\to\infty$,
by \eqref{recupp}, \eqref{estg}, and
\eqref{e:potential cost threshold upperbound vettoriale}, we obtain
\begin{equation*}
\overline{\callF}(u_{\zeta,\nu},1;Q_r^\nu)
\leq
(1+\vartheta(\delta))\Big(
r^{n-1}(1+\tau)g(\zeta,\nu)
+r^n(1+\tau)\frac{K\varsigma^2}{4\delta}
+C(\tau)r^n
\Big).
\end{equation*}
Dividing by $r^{n-1}$ and letting $r\downarrow0$
with $\delta$ and $\tau$ fixed, in view of \eqref{sfc}, 
\[
g_\lambda(\zeta,\nu)
\leq
(1+\vartheta(\delta))(1+\tau)g(\zeta,\nu)
+\lambda|\zeta|.
\]
Letting first $\delta\downarrow0$ and then $\tau\downarrow0$,
inequality~\eqref{e:upbvsalto} follows. Finally, letting $\lambda\downarrow0$,
inequalities \eqref{lambdalin}, \eqref{recessione},
and \eqref{e:upbvsalto} yield, for every $u\in BV(\Omega,\R^N)$, $\overline{\callF}(u,1;\Omega)
\leq\Fm_\varsigma(u,1;\Omega).$ Combined with the lower bound (Theorem~\ref{thm:lbcompBV}),
this identifies the $\Gamma$–limit of the subsequence
$\widehat{\callF}_{\eps_k}$.
By Urysohn’s property~\cite[Proposition~8.3]{dalmaso1993introduction},
the result extends to the entire family $\widehat{\callF}_\eps$.

\medskip
{\bf Step 4. Extension to $GBV$.}
To extend \eqref{e:gammalisup estimate} to $u\in (GBV\cap L^1(\Omega))^N$, we argue by truncation. For $k\in\N$, let $\mathcal{T}_k$ be as in \eqref{e:Tk}. By Steps~1–3,
\[
 \callF''(\mathcal{T}_k(u),1;\Omega)\le \Fm_{\varsigma}(\mathcal{T}_k(u),1;\Omega).
\]
The claim follows by the $L^1$–lower semicontinuity of $\callF''$ and Proposition~\ref{prop:contF0}.
\end{proof}

We finally prove Theorem \ref{thm:general-anisotropic-gamma-convergence}.
\begin{proof}[Proof of Theorem~\ref{thm:general-anisotropic-gamma-convergence}]
The lower bound 
has been proven in Theorem~\ref{thm:lbcomp}.
The upper bound follows by Theorem~\ref{t:limsupndim} and $\callF_{\eps}\leq \widehat{\callF}_\eps$.
\end{proof}

\section{Acknowledgements}
The author warmly thanks Matteo Focardi for inspiring discussions and continuous encouragement.

\section{Funding}
\begin{center}
    \includegraphics[width=5 cm]{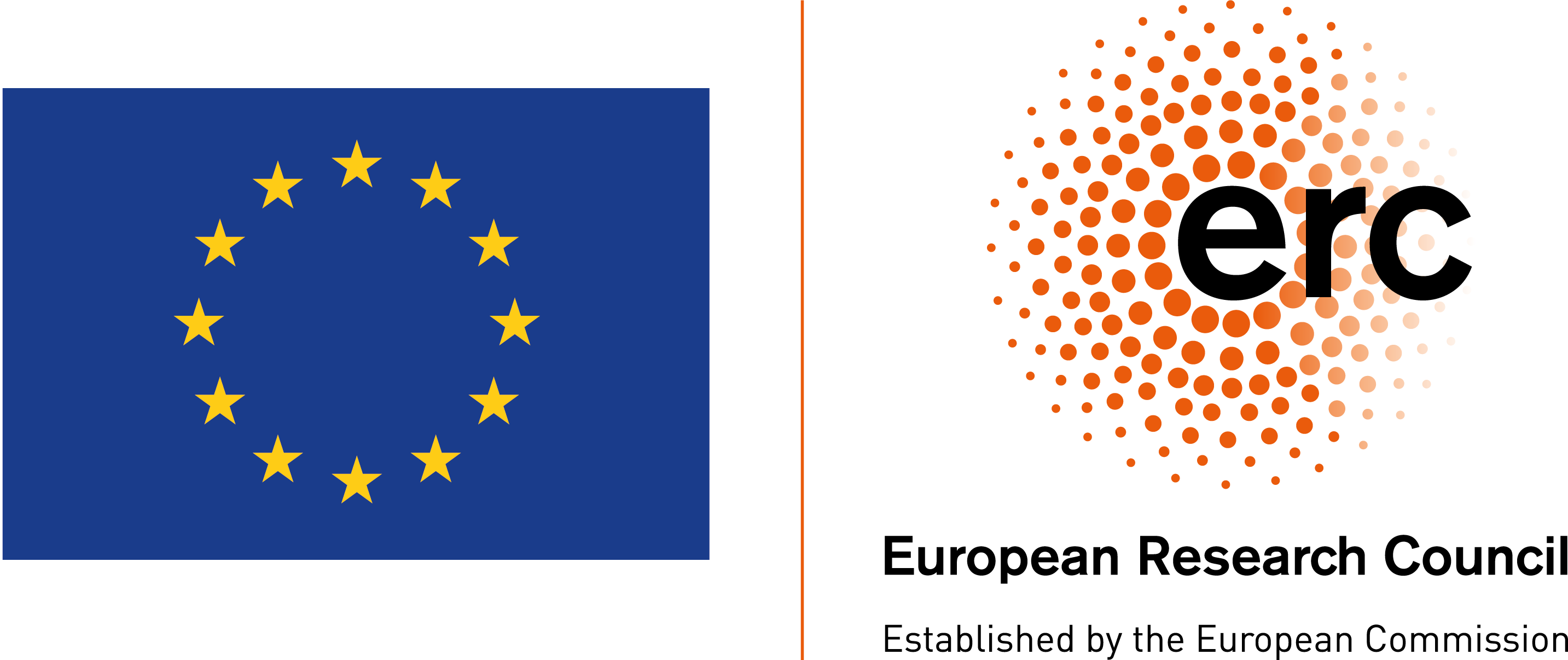}
\end{center}
Funded by the European Union. Views and opinions expressed are however those of the
author(s) only and do not necessarily reflect those of the European Union or the
European Research Council Executive Agency. Neither the European Union nor the
granting authority can be held responsible for them. This project has received funding
from the European Research Council (ERC) under the European Union’s Horizon
Europe research and innovation programme (grant agreement No. 101198055, project
acronym NEITALG).

\bibliographystyle{alpha} 
\bibliography{bibtex}

\end{document}